\definecolor{labelkey}{rgb}{0.6,0,1}
\newcommand{\ds}{\displaystyle}
\newcommand{\N}[1]{\lfloor N{#1}\rfloor}
\definecolor{darkblue}{rgb}{0.0, 0.0, 0.55}
\newcommand{\BB}[1]{\textcolor{red}{#1}}
\def\btheta{{\boldsymbol\theta}}
\def\bSigma{{\boldsymbol\Sigma}}
\def\bGamma{{\boldsymbol\Gamma}}
\def\bLambda{{\boldsymbol\Lambda}}
\def\bTheta{{\boldsymbol\Theta}}
\def\bmu{{\boldsymbol\mu}}
\def\bepsilon{{\boldsymbol\epsilon}}
\def\bvarphi{{\boldsymbol\varphi}}
\def\cC{\mathcal{C}}
\def\dB{\mathbb{B}}
\def\dD{\mathbb{D}}
\def\dF{\mathbb{F}}
\def\dM{\mathbb{M}}
\def\dP{\mathbb{P}}
\def\dR{\mathbb{R}}
\def\dS{\mathbb{S}}
\def\dT{\mathbb{T}}
\def\dV{\mathbb{V}}
\def\bc{\mathbf{c}}
\def\bQ{\mathbf{Q}}
\def\br{\mathbf{r}}
\def\bR{\mathbf{R}}
\def\bY{\mathbf{Y}}
\def\bZ{\mathbf{Z}}
\def\cO{\mathcal{O}}
\newcommand{\I}{\ensuremath{\mathbb{I}}}
\newtheorem{theorem}{Theorem}[section]
\newtheorem{lemma}{Lemma}[section]
\newtheorem{proposition}{Proposition}[section]
\newtheorem{corollary}{Corollary}[section]
\newtheorem{assumption}{Assumption}
\newtheorem{remark}{Remark}
\numberwithin{equation}{section}
\begin{document}
\title[Sequential change-point detection for GOU processes]{Sequential change-point detection for generalized Ornstein-Uhlenbeck processes}

\author{Yunhong Lyu}
\address{Department of Mathematics and Statistics, Trent University}
\email{yunhonglyu@trentu.ca}

\author{Bouchra R. Nasri}
\address{Département de médecine sociale et préventive, École de santé publique, Université de Montréal}
\email{bouchra.nasri@umontreal.ca}

\author{Bruno N. Rémillard}
\address{Department of Statistics and Business Analytics, United Arab Emirates University and Department of Decision Sciences, HEC Montréal}
\email{bruno.remillard@hec.ca}

\thanks{Funding in partial support of this work was provided by the Fonds
québécois de la recherche en santé and the Natural Sciences and Engineering Research Council of Canada.}

\keywords{Sequential change-point; generalized Ornstein-Uhlenbeck processes; discrete observations}

\subjclass{60J60, 60F05 (Primary) 62L10 (Secondary)}

\begin{abstract}
In this article, we study sequential change-point methods for discretely observed generalized Ornstein-Uhlenbeck processes with periodic drift. Two detection methods are proposed, and their respective performance is studied through numerical experiments for several choices of parameters.
\end{abstract}

\maketitle
\section{Introduction}

The main focus of sequential change-point detection is the identification of shifts in distribution of data collected sequentially over time, where one of the ear;lier reference is  \cite{page1954continuous}. In this work, we are interested in Ornstein-Uhlenbeck (O-U)  and its recent extensions called generalized O-U (GOU) processes \citep{nkurunziza2019improved,lyu2023inference}. The O-U process is known for its mean-reverting property, meaning that its long-term expectation is constant. This is no longer true for the GOU process, whose main drift term can be periodic. In fact, we are focusing in detecting change-points in the parameters of a GOU process in a sequential way, by looking at the process observed at discrete times. Unlike \cite{Berkes/Gombay/Horvath/Kokoszka:2004}, who considered GARCH processes,  we propose two sequential monitoring schemes to detect parameter changes in discretely observed GOU processes: one based on residuals and one based on estimators. In addition, we also establish the asymptotic results of the detectors under both the null and alternative hypotheses. Note that in the particular case of a GARCH(0,0), the methodology proposed by \cite{Berkes/Gombay/Horvath/Kokoszka:2004} is based on cumulative sums of residuals. Motivated by \cite{Berkes/Gombay/Horvath/Kokoszka:2004}, we will consider sequential based on a statistic $\Gamma(N,K)$ of observations $X_1,\ldots, X_{N+K}$, together with a boundary function $g(N,K)$. The decision rule is to stop and reject the null hypothesis $H_0$ of no change-point at
\begin{equation*}
    \tau_\Gamma(N)=\begin{cases}
        \inf\{K\ge 1: \Gamma(N,K)\ge g(N,K)\};\\
        \infty\quad \mbox{if}~\Gamma(N,K)< g(N,K), \text{ for all } K\ge 1.
    \end{cases}
\end{equation*}
Note that if there is an instantaneous change immediately after the first observation of the new data, this leads to the fact that $K_*=1$. We choose the statistics and boundaries so that
\begin{equation}
\label{FalseAlarm}
\lim_{N\to\infty}\dP_{H_0}\left(\tau_\Gamma(N)<\infty\right)=\alpha,
\end{equation}
where $0<\alpha<1$ is the prescribed type-I error. We will also choose the boundary so that under the alternative hypothesis $H_1$ of a change-point,
\begin{equation}
\label{CPHa}
\lim_{N\to\infty}\dP_{H_1}\left(\tau_\Gamma(N)<\infty\right)=1.
\end{equation}
Condition \eqref{FalseAlarm} ensures that the probability of a false alarm is asymptotically $\alpha$, while condition
\eqref{CPHa} means that under $H_1$, a change-point is detected with probability approaching $1$.

This paper is structured as follows: the GOU model and some preliminary results for the estimation of its parameters are presented in \Cref{sec:Estimation}, while the discretized process is defined in \Cref{sec:discrete}.  The limiting behaviour of the estimated drift parameters is presented together with a non-parametric goodness-of-fit test in Section \ref{ssec:gof}. In \Cref{sec:Asymptotic}, we propose two processes to detect the sequential change-point in the GOU process. We then prove the asymptotic property of the two detectors under both the null and alternative hypotheses. Numerical experiments for GOU processes with drift
$\mu_1 +\mu_{2} \sqrt{2}\cos{(2\pi  t)}-a x
$ are considered in \Cref{sec:Simulation} to assess the performance of the two statistics. All simulations were performed using the CRAN package GenOU \citep{Lyu/Nasri/Remillard:2025a}.

\section{Parameter estimation}\label{sec:Estimation}
In what follows, we adopt the following notations. Consider a probability space $\left(\Omega, \mathscr{F}, \rm{P}\right)$ with $\mathscr{F}$ as a $\sigma-$field on the sample space $\Omega$, and $\rm{P}$ as a probability measure. Furthermore, let $L^p$ represent the space of measurable functions that are $ p$-integrable, where $p\ge 1$. For mathematical convenience, we assume that $\mathscr{F}$ is complete. In addition, $O_{\rm{P}}(1)$ denotes a random quantity such that  is bounded in probability, and $o_{\rm{P}}(1)$ indicates convergence to 0 in probability. We define $\mathbb{R}^{+}=[0,+\infty)$. The notation ${I}_{n}$ is used to represent the $n$-dimensional identity matrix.
Let $\mathbb{I}_{A}$ denote the indicator function of the event $A$,  $\top$ denote the transpose of a matrix, and let
$\btheta = (\bmu,a)^{\top}=(\mu_{1},~~\mu_{2},\dots,~~\mu_{p},a)^{\top}\in\cO$, where $\cO \subset\mathbb{R}^{p}\times(0,\infty)$ is the parameter space and $\bvarphi{(t)}=[\varphi_{1}(t),~\varphi_{2}(t),~\varphi_{3}(t),~\ldots~,~\varphi_{p}(t)]^\top,$ where for each $i=1,2,\dots,p,$ the function $\varphi_{i}(t)$ is a real-valued function of $t$. The \textbf{absolute maximum norm} of vectors and matrices is denoted by
$\|\cdot \|$.
We consider the following Stochastic Differential Equation (SDE):
\begin{equation}\label{eq2}
dX_{t}=S({\btheta},t,X_{t})dt+\sigma dB_{t},
\end{equation}
using notations similar to \cite{lyu2023inference}, where $\ds S({\btheta},t,x)=\bmu^\top \bvarphi(t)-a x$, $\{B_{t}, t\ge 0\}$ is a standard Brownian motion, and $\sigma$ is a positive  constant.
\begin{remark}\label{remark1}
In contrast to estimating drift parameters for continuous-time diffusion processes, the assumption of a known diffusion parameter $\sigma$ is a prevalent practice. This choice comes from the singularity of measures corresponding to various diffusion parameters, which facilitates the direct computation of $\sigma$ from a solitary continuous-time observation path, obviating the need for estimation procedures.
\end{remark}

In order to describe the solution of the SDE, we need the following regularity conditions:
\begin{assumption}\label{hyp:1}
The distribution of the initial value, $X_{0}$, of the SDE in \eqref{eq2} does not depend on the drift parameter $\btheta.$ Furthermore, $X_{0}$ is independent of $\{B_{t}: t\ge 0\}$ and $E[|X_{0}|^{d}]<\infty,$ for some $d\ge 2.$
\end{assumption}
\begin{assumption}\label{hyp:2}
For any $T>0$, the base function $\{\bvarphi(t), t\in[0,T]\}$ is Riemann-integrable on [0,T] and possess the following properties:
\begin{enumerate}
\item $\varphi_1(t)\equiv 1$.
    \item Periodicity: $\bvarphi(t+\upsilon)=\bvarphi(t)$, for all $t\in[0,T]$, where $\upsilon>0$ is the period.
    \item Orthogonality in $L^{2}\left([0,\upsilon],\ds\frac{1}{\upsilon}d\lambda\right):\int_{0}^{\upsilon}\bvarphi(t)\bvarphi^\top(t)dt=\upsilon {I}_{p}$
\end{enumerate}
\end{assumption}
\begin{remark}\label{rem:a}
  It follows from part 3 of Assumption \ref{hyp:2} that $\ds \bar \bvarphi= \frac{1}{\upsilon}\int_0^{\upsilon} \bvarphi(s)ds = e_1 = (1,0\ldots,0)^\top$. In addition, since the base function $\bvarphi$ is bounded on [0,T] and $\upsilon-$periodic, this implies that $\bvarphi$ is bounded on $ \mathbb{R}_{+}=[0,+\infty).$ Furthermore, there exists a positive constant $K_{\bvarphi}$, such that
\begin{equation}\label{kphi}
\left\|\bvarphi\right\|_\infty = \sup_{t\ge 0}\left\|\bvarphi(t)\right\|\le K_{\bvarphi}.
\end{equation}
Without loss of generality, we assume $\upsilon=1$ in the following sections.
\end{remark}
\subsection{Maximum likelihood estimation}

First, we introduce the log-likelihood function, if the process is observed continuously on the interval $[0,T]$.
From \cite[Theorem 7.7]{liptser2001statistics}, the log-likelihood function of the process defined by \eqref{eq2}   is
\begin{equation}\label{eq2.3}
{\rm log}\mathcal{L}([0,T],\btheta)=\frac{1}{\sigma^{2}}\int_{0}^{T}S({\btheta,t,X_{t}})dX_{t}-\frac{1}{2\sigma^{2}}\int_{0}^{T}S^{2}({\btheta,t,X_{t}})d{t},
\end{equation}
where $T>0$; see also \cite{dehling2010drift, nkurunziza2018estimation}. Furthermore, set
\begin{equation}
\bQ_{[t_0,t_1]}=\begin{bmatrix} \ds\int_{t_0}^{t_1}\bvarphi(t)\bvarphi^\top(t)dt&-\ds\int_{t_0}^{t_1}\bvarphi(t)X_{t}dt\\
-\ds\int_{t_0}^{t_1}\bvarphi^\top(t)X_tdt&\ds\int_{t_0}^{t_1}X_{t}^{2}dt
\end{bmatrix},
\label{Qmatrix}
\end{equation}
\begin{equation}
\tilde{\bR}_{[t_0,t_1]}=\begin{bmatrix} \ds\int_{t_0}^{t_1}\bvarphi^\top(t)dX_{t}&-\ds\int_{t_0}^{t_1}X_{t}dX_{t}\\
\end{bmatrix}^{\top},\quad{} \bR_{[t_0,t_1]}=\begin{bmatrix} \ds\int_{t_0}^{t_1}\bvarphi^\top(t)dB_{t}&-\ds\int_{t_0}^{t_1}X_{t}dB_{t}\\
\end{bmatrix}^{\top}
\label{Rmatrix},
\end{equation}
for $0\le t_0<t_1 \le T$. Then,
\begin{equation}
{\rm log}\mathcal{L}([0,T],\btheta)=\frac{1}{\sigma^{2}}\btheta^{\top}\tilde{\bR}_{[0,T]}-\frac{1}{2\sigma^{2}}\btheta^{\top}\bQ_{[0,T]}\btheta.\nonumber
\end{equation}
It is then easy to check that the maximum likelihood estimator (MLE)  is
$\check\btheta_{T}=\bQ_{[0,T]}^{-1}\tilde{\bR}_{[0,T]}$.
\begin{proposition}
\label{MLE_0206}
Suppose that Assumptions \ref{hyp:1}-\ref{hyp:2} hold. Then,
$\check\btheta_{T}=\btheta+\sigma \bQ_{[0,T]}^{-1}\bR_{[0,T]}$.
\end{proposition}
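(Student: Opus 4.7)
The plan is to substitute the SDE into the $dX_t$-integrals appearing in $\tilde\bR_{[0,T]}$ and recognize the resulting decomposition as $\bQ_{[0,T]}\btheta+\sigma\bR_{[0,T]}$. Since we already have the closed form $\check\btheta_T=\bQ_{[0,T]}^{-1}\tilde\bR_{[0,T]}$, the whole claim reduces to showing
\[
\tilde\bR_{[0,T]} \;=\; \bQ_{[0,T]}\,\btheta \;+\; \sigma\,\bR_{[0,T]}.
\]

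First I would rewrite $\bQ_{[0,T]}\btheta$ explicitly using the block structure of $\bQ_{[0,T]}$ and the fact that $S(\btheta,t,x)=\bvarphi^\top(t)\bmu-ax=\bmu^\top\bvarphi(t)-ax$. The top $p$ entries collapse to $\int_0^T\bvarphi(t)\bigl(\bvarphi^\top(t)\bmu-aX_t\bigr)dt=\int_0^T\bvarphi(t)\,S(\btheta,t,X_t)\,dt$, while the last entry collapses to $-\int_0^T X_t\bigl(\bvarphi^\top(t)\bmu-aX_t\bigr)dt=-\int_0^T X_t\,S(\btheta,t,X_t)\,dt$. This is pure bookkeeping but is the only place where Assumptions~\ref{hyp:1}--\ref{hyp:2} enter (through integrability guaranteeing that all integrals are finite almost surely).

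Next, I would plug $dX_t=S(\btheta,t,X_t)\,dt+\sigma\,dB_t$ into the two stochastic integrals forming $\tilde\bR_{[0,T]}$. The first $p$ components of $\tilde\bR_{[0,T]}$ become $\int_0^T\bvarphi(t)S(\btheta,t,X_t)\,dt+\sigma\int_0^T\bvarphi(t)\,dB_t$, and the last entry becomes $-\int_0^T X_t S(\btheta,t,X_t)\,dt-\sigma\int_0^T X_t\,dB_t$. Comparing these with the expression obtained for $\bQ_{[0,T]}\btheta$ and with the definition of $\bR_{[0,T]}$ gives the desired identity $\tilde\bR_{[0,T]}=\bQ_{[0,T]}\btheta+\sigma\bR_{[0,T]}$. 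Left-multiplying by $\bQ_{[0,T]}^{-1}$ yields the conclusion.

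The only delicate point is the invertibility of $\bQ_{[0,T]}$ so that $\check\btheta_T$ is well defined; this is implicit in the preceding derivation of the MLE and follows, a.s., from the non-degeneracy of $(\bvarphi(t),X_t)$ on $[0,T]$ combined with Assumption~\ref{hyp:2}(3). Apart from this, the argument is a direct algebraic substitution and requires no additional probabilistic input.
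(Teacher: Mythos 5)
Your proof is correct and is essentially the paper's own argument: the paper simply defers to Proposition~4.1 of \cite{dehling2010drift}, whose proof is exactly your substitution of $dX_t=S(\btheta,t,X_t)\,dt+\sigma\,dB_t$ into $\tilde{\bR}_{[0,T]}$, giving the identity $\tilde{\bR}_{[0,T]}=\bQ_{[0,T]}\btheta+\sigma\bR_{[0,T]}$ and hence the claim after left-multiplying by $\bQ_{[0,T]}^{-1}$. Your closing remark on the a.s.\ invertibility of $\bQ_{[0,T]}$ (via the non-degeneracy of $(\bvarphi(t),X_t)$ and part~3 of Assumption~\ref{hyp:2}) is the only delicate point, and it is treated the same way, implicitly, in the reference.
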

The proof can be found in Proposition 4.1 from \cite{dehling2010drift}.

Finally, one can look at the asymptotic behaviour of this estimator, which will be used when studying the asymptotic behaviour of the change-point statistics.

To simplify some  expressions, set
\begin{equation}\label{sigma}
\bSigma=\begin{bmatrix}
    I_p & \bLambda\\
    \bLambda^{\top} & \omega
\end{bmatrix},\quad \bLambda = -\int_{0}^{1} \widetilde{h}(t)\bvarphi(t)dt, \quad \omega = \int_{0}^{1} \widetilde{h}^{2}(t)dt + \frac{\sigma^2}{2a}.
\end{equation}
Based on Proposition 3.3 in \cite{lyu2023inference}, we have the following result, proven in Appendix \ref{app:pf1}, which plays an important role in obtaining the asymptotic property of test statistics.
\begin{proposition}
\label{ConvQSigma}
Suppose that Assumptions \ref{hyp:1}-\ref{hyp:2} hold. Then, as $T\to\infty$, we have
\begin{equation}
\label{Qconverg}
\frac{1}{T}\bQ_{[0,T]}\stackrel{a.s. \text{ \rm and } L^{1} }{\longrightarrow} \bSigma, \text{ and } T\bQ_{[0,T]}^{-1}\stackrel{a.s.}{\longrightarrow} \bSigma^{-1}.
\end{equation}
\end{proposition}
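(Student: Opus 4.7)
The plan is to establish the convergence block by block, since $\frac{1}{T}\bQ_{[0,T]}$ naturally decomposes into a deterministic upper-left block and three blocks involving $X_t$. For the deterministic piece $\frac{1}{T}\int_0^T \bvarphi(t)\bvarphi^\top(t)dt$, I would invoke the periodicity of $\bvarphi$ and Assumption \ref{hyp:2}(3): writing $T=n+r$ with $n=\lfloor T\rfloor$ and $r\in[0,1)$, the integral equals $n\,I_p + \int_0^r \bvarphi(t)\bvarphi^\top(t)dt$ (using $\upsilon=1$), so dividing by $T$ and letting $T\to\infty$ gives $I_p$. This convergence is uniform and deterministic, so both the a.s.\ and $L^1$ statements are immediate.

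For the remaining three blocks, I would rely on the cyclostationary/ergodic structure of the GOU process inherited from Proposition~3.3 of \cite{lyu2023inference}. Write the solution of \eqref{eq2} as a deterministic periodic part $\widetilde h(t)$ plus a mean-zero stationary O--U component decaying geometrically in the past. Then the Cesàro averages should satisfy
\begin{equation*}
\frac{1}{T}\int_0^T \bvarphi(t)X_t\,dt \;\xrightarrow{a.s.}\; \int_0^1 \widetilde h(t)\bvarphi(t)dt = -\bLambda,
\qquad
\frac{1}{T}\int_0^T X_t^2\,dt \;\xrightarrow{a.s.}\; \int_0^1 \widetilde h^2(t)dt + \frac{\sigma^2}{2a} = \omega,
\end{equation*}
since cross terms between the periodic and stationary parts average to zero and the stationary variance is $\sigma^2/(2a)$. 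These are precisely the statements supplied (or easily derived) from the cited Proposition~3.3; I would invoke them and verify the constants match \eqref{sigma}.

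To upgrade to $L^1$ convergence, I would show that the family $\{T^{-1}\bQ_{[0,T]}\}_{T\ge 1}$ is uniformly integrable. Using Assumption~\ref{hyp:1} ($E|X_0|^d<\infty$ for some $d\ge 2$) together with the explicit formula for $X_t$, one obtains a uniform bound $\sup_{t\ge 0} E|X_t|^{2+\delta}<\infty$ for some $\delta>0$, which controls the $L^{1+\delta/2}$ norm of $X_t^2$ and $\bvarphi(t)X_t$ uniformly in $t$; Jensen then yields uniform integrability of the time averages. Finally, $\bSigma$ is positive definite (its Schur complement $\omega - \bLambda^\top\bLambda = \int_0^1 (\widetilde h(t)-\bar{\widetilde h})^2 dt+\sigma^2/(2a)>0$ via \Cref{rem:a}), so matrix inversion is continuous at $\bSigma$ and $T\bQ_{[0,T]}^{-1}\xrightarrow{a.s.}\bSigma^{-1}$ follows by the continuous mapping theorem. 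The main technical step is really the ergodic identification of the three stochastic limits; everything else is either deterministic or a continuity/uniform-integrability wrap-up.
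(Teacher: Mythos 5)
Your overall route coincides with the paper's: the deterministic block via periodicity and orthonormality of $\bvarphi$, the three stochastic blocks via the stationary ergodic auxiliary process of \cite{dehling2010drift,nkurunziza2019improved} and the ergodic theorem, and the inverse via continuity of matrix inversion at the invertible $\bSigma$. The genuine gap is in your $L^1$ upgrade. You claim that Assumption \ref{hyp:1} yields $\sup_{t\ge 0}E|X_t|^{2+\delta}<\infty$ for some $\delta>0$; but that assumption only guarantees $E[|X_0|^{d}]<\infty$ for \emph{some} $d\ge 2$, and when $d=2$ the term $e^{-at}X_0$ in the explicit solution has no moment of order $2+\delta$, so the uniform-integrability argument collapses in exactly the boundary case the assumption permits. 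The paper avoids this entirely: the pointwise ergodic theorem (Theorem 3.5.7 in \cite{Stout1974}) already delivers $L^1$ (not just a.s.) convergence of the block averages $\frac{1}{T}\sum_{j=1}^{\lfloor T\rfloor}\int_{j-1}^{j}\bvarphi(t)\tilde X_t\,dt$ and $\frac{1}{T}\sum_{j=1}^{\lfloor T\rfloor}\int_{j-1}^{j}\tilde X_t^2\,dt$ with no moments beyond the second, and the replacement of $X_t$ by $\tilde X_t$ is carried out in $L^2$/$L^1$ via the coupling $|X_t-\tilde X_t|\le C_0e^{-at}$ of Theorem \ref{CovSolu_Auxi} together with the continuous Ces\`aro mean theorem. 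Your step can be repaired without extra moments by splitting $X_t=e^{-at}X_0+h(t)+z(t)$: the $X_0$ contribution to $\frac{1}{T}\int_0^T X_t^2\,dt$ is bounded by $X_0^2/(2aT)$, which tends to $0$ in $L^1$ under $E[X_0^2]<\infty$, while the Gaussian part $z(t)$ has all moments; but as written the step fails.

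Two smaller points. First, your decomposition of the solution as ``$\tilde h(t)$ plus a mean-zero stationary component'' is literally false for the solution of \eqref{eq2}: $z(t)=\sigma e^{-at}\int_0^t e^{as}dB_s$ is not stationary and $h\ne\tilde h$; stationarity holds only for the auxiliary process $\tilde X_t$ driven by a bilateral Brownian motion, so the coupling step is a necessary ingredient, not a gloss. Second, your Schur-complement identity $\omega-\bLambda^\top\bLambda=\int_0^1\bigl\{\tilde h(t)-\bar{\tilde h}\bigr\}^2dt+\sigma^2/(2a)$ is incorrect unless the projections of $\tilde h$ on $\varphi_2,\ldots,\varphi_p$ vanish, which they generally do not (cf.\ \eqref{eq:Lambda}): since $\bLambda^\top\bLambda=\sum_{k=1}^p\left(\int_0^1\tilde h(t)\varphi_k(t)\,dt\right)^2$ accounts for all $p$ coefficients while your formula subtracts only the $\varphi_1$ one, the correct statement is the Bessel bound $\omega-\bLambda^\top\bLambda\ge \sigma^2/(2a)>0$. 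That bound suffices for positive definiteness of $\bSigma$, and is in fact more than the paper records, since the paper invokes invertibility of $\bSigma$ without proof before applying the continuous mapping theorem.
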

\begin{remark}\label{rem:Sigma}
From \Cref{rem:a}, $\bar \bvarphi= \int_0^1 \bvarphi(s)ds = (1,0,\ldots,0)^\top$,
and set $b= \frac{\bmu^\top \bar \bvarphi}{a} = \frac{\mu_1}{a}$.
Then the first column of $\bSigma$ is
$\mathcal{C}_1 = (\bar \bvarphi^\top, -b)^\top$.
\end{remark}
As an immediate result of \Cref{ConvQSigma}, we have the following corollary.
\begin{corollary}
\label{Cbound_0114_2024}
Suppose that Assumptions \ref{hyp:1}-\ref{hyp:2} hold. Then,
\begin{equation*}
\left|\left|\frac{1}{T}\bQ_{[0,T]}-\bSigma \right|\right|=O_{\rm{P}}(T^{-1}),
\end{equation*}
as $T\to\infty$.
\end{corollary}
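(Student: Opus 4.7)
The plan is to control $\frac{1}{T}\bQ_{[0,T]}-\bSigma$ block by block, writing $X_t = m_t + Z_t$, where $m_t = e^{-at}X_0 + \int_0^t e^{-a(t-s)}\bmu^\top\bvarphi(s)ds$ is the conditional mean of $X_t$ given $X_0$ and $Z_t = \sigma\int_0^t e^{-a(t-s)}dB_s$ is the Gaussian driving part. The key companion fact is that $m_t$ converges exponentially fast to the unique $1$-periodic solution $\widetilde{h}(t)$ of $\widetilde{h}'(t)+a\widetilde{h}(t)=\bmu^\top\bvarphi(t)$, while $Z_t$ is asymptotically stationary Gaussian with exponentially decaying covariance $\mathrm{Cov}(Z_s,Z_t) = \frac{\sigma^2}{2a}\bigl(e^{-a|s-t|}-e^{-a(s+t)}\bigr)$.

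The upper-left $p\times p$ block $\frac{1}{T}\int_0^T \bvarphi(t)\bvarphi^\top(t)dt - I_p$ is purely deterministic: splitting $T = \lfloor T\rfloor + \{T\}$ and invoking the orthogonality condition in Assumption~\ref{hyp:2} leaves a norm bounded by a multiple of $K_{\bvarphi}^2/T$. For the off-diagonal blocks $-\frac{1}{T}\int_0^T \bvarphi(t) X_t dt - \bLambda$, I would substitute $X_t=(m_t-\widetilde{h}(t))+\widetilde{h}(t)+Z_t$. The transient piece contributes $O(T^{-1})$ via the exponential bound on $m_t-\widetilde{h}(t)$; the periodic piece matches $\bLambda$ up to $O(T^{-1})$ by the periodicity of $\bvarphi\widetilde{h}$; and the zero-mean random piece $\frac{1}{T}\int_0^T \bvarphi(t) Z_t dt$ is Gaussian with variance computed directly from the exponentially decaying covariance of $Z$ and bounded by a $1/T$-factor via the diagonal-band estimate $\int_0^T\!\!\int_0^T e^{-a|s-t|}ds\,dt = O(T)$.

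For the $(p+1,p+1)$ entry $\frac{1}{T}\int_0^T X_t^2 dt-\omega$, applying It\^o's formula to $X_t^2$ gives
\begin{equation*}
X_T^2-X_0^2 = 2\int_0^T X_t\bmu^\top\bvarphi(t)dt - 2a\int_0^T X_t^2 dt + 2\sigma\int_0^T X_t dB_t + \sigma^2 T,
\end{equation*}
which I would solve for $\frac{1}{T}\int_0^T X_t^2 dt$ and bound term by term: the linear-in-$X$ integral is handled as in the previous paragraph, the martingale $\frac{\sigma}{aT}\int_0^T X_t dB_t$ is controlled via its quadratic variation $\int_0^T X_t^2 dt \asymp T$, and the boundary contribution $(X_T^2-X_0^2)/T$ is negligible thanks to uniform moment bounds on $X_T$ inherited from Assumption~\ref{hyp:1}. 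Matching the resulting limit with $\omega$ as defined in \eqref{sigma} invokes the integration-by-parts identity $\int_0^1 \widetilde{h}(t)\widetilde{h}'(t)dt=0$, which follows from $\widetilde{h}(0)=\widetilde{h}(1)$. The principal obstacle is the quadratic-in-Gaussian functional $\int_0^T Z_t^2 dt$, whose direct moment analysis is awkward; the It\^o decomposition is precisely the device that converts it into the linear Gaussian and martingale pieces already handled, after which the estimates become routine.
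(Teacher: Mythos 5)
Your block-by-block machinery is sound as far as it goes, and your treatment of the deterministic upper-left block is exactly the paper's (split at $\lfloor T\rfloor$, use orthogonality, bound the remainder by a multiple of $K_{\bvarphi}^2/T$). The genuine gap is quantitative: every estimate you produce for the stochastic blocks is a factor $T^{1/2}$ too weak for the stated conclusion, and you never bridge that gap. Your variance bound $\mathrm{Var}\bigl(\frac{1}{T}\int_0^T\bvarphi(t)Z_t\,dt\bigr)=\frac{1}{T^2}\,O(T)=O(T^{-1})$ gives $\frac{1}{T}\int_0^T\bvarphi(t)Z_t\,dt=O_{\rm P}(T^{-1/2})$, not $O_{\rm P}(T^{-1})$; likewise, in your It\^o decomposition of the $(p+1,p+1)$ entry, the quadratic variation $\int_0^T X_t^2\,dt\asymp T$ yields $\frac{\sigma}{aT}\int_0^T X_t\,dB_t=O_{\rm P}(T^{-1/2})$. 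Moreover, no refinement of this route can close the gap, because the $T^{-1/2}$ scale is sharp: by stochastic Fubini, $\int_0^T Z_t\,dt=\frac{\sigma}{a}\int_0^T\bigl(1-e^{-a(T-s)}\bigr)dB_s$ is Gaussian with variance of exact order $T$, so $T\cdot\frac{1}{T}\int_0^T Z_t\,dt$ is not tight, and the random entries of $\frac{1}{T}\bQ_{[0,T]}$ genuinely fluctuate at scale $T^{-1/2}$ (the same CLT scaling visible in Lemma \ref{lem:mle}, where $T^{-1/2}\bR_{[0,T]}$ has a nondegenerate Gaussian limit). What your argument honestly proves is $\bigl\|\frac{1}{T}\bQ_{[0,T]}-\bSigma\bigr\|=O_{\rm P}(T^{-1/2})$, not the corollary as stated.

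For comparison, the paper's own proof consists precisely of the deterministic computation you also carry out, showing $T\bigl\|\frac{1}{T}\int_0^T\bvarphi(t)\bvarphi^\top(t)\,dt-I_p\bigr\|\le 1+K_{\bvarphi}^2$, followed by the remark that ``the rest of the proof is similar to the proof of \Cref{ConvQSigma}''; but \Cref{ConvQSigma} establishes only a.s.\ and $L^1$ convergence via the auxiliary stationary process $\tilde X$ and the ergodic theorem, with no rate at all, whereas you use the explicit decomposition $X_t=m_t+Z_t$ with exponentially decaying Gaussian covariances. So the $O(T^{-1})$ rate is verified, in the paper as in your proposal, only for the deterministic block, and your more explicit analysis in fact exposes that the claimed uniform rate cannot hold for the random blocks. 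The constructive fix is to state and prove the $O_{\rm P}(T^{-1/2})$ rate your estimates actually deliver, which is what arguments of the type used downstream (e.g.\ via \Cref{prop:inverse}, which only transfers whatever rate is available to the inverse) require.
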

\begin{proof}[Proof of \Cref{Cbound_0114_2024}]
First,
  \begin{equation*}
  \frac{1}{T}\int_{0}^{T}\bvarphi(t)\bvarphi^\top(t)dt -  I_{p}=  \frac{\lfloor T\rfloor}{T}I_{p}-I_{p}+\frac{1}{T}\int_{\lfloor T\rfloor}^{T}\bvarphi(t)\bvarphi^\top(t)dt.
  \end{equation*}
  Furthermore, we get
   \begin{equation*}
T\left|\left|\frac{1}{T}\int_{0}^{T}\bvarphi(t)\bvarphi^\top(t)dt-  I_{p}\right|\right| \le T\left(\frac{\lfloor T\rfloor-T}{T} \left|\left| I_{p}\right|\right|+\frac{K_{\bvarphi}^2}{T}\right)\le T\frac{1+K_{\bvarphi}^2}{T}=1+K_{\bvarphi}^2.
  \end{equation*}
The rest of the proof is similar to the proof of \Cref{ConvQSigma}.
\end{proof}

The next result, proven in Appendix \ref{app:pf2}, shows that the MLE $\check\btheta_{T}$ is consistent.
\begin{lemma}\label{lem:mle}
Suppose that Assumptions \ref{hyp:1}-\ref{hyp:2} hold. Then, $\check\btheta_{T}$ is a  strongly consistent estimator of $\btheta$. Furthermore,
$\check\btheta_{T}$ is asymptotically normal, i.e. $T^{1/2} \left(\check\btheta_{T}-\btheta\right)\xrightarrow[T\to\infty]{{ D}}\mathcal{N}_{p+1}(0,\sigma^{2}{\bSigma}^{-1})$, and
$||\bR_{[0,T]}||={\rm O}_{\rm{P}}(T^{1/2} )$.
\end{lemma}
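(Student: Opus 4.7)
The plan is to leverage Proposition \ref{MLE_0206}, which gives the exact identity $\check\btheta_T - \btheta = \sigma \bQ_{[0,T]}^{-1}\bR_{[0,T]}$, together with Proposition \ref{ConvQSigma} on the almost sure limit of $T^{-1}\bQ_{[0,T]}$. All three conclusions then reduce to analyzing the continuous vector martingale $\bR_{[0,T]}$, whose quadratic covariation process is exactly $\bQ_{[0,T]}$ (one checks this entrywise from \eqref{Qmatrix}--\eqref{Rmatrix}).

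For strong consistency, I would write
\begin{equation*}
\check\btheta_T - \btheta \;=\; \sigma\, \bigl(T\bQ_{[0,T]}^{-1}\bigr)\, \frac{\bR_{[0,T]}}{T}
\end{equation*}
and use Proposition \ref{ConvQSigma} on the first factor. For the second factor, I would apply the strong law for continuous local martingales: for each component $M_T$ of $\bR_{[0,T]}$, the bracket $\langle M\rangle_T$ is a diagonal entry of $\bQ_{[0,T]}$, hence $\langle M\rangle_T/T$ converges a.s. to a finite positive limit by Proposition \ref{ConvQSigma}. Dambis--Dubins--Schwarz represents $M_T$ as a time-changed Brownian motion $W_{\langle M\rangle_T}$, and the usual Brownian SLLN $W_t/t\to 0$ a.s.\ then yields $M_T/\langle M\rangle_T \to 0$ a.s., and thus $M_T/T\to 0$ a.s. This gives $\check\btheta_T\to\btheta$ almost surely.

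For asymptotic normality, I would rescale as
\begin{equation*}
T^{1/2}\bigl(\check\btheta_T - \btheta\bigr) \;=\; \sigma\, \bigl(T\bQ_{[0,T]}^{-1}\bigr)\, \frac{\bR_{[0,T]}}{T^{1/2}},
\end{equation*}
and invoke the martingale central limit theorem for continuous vector martingales (e.g.\ Theorem VIII.3.11 of Jacod--Shiryaev): since $T^{-1}\langle \bR\rangle_T = T^{-1}\bQ_{[0,T]}\to \bSigma$ almost surely (and therefore in probability), we have $T^{-1/2}\bR_{[0,T]}\xrightarrow{D}\mathcal{N}_{p+1}(0,\bSigma)$. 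Combining with $T\bQ_{[0,T]}^{-1}\to\bSigma^{-1}$ a.s.\ and Slutsky's lemma yields the stated limit $\mathcal{N}_{p+1}(0,\sigma^2\bSigma^{-1})$.

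The last claim $\|\bR_{[0,T]}\|=O_P(T^{1/2})$ is then free: the martingale CLT step shows $T^{-1/2}\bR_{[0,T]}$ converges in distribution, hence is bounded in probability. The main technical obstacle is verifying the hypotheses of the continuous martingale CLT for the component $-\int_0^T X_t\, dB_t$, since $X_t$ is unbounded; but the only input needed is convergence of the bracket, which is supplied by \Cref{ConvQSigma} (the lower-right entry of $\bQ_{[0,T]}/T$ tends a.s.\ to $\omega$). Everything else is bookkeeping with Slutsky and the continuous-mapping theorem.
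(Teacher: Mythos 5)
Your proposal is correct, and its skeleton coincides with the paper's: both start from the identity $\check\btheta_{T}-\btheta=\sigma\bQ_{[0,T]}^{-1}\bR_{[0,T]}$ of \Cref{MLE_0206}, feed in $T\bQ_{[0,T]}^{-1}\to\bSigma^{-1}$ a.s.\ from \Cref{ConvQSigma}, and finish with Slutsky. The differences are in the two technical steps. For strong consistency, the paper proves $\bR_{[0,T]}/T\to 0$ a.s.\ by hand: an $L^2$ bound on $T^{-1/2}\bR_{[0,T]}$ (using $\|\bvarphi\|_\infty\le K_{\bvarphi}$ and $\sup_t E[X_t^2]<\infty$), Doob's maximal inequality over dyadic blocks $2^k\le T\le 2^{k+1}$, and Borel--Cantelli; you instead invoke the strong law for continuous local martingales (Dambis--Dubins--Schwarz plus the Brownian SLLN), using that each diagonal bracket $\langle M\rangle_T$ is a diagonal entry of $\bQ_{[0,T]}$ whose normalized limit ($1$ for the $\varphi_i$ components, $\omega>0$ for the last) is strictly positive, so $\langle M\rangle_\infty=\infty$ a.s.\ and $M_T/T\to 0$ a.s. Both are valid; yours is shorter and more conceptual, the paper's is elementary and self-contained. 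For the CLT, your identification $\langle\bR\rangle_T=\bQ_{[0,T]}$ entrywise is exactly right, and your appeal to the continuous martingale CLT is in fact the paper's own stated alternative: the proof primarily cites Proposition 1.21 of Kutoyants (2004) and verifies its integrability hypotheses, but then remarks that the result also follows from the martingale CLT of Rémillard--Vaillancourt since the quadratic variation of $t^{-1/2}\bR_{[0,t]}$ converges to $\bSigma$ by \Cref{ConvQSigma} --- and, as you note, for \emph{continuous} martingales no Lindeberg-type condition is needed despite $X_t$ being unbounded. Finally, for $\|\bR_{[0,T]}\|=O_{\rm P}(T^{1/2})$ the paper gives a direct Markov-inequality bound with the explicit constant $K_{\bvarphi}^2+\sup_{t\ge 0}E[X_t^2]$, whereas you deduce it for free from tightness of the weak limit; both deductions are sound, the paper's merely being quantitative.
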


\section{Asymptotic behaviour of estimators for the discretized process}\label{sec:discrete}

In most applications, a continuous process is only observed at finite times. To this end, consider a partition $0=t_{0}<t_{1}<\dots<t_{N}=T$ on a given period $[0,T]$,  with $\ds \Delta_{N}= \frac{T}{N} = t_{1}-t_{0}=\cdots=t_{N}-t_{N-1}$.

For all $i\in \{1,\ldots, N\}$, set $Y_{N,i}=\frac{X_{t_{i}}-X_{t_{i-1}}}{\Delta_N^{1/2}}$,
\begin{equation}\label{eq:Z}
\bZ_{N,i}= \Delta_N^{1/2} (\varphi_{1}(t_{i-1}),\varphi_{2}(t_{i-1}),\dots,\varphi_{p}(t_{i-1}),-X_{t_{i-1}})^\top,
\end{equation}
and
\begin{equation}\label{eq:eps}
\epsilon_{N,i}=\frac{\sigma}{\Delta_N^{1/2}}(B_{t_{i}}-B_{t_{i-1}}) = \sigma \tilde \epsilon_{N,i}, \quad i\ge 1.
\end{equation}
Then, let
\begin{eqnarray}\label{eq:rn}
r_{N,i} &=& Y_{N,i} -  \bZ_{N,i}^\top \btheta - \epsilon_{N,i} \\
&=&  \frac{1}{\Delta_N^{1/2}} \int^{t_{i}}_{t_{i-1}} \left[\bmu^\top\left\{\bvarphi(s)-\bvarphi(t_{i-1})\right\}-a(X_s-X_{t_{i-1}}) \right]ds ,\nonumber
\end{eqnarray}
so
\begin{equation}\label{DiscYZ}
Y_{N,i}=\bZ_{N,i}^\top \btheta+\epsilon_{N,i} +r_{N,i}, \qquad i\ge 1.
\end{equation}
Based on the previous notations, set
\begin{equation*}
\bY_N=\begin{bmatrix}
Y_{N,1}\\
Y_{N,2}\\
\vdots\\
Y_{N,N}\\
\end{bmatrix}, \quad \bZ_N=\begin{bmatrix}
\bZ_{N,1}^\top \\
\bZ_{N,2}^\top \\
\vdots\\
\bZ_{N,N}^\top
\end{bmatrix},
\quad \btheta=\begin{bmatrix}
\mu_1\\
\mu_2\\
\vdots\\
\mu_p\\
a
\end{bmatrix},
\quad \bepsilon_N=\begin{bmatrix}
    \epsilon_{N,1}\\
    \epsilon_{N,2}\\
    \vdots\\
    \epsilon_{N,N}
\end{bmatrix},
\quad \br_N=\begin{bmatrix}
    r_{N,1}\\
    r_{N,2}\\
    \vdots\\
    r_{N,N}
\end{bmatrix}.
\end{equation*}
Then,
$ \bY_N=\bZ_N\btheta+\bepsilon_N +\br_N$ and the estimator based on the discretized regression is given $\hat\btheta_N=\left(\bZ_N^{\top}\bZ_N\right)^{-1}\bZ_N^{\top}\bY_N$.
Hence,
\begin{equation}\label{eq:thetaN}
\hat\btheta_N=\left(\bZ_N^{\top}\bZ_N\right)^{-1}\bZ_N^{\top}(\bZ_N\btheta+\bepsilon_N+ \br_N)=
\btheta+\left(\bZ_N^{\top}\bZ_N\right)^{-1}\bZ_N^{\top}(\bepsilon_N  +\br_N).
\end{equation}
In passing, let us first note a relationship between the complete sample $\{X_t:0\le t\le T\}$ and $\left\{X^{\Delta_{N}}(t_{i}):i=0,1,2,\dots\right\}$ where $X^{\Delta_{N}}(t_{i})$ satisfies the Euler-Maruyama discretized version of the SDE of $X(t)$. To this end, let $i_{t}=\max\{i=0,1,2,\dots:t_{i}\le t\}$ i.e. $i_{t}$ is the maximum of integer less than or equal to $t$, and suppose that
$X^{\Delta_{N}}(0)=X_{0}$.
From  \cite[Theorem 9.6.2, p. 324]{kloeden1999numerical},
there exists a constant
$\bf{C}^*$, such that
\begin{equation*}
\sup_{0\le t\le T}E\left[\left| X_t- X^{\Delta_{N}}(i_{t})\right|\right]\le {\bf C}^*\sqrt{\Delta_{N}+C(\Delta_N)},
\end{equation*}
where $C(\Delta_N)$ is a non-negative function with $\ds \inf_{\Delta_N > 0} C(\Delta_N) = 0$. The existence of such a function follows from Proposition \ref{prop:CN}.

\subsection{Sequential change-point detection}
We  are now in a position to define what is meant by change-point. To this end,
extend \eqref{DiscYZ}, by letting $\btheta$ depend on $i$, viz.,
\begin{equation*}\label{DiscYZ}
Y_{N,i}=\bZ_{N,i}^\top \btheta_i+\epsilon_{N,i} +r_{N,i}, \qquad i\ge 1.
\end{equation*}

First, we suppose that there is no change in the parameter $\btheta$ in the historical data of size $N$:
\begin{equation}
\label{FirstAssum}
\btheta_{i}=\btheta_0,\quad i=1,2,\cdots,N,
\end{equation}where $\btheta_0= \left(\bmu^\top,a\right)^{\top}=(\mu_{1},~~\mu_{2},\dots,~~\mu_{p},a)^{\top}$.
This can be written as
\begin{equation}
\label{eq:H0}
H_0: \btheta_{i}=\btheta_0,\qquad i\ge 1,
\end{equation}
with the alternative hypothesis
\begin{equation}
\label{eq:H1}
\begin{aligned}
 H_1:& ~\text{there is }K_* \ge 0,\text{such that }\btheta_{i}=\btheta_0,\quad i \le N+K_*, ~\text{and}\\
 & \btheta_{i}=\btheta_*,\quad i > N+K_*, \text{with }\btheta_0\neq \btheta_*,
\end{aligned}
\end{equation}
with $\btheta_*= (\bmu_*,a_*)^{\top}=(\mu_{1*},~~\mu_{2*},\dots,~~\mu_{p*},a_*)^{\top}$.

Let
$X_{t_{N+i}}^{(2)}$ denote  the value of the process $\{X_t,t\ge 0\}$ after change-point, for  $i > K_*$.
Furthermore, for $i\ge K_*+1$, set
\begin{equation}\label{eq:Z_1}
\bZ_{N,i}^{(2)}= \Delta_N^{1/2}\left(\varphi_{1}(t_{N+i-1}),\varphi_{2}(t_{N+i-1}),\dots,\varphi_{p}(t_{N+i-1}),-X_{t_{N+i-1}}^{(2)}\right)^\top,
\end{equation}
and the discretized version of the process is given by
\begin{equation}\label{DiscYZ_1}
Y_{N,i}^{(2)} =\frac{X_{t_{i}}^{(2)}-X_{t_{i-1}}^{(2)}}{\Delta_N^{1/2}}=\left(\bZ_{N,i}^{(2)}\right)^\top \btheta_*+\epsilon_{N,i}
+r_{N,i}^{(2)} , \qquad i\ge  K_*+1,
\end{equation}
where
\begin{equation}
\epsilon_{N,i}=\frac{\sigma}{\Delta_N^{1/2}}(B_{t_{i}}-B_{t_{i-1}}), \quad i \ge K_*+1,
\label{eq:eps_1}
\end{equation}
and
\begin{equation}\label{eq:rn2}
r_{N,i}^{(2)} =
 \frac{1}{\Delta_N^{1/2}} \int_{t_{i-1}}^{t_{i}} \left[\bmu^\top\left\{\bvarphi(s)-\bvarphi(t_{i-1})\right\}-a\left(X_s^{(2)}-X_{t_{i-1}}^{(2)}\right) \right]ds.
\end{equation}

\subsection{Asymptotic behaviour}

Now, set $\ds \bSigma_N= \frac{1}{T}{\bZ_N^{\top}\bZ_N} = \frac{1}{T}\sum_{i=1}^N \bZ_{N,i}\bZ_{N,i}^\top$.
Before starting the next result, we need the following proposition proven in Appendix \ref{app:pf0}.

\begin{proposition}\label{prop:CN}
 Suppose $\bvarphi$ is $1$-periodic and Riemann integrable, and define
 \begin{equation}\label{eq:CN}
C(\Delta_N) = \sup_{i\ge 1} \frac{1}{\Delta_N}\int_{t_{i-1}}^{t_{i}}\|\bvarphi(s)-\bvarphi(t_{i-1})\|ds.
 \end{equation}
 Then, $C(\Delta_N)\to 0$ as $N\to\infty$.
\end{proposition}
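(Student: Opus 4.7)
The plan is to dominate $C(\Delta_N)$ uniformly in $i$ by the modulus of continuity of $\bvarphi$ evaluated at scale $\Delta_N$, and then exploit uniform continuity of $\bvarphi$ on $\mathbb{R}_+$ to conclude.

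First I would introduce the modulus of continuity
$$\omega_{\bvarphi}(\delta)=\sup\bigl\{\|\bvarphi(u)-\bvarphi(v)\|:u,v\ge 0,\ |u-v|\le \delta\bigr\}.$$
For any $i\ge 1$ and any $s\in[t_{i-1},t_i]$ one has $|s-t_{i-1}|\le \Delta_N$, so $\|\bvarphi(s)-\bvarphi(t_{i-1})\|\le \omega_{\bvarphi}(\Delta_N)$. Integrating this pointwise bound over $[t_{i-1},t_i]$ and dividing by $\Delta_N$ gives
$$\frac{1}{\Delta_N}\int_{t_{i-1}}^{t_i}\|\bvarphi(s)-\bvarphi(t_{i-1})\|\,ds\le \omega_{\bvarphi}(\Delta_N),$$
and taking the supremum in $i$ yields $C(\Delta_N)\le \omega_{\bvarphi}(\Delta_N)$.

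It then remains to show $\omega_{\bvarphi}(\Delta_N)\to 0$. For continuous $\bvarphi$, compactness of $[0,1]$ gives uniform continuity on $[0,1]$, and $1$-periodicity lifts this to uniform continuity on all of $\mathbb{R}_+$; hence $\omega_{\bvarphi}(\delta)\downarrow 0$ as $\delta\downarrow 0$, and since $\Delta_N=T/N\to 0$ the claim follows.

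The main obstacle is a formal gap between the stated hypothesis "$\bvarphi$ is Riemann integrable" and the uniform continuity used in the last step: a Riemann integrable periodic function with a jump in the interior of some $[t_{i-1},t_i]$ produces an averaged oscillation bounded away from $0$ along appropriate subsequences of $N$, so a literal Riemann-integrable-only hypothesis is not sufficient. In the setting of the paper, however, the base functions of interest (e.g.\ $\bvarphi(t)=(1,\sqrt{2}\cos(2\pi t))^\top$) are smooth, so continuity is automatic and the modulus-of-continuity argument above goes through verbatim; strengthening \Cref{hyp:2} to require continuity of $\bvarphi$ would make the proof fully rigorous as stated.
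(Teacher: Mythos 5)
Your argument is correct under the continuity hypothesis you flag, and it takes a genuinely different route from the paper's. You dominate $C(\Delta_N)\le \omega_{\bvarphi}(\Delta_N)$ pointwise via the modulus of continuity and conclude from uniform continuity (compactness of $[0,1]$ lifted to $[0,\infty)$ by $1$-periodicity). The paper instead argues by contradiction: assuming $C(\Delta_N)>\epsilon_0>0$ along a subsequence, it uses periodicity and compactness to extract grid points $t_{k_N}\to t$, picks a point $t'$ with $|t-t'|<\delta/2$ and oscillation of $\bvarphi$ below $\epsilon_0/4$ on the ball of radius $\delta$ around $t'$, and bounds the averaged integrand by $2\sup_{s\in[t_{k_N},t_{k_N+1}]}\|\bvarphi(s)-\bvarphi(t')\|\le\epsilon_0/2$, a contradiction. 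Your route is shorter and, more usefully, quantitative: the bound $C(\Delta_N)\le\omega_{\bvarphi}(\Delta_N)$ immediately yields the rates that \Cref{hyp:phiextra} requires (e.g.\ $C(\Delta)=O(\Delta)$ when $\bvarphi$ is Lipschitz), whereas the paper's contradiction argument is purely qualitative and gives only the limit.

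Your reservation about the hypothesis ``Riemann integrable'' is also on target, and it is worth noting that the paper's own proof contains the same unpatched step: the existence of a point $t'$ satisfying $\|\bvarphi(s)-\bvarphi(t')\|<\epsilon_0/4$ for \emph{all} $|s-t'|<\delta$ demands small oscillation of $\bvarphi$ on an entire interval containing the limit point $t$; if $t$ is a jump point with jump of size at least $\epsilon_0/2$, no such $t'$ exists, since $|t-t'|<\delta/2$ forces $t\in(t'-\delta,t'+\delta)$ and hence the oscillation on that interval is at least the jump size. A mere continuity point near $t$ does not control $\bvarphi$ on the whole ball, which is exactly the distinction your counterexample exploits: for a $1$-periodic step function and generic $\Delta_N$ (irrational, so the fractional parts of the grid $t_{i-1}=(i-1)\Delta_N$ are dense mod $1$), the supremum over $i\ge 1$ selects intervals whose left endpoint sits just before the jump, and the averaged oscillation stays near the jump size, so $C(\Delta_N)\not\to 0$. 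Both your proof and the paper's therefore really use continuity of $\bvarphi$, and strengthening \Cref{hyp:2} as you propose is the correct fix; it is harmless in the paper's setting, where the base functions are trigonometric.
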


Before stating the next results, we need to had extra assumptions on $N$ and $T$.
\begin{assumption}\label{hyp:N}
Suppose that $N=\lfloor T^{\delta}\rfloor$, with  {$2<\delta <1/\gamma$}, and $0\leq\gamma<1/2$.
The partition $0=t_{0}<t_{1}<\dots<t_{N}=T$ on a given period $[0,T]$ has an equal increment $\Delta_{N}=T/N \to 0$, as $T\to\infty$.
\end{assumption}

The next result, proven in Appendix \ref{app:pf3}, relates $\bSigma_N$ to $\frac{1}{T}\bQ_{[0,T]}$. Let $C_3=2K_{\bvarphi}+C_1, C
_4=\frac{2}{3}K_{\bvarphi}\sigma+2\sqrt{C_2}\sigma$.

\begin{proposition}\label{prop:convdiscont}
Suppose that Assumptions \ref{hyp:1}--\ref{hyp:N} hold. Then,
\begin{equation*}
E\left[\left\|\bSigma_N-\frac{1}{T}\bQ_{[0,T]}\right\|\right]\leq C_3C(\Delta_N)+C_4\Delta_N^{1/2} +O(\Delta_N),
\end{equation*}
$$
E\left[\left\|\frac{1}{T^{1/2}}\sum_{i=1}^{N}\bZ_{N,i}\epsilon_{N,i} -\frac{\sigma}{T^{1/2}}\bR_{[0,T]}\right\|\right]
\leq \sigma \sqrt{2 K_\varphi C(\Delta_N)+  \frac{1}{2}\sigma^2 \Delta_N^{1/2}+o(\Delta_N)},
$$
and
\begin{equation}\label{eq:rN20}
E\left[\left\|\frac{1}{T^{1/2}}\sum_{i=1}^{N}\bZ_{N,i} r_{N,i}\right\|\right]
= T^{1/2}\left\{O\left(C(\Delta_N)\right)+O\left(\Delta_N^{1/2}\right)\right\} .
\end{equation}
\end{proposition}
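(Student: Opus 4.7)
My plan is to handle all three inequalities via the same template: express the quantity in question as a telescoping sum of integrals over the mesh cells $[t_{i-1},t_i]$, and then bound the integrand in norm by a combination of (i) the uniform modulus $C(\Delta_N)$ from \eqref{eq:CN} for pieces involving $\bvarphi(s)-\bvarphi(t_{i-1})$, and (ii) moments of $X_s-X_{t_{i-1}}$ extracted from the integral form of \eqref{eq2}. As a preliminary step I will first establish
\[
E\left[(X_s-X_{t_{i-1}})^2\right]\le \sigma^2(s-t_{i-1})+O\!\left((s-t_{i-1})^2\right),
\]
uniformly in $i$ and $s\in[t_{i-1},t_i]$, by inserting $X_s-X_{t_{i-1}}=\int_{t_{i-1}}^{s}[\bmu^\top\bvarphi(u)-aX_u]du+\sigma(B_s-B_{t_{i-1}})$, squaring, applying Cauchy--Schwarz to the drift integral with the uniform moment bounds $\sup_sE|X_s|\le C_1$ and $\sup_sE[X_s^2]\le C_2$ (available from the ergodic estimates behind \Cref{ConvQSigma}), and invoking the It\^{o} isometry on the diffusion term. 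Jensen's inequality will then provide the $L^1$ version $E|X_s-X_{t_{i-1}}|\le \sigma(s-t_{i-1})^{1/2}+o\!\left((s-t_{i-1})^{1/2}\right)$.

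For the first inequality, I split $\bSigma_N-T^{-1}\bQ_{[0,T]}$ into its three natural blocks, each of which is of the form $T^{-1}\sum_i\int_{t_{i-1}}^{t_i}\{f(t_{i-1})-f(s)\}ds$ for a suitable product $f$. Using the identity $AB-CD=(A-C)B+C(B-D)$ together with \eqref{kphi}, the $\bvarphi\bvarphi^{\top}$ block contributes at most $2K_{\bvarphi}C(\Delta_N)$; the cross $\bvarphi X$ block produces $C_1 C(\Delta_N)+K_{\bvarphi}E|X_s-X_{t_{i-1}}|$; and the lower-right $X^2$ block, after factoring $X_s^2-X_{t_{i-1}}^2=(X_s-X_{t_{i-1}})(X_s+X_{t_{i-1}})$ and applying Cauchy--Schwarz with $C_2$, gives $2\sqrt{C_2}\sigma\Delta_N^{1/2}+O(\Delta_N)$. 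Summing the three pieces is exactly $C_3 C(\Delta_N)+C_4\Delta_N^{1/2}+O(\Delta_N)$.

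For the second inequality, the crucial observation is that
\[
\sum_{i=1}^N\bZ_{N,i}\epsilon_{N,i}-\sigma\bR_{[0,T]}=\sigma\sum_{i=1}^N\int_{t_{i-1}}^{t_i}\begin{pmatrix}\bvarphi(t_{i-1})-\bvarphi(s)\\-(X_{t_{i-1}}-X_s)\end{pmatrix}dB_s,
\]
so the left-hand side is itself an It\^{o} stochastic integral with predictable integrand. Applying the It\^{o} isometry componentwise bounds its expected squared Euclidean norm by $\sigma^2\sum_i\int_{t_{i-1}}^{t_i}\|\bvarphi(t_{i-1})-\bvarphi(s)\|^2\,ds+\sigma^2\sum_i\int_{t_{i-1}}^{t_i}E[(X_{t_{i-1}}-X_s)^2]\,ds$. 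The first sum is at most $2\sigma^2 K_{\bvarphi}T\, C(\Delta_N)$ by the pointwise estimate $\|\bvarphi(t_{i-1})-\bvarphi(s)\|^2\le 2K_{\bvarphi}\|\bvarphi(t_{i-1})-\bvarphi(s)\|$ and \eqref{eq:CN}; the second is controlled directly by the preliminary moment bound. Dividing by $T$ and applying Jensen's inequality $E\|V\|\le(E\|V\|^2)^{1/2}$ to the resulting vector produces the stated bound.

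For the third inequality, the explicit form \eqref{eq:rn} gives $|r_{N,i}|\le\Delta_N^{-1/2}\bigl\{\|\bmu\|\int_{t_{i-1}}^{t_i}\|\bvarphi(s)-\bvarphi(t_{i-1})\|ds+a\int_{t_{i-1}}^{t_i}|X_s-X_{t_{i-1}}|ds\bigr\}$, and $\|\bZ_{N,i}\|\le\Delta_N^{1/2}(K_{\bvarphi}+|X_{t_{i-1}}|)$. Taking expectation of $\|\bZ_{N,i}r_{N,i}\|=\|\bZ_{N,i}\|\,|r_{N,i}|$ and applying Cauchy--Schwarz with $C_1,C_2$ together with the $L^1$ preliminary estimate, each cell contributes $O(\Delta_N C(\Delta_N))+O(\Delta_N^{3/2})$; summing the $N=T/\Delta_N$ cells and dividing by $T^{1/2}$ produces exactly the rate in \eqref{eq:rN20}. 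The main technical obstacle across the proof is establishing the preliminary moment inequality for $X_s-X_{t_{i-1}}$ at the sharp rate and with uniformity in $i,s$; once that is in hand, everything downstream is bookkeeping with Cauchy--Schwarz, the It\^{o} isometry, and \eqref{eq:CN}.
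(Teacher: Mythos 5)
Your proposal is correct, and for the first and third bounds it follows essentially the same template as the paper: split $\bSigma_N-\frac{1}{T}\bQ_{[0,T]}$ into its three blocks, control the $\bvarphi\bvarphi^\top$ block by $2K_{\bvarphi}C(\Delta_N)$, the cross block by $C_1C(\Delta_N)+\frac{2}{3}K_{\bvarphi}\sigma\Delta_N^{1/2}$, and the $X^2$ block via the factorization $X_s^2-X_{t_{i-1}}^2=(X_s-X_{t_{i-1}})(X_s+X_{t_{i-1}})$ and Cauchy--Schwarz; likewise your cell-by-cell bound $\|\bZ_{N,i}\|\,|r_{N,i}|\le(K_{\bvarphi}+|X_{t_{i-1}}|)\bigl\{\|\bmu\|\int_{t_{i-1}}^{t_i}\|\bvarphi(s)-\bvarphi(t_{i-1})\|ds+a\int_{t_{i-1}}^{t_i}|X_s-X_{t_{i-1}}|ds\bigr\}$ reproduces the paper's derivation of \eqref{eq:rN20} almost verbatim. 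You diverge from the paper in two places, both defensibly. First, your preliminary moment estimate is obtained from the integral form of the SDE (Cauchy--Schwarz on the drift, It\^o isometry on the noise), whereas the paper's Proposition \ref{prop:XT} works from the explicit solution $X_t=e^{-at}X_0+h(t)+z(t)$ and bounds the three pieces separately; your route is more elementary and does not use the closed-form solution, at the cost of a small overstatement: plain Cauchy--Schwarz on the drift--noise cross term only yields a remainder $O\bigl((s-t_{i-1})^{3/2}\bigr)$, not $O\bigl((s-t_{i-1})^2\bigr)$ as you claim (the sharper rate needs conditioning, e.g.\ computing $E[X_u(B_u-B_{t_{i-1}})]$ explicitly). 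This is harmless, since the paper itself only establishes and only uses $E[(X_s-X_{t_{i-1}})^2]=\sigma^2(s-t_{i-1})+O(\Delta_N^{3/2})$, and all downstream bookkeeping survives with that rate. Second, and more substantively, for the martingale bound you keep $\sum_{i=1}^N\int_{t_{i-1}}^{t_i}\bigl((\bvarphi(t_{i-1})-\bvarphi(s))^\top,-(X_{t_{i-1}}-X_s)\bigr)^\top dB_s$ as a single stochastic integral and apply the It\^o isometry globally before Jensen; the paper's displayed proof instead pushes the norm inside the sum over cells and isometrizes each cell separately, which discards the martingale orthogonality across cells --- summing $N$ per-cell $L^1$ bounds of size $O\bigl(\Delta_N^{1/2}\{C(\Delta_N)+\Delta_N\}^{1/2}\bigr)$ and dividing by $T^{1/2}$ would leave a spurious factor of order $N^{1/2}$, so the stated conclusion \eqref{ineq4} really requires your global-isometry argument. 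Your version is the clean one here: with $\|\bvarphi(t_{i-1})-\bvarphi(s)\|^2\le 2K_{\bvarphi}\|\bvarphi(t_{i-1})-\bvarphi(s)\|$ and \eqref{eq:CN} the $\bvarphi$-part of the quadratic variation is at most $2K_{\bvarphi}TC(\Delta_N)$, and the $X$-part is $\frac{\sigma^2}{2}T\Delta_N+T\,O(\Delta_N^{3/2})$, which after dividing by $T$ is even slightly sharper than the $\frac{1}{2}\sigma^2\Delta_N^{1/2}$ appearing in the statement. In short: correct throughout, same skeleton as the paper for the deterministic and remainder terms, and a genuinely better-justified treatment of the martingale term.
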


As a by-product of \eqref{eq:thetaN}, \Cref{prop:convdiscont}, and \Cref{lem:mle}, we have the following corollary, if one assumes a little smoothness of $\bvarphi$.
\begin{assumption}\label{hyp:phiextra}
$C(\Delta)=O\left(\Delta^{1/2}\right)$.
\end{assumption}
Note that if $\bvarphi$ is Lipschitz, then $C(\Delta)=O(\Delta)$, so \Cref{hyp:phiextra} is trivially satisfied in this case.
\begin{corollary}\label{cor:est}
Under Assumptions \ref{hyp:1}--\ref{hyp:phiextra},
    $T^{1/2}\left(\check\btheta_T-\hat\btheta_N\right) \stackrel{\rm{P}}{\longrightarrow}0$, as $T\to\infty$.
\end{corollary}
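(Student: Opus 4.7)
The plan is to substitute the explicit representations $\check\btheta_T - \btheta = \sigma \bQ_{[0,T]}^{-1} \bR_{[0,T]}$ from \Cref{MLE_0206} and $\hat\btheta_N - \btheta = T^{-1}\bSigma_N^{-1} \bZ_N^\top(\bepsilon_N + \br_N)$ from \eqref{eq:thetaN} into the difference. After multiplying by $T^{1/2}$, one has
\[
T^{1/2}(\check\btheta_T - \hat\btheta_N) = (T\bQ_{[0,T]}^{-1})\,\sigma T^{-1/2}\bR_{[0,T]} - \bSigma_N^{-1}\, T^{-1/2}\bZ_N^\top \bepsilon_N - \bSigma_N^{-1}\, T^{-1/2}\bZ_N^\top \br_N.
\]
I would then split this expression into three pieces by adding and subtracting $\bSigma_N^{-1}\,\sigma T^{-1/2}\bR_{[0,T]}$, obtaining
\[
\underbrace{(T\bQ_{[0,T]}^{-1} - \bSigma_N^{-1})\,\sigma T^{-1/2}\bR_{[0,T]}}_{A_1} + \underbrace{\bSigma_N^{-1}\bigl(\sigma T^{-1/2}\bR_{[0,T]} - T^{-1/2}\bZ_N^\top \bepsilon_N\bigr)}_{A_2} - \underbrace{\bSigma_N^{-1}\, T^{-1/2}\bZ_N^\top \br_N}_{A_3},
\]
and show $A_k = o_{\rm{P}}(1)$ for each $k$.

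A preliminary step is to verify that $\bSigma_N^{-1} = O_{\rm{P}}(1)$. The first bound of \Cref{prop:convdiscont}, combined with \Cref{ConvQSigma}, yields $\bSigma_N \xrightarrow{\rm P} \bSigma$, and $\bSigma$ is invertible because its Schur complement satisfies $\omega - \bLambda^\top \bLambda \geq \sigma^2/(2a) > 0$ (by orthogonality of $\bvarphi$ together with the projection bound $\|\bLambda\|^2 \leq \int_0^1 \widetilde h^2$). Continuity of inversion then gives $\bSigma_N^{-1} \xrightarrow{\rm P} \bSigma^{-1}$; in particular $\bSigma_N^{-1}$ is stochastically bounded. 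From \Cref{lem:mle} I also have $\|T^{-1/2}\bR_{[0,T]}\| = O_{\rm{P}}(1)$.

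With these facts in hand, the three pieces are treated as follows. For $A_3$, the third estimate of \Cref{prop:convdiscont} gives $E[\|T^{-1/2}\bZ_N^\top \br_N\|] = T^{1/2}\{O(C(\Delta_N)) + O(\Delta_N^{1/2})\}$; under \Cref{hyp:phiextra} this reduces to $O(T^{1/2}\Delta_N^{1/2}) = O(T^{1 - \delta/2})$, which tends to $0$ precisely because $\delta > 2$ in \Cref{hyp:N}. For $A_2$, the second estimate of \Cref{prop:convdiscont} gives $\|\sigma T^{-1/2}\bR_{[0,T]} - T^{-1/2}\bZ_N^\top \bepsilon_N\| = O_{\rm{P}}\bigl(\sqrt{C(\Delta_N) + \Delta_N^{1/2}}\bigr) = o_{\rm{P}}(1)$, so $A_2 = o_{\rm{P}}(1)$. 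For $A_1$, I would apply the matrix identity $T\bQ_{[0,T]}^{-1} - \bSigma_N^{-1} = (T\bQ_{[0,T]}^{-1})(\bSigma_N - T^{-1}\bQ_{[0,T]})\bSigma_N^{-1}$: the middle factor is $o_{\rm{P}}(1)$ by the first estimate of \Cref{prop:convdiscont} under \Cref{hyp:phiextra}, the outer factors are $O_{\rm{P}}(1)$, and multiplication by $\sigma T^{-1/2}\bR_{[0,T]} = O_{\rm{P}}(1)$ then yields $A_1 = o_{\rm{P}}(1)$.

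The main obstacle is the rate check for $A_3$: the discretization bias is amplified by a factor of $T^{1/2}$, so the bound from \Cref{prop:convdiscont} is only useful when $T\Delta_N \to 0$, which is exactly the content of $\delta > 2$ in \Cref{hyp:N} under \Cref{hyp:phiextra}. Without the extra smoothness of \Cref{hyp:phiextra}, $C(\Delta_N)$ alone could vanish arbitrarily slowly and a still stricter lower bound on $\delta$ would be needed.
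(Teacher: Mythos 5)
Your proposal is correct and follows essentially the same route as the paper, which proves the corollary in one line by citing \eqref{eq:thetaN}, Proposition \ref{prop:convdiscont}, and Lemma \ref{lem:mle}, with the only delicate step being exactly your $A_3$ rate check: the right-hand side of \eqref{eq:rN20} is $O\left(T^{1/2}\Delta_N^{1/2}\right)=O\left(T^{-(\delta-2)/2}\right)=o(1)$ since $\delta>2$ under Assumptions \ref{hyp:N}--\ref{hyp:phiextra}. Your decomposition into $A_1,A_2,A_3$, the matrix-inverse identity, and the Schur-complement verification that $\bSigma$ is invertible merely make explicit details the paper leaves implicit, and all of them are sound.
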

This result follows from the fact that under Assumptions \ref{hyp:N}--\ref{hyp:phiextra}, the right-hand side of \eqref{eq:rN20} is $O\left(T^{-(\delta-2)/2}\right)=o(1)$, as $T\to\infty$.

Finally, we end this section with a goodness-of-fit for GOU processes.

\subsection{Goodness-of-fit test for a GOU process}\label{ssec:gof}

The goodness-of-fit test is based on the residuals defined by
\begin{equation}\label{eq:res1}
\hat{\epsilon}_{N,i}=Y_{N,i}-\bZ_{N,i}^\top\hat\btheta_N = \epsilon_{N,i}  +r_{N,i}  - \bZ_{N,i}^\top \left(\hat\btheta_N-\btheta\right),\qquad i\ge 1.
\end{equation}
For any $y\in\dR$, set $\ds \hat F_N(y) = \frac{1}{N}\sum_{i=1}^N \I\left( \hat \epsilon_{N,i}\le y\right)$, and $F(y) = \Phi(y/\sigma)$, where $\Phi$ is the cdf of the standard Gaussian, with density $\phi$.  Further, set $\dF_N(y)=N^{1/2}\left\{\hat F_N(y)-F(y)\right\}$, $y\in \dR$, and let  $ \cC_1 = \left(1,0,\ldots,0,-\mu_1/a\right)^\top$.
\begin{theorem}\label{thm:FN}
$\dF_N$ converges in $C[-\infty,+\infty]$ to $\dB\circ F+ F' \bTheta^\top \cC_1$, where $\dB$ is a Brownian bridge and $\bTheta_N=T^{1/2}\left(\btheta_N-\btheta_0\right)$ converges to $\bTheta$.
\end{theorem}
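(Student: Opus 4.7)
The plan is to decompose $\dF_N$ into the empirical process of the ``true'' innovations, a deterministic mean correction picking up the parameter-estimation error, and an oscillatory remainder controlled by stochastic equicontinuity. From \eqref{eq:res1}, set $\delta_{N,i}:=\bZ_{N,i}^\top(\hat\btheta_N-\btheta_0)-r_{N,i}$, so that $\hat\epsilon_{N,i}=\epsilon_{N,i}-\delta_{N,i}$, and let $\tilde F_N(y)=\tfrac{1}{N}\sum_i\I(\epsilon_{N,i}\le y)$ and $\bar F_N(y)=\tfrac{1}{N}\sum_i F(y+\delta_{N,i})$. Then
\[
\dF_N=\underbrace{N^{1/2}(\tilde F_N-F)}_{(\mathrm{I})}+\underbrace{N^{1/2}(\bar F_N-F)}_{(\mathrm{II})}+\underbrace{N^{1/2}(\hat F_N-\tilde F_N-\bar F_N+F)}_{(\mathrm{III})}.
\]
Because $\epsilon_{N,i}=\sigma\Delta_N^{-1/2}(B_{t_i}-B_{t_{i-1}})$ are i.i.d.\ $\mathcal{N}(0,\sigma^2)$, Donsker's theorem delivers $(\mathrm{I})\Rightarrow\dB\circ F$ in $C[-\infty,+\infty]$.

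For (II), Taylor-expanding $F$ with uniformly bounded $F''$ gives $N^{1/2}\{\bar F_N(y)-F(y)\}=F'(y)\,N^{-1/2}\sum_i\delta_{N,i}+o_P(1)$, uniformly in $y$. A Riemann-sum approximation combined with the ergodic limit $T^{-1}\int_0^TX_s\,ds\to\mu_1/a$ (underlying \Cref{rem:Sigma}) yields $\tfrac{1}{N}\sum_i\bZ_{N,i}=\Delta_N^{1/2}\cC_1+o_P(\Delta_N^{1/2})$, hence $N^{-1/2}\sum_i\bZ_{N,i}^\top=T^{1/2}\cC_1^\top+o_P(T^{1/2})$. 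Together with $T^{1/2}(\hat\btheta_N-\btheta_0)=\bTheta_N\xrightarrow{D}\bTheta$ from \Cref{cor:est} and \Cref{lem:mle}, this produces $N^{-1/2}\sum_i\bZ_{N,i}^\top(\hat\btheta_N-\btheta_0)\to\bTheta^\top\cC_1$. The $r_{N,i}$-contribution is negligible: bounding $|r_{N,i}|\le\|\bmu\|\Delta_N^{1/2}C(\Delta_N)+a\Delta_N^{-1/2}\int_{t_{i-1}}^{t_i}|X_s-X_{t_{i-1}}|\,ds$ and applying Assumptions \ref{hyp:N} ($\delta>2$) and \ref{hyp:phiextra} yields $N^{-1/2}\sum_i|r_{N,i}|=o_P(1)$.

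The main obstacle is the uniform control of (III): one must show
\[
\sup_{y\in\dR}\left|N^{-1/2}\sum_i\bigl\{\I(\epsilon_{N,i}\le y+\delta_{N,i})-\I(\epsilon_{N,i}\le y)-F(y+\delta_{N,i})+F(y)\bigr\}\right|=o_P(1).
\]
The class $\{\I(\cdot\le y+c):(y,c)\in\dR^2\}$ is VC, and hence Donsker, so standard bracketing/chaining yields stochastic equicontinuity once $\max_i|\delta_{N,i}|\to0$ in probability. The latter follows from $\|\bZ_{N,i}\|\le\Delta_N^{1/2}(K_\bvarphi+|X_{t_{i-1}}|)$, the $L^d$ bound on $X_t$ from Assumption \ref{hyp:1} giving $\max_i|X_{t_{i-1}}|=o_P(T^{1/2})$, and $\hat\btheta_N-\btheta_0=O_P(T^{-1/2})$; a fine discretization in $y$ combined with monotonicity of indicators lifts the pointwise bound to the supremum. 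This step, involving an empirical process with randomly perturbed arguments of an indicator, is the only delicate piece of the argument.

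Finally, assembling (I)--(III) and using the joint convergence $(N^{1/2}(\tilde F_N-F),\bTheta_N)\Rightarrow(\dB\circ F,\bTheta)$, which is inherited from the common driving Brownian motion (since $\bTheta_N\approx\sigma\bSigma^{-1}\cdot T^{-1/2}\bZ_N^\top(\bepsilon_N/\sigma)$ is itself a linear functional of the $\epsilon_{N,i}$ by \Cref{lem:mle}), yields the claimed limit $\dF_N\Rightarrow\dB\circ F+F'\bTheta^\top\cC_1$ in $C[-\infty,+\infty]$.
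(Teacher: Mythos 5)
Your decomposition into (I) the innovation empirical process, (II) the drift term, and (III) an equicontinuity remainder is, at the structural level, exactly what underlies the paper's proof; the difference is that the paper does not prove (III) at all, but delegates it (together with the joint convergence of the empirical process and $\bTheta_N$) to the general residual-empirical-process results of Ghoudi and R\'emillard (1998) and Nasri and R\'emillard (2019), and only verifies their hypotheses: $N^{-1/2}\sum_i r_{N,i}=o_{\rm P}(1)$ via \eqref{eq:rN20}, and the drift identification $\frac{1}{N^{1/2}T^{1/2}}\sum_i \bZ_{N,i}\stackrel{\rm P}{\to}\cC_1$ (first column of $\bSigma_N$, Remark \ref{rem:Sigma}), giving $\frac{1}{N^{1/2}}\sum_i\{F(y+r_{N,i}+\bc^\top\bZ_{N,i}/T^{1/2})-F(y)\}\to F'(y)\bc^\top\cC_1$. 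Your steps (I) and (II), including the negligibility computation for the $r_{N,i}$ under Assumptions \ref{hyp:N} and \ref{hyp:phiextra}, are correct and coincide with this verification, and your closing remark on joint convergence via the common innovations is the right idea (in the paper it is covered by the same framework, using the martingale CLT since $\bZ_{N,i}$ is predictable rather than deterministic).

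The genuine gap is in (III), which is precisely the part you chose to prove directly. The VC property of $\{\I(\cdot\le y+c):(y,c)\in\dR^2\}$ yields Donsker-type equicontinuity for \emph{i.i.d.}\ observations with the shift as a fixed index of the class; here $\delta_{N,i}$ is heterogeneous in $i$, depends on the \emph{entire sample} through $\hat\btheta_N$ (so the summands $\I(\epsilon_{N,i}\le y+\delta_{N,i})-F(y+\delta_{N,i})$ are not centered, conditionally or otherwise), and is built from $X_{t_{i-1}}$, which is unbounded and serially dependent. Hence ``standard bracketing/chaining once $\max_i|\delta_{N,i}|\to0$'' does not apply as stated, and your pointwise-plus-monotonicity patch does not rescue it: even for fixed $y$, the sum with sample-dependent shifts is not an evaluation of a centered empirical process, so there is no pointwise bound to lift. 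The standard repair --- and the actual content of the results the paper cites --- is to parametrize the estimation error: for deterministic $\bc$ in a compact set, consider the two-parameter process $(y,\bc)\mapsto N^{-1/2}\sum_i\{\I(\epsilon_{N,i}\le y+\bc^\top\bZ_{N,i}/T^{1/2}+r_{N,i})-F(y+\bc^\top\bZ_{N,i}/T^{1/2}+r_{N,i})\}$; since $\epsilon_{N,i}$ is independent of $\mathcal{F}_{t_{i-1}}$ and $\bZ_{N,i}$ is predictable, the summands are (up to the non-predictable but asymptotically negligible $r_{N,i}$, which needs separate care) martingale differences after conditional centering, and one establishes equicontinuity jointly in $(y,\bc)$ by martingale or bracketing techniques for triangular arrays, then substitutes $\bc=T^{1/2}(\hat\btheta_N-\btheta_0)=O_{\rm P}(1)$. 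A secondary flaw: $\max_i|X_{t_{i-1}}|=o_{\rm P}(T^{1/2})$ does not follow from the $L^d$, $d\ge2$, bound of Assumption \ref{hyp:1} by a union bound (with $N=\lfloor T^\delta\rfloor$, $\delta>2$, that route requires $d>2\delta$ moments); instead one should invoke the explicit solution $X_t=e^{-at}X_0+h(t)+z(t)$, where $z(t)$ is Gaussian with variance bounded by $\sigma^2/(2a)$, giving $\max_i|X_{t_{i-1}}|=O_{\rm P}(\sqrt{\log N})$, which suffices.
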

\begin{proof}
It follows from \cite{Ghoudi/Remillard:1998} or \cite{Nasri/Remillard:2019},
Assumptions \ref{hyp:1}--\ref{hyp:phiextra}, \eqref{eq:res1}, and \eqref{eq:rN20}, that
$\ds \frac{1}{N^{1/2}}\sum_{i=1}^N r_{N,i} =o_{\rm{P}}(1)$, while $\ds \frac{1}{N^{1/2}T^{1/2}}\sum_{i=1}^N \bZ_{N,i} $  is the first column of the matrix $\bSigma_N$, by Remark \ref{rem:Sigma}. As a result, $\ds \frac{1}{N^{1/2}T^{1/2}}\sum_{i=1}^N \bZ_{N,i}\stackrel{\rm{P}}{\to} \cC_1$. It follows that  for any $\bc\in \dR^{p+1}$,
$$
\frac{1}{N^{1/2}}\sum_{i=1}^N \left\{ F\left(y+ r_{N,i}+ \bc^\top \bZ_{N,i}/T^{1/2} \right) -F(y)\right\} \stackrel{\rm{P}} {\longrightarrow}F'(y)\bc ^\top \cC_1 = \frac{1}{\sigma}\phi\left(\frac{y}{\sigma}\right)\bc^\top \cC_1.
$$
\end{proof}
Next, set $\dD_N(u) = N^{1/2}\{D_N(u)-u\}$, where
$\ds D_N(u) = \frac{1}{N}\sum_{i=1}^N \I\left\{\Phi\left(\hat e_{N,i}/\hat\sigma_N\right) \le u\right\}$, $ u\in [0,1]$, and
$\ds \hat{\sigma}^2_N= \frac{1}{N}\sum_{i=1}^N Y_{N,i}^2$.
\begin{corollary}\label{cor:DN}
$\dD_n$ converges in $C[0,1]$ to $\dD$, where
\begin{equation}\label{eq:DLIM}
     \dD(u) = \beta(u) + \phi\circ \Phi^{-1}(u)\left\{\dM + \dS \Phi^{-1}(u)\right\},
 \end{equation}
  $\dM_N = N^{-1/2}\sum_{i=1}^N \tilde\epsilon_{N,i}$ converges in law to $\dM\sim N(0,1)$,  and
 $\dS_N = N^{1/2}\left(\frac{\hat\sigma_N}{\sigma}-1\right)$ converges in law to $\dS\sim N(0,1/2)$, independent of $\dM$. In addition, $E\{\dM \beta(u)\}= -\phi\circ \Phi^{-1}(u)$, and $E\{\beta(u)\dS\} = -\frac{1}{2}H(u)$, with $H(u) = \Phi^{-1}(u) \phi\circ \Phi^{-1}(u)$.
\end{corollary}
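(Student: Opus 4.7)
The plan is to rewrite $\dD_N$ as a perturbation of $\dF_N$ evaluated at a random argument, read the leading empirical part off from \Cref{thm:FN}, and identify the remaining components by elementary manipulations of iid Gaussians. Set $q_u=\Phi^{-1}(u)$ and $y_u=\sigma q_u$, so $F(y_u)=u$. Since $\Phi(\hat\epsilon_{N,i}/\hat\sigma_N)\le u$ iff $\hat\epsilon_{N,i}\le \hat\sigma_N q_u$, we have $D_N(u)=\hat F_N(\hat\sigma_N q_u)$, and writing $\hat\sigma_N=\sigma(1+N^{-1/2}\dS_N)$ gives
\begin{equation*}
\dD_N(u)=\dF_N\bigl(y_u+N^{-1/2}\sigma q_u\dS_N\bigr)+N^{1/2}\bigl\{F(y_u+N^{-1/2}\sigma q_u\dS_N)-F(y_u)\bigr\}.
\end{equation*}
A one-term Taylor expansion combined with $F'(y_u)=\phi(q_u)/\sigma$ shows that the deterministic second summand converges uniformly in $u$ to $\phi(q_u)q_u\,\dS$, provided $\dS_N=O_{\rm{P}}(1)$.

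Next I would establish joint weak convergence of $(\dF_N,\bTheta_N,\dM_N,\dS_N)$: the $\dF_N$ piece is \Cref{thm:FN}, and the three finite-dimensional limits come from the multivariate CLT for linear statistics in the iid Gaussian increments $\tilde\epsilon_{N,i}$. Combining stochastic equicontinuity of the Gaussian limit of $\dF_N$ with $\dS_N=O_{\rm{P}}(1)$, the first term in the display equals $\dF_N(y_u)+o_{\rm{P}}(1)$ uniformly in $u$, and by \Cref{thm:FN}, $\dF_N(y_u)\Rightarrow\beta(u)+\sigma^{-1}\phi(q_u)\,\bTheta^\top\cC_1$ with $\beta=\dB$ a Brownian bridge. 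To identify $\bTheta^\top\cC_1$, I use \Cref{prop:convdiscont} and \Cref{cor:est} to write $\bTheta=\sigma\bSigma^{-1}\lim T^{-1/2}\bR_{[0,T]}$; symmetry of $\bSigma$ and the algebraic identity $\bSigma^{-1}\cC_1=e_1$ (from \Cref{rem:Sigma}) then yield $\bTheta^\top\cC_1=\sigma\,e_1^\top\lim T^{-1/2}\bR_{[0,T]}=\sigma\dM$, since $\varphi_1\equiv 1$ makes the first coordinate of $T^{-1/2}\bR_{[0,T]}$ equal to $T^{-1/2}B_T$, which coincides with $\dM_N=N^{-1/2}\sum\tilde\epsilon_{N,i}$ exactly. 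For $\dS_N$, expanding $Y_{N,i}^2=\epsilon_{N,i}^2+O_{\rm{P}}(\Delta_N^{1/2})$ uniformly in $i$ (using $\bZ_{N,i}^\top\btheta=O(\Delta_N^{1/2})$) gives $\hat\sigma_N^2-\sigma^2=N^{-1}\sum_i(\epsilon_{N,i}^2-\sigma^2)+o_{\rm{P}}(N^{-1/2})$ under \Cref{hyp:N}, so the CLT and the delta method yield $\dS\sim\mathcal{N}(0,1/2)$.

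Independence $\dM\perp\dS$ follows from $\mathrm{Cov}(\tilde\epsilon,\tilde\epsilon^2)=E[\tilde\epsilon^3]=0$ combined with joint Gaussianity of the limit. The covariance identities reduce, after independent cross-indices drop out, to single-variable Gaussian moments: $E[\dM\,\beta(u)]=E[\tilde\epsilon\,\I(\tilde\epsilon\le q_u)]=\int_{-\infty}^{q_u}x\phi(x)\,dx=-\phi(q_u)$, and using $\int_{-\infty}^{q_u}x^2\phi(x)\,dx=-q_u\phi(q_u)+\Phi(q_u)$ (integration by parts), $E[\beta(u)\,\dS]=\tfrac{1}{2}E[(\tilde\epsilon^2-1)\I(\tilde\epsilon\le q_u)]=-\tfrac{1}{2}q_u\phi(q_u)=-\tfrac{1}{2}H(u)$. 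The main technical obstacle is justifying the uniform substitution of the random argument $y_u+N^{-1/2}\sigma q_u\dS_N$ into $\dF_N$, which requires stochastic equicontinuity in $C[-\infty,\infty]$ of the kind already underpinning \Cref{thm:FN}, together with careful handling of the boundary behaviour as $u\to 0,1$ where $q_u$ is unbounded but $\phi(q_u)\to 0$.
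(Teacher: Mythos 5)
Your proposal is correct and follows essentially the same route as the paper's proof: deduce the limit from \Cref{thm:FN} (via the identity $D_N(u)=\hat F_N\left(\hat\sigma_N\Phi^{-1}(u)\right)$, which the paper leaves implicit), identify $\dM$ through $\bSigma^{-1}\cC_1=\mathbf{e}_1$ and $\varphi_1\equiv 1$, and obtain $\dS$ by expanding $N^{1/2}\left(\hat\sigma_N^2/\sigma^2-1\right)$ into $N^{-1/2}\sum_{i=1}^N\left(\tilde\epsilon_{N,i}^2-1\right)+o_{\rm P}(1)$ using $\delta>2$. The only additions are welcome ones — you make the random-argument substitution and its uniformity explicit, and you derive the covariance identities $E\{\dM\beta(u)\}=-\phi\circ\Phi^{-1}(u)$ and $E\{\beta(u)\dS\}=-\tfrac{1}{2}H(u)$ by direct Gaussian moment calculations, which the paper merely states.
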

\begin{remark}
The limiting process $\dD$ is the same as the one used to test for Gaussianity with unknown mean and variance \citep{Ghoudi/Remillard:2015}.
In particular, the law of $\dD$ does not depend on any parameters.
\end{remark}
\begin{proof}
It follows from Theorem \ref{thm:FN} that $\dD_n$ converges in $C[0,1]$ to $\dD$, where
$\dD(u) = \dB(u) + \phi\circ \Phi^{-1}(u) \cC_1^\top \bTheta/\sigma + \dS \Phi^{-1}(u) \phi\circ \Phi^{-1}(u) $, and
 $\dS_N = N^{1/2}\left(\frac{\hat\sigma_N}{\sigma}-1\right)$ converges in law to $\dS$.
Now, it follows from \eqref{eq:thetaN}, Proposition \ref{prop:convdiscont} and Assumption \ref{hyp:phiextra} that $ \bTheta/\sigma $ is the limit in law
$\ds \bSigma^{-1} \frac{1}{T^{1/2}}\sum_{i=1}^N \bZ_{N,i}\tilde\epsilon_{N,i}$.
 Since $\bSigma^{-1}\cC_1 = (1,0,\ldots,0)^\top$, it follows that $\dM = \bTheta^\top\cC_1 \sigma$  is the limit in law of $\dM_N = N^{-1/2}\sum_{i=1}^N \tilde\epsilon_{N,i}$, which converges to a standard Gaussian.
Next, under Assumptions \ref{hyp:1}--\ref{hyp:phiextra}, and using Proposition \ref{prop:convdiscont}, one gets
that $\frac{1}{N^{1/2}}\sum_{i=1}^N r_{N,i}^2 = o_P(1)$, so
\begin{eqnarray*}
N^{1/2}\left(\frac{\hat\sigma_N^2}{\sigma^2}-1\right)&=&
\frac{1}{N^{1/2}}\sum_{i=1}^N \left(\frac{\epsilon_{N,i}^2}{\sigma^2}-1\right)
+ \frac{1}{\sigma^2 N^{1/2}}\sum_{i=1}^N r_{N,i}^2 + \frac{T}{\sigma N^{1/2}}\btheta^\top \bSigma_N\btheta \\
&&\quad +2\btheta^\top \frac{1}{\sigma^2 N^{1/2}} \sum_{i=1}^N \left\{\bZ_{N,i}(\epsilon_{N,i}+r_{N,i})+ \epsilon_{N,i} r_{N,i} \right\}\\
&=& \frac{1}{N^{1/2}}\sum_{i=1}^N \left(\tilde \epsilon_{N,i}^2 -1\right)+o_{\rm{P}}(1).
\end{eqnarray*}
As a result, $\dS_N = \left(\frac{\hat\sigma_N^2}{\sigma^2}-1\right)\Big{/}\left(1+\frac{\hat\sigma_N}{\sigma}\right) = \frac{1}{2} \frac{1}{N^{1/2}}\sum_{i=1}^N \left(\tilde \epsilon_{N,i}^2 -1\right)+o_P(1) $ converges in law to
$\dS \sim N(0,1/2)$, with $E\{\beta(u)\dS\} = -\frac{1}{2}H(u)$, with $H(u) = \Phi^{-1}(u) \phi\circ \Phi^{-1}(u)$.
\end{proof}
Since the law of $\dD$ does not depend on any parameters, any continuous functional of $\dD_N$ like Kolmogorov-Smirnov or Cram\'er-von Mises can be easily tabulated. First, note that if $u_{N:1}< \cdots < u_{N:N}$ are the order statistics of $u_{N,i} = \Phi\left(\frac{\hat e_{N,i}}{\hat\sigma_N}\right)$, $i\in \{1,\ldots, N\}$, it follows that the Kolmogorov-Smirnov statistic is
$$
\dT_N = \sup_{u\in [0,1]}|\dD_N(u)| = N^{1/2} \max_{1\le i\le N}\max(|u_{N:i}-i/N|,|u_{N:i}-(i-1)/N|,
$$
while the Cram\'er-von Mises statistic is
$$
\dV_N = \int_0^1 \{\dD_N(u)\}^2 du = \frac{1}{12N}+\sum_{i=1}^N \left\{u_{N:i} - \frac{(i-1/2)}{N}\right\}^2.
$$
To estimate the quantiles of $\dT_N$ and $\dV_N$, one can proceed the following way: For $k\in\{1,\ldots, B\}$, generate $\varepsilon_i\sim N(0,1)$, $i\in \{1,\ldots,N\}$,
$u_{N,i} = \Phi\left(\frac{\varepsilon_i-m_N}{s_N}\right)$, where $m_N = \frac{1}{N}\sum_{i=1}^N \varepsilon_i$, and $s_N^2 = \frac{1}{N-1}\sum_{i=1}^N \varepsilon_i^2$. Then, compute $\dT_N^{(k)}$ and $\dV_N^{(k)}$, $k\in \{1,\ldots,B\}$ and calculate the quantiles. Examples of these calculations are given in \Cref{tab:gof}. Note that the limiting distribution of $\dT$ of $\dT_N$ is the Lilliefors statistic \citep{Lilliefors:1967}.
\begin{table}[ht!]
\caption{Estimated quantiles of size $\alpha\in\{90\%,95\%,97.5\%,99\%\}$ for Kolmogorov-Smirnov (ks) and Cram\'er-von Mises (cvm) based on $B=500,000$ replications for sample size $n\in\{100,250,500,750,1000\}$. }\label{tab:gof}
\centering
\begin{tabular}{ccccccccccc}
\hline
&        \multicolumn{10}{c}{$n$} \\
$a$ & \multicolumn{2}{c}{$100$} & \multicolumn{2}{c}{$250$} &\multicolumn{2}{c}{$500$} &\multicolumn{2}{c}{$750$} &\multicolumn{2}{c}{$1000$} \\
&      ks      & cvm   &   ks  & cvm   &   ks  & cvm   &   ks  & cvm   &   ks  & cvm \\
\hline
90\%   & 0.817 & 0.103  & 0.825  &0.103 & 0.828   & 0.103   & 0.829 & 0.103   & 0.830 &  0.103     \\
95\%   & 0.890 & 0.125  & 0.898  &0.126 & 0.901   & 0.126   & 0.902 & 0.126   & 0.904 &  0.126     \\
97.5\% & 0.956 & 0.148  & 0.966  &0.148 & 0.967   & 0.149   & 0.970 & 0.149   & 0.972 &  0.149     \\
99\%   & 1.037 & 0.179  & 1.047  &0.179 & 1.049   & 0.178   & 1.052 & 0.179   & 1.053 &  0.179     \\
\hline
\end{tabular}
\end{table}
It follows from  Table \ref{tab:gof} that the Cram\'er-von Mises statistic seems to converge faster than the Kolmogorov-Smirnov statistic.  For example, for any sample size, the 95\% quantile for the
Cram\'er-von Mises statistic is $0.149$, while it is $0.904$ the Kolmogorov-Smirnov statistic.
The speed of convergence might depend on the number of estimated parameters, even if the limit is parameter free. This will be studied in a forthcoming paper.

\section{Asymptotic property of the test statistics}\label{sec:Asymptotic}

In what follows, we define two processes to detect change-point.  In this section, we will develop the test statistics for detecting structural changes and examine their asymptotic distributions under both the null and alternative.

\subsection{Change-point detection based on residuals}\label{sec:DetRes}
Let $\hat{Q}(N,K)$ be the stochastic process
\begin{equation}
\label{statistic1}
\hat{Q}(N,K)= \sum_{N<i\le N+K}\hat{\epsilon}_{N,i}, \qquad K\ge 1,
\end{equation}
where
\begin{equation}
    \label{eq:epsilon}
    \hat\epsilon_{N,i} =Y_{N,i}-\bZ_{N,i}^\top \hat\btheta_N= \epsilon_{N,i} +r_{N,i}- \bZ_{N,i} \left(\bZ_N^{\top}\bZ_N\right)^{-1}\bZ_N^{\top}(\bepsilon_N+\br_N).
\end{equation}
The boundary function is defined  as $\ds g(N,K)=c \sigma g_1(N,K)$, $c = c(\alpha) > 0$,
where $g_1(N,K)=N^{1/2}\left(1+\frac{K}{N}\right)\ell_\gamma\left(\frac{K}{N}\right)$, where $\ell_\gamma(s)=\left(\frac{s}{1+s}\right)^{\gamma}$, with $0\le \gamma<1/2$,
 and $\alpha$ is the type-I error defined in \eqref{FalseAlarm}. The associated stopping time is then given by
\begin{equation}
\label{location1}
\tau_{\hat{Q}}(N) = \inf\{K \geq 1 : |\hat{Q}(N,K)| \geq g(N,K)\}.
\end{equation}
The proof of the next result, given in Appendix \ref{app:pf-thm1}, is similar to the proof of Theorem 2.1 in \cite{horvath2004monitoring}, although there are extra terms
that need to be taken care of. These extra terms appear because we work with the process observed at discrete times $t_{i-1} = i\Delta_N$, $i\ge 0$.
\begin{theorem}
\label{thm:main1}
Suppose that Assumptions \ref{hyp:1}-\ref{hyp:N} hold. Then, under the null hypothesis \eqref{eq:H0}, we have
\begin{multline*}
\lim_{N\to\infty}\dP\left(\tau_{\hat{Q}}(N)<\infty\right)=\lim_{N \rightarrow \infty}
\dP\left\{\sup_{1 \le K < \infty}\frac{|\hat{Q}(N,K)|}{\sigma g_1(N,K)} \ge c \right\}
=\dP \left\{ \sup_{0 < t \le 1} \frac{|\dB_1(t)|}{t^\gamma} \ge c\right\},
\end{multline*}
where $\dB_1(\cdot)$ is a Brownian motion on $[0,1]$. 
\end{theorem}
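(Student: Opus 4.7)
The plan is to mimic the standard monitoring-scheme argument of \cite{horvath2004monitoring}, with the extra care needed because our residuals $\hat\epsilon_{N,i}$ sit on top of a discretization error $r_{N,i}$. I would begin by plugging \eqref{eq:epsilon} into \eqref{statistic1} to decompose
\begin{equation*}
\hat Q(N,K) \;=\; E(N,K)+R(N,K)-\left(\sum_{i=N+1}^{N+K}\bZ_{N,i}\right)^{\!\!\top}\!\!\bigl(\bZ_N^\top\bZ_N\bigr)^{-1}\bZ_N^\top(\bepsilon_N+\br_N),
\end{equation*}
where $E(N,K)=\sum_{N<i\le N+K}\epsilon_{N,i}$ is an exact Gaussian random walk and $R(N,K)=\sum_{N<i\le N+K} r_{N,i}$. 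The three steps are then: (a) reduce to $E(N,K)$ by showing the remaining two terms are $o_{\rm P}(g_1(N,K))$ uniformly in $K$; (b) rewrite $E(N,K)/\sigma$ exactly as a standard Brownian motion evaluated at $K$, so that via the time change $t=K/(N+K)$ one obtains the Brownian motion $\dB_1$ on $[0,1]$; (c) handle small $K$ and a ``tail'' $K\ge NT$ separately.

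For step (a), the estimation-error piece is controlled using \Cref{cor:est} together with \Cref{prop:convdiscont}: $(\bZ_N^\top\bZ_N)^{-1}\bZ_N^\top(\bepsilon_N+\br_N)=\hat\btheta_N-\btheta=O_{\rm P}(T^{-1/2})$, while $\sum_{i=N+1}^{N+K}\bZ_{N,i}$ is essentially $T^{1/2}$ times the first column of a partial $\bSigma_N$, so the whole cross term is of order $(K/N)O_{\rm P}(1)$, which after dividing by $g_1(N,K)=N^{1/2}(1+K/N)\ell_\gamma(K/N)$ is uniformly $o_{\rm P}(1)$ provided one also controls the behaviour for large $K$ via $\ell_\gamma$. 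For the discretization residual $R(N,K)$, the bound \eqref{eq:rN20} (applied over the window $[T,T+K\Delta_N]$ rather than $[0,T]$) together with \Cref{hyp:phiextra} gives $\sum_{i=N+1}^{N+K}|r_{N,i}|=O_{\rm P}(K\Delta_N^{1/2})$; dividing by $g_1$ and using Assumption \ref{hyp:N} ($\delta>2$) shows this ratio is $o_{\rm P}(1)$ uniformly over $K$.

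For step (b), since $\epsilon_{N,i}$ are i.i.d.\ $\mathcal{N}(0,\sigma^2)$, the process $K\mapsto E(N,K)/\sigma$ is distributionally a standard Brownian motion $W$ indexed by $K$. Put $u=K/N$ and then $t=u/(1+u)=K/(N+K)$, so that $1+u=1/(1-t)$ and $(1+u)\ell_\gamma(u)=t^\gamma/(1-t)$. Using the scale invariance $W(Nu)\stackrel{d}{=}N^{1/2}W(u)$ and the well-known transformation $\dB_1(t):=(1-t)W(t/(1-t))$, which yields a standard Brownian motion on $[0,1]$, one obtains
\begin{equation*}
\frac{E(N,K)}{\sigma g_1(N,K)}\;=\;\frac{W(K)/N^{1/2}}{(1+u)\ell_\gamma(u)}\;\stackrel{d}{=}\;\frac{(1-t)W(t/(1-t))}{t^\gamma}\;=\;\frac{\dB_1(t)}{t^\gamma},
\end{equation*}
and the supremum over $K\ge 1$ translates, in the limit, into the supremum over $t\in(0,1]$.

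The main obstacle, and step (c), is truncating the supremum correctly. Near $K=1$ (i.e.\ $t$ near $0$) the denominator $t^\gamma$ is small, but since $\gamma<1/2$ the integrability of $t^{-2\gamma}$ near $0$ plus a Kolmogorov-type maximal inequality shows that $\sup_{1\le K\le K_N}|E(N,K)|/g_1(N,K)$ is tight for $K_N\to\infty$ slowly. For $K\ge NT$ (large $K$), one uses the law of the iterated logarithm for $W$: $W(K)/K^{1-\gamma}\to 0$ a.s., which combined with $g_1(N,K)\sim K^{1-\gamma}N^{\gamma-1/2}$ shows that the contribution of the tail can be made arbitrarily small by choosing $T$ large. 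Once the tail and boundary contributions are under control, a continuous-mapping argument on $C[\eta,1-\eta]$ followed by $\eta\downarrow 0$ delivers the weak convergence of the supremum to $\sup_{0<t\le 1}|\dB_1(t)|/t^\gamma$, which is exactly the right-hand side of the theorem.
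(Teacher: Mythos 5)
Your step (a) contains the fatal error: the estimation-error cross term is \emph{not} $o_{\rm P}(g_1(N,K))$ uniformly in $K$; it contributes at exactly the order of the boundary. Your scaling is off by a factor of $N^{1/2}$: since the first column of $\bSigma_N$ is $\frac{\Delta_N^{1/2}}{T}\sum_i \bZ_{N,i}$, the paper's bound \eqref{eq:sumTotal} gives $\sum_{i=N+1}^{N+K}\bZ_{N,i}\approx \frac{K}{N}\,T\Delta_N^{-1/2}\,\mathcal{C}_1 = K\Delta_N^{1/2}\mathcal{C}_1$, so with $\hat\btheta_N-\btheta_0=O_{\rm P}(T^{-1/2})$ the cross term is of order $K\Delta_N^{1/2}T^{-1/2}O_{\rm P}(1)=\frac{K}{N^{1/2}}O_{\rm P}(1)$, not your $(K/N)O_{\rm P}(1)$. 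Dividing by $g_1(N,K)=N^{1/2}(1+K/N)\ell_\gamma(K/N)$ leaves $\left(\frac{K/N}{1+K/N}\right)^{1-\gamma}O_{\rm P}(1)$, which for $K\asymp N$ is a nondegenerate $O_{\rm P}(1)$. This is precisely why the paper's proof does not discard this term: using $\varphi_1\equiv 1$ and $\mathcal{C}_1^\top\bSigma^{-1}=(1,0,\ldots,0)$, it identifies the leading part of the cross term with $\frac{K}{N}\sum_{i\le N}\epsilon_{N,i}$ exactly, and proves the uniform approximation of $\hat Q(N,K)$ by the CUSUM-with-estimated-drift process
\begin{equation*}
\sum_{N<i\le N+K}\epsilon_{N,i}-\frac{K}{N}\sum_{0<i\le N}\epsilon_{N,i},
\end{equation*}
whose rescaled limit is $\frac{B(1+t)-(1+t)B(1)}{1+t}=\dB_1\!\left(\frac{t}{1+t}\right)$, a Brownian motion in the variable $t/(1+t)$ because its covariance at $s\le t$ is $\frac{s}{1+s}$.

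Your step (b) contains a second error that happens to cancel the first: for a standard Brownian motion $W$, the process $(1-t)W\!\left(\frac{t}{1-t}\right)$ on $[0,1]$ is a Brownian \emph{bridge} (covariance $s(1-t)$ for $s\le t$), not a Brownian motion. So if your reduction to $E(N,K)$ alone were valid, the limit would be $\sup_{0<t<1}|B^0(t)|/t^\gamma$ with $B^0$ a bridge — a genuinely different law from the theorem's $\sup_{0<t\le 1}|\dB_1(t)|/t^\gamma$, with different critical values; this mismatch alone signals that the reduction in (a) cannot be right. It is exactly the subtracted term $\frac{K}{N}\sum_{i\le N}\epsilon_{N,i}$, i.e.\ the estimator fluctuation you dropped, that restores the Brownian-motion limit after the time change. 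Your treatment of $R(N,K)$ via \eqref{eq:rN20} and Assumption \ref{hyp:N}, and your step (c) boundary/tail considerations, are broadly in line with the paper (which handles the latter through convergence of the partial-sum process in $C[0,\infty)$ and the continuous mapping theorem), but they cannot repair steps (a)–(b), so the proof as written is invalid even though it lands on the stated formula.
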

\begin{remark}
Suppose that there is no change in the
historical data and the natural estimator of $\sigma^2$  is the so-called realized quadratic variation of the process given by
$$
\hat{\sigma}^2_N= \frac{1}{N}\sum_{i=1}^N Y_{N,i}^2 = \frac{1}{N}\sum_{i=1}^{N}\frac{(X_{t_{i}}-X_{t_{i-1}})^2}{t_{i}-t_{i-1}}.
$$
It follows from \Cref{cor:DN} that $\hat{\sigma}^2_N$ converges in probability to $\sigma^2$, so \Cref{thm:main1} remains true when $\sigma$ is replaced by
$\hat{\sigma}_N$.
\end{remark}
The next assumption is needed to prove the result under the alternative \eqref{eq:H1}, namely that there exists $K_* \ge 0$
such that
$$
\btheta_{i}=\btheta_0,\quad 1\le i\le N+K_*, \text{ and }
 \btheta_{i}=\btheta_*,\quad i > N+K_*, \text{with }\btheta_0\neq \btheta_*=(\mu_*^\top, a_*)^\top.
$$
\begin{assumption}
\label{hyp:theta_star}
Suppose that
$K_*/N\to t_* \ge 0 $ and
\begin{equation}\label{eq:condalt}
\kappa_{\hat Q,\btheta_0,\btheta_*}
= \mathbf{e}_1^\top \left(\frac{a}{a_*}\bmu_*-\bmu\right)  = a\left(\frac{\mu_{1*}}{a_*}-\frac{\mu_{1}}{a}\right)\neq 0,
\end{equation}
\end{assumption}
The next result, showing the behavior of the statistic under the alternative hypothesis, is proven in Appendix \ref{app:pf-thm1-alt}.
\begin{theorem}
\label{ThmAlt}
Suppose that Assumptions \ref{hyp:1}-\ref{hyp:theta_star} hold. Then, under the alternative hypothesis \eqref{eq:H1}, as $N\to\infty$,
\begin{equation*}
\sup_{1 \le K < \infty}\frac{\left|\hat{Q}(N,K)\right|}{g_1(N,K)}\stackrel{\rm{P}}{\longrightarrow}\infty.
\end{equation*}
In particular, $\ds \lim_{N \to \infty} \dP \left\{ \tau_{\hat{Q}}(N) < \infty \right\} = 1$.
\end{theorem}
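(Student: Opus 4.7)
The plan is to lower-bound the supremum by the value of $|\hat{Q}(N,K_N)|/g_1(N,K_N)$ at a single convenient index $K_N=\lfloor sN\rfloor$ with $s>t_*$, show that this ratio diverges in probability at rate $T^{1/2}$, and deduce the stopping-time claim from the eventual threshold crossing. Decompose $\hat{Q}(N,K_N)=A_N+B_N$ with $A_N=\sum_{i=N+1}^{N+K_*}\hat{\epsilon}_{N,i}$ (pre-change) and $B_N=\sum_{i=N+K_*+1}^{N+K_N}\hat{\epsilon}_{N,i}$ (post-change). Since $A_N$ is built from observations obeying the historical model, the proof of \Cref{thm:main1} shows $A_N=O_{\rm{P}}(N^{1/2})$.

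For $B_N$, substitute \eqref{DiscYZ_1}, $Y_{N,i}^{(2)}=(\bZ_{N,i}^{(2)})^\top\btheta_*+\epsilon_{N,i}+r_{N,i}^{(2)}$, to get
\begin{equation*}
B_N=\sum_{i=N+K_*+1}^{N+K_N}\left[(\bZ_{N,i}^{(2)})^\top(\btheta_*-\btheta_0)+(\bZ_{N,i}^{(2)})^\top(\btheta_0-\hat{\btheta}_N)+\epsilon_{N,i}+r_{N,i}^{(2)}\right].
\end{equation*}
By \eqref{eq:Z_1}, the leading summand equals $\Delta_N^{1/2}\bigl[(\bmu_*-\bmu)^\top\bvarphi(t_{N+i-1})-(a_*-a)X_{t_{N+i-1}}^{(2)}\bigr]$. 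Applying the $L^1$ ergodic limit from \Cref{ConvQSigma} to the post-change GOU process, whose stationary mean is $\mu_{1*}/a_*$, together with the Riemann-sum approximation of \Cref{prop:CN} on the moving window $[t_{N+K_*},t_{N+K_N}]$, the Cesàro mean of the bracket tends to $(\mu_{1*}-\mu_1)-(a_*-a)\mu_{1*}/a_*=\kappa_{\hat{Q},\btheta_0,\btheta_*}$. Hence the drift contribution to $B_N$ is $(s-t_*)\,\kappa_{\hat{Q},\btheta_0,\btheta_*}\,N\Delta_N^{1/2}(1+o_{\rm{P}}(1))$.

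The remaining three pieces are of strictly smaller order than $N\Delta_N^{1/2}=N^{1/2}T^{1/2}$. First, $\|\hat{\btheta}_N-\btheta_0\|=O_{\rm{P}}(T^{-1/2})$ by \Cref{lem:mle} combined with \Cref{cor:est}, while $\sum_{i=N+K_*+1}^{N+K_N}\|\bZ_{N,i}^{(2)}\|=O_{\rm{P}}(N\Delta_N^{1/2})$ by ergodicity of $|X^{(2)}|$, giving an $O_{\rm{P}}(N^{1/2})$ contribution. Second, the independent Gaussian sum $\sum\epsilon_{N,i}$ is $O_{\rm{P}}(N^{1/2})$. Third, $\sum r_{N,i}^{(2)}$ is dominated using the pointwise bound $E|r_{N,i}^{(2)}|=O(\Delta_N^{1/2}C(\Delta_N)+\Delta_N)$, yielding $O(N^{1/2}T^{1/2}\{C(\Delta_N)+\Delta_N^{1/2}\})=o(N^{1/2}T^{1/2})$. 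Since $g_1(N,K_N)=(1+s)^{1-\gamma}s^{\gamma}\,N^{1/2}$, combining these estimates gives
\begin{equation*}
\frac{|\hat{Q}(N,K_N)|}{g_1(N,K_N)}=\frac{|(s-t_*)\kappa_{\hat{Q},\btheta_0,\btheta_*}|}{(1+s)^{1-\gamma}s^{\gamma}}\cdot T^{1/2}\bigl(1+o_{\rm{P}}(1)\bigr)\stackrel{\rm{P}}{\longrightarrow}\infty,
\end{equation*}
which proves the first assertion, and then $\tau_{\hat{Q}}(N)\le K_N$ with probability tending to one.

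The main obstacle is justifying the Cesàro limit in the second paragraph, since $\sum X_{t_{N+i-1}}^{(2)}$ runs over a moving window starting from a random value $X^{(2)}_{t_{N+K_*}}$ that itself depends on $N$. The fix is to re-run the proof of \Cref{ConvQSigma} on $[t_{N+K_*},t_{N+K_N}]$, exploiting the geometric mixing of GOU processes so that dependence on the starting point is forgotten at geometric rate; the same mechanism also delivers the ergodic bound on $\sum\|\bZ_{N,i}^{(2)}\|$ used above. Every other step is a direct application of results already proved in the paper.
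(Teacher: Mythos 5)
Your proposal is correct in substance and, at the level of the algebra, it is the paper's own argument: the paper likewise writes $\sum_{i=N+1}^{N+K}\hat\epsilon_{N,i}$ as $\sum\epsilon_{N,i}+\sum r_{N,i}^{(2)}$, minus $\bigl(\sum_{i\le N+K_*}\bZ_{N,i}\bigr)^\top(\hat\btheta_N-\btheta_0)$ and $\bigl(\sum_{i> N+K_*}\bZ_{N,i}^{(2)}\bigr)^\top(\hat\btheta_N-\btheta_0)$, plus the drift term $\bigl(\sum_{i>N+K_*}\bZ_{N,i}^{(2)}\bigr)^\top(\btheta_*-\btheta_0)$; it then bounds the nuisance terms by $O_{\rm P}(1)$ after division by $g_1(N,K)$ and extracts the rate $T^{1/2}$ with constant $\frac{(t-t_*)}{(1+t)\ell_\gamma(t)}\,(\btheta_*-\btheta_0)^\top\mathcal{C}_{1*}$, which coincides with your $(s-t_*)\kappa_{\hat Q,\btheta_0,\btheta_*}/\{(1+s)^{1-\gamma}s^\gamma\}$ because $\mathcal{C}_{1*}=(1,0,\ldots,0,-\mu_{1*}/a_*)^\top$, so $(\btheta_*-\btheta_0)^\top\mathcal{C}_{1*}=(\mu_{1*}-\mu_1)-(a_*-a)\mu_{1*}/a_*=a(\mu_{1*}/a_*-\mu_1/a)$. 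Your order assessments for the four pieces of $B_N$ (including $E|r_{N,i}^{(2)}|=O(\Delta_N^{1/2}C(\Delta_N)+\Delta_N)$ and the identity $N\Delta_N^{1/2}=N^{1/2}T^{1/2}$) all check out against Propositions \ref{prop:XT} and \ref{prop:convdiscont}.

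Two genuine differences deserve comment. First, you lower-bound the supremum at the single index $K_N=\lfloor sN\rfloor$ with a fixed $s>t_*$, whereas the paper controls the normalized process uniformly over $K>K_*$; for the theorem as stated your pointwise lower bound suffices and is a legitimate simplification. Second — and this is where your route actually departs from the paper's — for the moving-window Ces\`aro limit of $\bZ_{N,i}^{(2)}$, which you correctly single out as the main obstacle, you propose re-running \Cref{ConvQSigma} via geometric forgetting of the random initial condition $X^{(2)}_{t_{N+K_*}}$, i.e., a coupling with the stationary auxiliary process in the spirit of \Cref{CovSolu_Auxi}. That mechanism does work: the coupling error in the Ces\`aro mean is $O_{\rm P}(1/T)$ since $(K_N-K_*)\Delta_N\approx(s-t_*)T\to\infty$, and the periodic mean $\tilde h_*$ is then handled by the Riemann-sum argument of \Cref{prop:CN}; but you leave it as a sketch. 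The paper closes the same step without any mixing argument, by integrating the SDE over the window exactly as in the derivation of \eqref{eq:sumZ2}--\eqref{eq:sumTotal}: $a_*\int X_s^{(2)}\,ds$ equals a deterministic integral of $\bmu_*^\top\bvarphi$ plus the boundary difference of $X^{(2)}$ and a Brownian increment, each with expectation bounded uniformly in $K$. That identity is more elementary, yields explicit rates, and delivers the uniform-in-$K$ control for free; you could adopt it verbatim to replace your sketched mixing step, after which your proof is complete.
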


\begin{remark}\label{rem:kappaQ}
Note that one can have $\btheta_*\neq \btheta_0$ and $\kappa_{\hat Q,\btheta_0,\btheta_*}=0$, showing that the test statistic is not consistent under all alternative hypotheses. This is illustrated by a numerical experiment in Section \ref{sec:Simulation}.
\end{remark}

\subsection{Sequential detection using estimators}\label{sec:DetEst}
One can also detect change-point using a statistic based on estimations at different times. To this end,
let $\hat{\bGamma}(N,K)$ be the process defined by
\begin{equation}\
\label{statistic3}
\hat{\bGamma}(N,K)=T^{1/2} \left(\frac{1}{T}\bZ_N^\top\bZ_N\right)^{1/2}\left(\hat\btheta_{N+K}-\hat\btheta_N\right).
\end{equation}
Further set
\begin{equation}
\label{location2}
\tau_{\hat{\bGamma}}(N) = \inf\left\{ K\ge 1 : \left\|\hat{\bGamma}(N,K)\right\| \geq c {\ell\left(K/N\right)}\right\},
\end{equation}
where $\ell_\gamma(s) =\left(\frac{s}{1+s}\right)^{\gamma}$, with $0\le \gamma<1/2$. The proof of the next result is given in Appendix \ref{app:pf-thm2}.

\begin{theorem}
\label{thm:main12}
Suppose that Assumptions \ref{hyp:1}-\ref{hyp:phiextra} hold. Then, under the null hypothesis \eqref{eq:H0},
\begin{eqnarray*}
\lim_{N \to \infty} \dP\left\{\tau_{\hat{\bGamma}}(N) < \infty\right\} &=&
\lim_{N \rightarrow \infty}
\dP\left\{ \sup_{K\ge 1}\left\|\hat{\bGamma}(N,K)\right\|/{\ell\left(K/N\right)}\ge c \right\}\\
&=&\dP\left\{ \sup_{0 < t \le 1} \frac{\left\|\dB_{p+1}(t)\right\|}{t^\gamma}\ge c \right\},
\end{eqnarray*}
where $\dB_{p+1}(\cdot)$ is a $(p+1)$-dimensional Brownian motion.
\end{theorem}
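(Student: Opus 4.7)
The plan is to reduce $\hat\bGamma(N,K)$ to a functional of the continuous-time MLE $\check\btheta_T$, use the explicit representation $\check\btheta_T = \btheta_0 + \sigma\bQ_{[0,T]}^{-1}\bR_{[0,T]}$ from Proposition \ref{MLE_0206} to express the statistic as a linear functional of the Gaussian martingale $\bR_{[0,\cdot]}$, then invoke a functional martingale CLT, and finally recover the claimed supremum through a time-inversion of Brownian motion, in the spirit of the GARCH-based proof of \cite{horvath2004monitoring}.

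First I would upgrade Corollary \ref{cor:est} to a uniform statement over $K \ge 1$. Setting $T_K = T(1+K/N)$ and applying Proposition \ref{prop:convdiscont} on $[0,T]$ and on $[0,T_K]$, Assumption \ref{hyp:phiextra} (which forces $C(\Delta_N) = O(\Delta_N^{1/2})$) together with Assumption \ref{hyp:N} should yield
\[
\sup_{K \ge 1}\frac{\bigl\|\hat\bGamma(N,K) - T^{1/2}\bSigma_N^{1/2}(\check\btheta_{T_K} - \check\btheta_T)\bigr\|}{\ell_\gamma(K/N)} \stackrel{\rm P}{\longrightarrow} 0.
\]
Next I would apply the identity $A^{-1} - B^{-1} = -A^{-1}(A-B)B^{-1}$ with $A = \bQ_{[0,T_K]}$, $B = \bQ_{[0,T]}$ and split $\bR_{[0,T_K]} = \bR_{[0,T]} + \bR_{[T,T_K]}$ to get
\[
\check\btheta_{T_K} - \check\btheta_T \;=\; \sigma\bQ_{[0,T_K]}^{-1}\bR_{[T,T_K]}\; -\; \sigma\bQ_{[0,T_K]}^{-1}\bQ_{[T,T_K]}\bQ_{[0,T]}^{-1}\bR_{[0,T]}.
\]
Replacing $u\bQ_{[0,u]}^{-1}$ by $\bSigma^{-1}$ and $\bQ_{[T,T_K]}$ by $K\Delta_N\,\bSigma = sT\bSigma$ via Proposition \ref{ConvQSigma} (with $s=K/N$), and noting that $\bSigma_N^{1/2} \to \bSigma^{1/2}$, this collapses to
\[
T^{1/2}\bSigma_N^{1/2}\bigl(\check\btheta_{T_K} - \check\btheta_T\bigr) \;\approx\; \frac{\sigma\bSigma^{-1/2}}{1+s}\Bigl\{T^{-1/2}\bR_{[T,T_K]} - s\,T^{-1/2}\bR_{[0,T]}\Bigr\}.
\]

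A functional martingale invariance principle applied to the continuous martingale $u\mapsto T^{-1/2}\bR_{[0,Tu]}$, whose predictable quadratic variation $\bQ_{[0,Tu]}/T$ converges to $u\bSigma$, then gives weak convergence to $\bSigma^{1/2}\bW_{p+1}(u)$ for a standard $(p+1)$-dimensional Brownian motion $\bW_{p+1}$. Hence the finite-dimensional limit of $\hat\bGamma(N,K)$ at $s = K/N$ is the Gaussian process
\[
\bV(s) \;=\; \frac{\sigma}{1+s}\bigl\{\bW_{p+1}(1+s) - (1+s)\bW_{p+1}(1)\bigr\}, \qquad s \ge 0.
\]
With the change of variable $t = s/(1+s)\in[0,1)$ (so that $\ell_\gamma(s) = t^\gamma$ and $1/(1+s) = 1-t$), and using the time-inversion $\bW'(v) := v\bW_{p+1}(1/v)$, which is itself a standard $(p+1)$-dimensional Brownian motion, one finds $\bV(s) = \sigma\{\bW'(1-t) - \bW'(1)\}$, a process having the same law on $[0,1]$ as $\sigma\dB_{p+1}(t)$ for a standard $(p+1)$-dim BM $\dB_{p+1}$. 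The continuous-mapping theorem applied to the sup then delivers the announced limit; any residual $\sigma$ is absorbed into the critical value $c$, or replaced by $\hat\sigma_N$ exactly as in the remark following Theorem \ref{thm:main1}.

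The main obstacle will be the uniformity over the full range $K \ge 1$. For $K/N \to 0$ the denominator $t^\gamma$ vanishes while the numerator fluctuates like a Gaussian martingale, and tightness near $t=0$ will require a H\'ajek--R\'enyi type maximal inequality --- exactly the reason for the restriction $0\le \gamma < 1/2$. In the symmetric regime $K/N \to \infty$, which corresponds to $1-t \to 0$ in the inverted time, one must verify tightness of $\bW'$ near zero while propagating the residual error $r_{N,i}$ from Proposition \ref{prop:convdiscont} uniformly through the algebraic reduction. Combining these two estimates with the explicit calculation above parallels the strategy of \cite{horvath2004monitoring}, the novelty here being the bookkeeping needed to carry the $r_{N,i}$ terms that are specific to the discretely observed GOU setting.
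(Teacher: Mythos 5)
Your plan is correct and follows essentially the same route as the paper's own proof: both arguments reduce $\hat{\bGamma}(N,K)$ via Proposition \ref{prop:convdiscont} to the continuous-time martingale $\bR_{[0,\cdot]}$, apply the martingale functional CLT of \cite{Remillard/Vaillancourt:2024a} to obtain the limit $\left\{\dB_{p+1}(1+t)-(1+t)\dB_{p+1}(1)\right\}/(1+t)=\dB_{p+1}\left(t/(1+t)\right)$, and conclude by the continuous mapping theorem. Your resolvent-identity decomposition and explicit time inversion are merely a more explicit rendering of the paper's splitting $T^{1/2}\left(\hat\btheta_{N+\N{t}}-\btheta_0\right)-T^{1/2}\left(\hat\btheta_N-\btheta_0\right)$, and your bookkeeping of the $\sigma$ factor and of tightness at both endpoints of the time change is, if anything, more careful than the published argument.
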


Finally, we can state the result of the behavior of the statistic $\hat \bGamma(N,K)/h(K/N)$ under the alternative hypothesis. Its proof is given in Appendix \ref{app:pf-thm2-alt}.
\begin{theorem}
\label{ThmAlt2}
Suppose that \Cref{hyp:1}-\ref{hyp:phiextra} hold. Then, under the alternative hypothesis \eqref{eq:H1}, as $N\to\infty$, we have
\begin{equation*}
 \sup_{K\ge 1}\left\{\left\|\hat{\bGamma}(N,K)\right\|/\ell\left(K/N\right)\right\}\stackrel{\rm{P}}{\longrightarrow}\infty.
\end{equation*}
In particular, $\ds \lim_{N \to \infty} \dP \left\{ \tau_{\hat{\bGamma}}(N) < \infty \right\} = 1$.
\end{theorem}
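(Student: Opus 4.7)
The plan is to exhibit a deterministic sequence $K_N\to\infty$ with $K_N/N\to\infty$ such that $\|\hat\bGamma(N,K_N)\|\stackrel{\rm{P}}{\longrightarrow}\infty$. Since $\ell(s)=\left(s/(1+s)\right)^\gamma$ satisfies $0\le \ell\le 1$ with $\ell(s)\to 1$ as $s\to\infty$, dividing by $\ell(K_N/N)\to 1$ preserves divergence, so this suffices to establish the sup result and hence $\dP\{\tau_{\hat\bGamma}(N)<\infty\}\to 1$. A convenient choice is $K_N=\lfloor N^{\delta'}\rfloor$ for some $1<\delta'<1/\gamma$, taken large enough so that $K_N$ dominates both $N$ and $K_*$ (the standard monitoring framework implicitly places $K_*=O(N)$).

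The crux is identifying the limit of $\hat\btheta_{N+K_N}$ under $H_1$. Splitting the design at the change index $N+K_*$ and setting
\begin{equation*}
\bS_1=\sum_{i=1}^{N+K_*}\bZ_{N,i}\bZ_{N,i}^\top,\qquad \bS_2=\sum_{i=N+K_*+1}^{N+K_N}\bZ_{N,i}\bZ_{N,i}^\top,
\end{equation*}
the normal equations yield
\begin{equation*}
\hat\btheta_{N+K_N}-\btheta_*=(\bS_1+\bS_2)^{-1}\bS_1(\btheta_0-\btheta_*)+(\bS_1+\bS_2)^{-1}\bZ_{N+K_N}^\top(\bepsilon_{N+K_N}+\br_{N+K_N}).
\end{equation*}
Proposition \ref{prop:convdiscont} applied on the pre-change interval $[0,(N+K_*)\Delta_N]$ gives $\bS_1/((N+K_*)\Delta_N)\stackrel{\rm{P}}{\to}\bSigma$, while the post-change analogue of Proposition \ref{ConvQSigma} (applied to the GOU process with parameter $\btheta_*$ started from the random value $X^{(2)}_{t_{N+K_*}}$) yields $\bS_2/((K_N-K_*)\Delta_N)\stackrel{\rm{P}}{\to}\bSigma_*$, the positive-definite analogue of $\bSigma$ for $\btheta_*$. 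Because $(K_N-K_*)/(N+K_*)\to\infty$ by construction, $(\bS_1+\bS_2)^{-1}\bS_1\to \mathbf{0}$ in operator norm. Combining the post-change versions of Lemma \ref{lem:mle} with the $\br$-bound in Proposition \ref{prop:convdiscont} under Assumption \ref{hyp:phiextra}, the noise term is $o_{\rm{P}}(1)$. Consequently $\hat\btheta_{N+K_N}\stackrel{\rm{P}}{\to}\btheta_*$.

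By Corollary \ref{cor:est}, $\hat\btheta_N\stackrel{\rm{P}}{\to}\btheta_0$, and by Proposition \ref{prop:convdiscont} together with Proposition \ref{ConvQSigma}, $\bSigma_N\stackrel{\rm{P}}{\to}\bSigma\succ 0$. Therefore
\begin{equation*}
\frac{\|\hat\bGamma(N,K_N)\|}{\ell(K_N/N)}=\frac{T^{1/2}\|\bSigma_N^{1/2}(\hat\btheta_{N+K_N}-\hat\btheta_N)\|}{\ell(K_N/N)}\sim T^{1/2}\|\bSigma^{1/2}(\btheta_*-\btheta_0)\|\stackrel{\rm{P}}{\longrightarrow}\infty,
\end{equation*}
yielding $\tau_{\hat\bGamma}(N)<\infty$ with probability approaching $1$. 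The main obstacle is rigorously justifying the post-change limit $\bS_2/((K_N-K_*)\Delta_N)\stackrel{\rm{P}}{\to}\bSigma_*$: Proposition \ref{ConvQSigma} assumes an initial value with bounded $d$-th moment, but the post-change GOU starts at the random value $X^{(2)}_{t_{N+K_*}}$ whose distribution depends on $N$. This requires either uniform-in-initial-condition ergodicity for the GOU, or an $L^d$-stability bound $\sup_N E|X^{(2)}_{t_{N+K_*}}|^d<\infty$ inherited from Assumption \ref{hyp:1} via the SDE, after which the ergodic limit follows by applying Proposition \ref{ConvQSigma} conditionally on the starting point.
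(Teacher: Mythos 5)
Your proposal is correct in substance, but it takes a genuinely different route from the paper. The paper fixes $t>t_*$ and analyzes $\hat{\bGamma}(N,\lfloor Nt\rfloor)$, i.e.\ $K$ of the same order as $N$: it shows that $\frac{1}{T}\bZ_{N+\lfloor Nt\rfloor}^{\top}\bZ_{N+\lfloor Nt\rfloor}$ converges to the mixture $(1+t_*)\bSigma+(t-t_*)\bSigma_*$, from which $\hat\btheta_{N+\lfloor Nt\rfloor}-\hat\btheta_{N+\lfloor Nt_*\rfloor}$ converges in probability to $\bigl(\tfrac{1+t_*}{t-t_*}\bSigma+\bSigma_*\bigr)^{-1}\bSigma_*(\btheta_*-\btheta_0)$, yielding $T^{-1/2}\|\hat\bGamma(N,\lfloor Nt\rfloor)\|/\ell_\gamma(t)\to \kappa_{t_*,t}/\ell_\gamma(t)>0$ for \emph{every} fixed $t>t_*$, and hence divergence at rate $T^{1/2}$ already at detection lags of order $N$; as a by-product it obtains the drift function $\kappa_{t_*,t}$ and its limit $\kappa_{\hat\bGamma,\btheta_0,\btheta_*}$, which the simulation section uses to quantify power. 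You instead send $K_N/N\to\infty$ so that the pre-change block is asymptotically negligible, $(\bS_1+\bS_2)^{-1}\bS_1\to 0$, and $\hat\btheta_{N+K_N}\stackrel{\rm P}{\to}\btheta_*$ outright; this avoids the mixture-inverse computation entirely and is more elementary, at the cost of certifying detection only at lags $K_N\gg N$ (which is enough for the supremum statement, but gives no information about detection delay). Note also that your limiting constant $\|\bSigma^{1/2}(\btheta_*-\btheta_0)\|$ is, strictly speaking, the correct one given the definition $\hat\bGamma(N,K)=T^{1/2}\bSigma_N^{1/2}(\hat\btheta_{N+K}-\hat\btheta_N)$; the paper's $\kappa_{\hat\bGamma,\btheta_0,\btheta_*}=\|\bSigma(\btheta_*-\btheta_0)\|$ appears to absorb a square root, though this does not affect positivity or the conclusion. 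Two shared caveats deserve mention: (i) both arguments need $K_*/N\to t_*<\infty$, which formally sits in Assumption \ref{hyp:theta_star} rather than in the hypotheses \ref{hyp:1}--\ref{hyp:phiextra} cited by the theorem, and the paper's proof invokes it just as you do; (ii) the post-change ergodic limit $\bS_2/\{(K_N-K_*)\Delta_N\}\stackrel{\rm P}{\to}\bSigma_*$, starting from the random, $N$-dependent value $X^{(2)}_{t_{N+K_*}}$, is asserted without detail in the paper as well ($\frac1T A_{N+\lfloor Nt\rfloor}=(1+t_*)\bSigma+(t-t_*)\bSigma_*+o_P(1)$), so the uniform moment bound $\sup_N E\bigl[|X^{(2)}_{t_{N+K_*}}|^2\bigr]<\infty$ you flag — available from the auxiliary stationary process bound \eqref{ErgodicBound} and Theorem \ref{CovSolu_Auxi} — is exactly the missing glue in both proofs, and your explicit identification of it is a point in your write-up's favor rather than a defect relative to the paper.
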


\section{Numerical experiments }\label{sec:Simulation}
In this section, we present the results of several numerical experiments conducted to assess the finite sample performance of the monitoring strategies discussed earlier.
It is worth noting that while our monitoring schemes are designed to detect sudden changes in the parameters of the underlying linear model, this does not necessarily translate in an abrupt alteration in the expectation of the observed data itself.

First, we present some tables giving critical values for the limiting behavior of our statistics. \Cref{tab-critical1}--\Cref{tab-critical5} give respectively the approximate critical values $c(\alpha,\gamma)$ associated with $k$-dimensional Brownian motions, i.e., for which
\begin{equation*}
\rm{P} \left\{ \sup_{0 < t \le 1} \frac{|\dB_k(t)|}{t^\gamma} \ge c(\alpha,\gamma)\right\}=\alpha.
\end{equation*}

\begin{table}[ht!]
\small
\setlength{\abovecaptionskip}{0cm}
\setlength{\belowcaptionskip}{0.25cm}
\caption{Critical values $c(\alpha)$  of $\ds \sup_{0 < t \le 1}\frac{|\dB_1(t)|}{t^\gamma}$, based on 50,000 replications. The Brownian motion is evaluated at 10,000 equidistant points in $[0,1]$.}
\label{tab-critical1}
\centering
\begin{tabular}{ccccccc}
\hline
\multirow{2}*{$\alpha$}&&&$\gamma$&&&\\
  &0&0.1&0.2&0.3&0.4&0.49\\
 \hline
0.1   & 1.9520 & 2.0082 & 2.0703 & 2.1619 & 2.3527 & 2.8296 \\
0.05  & 2.2280 & 2.2933 & 2.3307 & 2.4295 & 2.6056 & 3.0738 \\
0.025 & 2.4947 & 2.5440 & 2.5784 & 2.6687 & 2.8388 & 3.3109 \\
0.01  & 2.8074 & 2.8545 & 2.8833 & 2.9547 & 3.1131 & 3.5775 \\
 \hline
  \end{tabular}
\end{table}

\begin{table}[htbp]
\small
\setlength{\abovecaptionskip}{0cm}
\setlength{\belowcaptionskip}{0.25cm}
\caption{Critical values $c(\alpha)$ of $\ds \sup_{0 < t\le 1} \frac{\left\|\dB_{2}(t)\right\|}{t^\gamma}$, based on 50,000 replications.
The Brownian motion is evaluated at 10,000 equidistant points in $[0,1]$. }
\label{tab-critical2}
\centering
\begin{tabular}{ccccccc}
\hline
\multirow{2}*{$\alpha$}&&&$\gamma$&&&\\
  &0&0.1&0.2&0.3&0.4&0.49\\
 \hline
 0.100   &  2.4165  & 2.4543  & 2.5095  & 2.6087  & 2.7839 &  3.2875     \\
 0.050   &  2.6944  & 2.7231  & 2.7740  & 2.8655  & 3.0354 &  3.5269     \\
 0.025   &  2.9533  & 2.9539  & 3.0157  & 3.0922  & 3.2566 &  3.7328    \\
 0.010   &  3.2625  & 3.2541  & 3.3063  & 3.3661  & 3.5367 &  3.9957   \\
 \hline
  \end{tabular}
\end{table}


\begin{table}[htbp]
\small
\setlength{\abovecaptionskip}{0cm}
\setlength{\belowcaptionskip}{0.25cm}
\caption{Critical values $c(\alpha)$ of $\ds \sup_{0 < t\le 1} \frac{\left\|\dB_{3}(t)\right\|}{t^\gamma}$, based on 50,000 replications.
The Brownian motion is evaluated at 10,000 equidistant points in $[0,1]$. }
\label{tab-critical3}
\centering
\begin{tabular}{ccccccc}
\hline
\multirow{2}*{$\alpha$}&&&$\gamma$&&&\\
  &0&0.1&0.2&0.3&0.4&0.49\\
 \hline
0.100 &  2.7472 &   2.7820  &  2.8379  &  2.9212  &  3.1071  &  3.6085    \\
0.050 &  3.0189 &   3.0502  &  3.1019  &  3.1763  &  3.3522  &  3.8305    \\
0.025 &  3.2640 &   3.2890  &  3.3474  &  3.4233  &  3.5744  &  4.0285   \\
0.010 &  3.5698 &   3.5595  &  3.6272  &  3.7057  &  3.8423  &  4.2816  \\
 \hline
  \end{tabular}
\end{table}


\begin{table}[htbp]
\small
\setlength{\abovecaptionskip}{0cm}
\setlength{\belowcaptionskip}{0.25cm}
\caption{Critical values $c(\alpha)$ of $\ds \sup_{0 < t\le 1} \frac{\left\|\dB_{4}(t)\right\|}{t^\gamma}$, based on 50,000 replications.
The Brownian motion is evaluated at 10,000 equidistant points in $[0,1]$. }
\label{tab-critical4}
\centering
\begin{tabular}{ccccccc}
\hline
\multirow{2}*{$\alpha$}&&&$\gamma$&&&\\
  &0&0.1&0.2&0.3&0.4&0.49\\
 \hline
 0.100 & 3.0243  & 3.0623  & 3.1147  & 3.1955  & 3.3683  & 3.8794  \\
 0.050 & 3.3126  & 3.3318  & 3.3768  & 3.4517  & 3.6109  & 4.1133  \\
 0.025 & 3.5516  & 3.5734  & 3.6188  & 3.6838  & 3.8354  & 4.3205  \\
 0.010 & 3.8403  & 3.8594  & 3.9058  & 3.9691  & 4.1084  & 4.5699  \\
 \hline
  \end{tabular}
\end{table}

\begin{table}[htbp]
\small
\setlength{\abovecaptionskip}{0cm}
\setlength{\belowcaptionskip}{0.25cm}
\caption{Critical values $c(\alpha)$ of $\ds \sup_{0 < t\le 1} \frac{\left\|\dB_{5}(t)\right\|}{t^\gamma}$, based on 50,000 replications.
The Brownian motion is evaluated at  10,000 equidistant points in $[0,1]$. }
\label{tab-critical5}
\centering
\begin{tabular}{ccccccc}
\hline
\multirow{2}*{$\alpha$}&&&$\gamma$&&&\\
  &0&0.1&0.2&0.3&0.4&0.49\\
 \hline
 0.100 & 3.2594  & 3.2885  & 3.3424  & 3.4308  & 3.6075  & 4.1203   \\
 0.050 & 3.5229  & 3.5625  & 3.6014  & 3.6854  & 3.8458  & 4.3372   \\
 0.025 & 3.7643  & 3.7948  & 3.8380  & 3.9314  & 4.0702  & 4.5398   \\
 0.010 & 4.0470  & 4.0763  & 4.1232  & 4.2085  & 4.3348  & 4.7933   \\
 \hline
  \end{tabular}
\end{table}

\subsection{Description of the numerical experiments}\label{ssec:exp}

For the numerical experiments, we use Monte-Carlo simulations along with exact solutions of the SDE, as described in \Cref{app:sim}.
First, we assume that the generalized O-U process solves the following SDEs:
\begin{eqnarray}  \label{CPsimu1}
 dX(t)&=&\left( \mu_{1}  + \mu_{2} \sqrt{2}\cos(2\pi t) -a X(t)  \right)dt+\sigma dW_{t},\quad 0\le t\le T(1+t_*),\\
dX(t)&=&\left( \mu_{1*}  + \mu_{2*} \sqrt{2}\cos(2\pi t) -a_* X(t)  \right)dt+\sigma dW_{t}, \quad t > T(1+t_*).
 \label{CPsimu2}
\end{eqnarray}
Here, the change point occurs after $T(1+t_*)$. Hence, if $\frac{K_*}{N}\to t_*$, as in Assumption \ref{hyp:theta_star}, then $t_{N+K_*} = (N+K_*)\frac{T}{N} \to T(1+t_*)$. For the numerical experiments, we chose $t_* \in \{0,0.1,0.3,0.5,0.7\}$, and we took
$\btheta_0=(\mu_{1},\mu_{2},a)^\top= (1,2,1)$, and $\btheta_*^\top =(\mu_{1*},\mu_{2*},a_*)$ belongs to the following set $\{(2,4,2), (5,3,4), (3,3,1),  (15,3,4), (5,3,1) \}$.
We also take $\sigma = \BB{3}$.  From the computations in Appendix \ref{app:tildeh}, one obtains
\begin{equation*}\label{eq:Sigmaexnum}
\bSigma = \left( \begin{array}{ccc}
  1  & 0 & -1\\
 0  & 1 &  -0.04940905\\
 -1  & -0.04940905 & 5.59881809
\end{array}\right).
\end{equation*}
Note that since $\mu_1=a=1$,
\begin{equation}\label{eq:condalt-sim}
\kappa_{\hat Q,\btheta_0,\btheta_*}  
= {\bar \bvarphi}^\top \left(\frac{a}{a_*}\bmu_*-\bmu\right) = a\left(\frac{\mu_{1*}}{a_*}- \frac{\mu_{1}}{a}\right)
= \frac{\mu_{1*}}{a_*}-1,
\end{equation}
 and by Equation \eqref{eq:newkappa},
 \begin{equation}
     \kappa_{\hat \bGamma,\btheta_0,\btheta_*} = \left\|\bSigma (\btheta_*-\btheta_0)\right\| .
 \end{equation}
 The associated values of $\kappa_{\hat Q,\btheta_0,\btheta_*}$ and $\kappa_{\hat \bGamma,\btheta_0,\btheta_*}$ are given in \Cref{tab:kappa}.
\begin{table}[ht!]
\caption{Values of $\kappa_{\hat Q,\btheta_0,\btheta_*}$ and $\kappa_{\hat \bGamma,\btheta_0,\btheta_*}$ for $\btheta_0=(1,2,1)$, and
$\btheta_* \in \{(2,4,2), (5,3,4), (3,3,1),  (15,3,4), (5,3,1) \}$.}
\label{tab:kappa}
\centering
\begin{tabular}{cccccc}
\hline
$\btheta_*$                       & (2,4,2) & (5,3,4) & (3,3,1) & (15,3,4) & (5,3,1) \\
$\kappa_{\hat Q,\btheta_0,\btheta_*}$  & 0       & 0.25  & 2.0  & 2.75 & 4.0 \\
$\kappa_{\hat \bGamma,\btheta_0,\btheta_*}$         & 4.90    & 12.81  & 3.03 & 11.37 & 5.78\\
\hline
\end{tabular}
\end{table}

Using the formulas in \Cref{app:sim}, we simulated 100 trajectories of $X(t_{i-1})$, $t_{i-1} = i\Delta_N = i \frac{T}{N}$,  $i \in \{0,1,\ldots, 3N\}$, where we took $T=20$, $N\in \{500, 750, 1000, 1500, 2000\}$. For the test statistics, we chose $\gamma=0.1$. Then, defining $\delta$ by $N=T^\delta$,  we see that \Cref{hyp:N} is met, namely $2<\delta<1/\gamma=10$, since $\delta\in\{2.074, 2.210, 2.306,  2.441, 2.537\}$.  We set $\tau_{\hat Q}(N)=2N$ when there is no change-point detected after; we did the same for $\tau_{\hat \bGamma}(N)$. Then, we computed the boxplots for the 100
estimated change-points $\tau_{\hat Q}(N)/N-1$ and $\tau_{\hat \bGamma}(N)/N-1$.

\begin{figure*}[ht!]
    \centering
    \includegraphics[width=6cm,height=3cm]{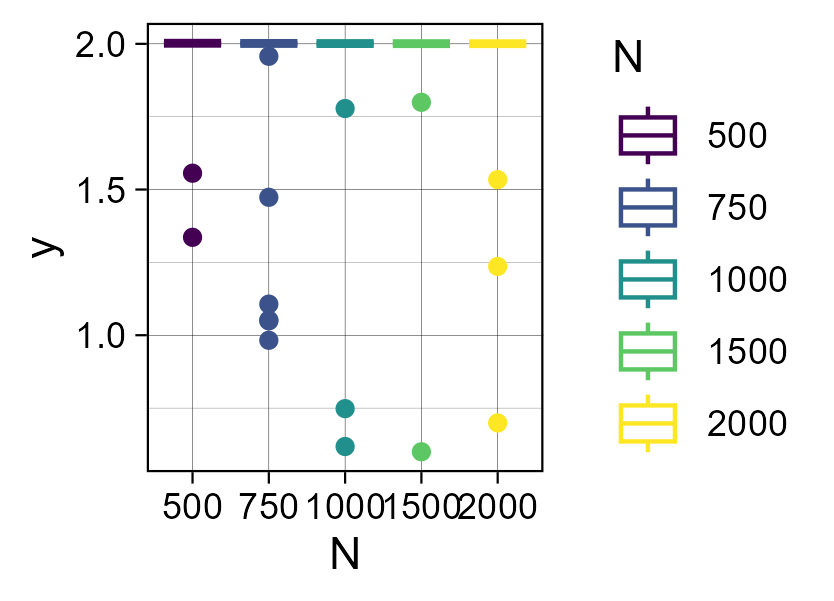}
    \includegraphics[width=6cm,height=3cm]{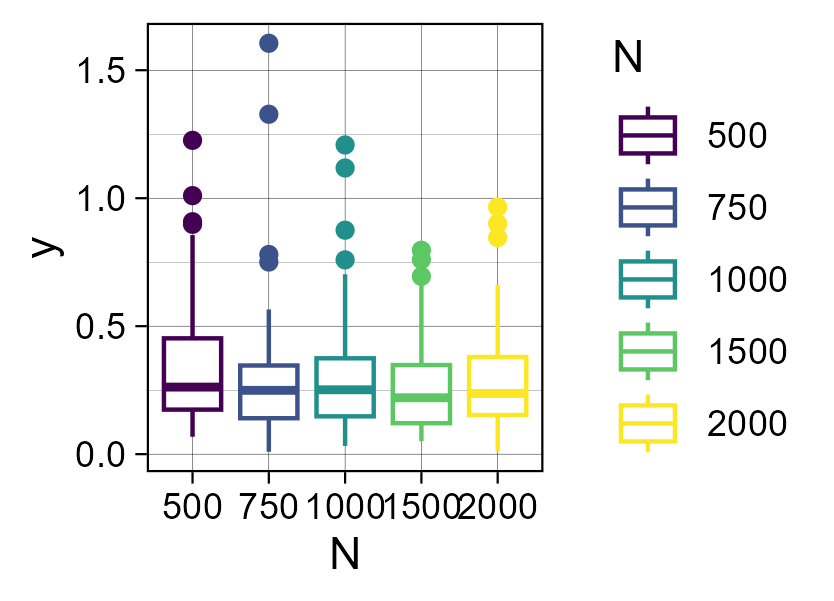}

   \includegraphics[width=6cm,height=3cm]{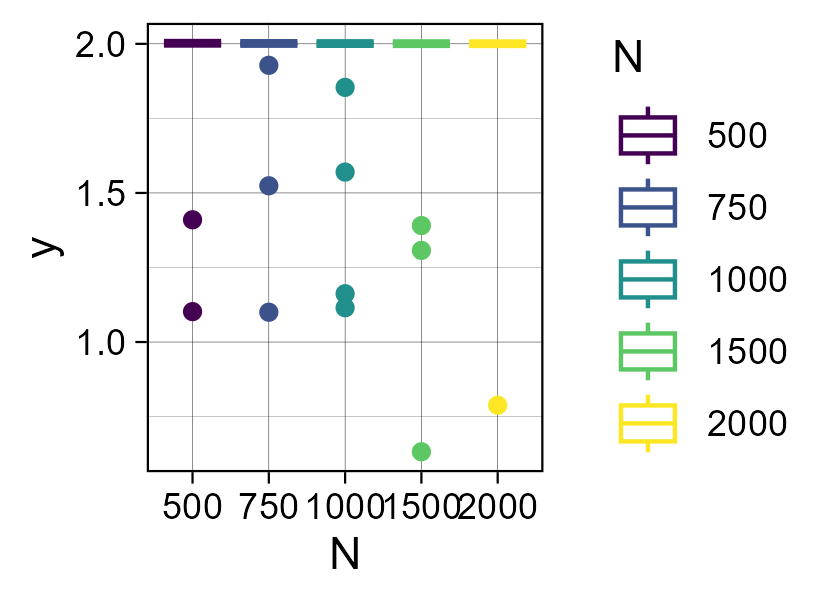}
    \includegraphics[width=6cm,height=3cm]{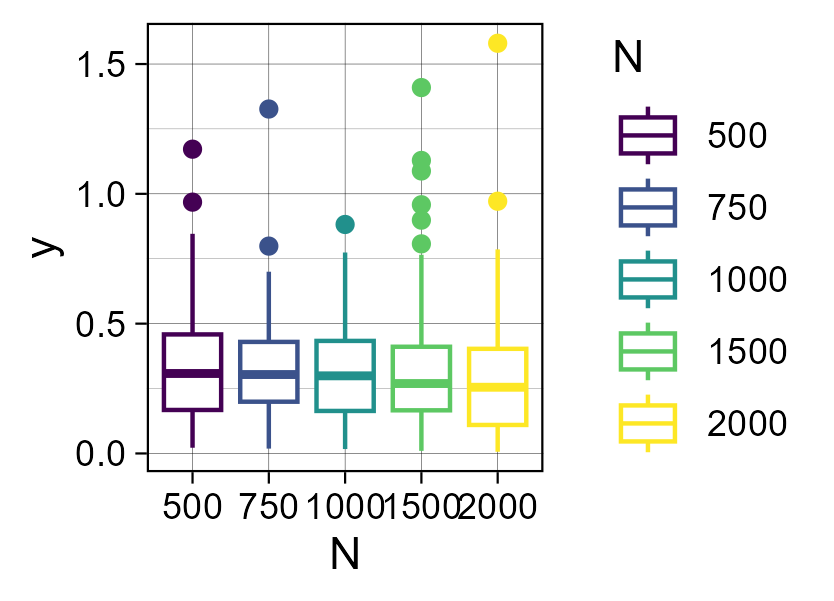}

      \includegraphics[width=6cm,height=3cm]{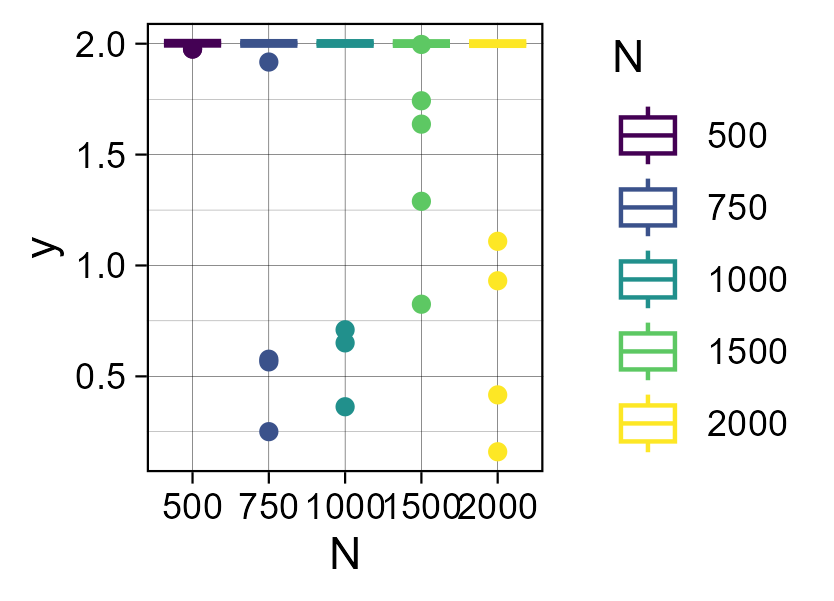}
    \includegraphics[width=6cm,height=3cm]{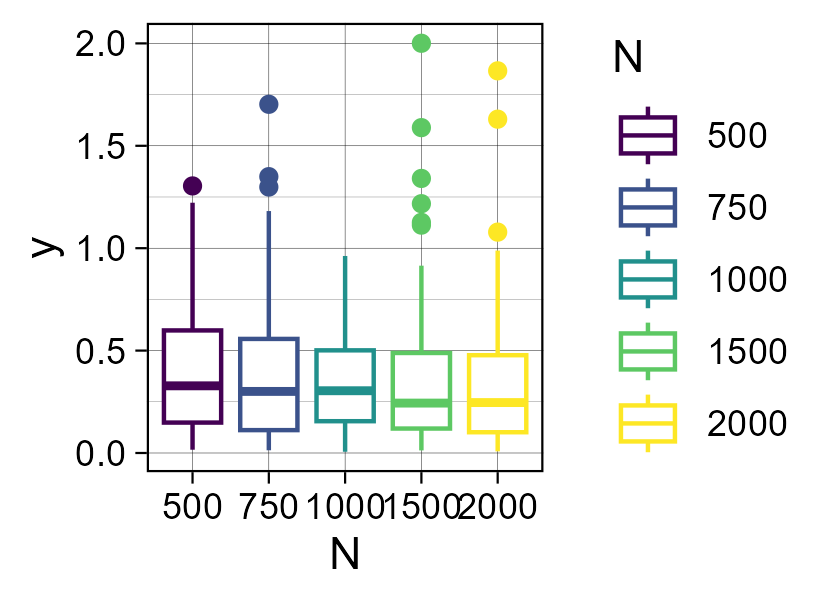}

     \includegraphics[width=6cm,height=3cm]{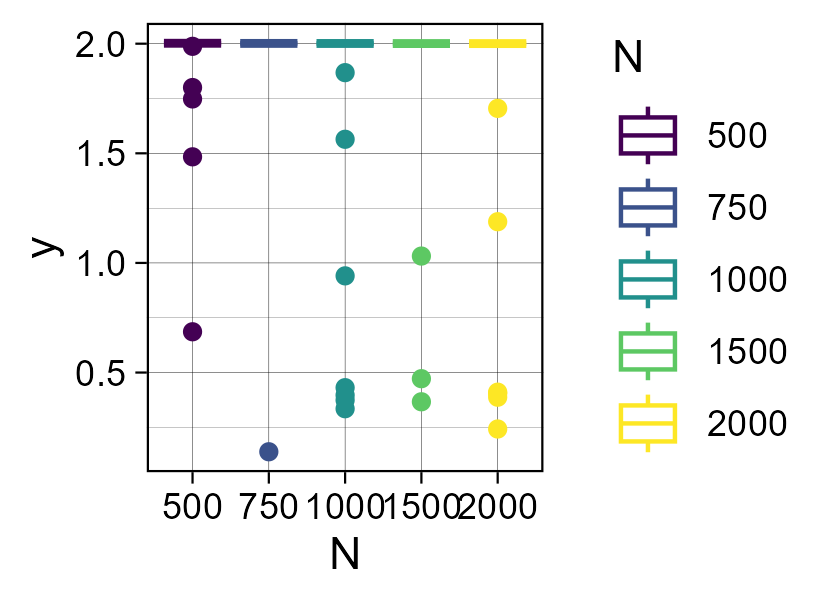}
    \includegraphics[width=6cm,height=3cm]{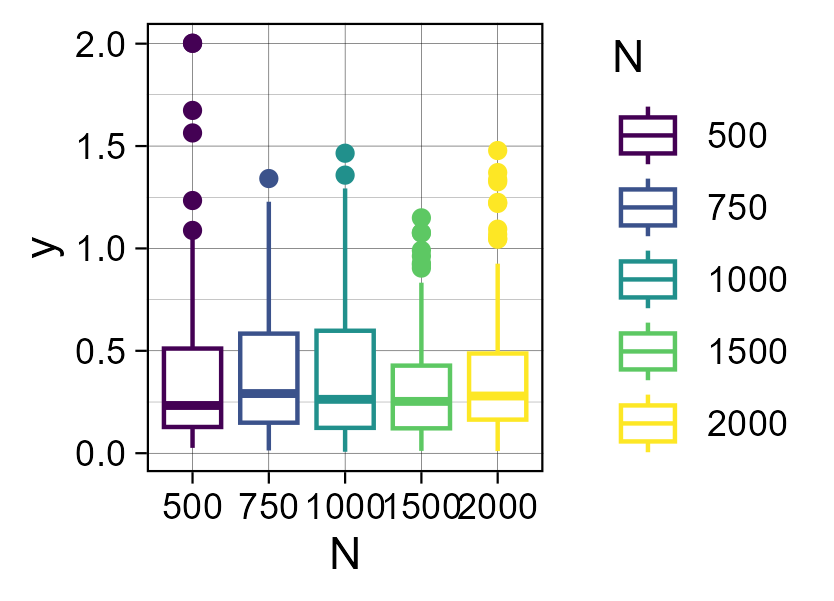}

    \includegraphics[width=6cm,height=3cm]{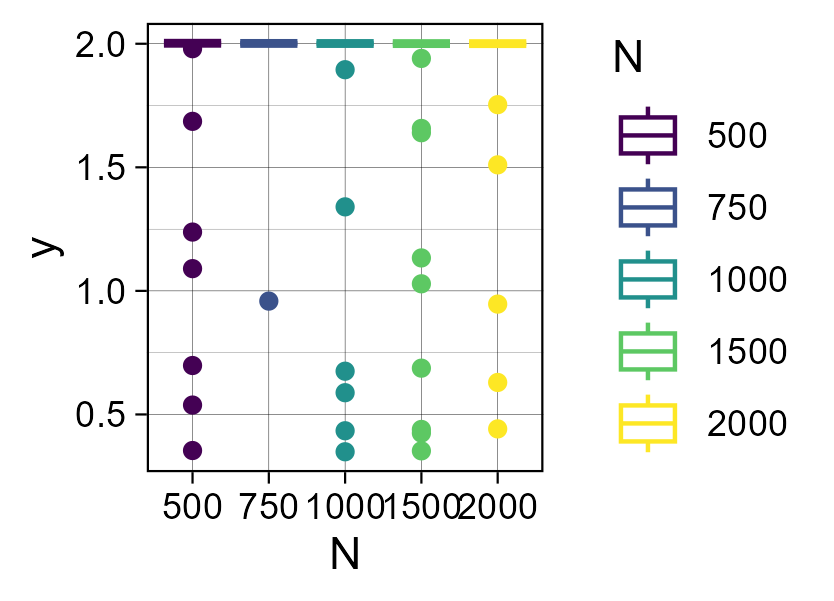}
    \includegraphics[width=6cm,height=3cm]{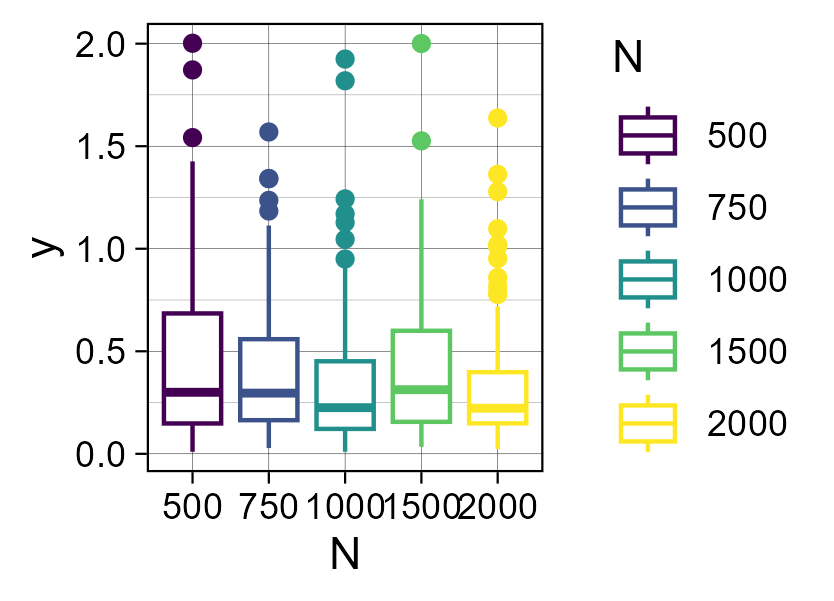}
    \caption{Estimation of $t_*$ when $t_*\in \{0,0.1,0.3,0.5,0.7\}$ (top to bottom),  $\btheta_*=2\btheta_0=(2,4,2)$,  $\kappa_{\hat Q,\btheta_0,\btheta_*}=0$, and $\kappa_{\hat \bGamma,\btheta_0,\btheta_*}=4.90$, based on $\widehat Q$ (left panel) and
     $\widehat \bGamma$ (right panel).}\label{fig:tauStar_t7_242}
\end{figure*}

\begin{figure}[ht!]
    \centering
   \includegraphics[width=6cm,height=3cm]{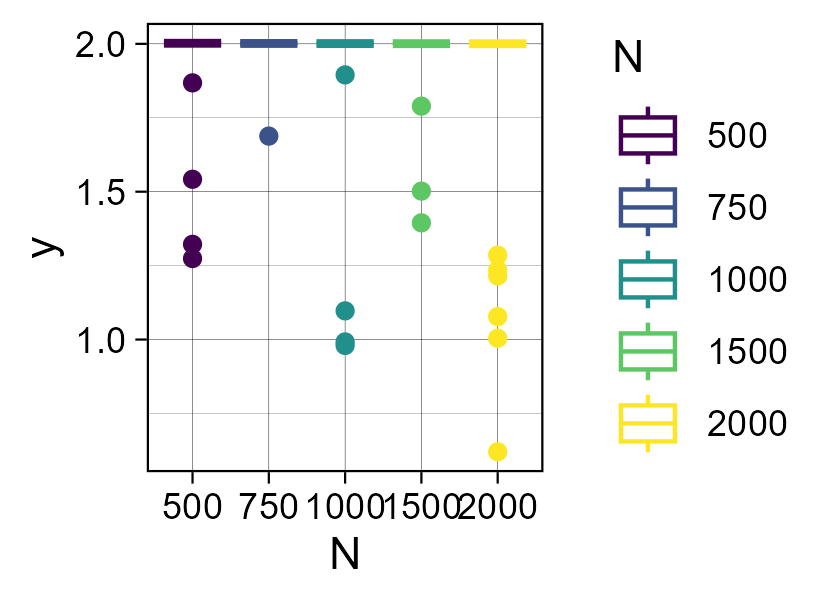}
    \includegraphics[width=6cm,height=3cm]{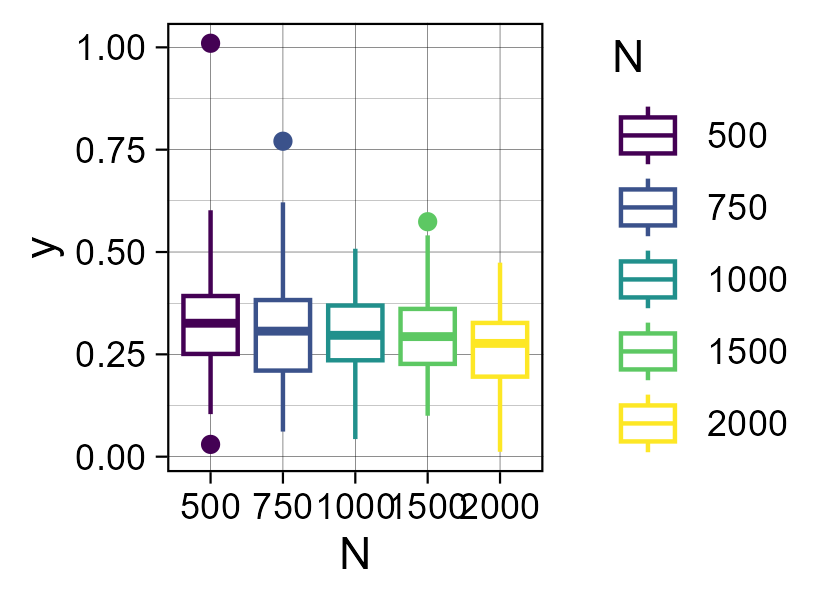}

 \includegraphics[width=6cm,height=3cm]{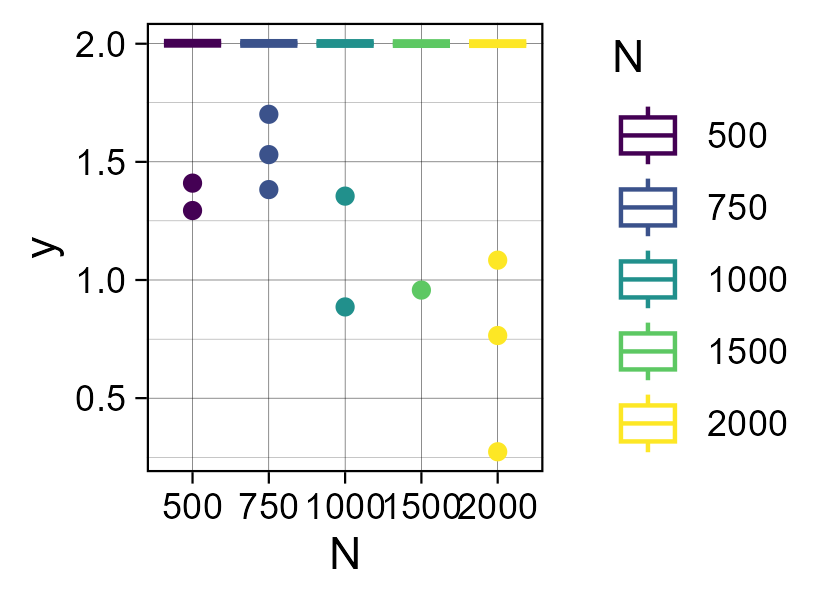}
    \includegraphics[width=6cm,height=3cm]{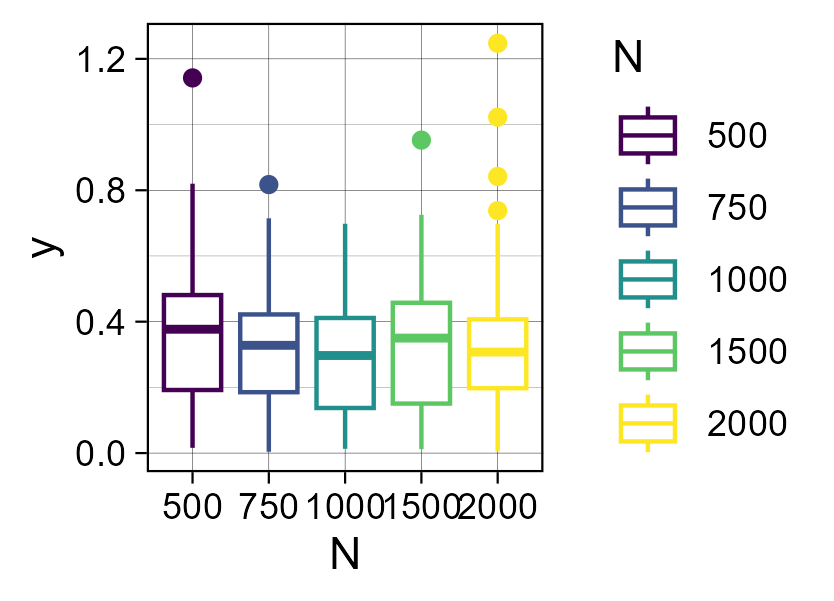}

\includegraphics[width=6cm,height=3cm]{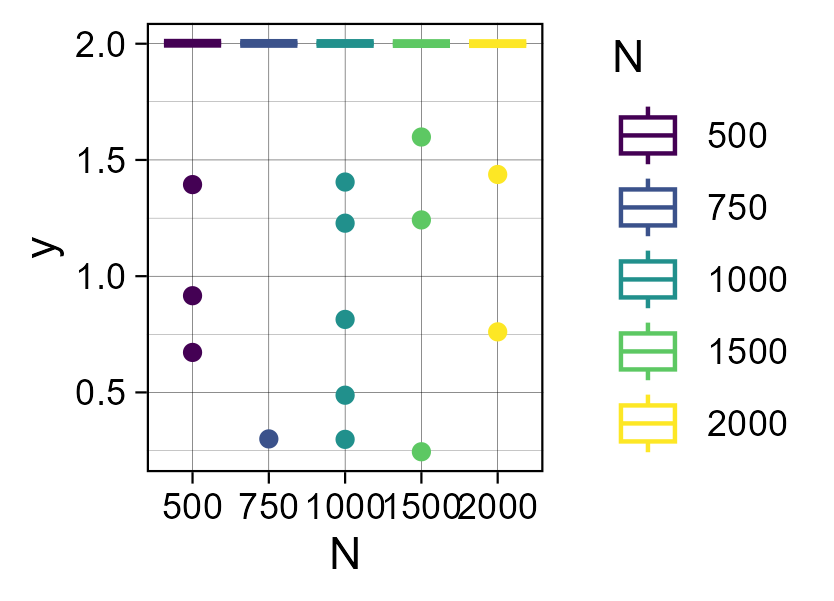}
    \includegraphics[width=6cm,height=3cm]{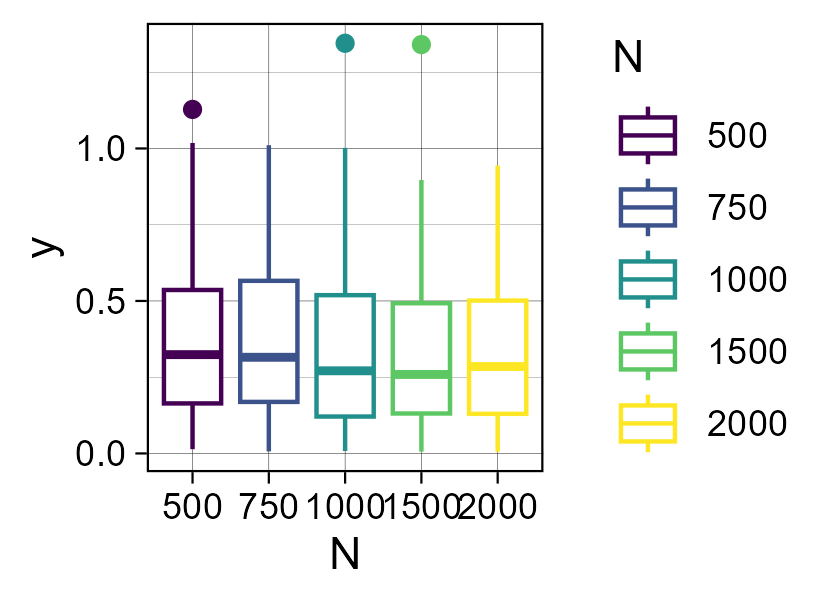}

\includegraphics[width=6cm,height=3cm]{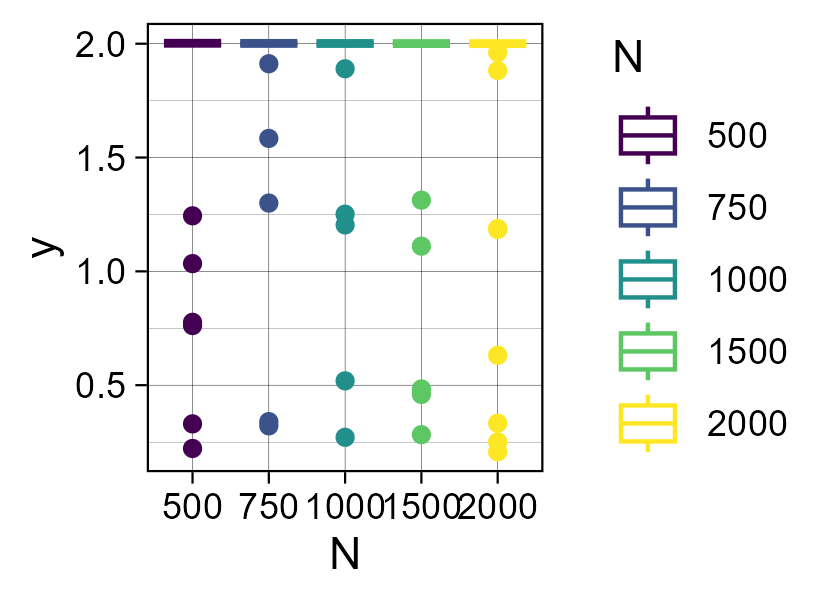}
    \includegraphics[width=6cm,height=3cm]{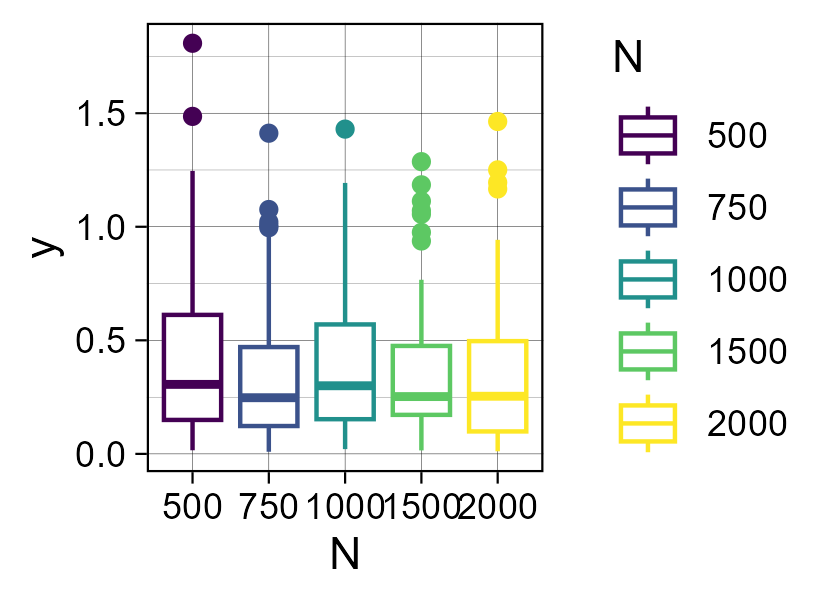}

 \includegraphics[width=6cm,height=3cm]{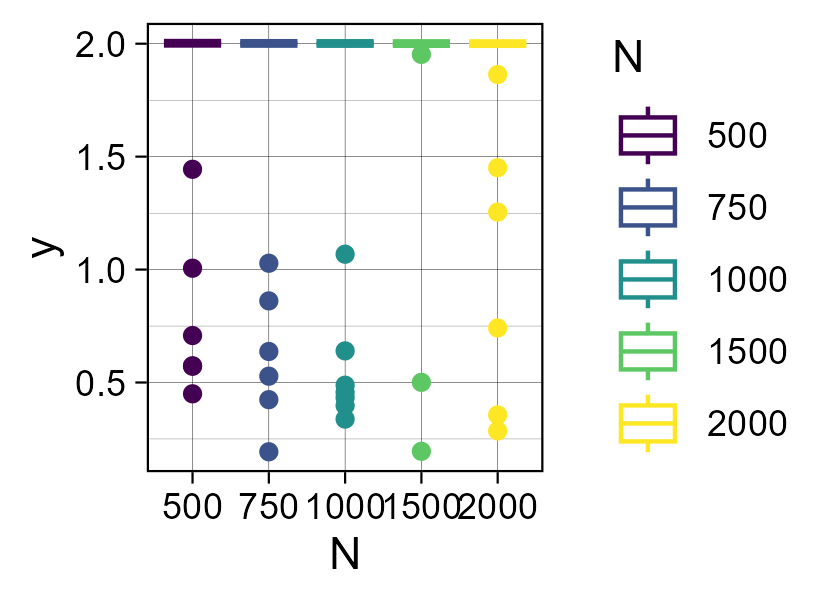}
    \includegraphics[width=6cm,height=3cm]{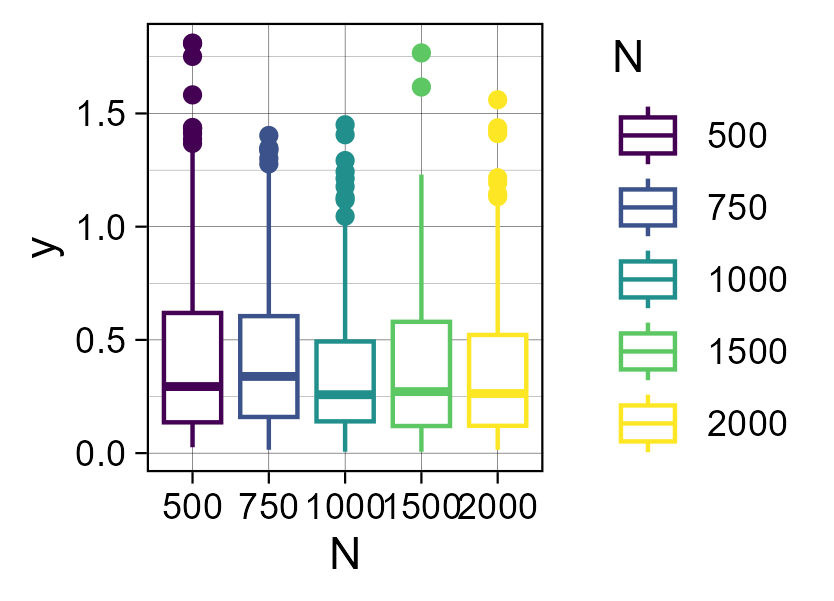}
        \caption{Estimation of $t_*$ when  $t_*\in \{0,0.1,0.3,0.5,0.7\}$ (top to bottom), $\btheta_*=(5,3,4)$,  $\kappa_{\hat Q,\btheta_0,\btheta_*}=0.25$ and $\kappa_{\hat \bGamma,\btheta_0,\btheta_*}=12.81$, based on $\widehat Q$ (left panel) and
     $\widehat \bGamma$ (right panel).}\label{fig:tauStar_t7_534}
\end{figure}


\begin{figure}[ht!]
    \centering

      \includegraphics[width=6cm,height=3cm]{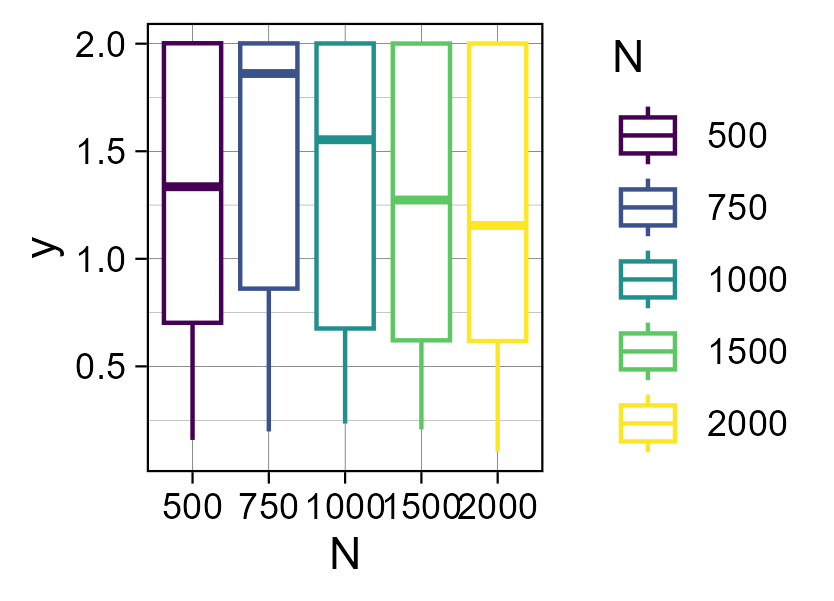}
    \includegraphics[width=6cm,height=3cm]{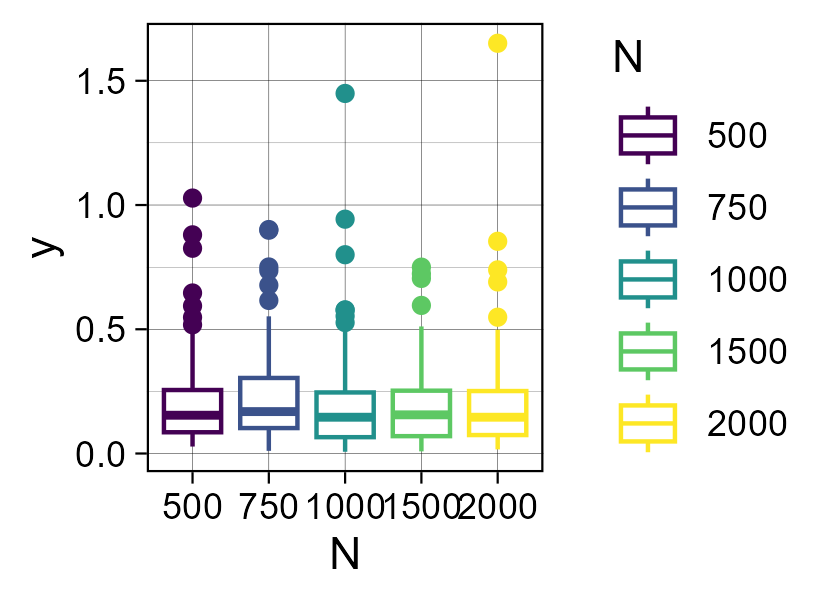}

    \includegraphics[width=6cm,height=3cm]{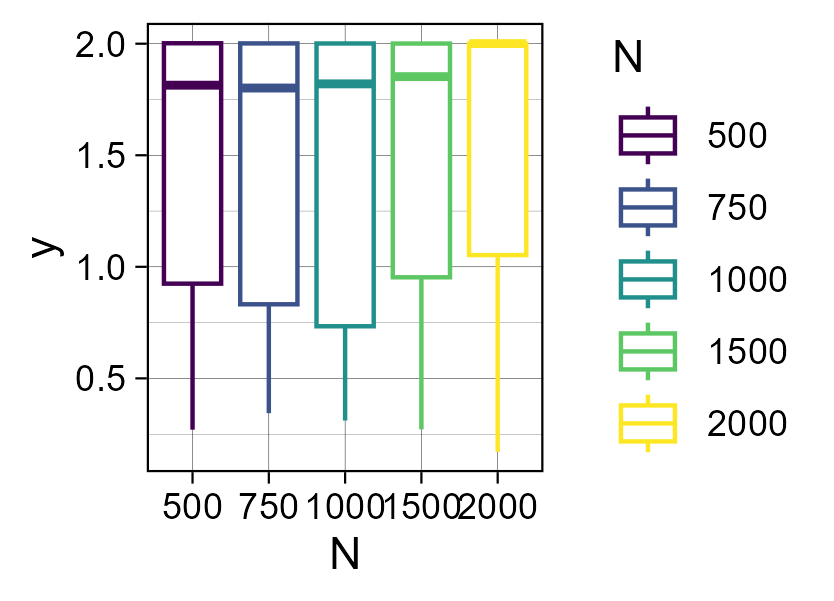}
    \includegraphics[width=6cm,height=3cm]{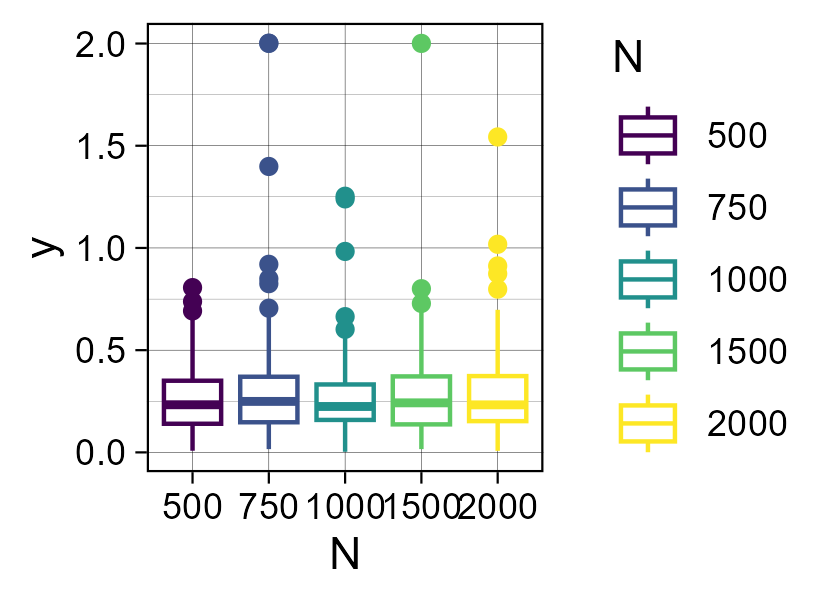}

    \includegraphics[width=6cm,height=3cm]{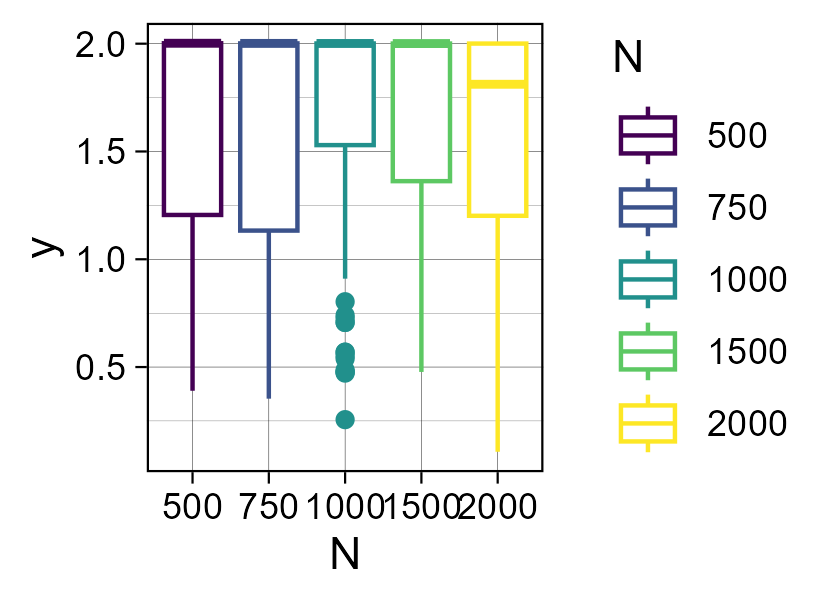}
      \includegraphics[width=6cm,height=3cm]{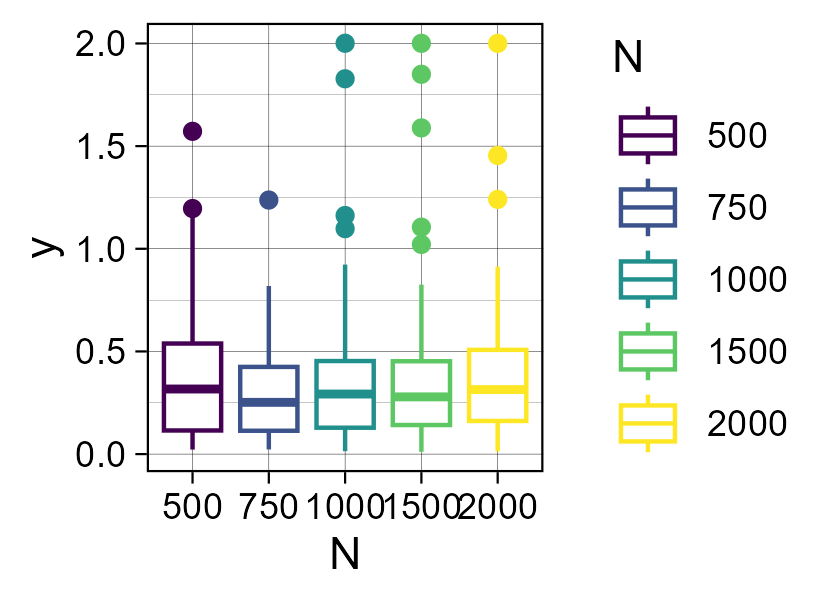}

       \includegraphics[width=6cm,height=3cm]{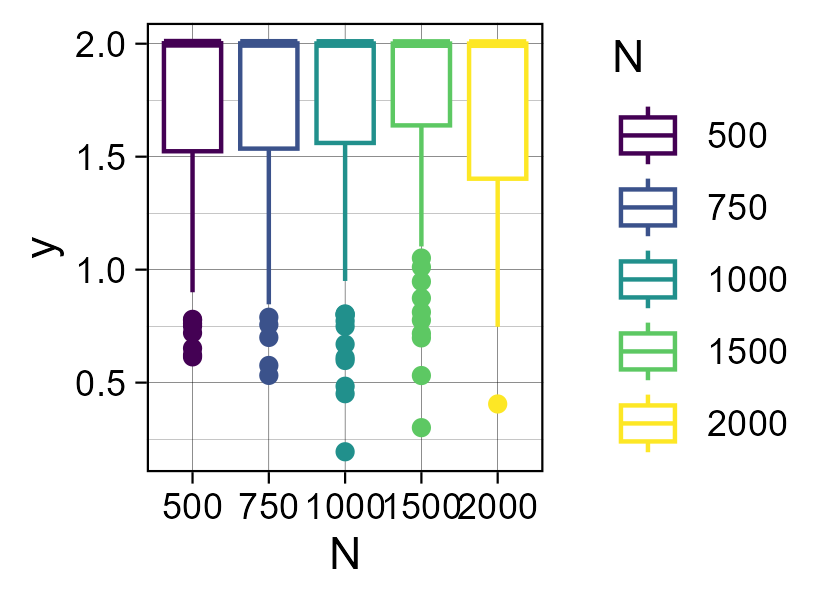}
    \includegraphics[width=6cm,height=3cm]{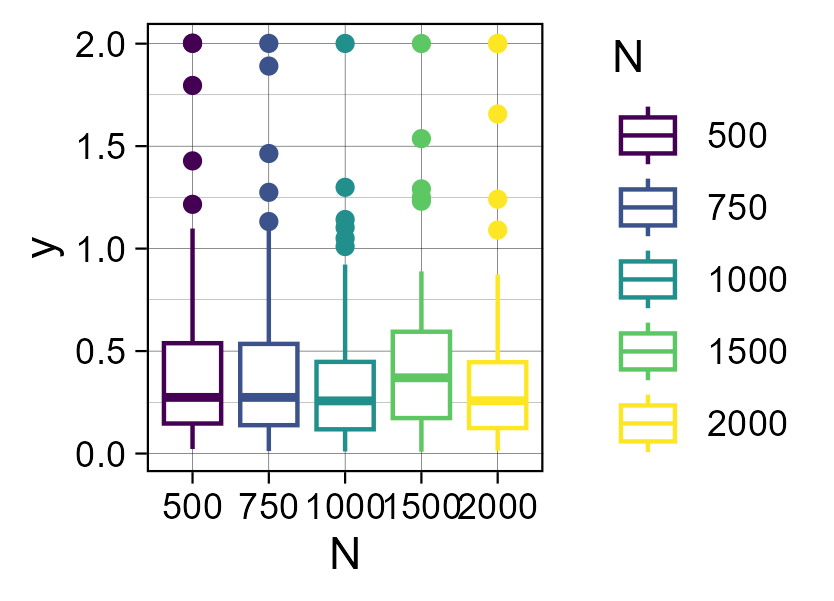}

       \includegraphics[width=6cm,height=3cm]{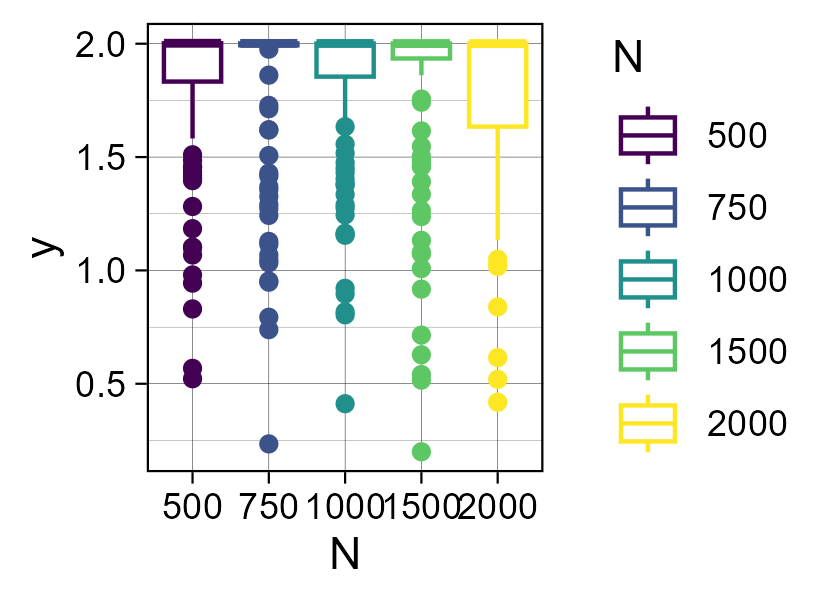}
    \includegraphics[width=6cm,height=3cm]{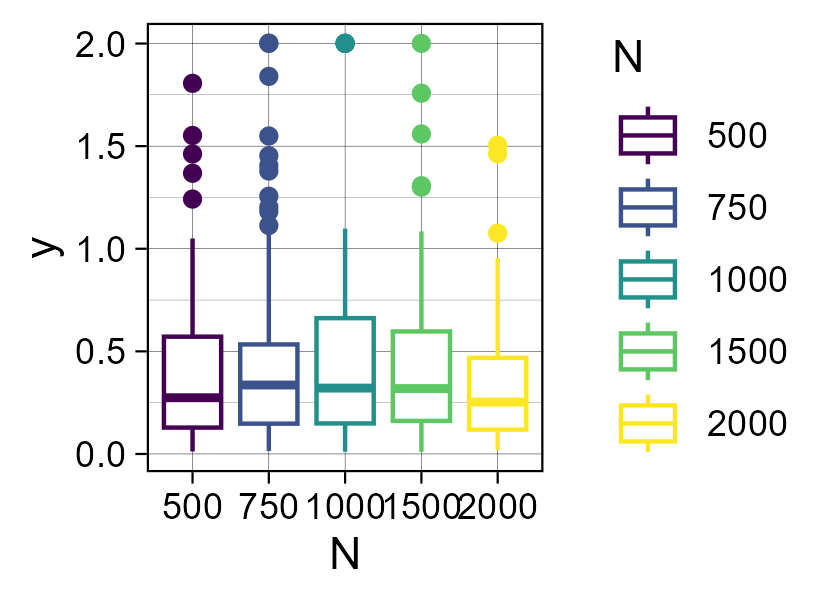}
        \caption{Estimation of $t_*$ when  $t_*\in \{0,0.1,0.3,0.5,0.7\}$ (top to bottom), $\btheta_*=(3,3,1)$,  $\kappa_{\hat Q,\btheta_0,\btheta_*}=2$ and $\kappa_{\hat \bGamma,\btheta_0,\btheta_*}=3.03$, based on $\widehat Q$ (left panel) and
     $\widehat \bGamma$ (right panel).}\label{fig:tauStar_t7_331}
\end{figure}

\subsection{Discussion of the results of the numerical experiments}

First, the case $\btheta_*=2\btheta_0$, is very interesting since Assumption \ref{hyp:theta_star} is not met, because $\kappa_{\hat Q,\btheta_0,\btheta_*}=0$.
The boxplots of $\tau_{\hat Q}$ and $\tau_{\hat\bGamma}$, displayed in Figure \ref{fig:tauStar_t7_242}, illustrates the fact that change-points are rarely detected using $\hat Q$. However, in this case, $\kappa_{\hat \bGamma,\btheta_0,\btheta_*}=4.90$, so the change-points are detected using $\hat\bGamma$.

In the case where $\btheta_*=(5,3,4)$, yielding $\kappa_{\hat Q,\btheta_0,\btheta_*}=0.25$ and $\kappa_{\hat\bGamma,\btheta_0,\btheta_*}=12.81$, the boxplots of $\tau_{\hat Q}$ and $\tau_{\hat\bGamma}$, displayed in Figure \ref{fig:tauStar_t7_534},
      illustrate the fact that it seems very difficult to detect change-points using $\hat Q$, which might be due to the low value of $\kappa_{\hat Q,\btheta_0,\btheta_*}=0.25$. However, in this case, $\kappa_{\hat \bGamma,\btheta_0,\btheta_*}=4.90$, and it seems that using $\hat\bGamma$ works quite well for detecting change-points.

Finally,
   for $\btheta_* \in \{(3,3,1), (15,3,4),(5,3,1)\}$, according to Table \ref{tab:kappa},
   the values of $\kappa_{\hat Q,\btheta_0,\btheta_*}$  and larger, and
   one can see from the boxplots displayed in Figure \ref{fig:tauStar_t7_331}, and Figures
   1-2
   in the Supplementary Material, that the performance of $\hat Q$ increases with $\kappa_{\hat Q,\btheta_0,\btheta_*}$. As for $\hat\bGamma$, the results are quite good, and it seems that $\hat\bGamma$ is generally better than $\hat Q$. Furthermore, in general, the performance of the test statistics seems to increase with $N$.

\subsection{Power curves}

We also calculated a ``power curve'' by computing the probability of rejection of the null hypothesis of no change-point when using N+K points, for 20 values of $K\in \{0.1 jN; 0 \le j\le 19\}$. For example,
if $\left\{\hat\tau_{N,\hat H,i}; i=1,\ldots, B\right\}$ are the possible values for a given $N$, using $B$ simulated trajectories, then the estimated power for statistic $\hat H$, at $\frac{j}{10}$, denoted $Pow\left(N,\frac{j}{10}\right)$, is
\begin{equation}\label{eq:est_power}
Pow_{\hat H}\left(N,\frac{j}{10}\right)=\frac{1}{B}\sum_{i=1}^B \I\left(\hat\tau_{N,G,i} \le \frac{j}{10}\right),\qquad j\in\{0,\ldots,19\}.
\end{equation}
An illustration of the power curves is given in Figure \ref{fig:pow_tauStar_t7_1534}. One can see that in general, $\hat\bGamma$ detects  change-points much earlier than $\hat Q$, and that the power of $\hat\bGamma$ is much better in general than the power of $\hat Q$.
All other curves are displayed in the Supplementary Material. In general, it seems that the test based on $\hat\bGamma$ is much more powerful than the one based on $\hat Q$.
\begin{figure}[ht!]
    \centering

    \includegraphics[width=6cm,height=3cm]{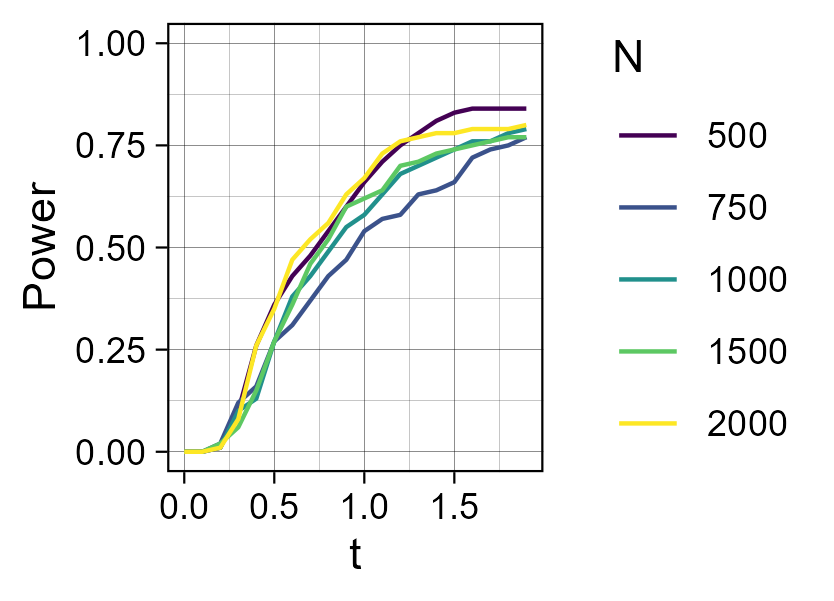}
    \includegraphics[width=6cm,height=3cm]{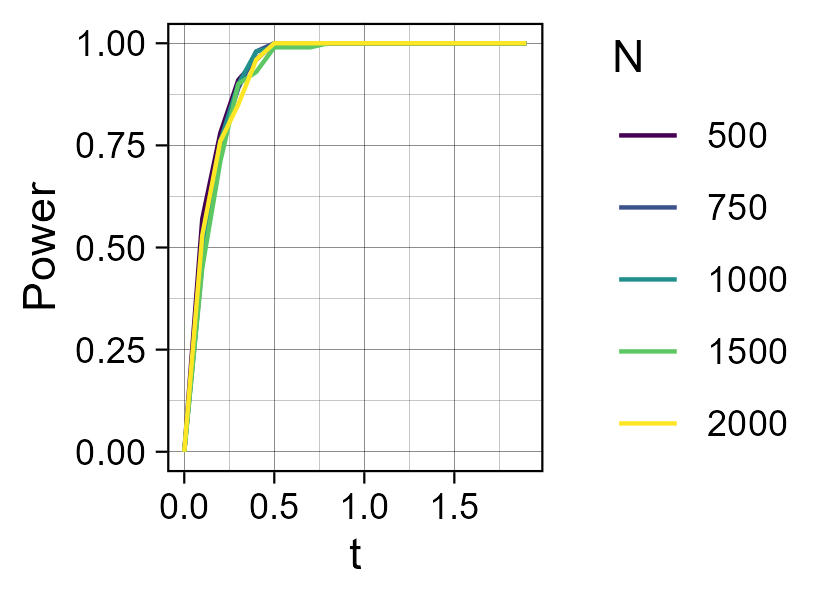}

    \includegraphics[width=6cm,height=3cm]{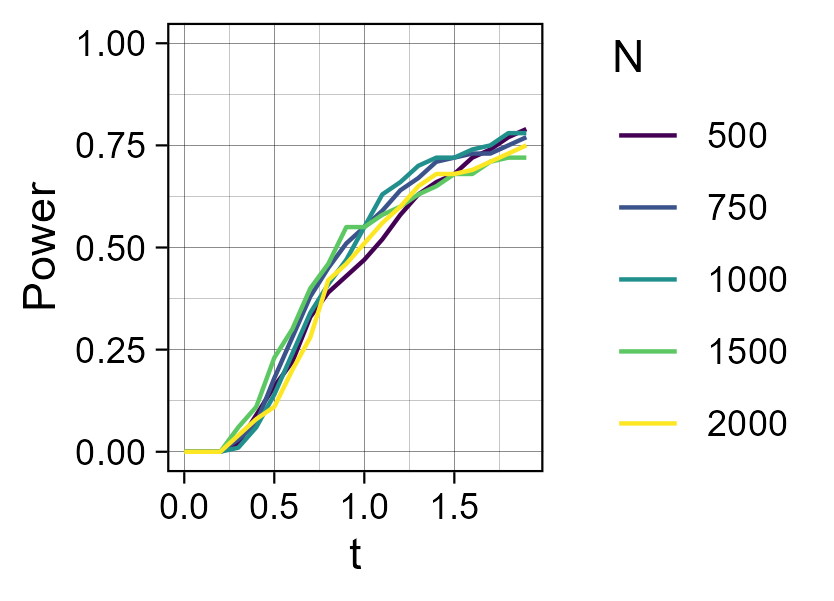}
    \includegraphics[width=6cm,height=3cm]{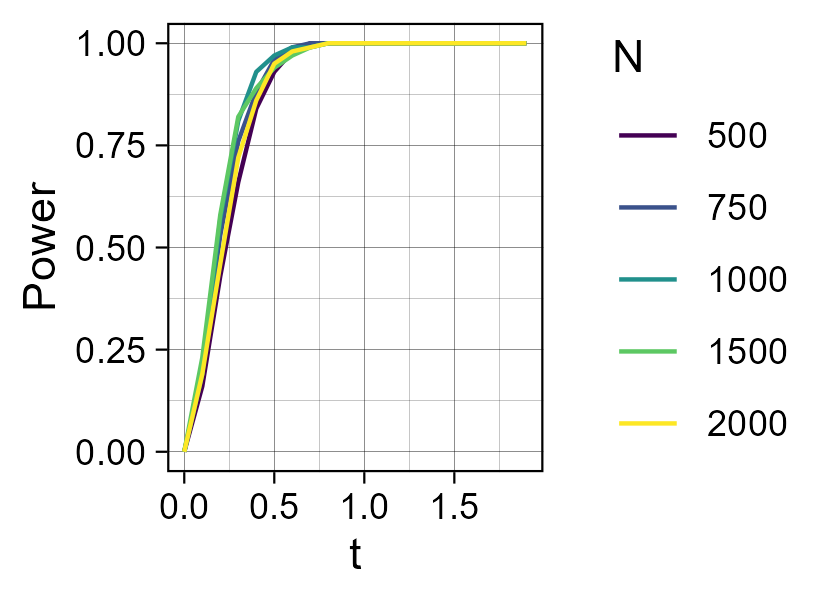}

    \includegraphics[width=6cm,height=3cm]{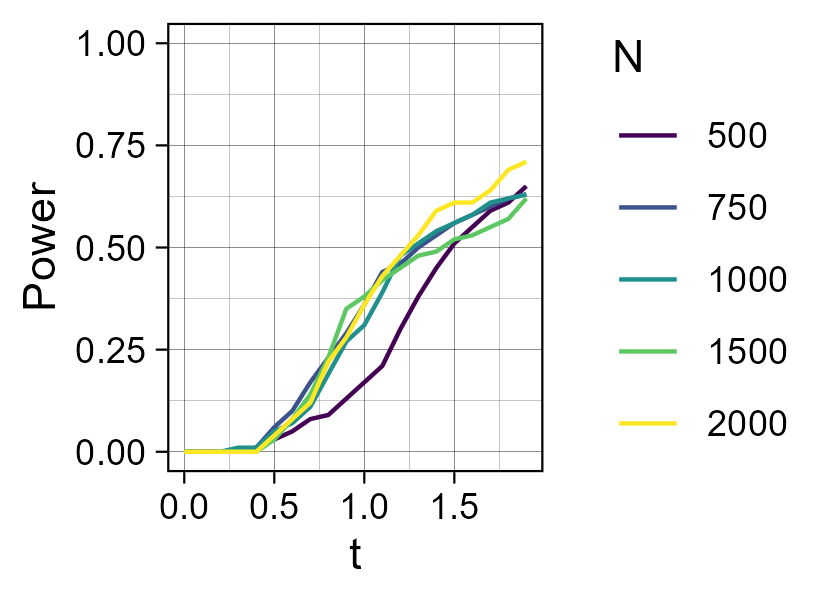}
    \includegraphics[width=6cm,height=3cm]{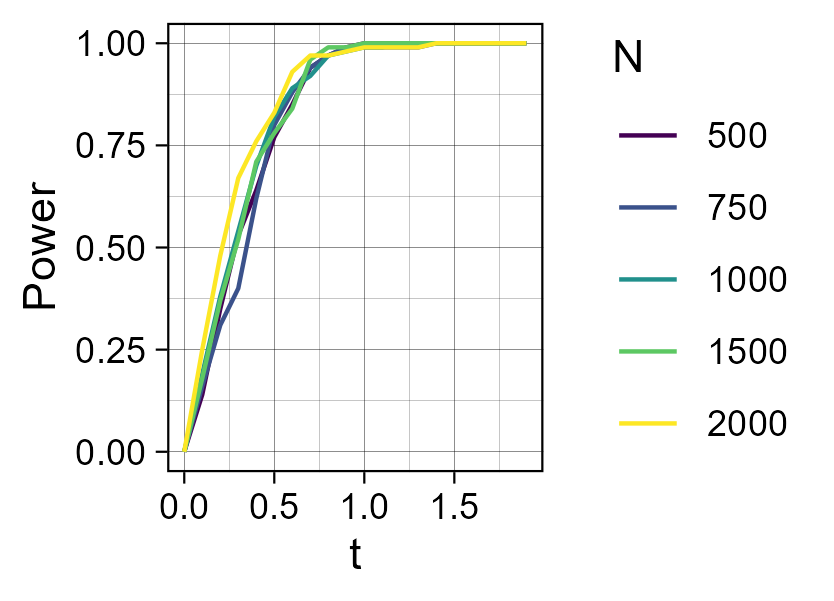}

    \includegraphics[width=6cm,height=3cm]{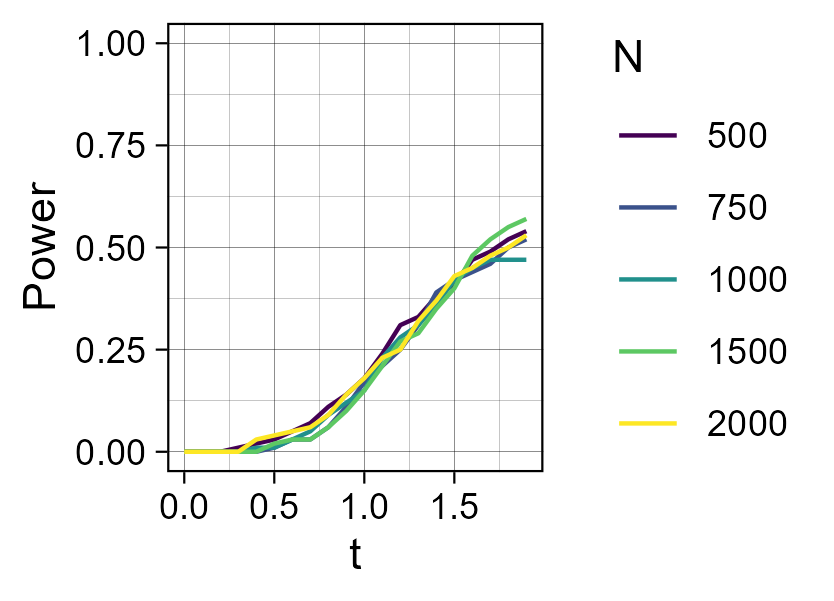}
    \includegraphics[width=6cm,height=3cm]{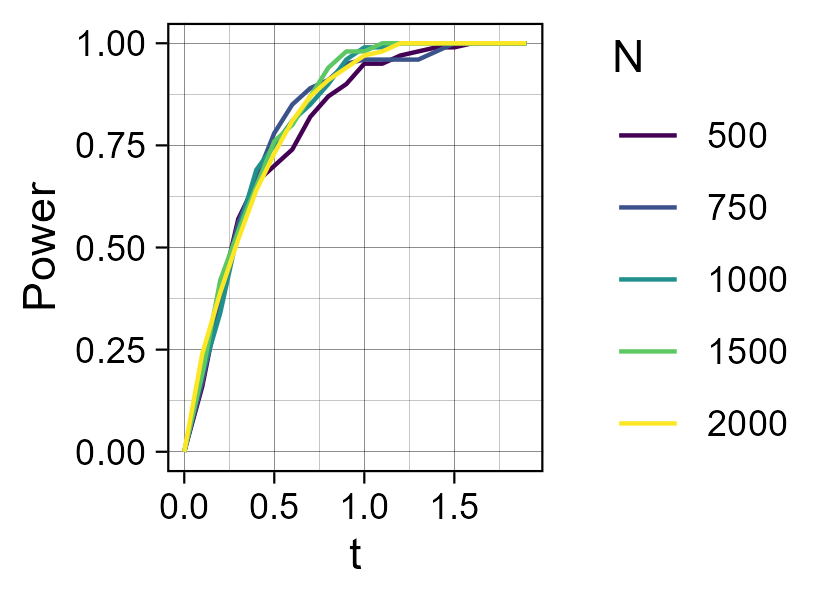}

    \includegraphics[width=6cm,height=3cm]{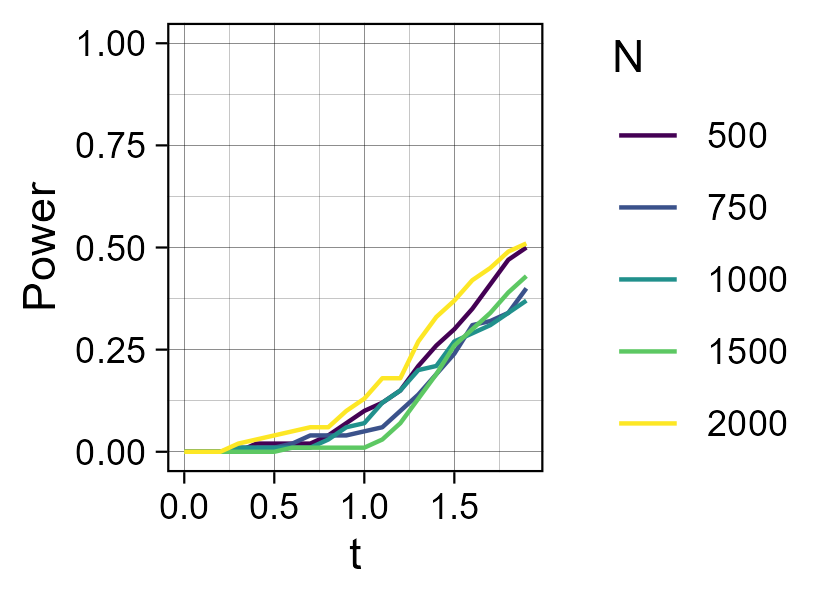}
    \includegraphics[width=6cm,height=3cm]{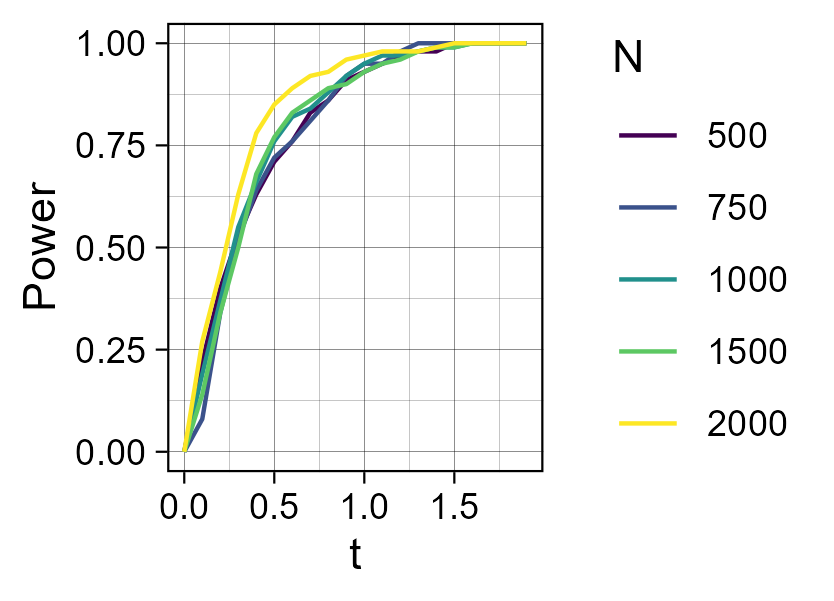}
        \caption{Empirical power of the estimation of  $t_*$ when $t_*\in\{0,0.1,0.3,0.5,0.7\}$ (top to bottom), $\btheta_*=(15,3,4)$, $\kappa_{\hat Q,\btheta_0,\btheta_*}=2.75$ and $\kappa_{\hat \bGamma,\btheta_0,\btheta_*}=11.37$, based on $\widehat Q$ (left panel) and
     $\widehat \bGamma$ (right panel).}\label{fig:pow_tauStar_t7_1534}
\end{figure}

\section{Conclusion}\label{sec:Conclusion}
In this paper, we address the sequential change-point detection problem within the framework of generalized Ornstein-Uhlenbeck (GOU) processes, where the drift is a periodic function of time $t$. First, we recall that the GOU process admits a unique and strong solution with explicit distribution law and we present the maximum likelihood estimator. In addition, for GOU processes observed at discrete times, we develop two types of tests for detecting changes in the parameters: one based on the cumulative sum of residuals, with statistic $\hat Q$, and the other based on sequential estimators, with statistic $\hat\bGamma$. Then, we derive their limiting behaviours under both the null and alternative hypotheses. In particular case, under any alternative hypothesis, the test based on statistic  $\hat \bGamma$ is consistent; however,  the test based on statistic  $\hat Q$ is not consistent in general.
These theoretical results are confirmed in our simulation section. In particular, it seems that the test based on  $\hat\bGamma$ outperforms in general the test based on $\hat Q$.

\appendix\section{Auxiliary results}

\begin{proposition}\label{prop:XT}
Suppose that Assumptions \ref{hyp:1}-\ref{hyp:N} hold. Then, for the partition $0=t_{0}<t_{1}<\dots<t_{N}=T$ on a given period $[0,T]$, with $t_{i}-t_{i-1}\le \Delta$, and $t\in [t_{i-1},t_{i}]$,
\begin{equation*}
\begin{aligned}
E\left( |X_t-X_{t_{i-1}}|\right)& \le \sigma\sqrt{(t-t_{i-1})}+O(\Delta),\\
E\left( |X_t-X_{t_{i-1}}|^2\right) & = \sigma^2 (t-t_{i-1}) + O\left(\Delta^{3/2}\right).
\end{aligned}
\end{equation*}
\end{proposition}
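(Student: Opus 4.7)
The plan is to write $X_t - X_{t_{i-1}}$ in integrated form using the SDE \eqref{eq2}, separate it into a drift integral and a Brownian increment, and bound each piece in $L^1$ and $L^2$ using the triangle inequality and the Cauchy--Schwarz inequality. The leading stochastic term will come from the Brownian increment, while the drift integral will contribute only higher-order error.

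More concretely, for $t\in[t_{i-1},t_i]$ I would start from
\[
X_t - X_{t_{i-1}} = \int_{t_{i-1}}^{t} \bigl(\bmu^\top\bvarphi(s) - a X_s\bigr)\,ds + \sigma\bigl(B_t - B_{t_{i-1}}\bigr).
\]
For the first bound, I would apply $E|U+V|\le E|U|+E|V|$ and use that $E|B_t-B_{t_{i-1}}|\le \sqrt{t-t_{i-1}}$ by Jensen, giving the term $\sigma\sqrt{t-t_{i-1}}$. For the drift integral, I need a uniform bound on $E|\bmu^\top\bvarphi(s) - a X_s|$: by \Cref{rem:a}, $\|\bvarphi\|_\infty \le K_\bvarphi$, and under \Cref{hyp:1} the explicit solution of the GOU process (see the earlier references cited in the paper) ensures that $\sup_{s\ge 0} E|X_s| < \infty$, so the drift integral is $O(t-t_{i-1}) = O(\Delta)$, as required.

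For the second bound, I would expand
\[
(X_t-X_{t_{i-1}})^2 = \sigma^2(B_t-B_{t_{i-1}})^2 + 2\sigma(B_t-B_{t_{i-1}})\!\!\int_{t_{i-1}}^{t}\!\!\bigl(\bmu^\top\bvarphi(s)-aX_s\bigr)ds + \Bigl(\!\int_{t_{i-1}}^{t}\!\!\bigl(\bmu^\top\bvarphi(s)-aX_s\bigr)ds\Bigr)^2.
\]
Taking expectations, the first term is exactly $\sigma^2(t-t_{i-1})$. For the cross-term I use Cauchy--Schwarz: its expectation is bounded by $2\sigma\sqrt{t-t_{i-1}}\cdot O(\Delta) = O(\Delta^{3/2})$. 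For the squared drift integral, I use Cauchy--Schwarz in the form $\bigl(\int_{t_{i-1}}^t f(s)ds\bigr)^2 \le (t-t_{i-1})\int_{t_{i-1}}^t f(s)^2 ds$, together with $\sup_{s\ge 0} E[X_s^2] < \infty$ (which holds by \Cref{hyp:1} with $d\ge 2$), yielding $O(\Delta^2)$. The dominant error is therefore $O(\Delta^{3/2})$.

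The only nontrivial input is the uniform-in-time moment bound $\sup_{s\ge 0} E[|X_s|^k] < \infty$ for $k=1,2$, which I regard as the main (but routine) obstacle; it follows from the closed-form solution of the linear SDE \eqref{eq2} combined with the fact that $a>0$ and $\bvarphi$ is bounded, ensuring the mean and variance of $X_s$ remain uniformly bounded as $s\to\infty$. With that in hand the remaining steps are elementary applications of the triangle and Cauchy--Schwarz inequalities, and the two displayed bounds drop out directly.
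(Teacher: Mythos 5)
Your proof is correct, but it takes a genuinely different route from the paper's. The paper argues from the explicit solution $X_t=e^{-a t}X_{0}+h(t)+z(t)$, splitting the increment $X_t-X_{t_{i-1}}$ into three pieces: the initial-condition term, handled via $E[|X_0|]<\infty$ and the elementary bound on $e^{-a t_{i-1}}-e^{-a t}$; the deterministic term $h(t)-h(t_{i-1})$, handled via $\|\bvarphi\|_\infty\le K_{\bvarphi}$; and the stochastic term $z(t)-z(t_{i-1})$, handled by an explicit It\^o-isometry computation with the exponential kernel, which is where the leading term $\sigma\sqrt{t-t_{i-1}}$ emerges. You instead keep the SDE \eqref{eq2} in integral form, so the increment is a drift integral plus the raw Brownian increment $\sigma(B_t-B_{t_{i-1}})$; the leading terms then come for free from $E|B_t-B_{t_{i-1}}|\le\sqrt{t-t_{i-1}}$ and $E[(B_t-B_{t_{i-1}})^2]=t-t_{i-1}$, and all the work is pushed into the uniform moment bounds $\sup_{s\ge 0}E[|X_s|]<\infty$ and $\sup_{s\ge 0}E[X_s^2]<\infty$. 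You are right that these hold, and that they are the only nontrivial input: they follow from the closed form since $|h(t)|$ is bounded by a constant and $E[z(t)^2]=\sigma^2(1-e^{-2at})/(2a)\le \sigma^2/(2a)$, or alternatively from the paper's own machinery, namely \eqref{ErgodicBound} for the auxiliary stationary process together with part (i) of \Cref{CovSolu_Auxi} ($|X_t-\tilde{X}_t|\le C_0 e^{-a t}$ with $E[C_0^2]<\infty$); it would strengthen your write-up to state one of these derivations rather than assert it, since the paper relies on such bounds repeatedly. What each approach buys: yours is shorter and more general (it works for any SDE with constant diffusion coefficient and drift uniformly bounded in $L^2$ along the solution), and your expansion of the square actually gives a detailed derivation of the second claim, which the paper states as \eqref{ineqMay10_2} with essentially no justification beyond the first-moment computation; the paper's route, in exchange, produces fully explicit constants — for instance the refined bound $\sigma\sqrt{t-t_{i-1}}\{1+a(t-t_{i-1})\}$ and the constant $C_1$ — which are reused later, e.g.\ in \Cref{prop:convdiscont}. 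One last point in your favor: your error constants are manifestly uniform in $i$ because the moment bounds are uniform in time, which is exactly what the applications of this proposition require.
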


\begin{proof}
From \cite{nkurunziza2019improved}, we have $X_t=e^{-a t}X_{0}+h(t)+z(t)$,
where
$$ h(t)=e^{-a t}\sum_{k=1}^{p}\mu_{k}\int_{0}^{t}e^{a s}\varphi_{k}(s)ds,t\ge 0,
$$
and
$\ds z(t)=\sigma e^{-a t}\int_{0}^{t}e^{a s}dB_{s}.$ Then, for $t\in[t_{i-1},t_{i}]$,
\begin{equation}
\label{Bound_1}
X_t-X_{t_{i-1}}=\left(e^{-a t}X_{0}-e^{-a t_{i-1}}X_{0}\right)+\left(h(t)-h(t_{i-1})\right)+\left(z(t)-z(t_{i-1})\right).
\end{equation}
First,
$$
\left|e^{-a t_{i-1}}-e^{-a t} - a e^{-a t_{i-1}}(t-t_{i-1})\right| \le \frac{a^2}{2}(t-t_{i-1})^2,
$$
so for $t\in [t_{i-1},t_{i}]$,
\begin{equation}
\label{Bound_11}
E[|e^{-a t}X_{0}-e^{-a t_{i-1}}X_{0}|]\leq E[|X_{0}|]\left\{ a (t - t_{i-1})+ \frac{a^2}{2} (t - t_{i-1})^2\right\}.
\end{equation}
Second,
\begin{multline*}
h(t)-h(t_{i-1})=e^{-a t}\sum_{k=1}^{p}\mu_{k}\int_{0}^{t}e^{a s}\varphi_{k}(s)ds-e^{-a t_{i-1}}\sum_{k=1}^{p}\mu_{k}\int_{0}^{t_{i-1}}e^{a s}\varphi_{k}(s)ds\\
=e^{-a t}\sum_{k=1}^{p}\mu_{k}\int_{0}^{t}e^{a s}\varphi_{k}(s)ds-e^{-a t}\sum_{k=1}^{p}\mu_{k}\int_{0}^{t_{i-1}}e^{a s}\varphi_{k}(s)ds\\
+e^{-a t}\sum_{k=1}^{p}\mu_{k}\int_{0}^{t_{i-1}}e^{a s}\varphi_{k}(s)ds-e^{-a t_{i-1}}\sum_{k=1}^{p}\mu_{k}\int_{0}^{t_{i-1}}e^{a s}\varphi_{k}(s)ds\\
=e^{-a t}\sum_{k=1}^{p}\mu_{k}\int_{t_{i-1}}^{t}e^{a s}\varphi_{k}(s)ds+\left(e^{-a t}-e^{-a t_{i-1}}\right)\sum_{k=1}^{p}\mu_{k}\int_{0}^{t_{i-1}}e^{a s}\varphi_{k}(s)ds,
\end{multline*}
so setting $C_1=\sum_{k=1}^{p}|\mu_{k}|K_{\bvarphi}$, the first term is bounded by $C_1(t-t_{i-1})+C_1 a (t-t_{i-1})^2$,
and the second term is also bounded by the same value. As a result,
\begin{equation}
\label{Bound_12}
|h(t)-h(t_{i-1})|\le 2C_1(t-t_{i-1})+C_1 a (t-t_{i-1})^2.
\end{equation}
Third,
\begin{multline*}
z(t)-z(t_{i-1})=\sigma e^{-a t}\int_{0}^{t}e^{a s}dB_{s}-\sigma e^{-a t_{i-1}}\int_{0}^{t_{i-1}}e^{a s}dB_{s}\\
=\sigma e^{-a t}\int_{0}^{t}e^{a s}dB_{s}-\sigma e^{-a t}\int_{0}^{t_{i-1}}e^{a s}dB_{s}+\sigma e^{-a t}\int_{0}^{t_{i-1}}e^{a s}dB_{s}-\sigma e^{-a t_{i-1}}\int_{0}^{t_{i-1}}e^{a s}dB_{s}\\
=\sigma e^{-a t}\int_{t_{i-1}}^{t}e^{a s}dB_{s}+\sigma(e^{-a t}-e^{-a t_{i-1}})\int_{0}^{t_{i-1}}e^{a s}dB_{s}.
\end{multline*}
From the fact $E[|z(t) - z(t_{i-1})|]\leq\left\{E[|z(t) - z(t_{i-1})|^2]\right\}^{1/2}$, we have
\begin{multline*}
E[|z(t) - z(t_{i-1})|^2]=E\left[\left|\sigma e^{-a t}\int_{t_{i-1}}^{t}e^{a s}dB_{s}+\sigma(e^{-a t}-e^{-a t_{i-1}})\int_{0}^{t_{i-1}}e^{a s}dB_{s}\right|^2\right]\\
\leq 2\sigma^2 E\left[\left|e^{-a t}\int_{t_{i-1}}^{t}e^{a s}dB_{s}\right|^2\right]+2\sigma^2 E\left[\left|(e^{-a t}-e^{-a t_{i-1}})\int_{0}^{t_{i-1}}e^{a s}dB_{s}\right|^2\right].
\end{multline*}
Next,
\begin{multline*}
E\left[\left| e^{-a t}\int_{t_{i-1}}^{t}e^{a s}dB_{s}\right|^2\right]=
\frac{1}{2a}(1-e^{-2a (t-t_{i-1})})\le (t-t_{i-1})+ a(t-t_{i-1})^2.
\end{multline*}
Also,
\begin{multline*}
E\left[\left|(e^{-a t}-e^{-a t_{i-1}})\int_{0}^{t_{i-1}}e^{a s}dB_{s}\right|^2\right]=(e^{-a t}-e^{-a t_{i-1}})^2\int_{0}^{t_{i-1}}e^{2a s}d{s}\\
\le \frac{a}{2}(t-t_{i-1})^2\left\{1+a(t-t_{i-1})\right\}^2.
\end{multline*}
As a result,
\begin{equation}
\label{Bound_13}
E[|z(t) - z(t_{i-1})|] \le  \sigma\sqrt{(t-t_{i-1})} \left\{1+a (t-t_{i-1})\right\}.
\end{equation}
\eqref{Bound_1}-\eqref{Bound_13} imply that for $t\in [t_{i-1},t_{i}]$,
\begin{equation}\label{ineqMay10_1}
E\left( |X_t-X_{t_{i-1}}|\right) \le  \sigma\sqrt{(t-t_{i-1})}+O(\Delta).
\end{equation}
Furthermore, for $ t\in [t_{i-1},t_{i}]$, we have
\begin{equation}\label{ineqMay10_2}
E\left( |X_t-X_{t_{i-1}}|^2\right) = \sigma^2 (t-t_{i-1})  + O\left(\Delta^{3/2}\right).
\end{equation}
\end{proof}

\begin{remark}\label{toto}
Note that the process $X_t=e^{-a t}X_{0}+h(t)+z(t)$ is not stationary. However, \cite{dehling2010drift} proposed an auxiliary ergodic and stationary process, namely
\begin{equation}\label{ergodic}
\tilde{X}_t=\tilde{h}(t)+\tilde{z}(t), ~~t\ge 0,
\end{equation}
where the function $\tilde{h}(t):[0,+\infty)\rightarrow\mathbb{R}$ and the process $\{\tilde{z}(t):t\ge 0\}$ are defined by
\begin{equation*}
\tilde{h}(t)=e^{-a t}\sum_{k=1}^{p}\mu_{k}\int_{-\infty}^{t}e^{a s}\varphi_{k}(s)ds,~~\tilde{z}(t)=\sigma e^{-a t}\int_{-\infty}^{t}e^{a s}d\tilde{B}_{s},
\end{equation*}
where $\{\tilde{B}_{s}\}_{s\in\mathbb{R}}$ denotes a bilateral Brownian motion, i.e.
$\tilde{B}_{s}=B_{s}\mathbb{I}_{\mathbb{R}+}(s)+\bar{B}_{-s}\mathbb{I}_{\mathbb{R}-}(s)$,
 $s\in\mathbb{R}$,
with $\{B_{s}\}_{s\ge 0}$ and $\{\bar{B}_{-s}\}_{s\ge 0}$ being two independent standard Brownian motions. For the details of the proof of ergodicity, we refer to \cite{dehling2010drift, nkurunziza2019improved}. From the construction of the auxiliary process, we can prove that
\begin{equation}
\label{ErgodicBound}
\sup_{t\ge 0}E\left[\left| \tilde{X}_{t}\right|^{m}\right]
\le \tilde{m},
\end{equation}
for some positive constant $\tilde{m}$.
The relation between $X_t$ and $\tilde{X}_t$ is given by Theorem 2.1 in \cite{nkurunziza2019improved}. For convenience, we state this theorem here.

\end{remark}

\begin{theorem}[\cite{nkurunziza2019improved}]
\label{CovSolu_Auxi} 
 If Assumptions \ref{hyp:1}-\ref{hyp:2} hold, then we have

$(i)~|X_t-\tilde{X}_t|\le C_{0}e^{-a t}$, \textit{where} $C_{0}$ \textit{is a random variable such that} $E(|C_{0}|^{2})<\infty$,

$(ii)~|X_t-\tilde{X}_t|\xrightarrow[t\rightarrow\infty]{\text{a.s. and } L^{2}}0$ and $|X^{2}_t-\tilde{X}^{2}_t|\xrightarrow[t\rightarrow\infty]{\text{a.s. and } L^{1}}0$,

$(iii) \ds \sup_{0\le t_0 < t_1 \le T}\left|\displaystyle\frac{1}{T}\int_{t_0 }^{t_1}\tilde{X}_t\bvarphi(t)dt-\frac{1}{T}\int_{t_0 }^{t_1}{X}_t\bvarphi(t)dt\right|\xrightarrow[T\rightarrow\infty]{\text{a.s. and } L^{2}}0$,

$(iv)\ds \sup_{0\le t_0 < t_1  \le T}\left|\displaystyle\frac{1}{T}\int_{t_0 }^{t_1}\tilde{X}^{2}_tdt-\frac{1}{T}\int_{t_0}^{t_1}{X}^{2}_tdt\right|\xrightarrow[T\rightarrow\infty]{\text{a.s. and } L^{1}}0.$
\end{theorem}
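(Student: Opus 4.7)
The plan is to use the explicit representations $X_t = e^{-at}X_0 + h(t) + z(t)$ and $\tilde X_t = \tilde h(t) + \tilde z(t)$ from Remark \ref{toto} to show that the coupling error $X_t - \tilde X_t$ decays exponentially in $t$; parts (ii)--(iv) then follow as more or less direct consequences of this single estimate.

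For (i), I would split the integrals defining $\tilde h(t)$ and $\tilde z(t)$ at $0$ and use $\tilde B_s = B_s$ for $s \geq 0$ to obtain
\begin{equation*}
X_t - \tilde X_t = e^{-at}\left\{X_0 - \sum_{k=1}^p \mu_k \int_{-\infty}^0 e^{as}\varphi_k(s)ds - \sigma \int_{-\infty}^0 e^{as}d\tilde B_s\right\}.
\end{equation*}
The deterministic term in braces is bounded in absolute value by $K_\varphi \sum_k |\mu_k|/a$ since $\int_{-\infty}^0 e^{as}ds = 1/a$ and $\|\bvarphi\|_\infty \leq K_\varphi$, while the stochastic term is a centered Gaussian with variance $\sigma^2/(2a)$. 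Setting $C_0$ equal to the absolute value of the quantity in braces and invoking Assumption \ref{hyp:1}, which gives $E[X_0^2] < \infty$, combined with the Gaussian second moment, yields $E[C_0^2] < \infty$, establishing (i).

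Part (ii) is then essentially immediate: the bound $|X_t - \tilde X_t| \leq C_0 e^{-at}$ gives both a.s. and $L^2$ convergence to $0$. For the squares, I would write $|X_t^2 - \tilde X_t^2| \leq C_0 e^{-at}(|X_t| + |\tilde X_t|)$ and combine Cauchy--Schwarz with the uniform second-moment bound \eqref{ErgodicBound} on $\tilde X_t$, together with the analogous bound $\sup_{t \geq 0} E[X_t^2] < \infty$ coming directly from the representation of $X_t$, to get $L^1$ convergence. The a.s. statement requires only that $|X_t|+|\tilde X_t|$ not grow as fast as $e^{at}$, which follows from a Borel--Cantelli / Doob argument applied to the martingale parts $z$ and $\tilde z$ along with a chaining step over unit time intervals.

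For (iii), the pointwise estimate $\|(X_t - \tilde X_t)\bvarphi(t)\| \leq K_\varphi C_0 e^{-at}$ is independent of $t_0, t_1$, so
\begin{equation*}
\sup_{0 \leq t_0 < t_1 \leq T}\left|\frac{1}{T}\int_{t_0}^{t_1}(\tilde X_t - X_t)\bvarphi(t)dt\right| \leq \frac{K_\varphi}{T}\int_0^T C_0 e^{-at}dt \leq \frac{K_\varphi C_0}{aT},
\end{equation*}
which tends to $0$ both a.s. and in $L^2$ (the latter since $E[C_0^2] < \infty$). Part (iv) is analogous: the pointwise bound $|X_t^2 - \tilde X_t^2| \leq C_0 e^{-at}(|X_t| + |\tilde X_t|)$, combined with Cauchy--Schwarz in the time variable and the uniform second-moment bounds, yields an $L^1$ bound of order $1/T$. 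The only real technical nuance is the a.s.\ statement in (ii) and (iv), which demands almost-sure control on the growth of $|X_t|$ and $|\tilde X_t|$; once that is in hand via the standard maximal inequality, everything else is a routine coupling estimate driven by the exponential decay established in (i).
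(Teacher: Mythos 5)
Your proposal cannot be compared against an in-paper proof, because the paper offers none: Theorem \ref{CovSolu_Auxi} is imported verbatim as Theorem 2.1 of \cite{nkurunziza2019improved}, stated "for convenience" with the proof deferred to that reference (and to \cite{dehling2010drift}). What you have written is therefore a self-contained derivation of a cited result, and it is essentially correct. Your key identity
\begin{equation*}
X_t-\tilde X_t=e^{-a t}\Bigl\{X_0-\sum_{k=1}^{p}\mu_k\int_{-\infty}^{0}e^{a s}\varphi_k(s)\,ds-\sigma\int_{-\infty}^{0}e^{a s}\,d\tilde B_s\Bigr\}
\end{equation*}
is exactly right: the $[0,t]$ portions of $h,\tilde h$ and $z,\tilde z$ cancel since $\tilde B_s=B_s$ for $s\ge 0$, the deterministic remainder is bounded by $K_{\bvarphi}\sum_k|\mu_k|/a$, and the Wiener integral over $(-\infty,0]$ is centered Gaussian with variance $\sigma^2/(2a)$, so $E[C_0^2]<\infty$ follows from Assumption \ref{hyp:1} with $d\ge 2$ without even needing independence of $X_0$ from the negative half of the bilateral motion. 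A pleasant feature of your argument is that it yields (iii) and (iv) with the supremum over all subintervals $[t_0,t_1]\subseteq[0,T]$ for free, since you dominate the integrand pointwise by $K_{\bvarphi}C_0e^{-a t}$ (resp. $C_0e^{-a t}(|X_t|+|\tilde X_t|)$) and then integrate over all of $[0,T]$; the $O(1/T)$ rate is stronger than the stated convergence.

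The one place where you sketch rather than prove is the almost-sure control of $|X_t|+|\tilde X_t|$ needed for the a.s. statements in (ii) and (iv), but your proposed mechanism does close the gap: writing $M_t=\int_0^t e^{a s}dB_s$, Doob's $L^2$ maximal inequality gives $\dP\bigl(\sup_{t\le n+1}|M_t|>n\,e^{a(n+1)}\bigr)\le \bigl(2a n^2\bigr)^{-1}$, which is summable, so Borel--Cantelli yields $\sup_{t\in[n,n+1]}|z(t)|=O(n)$ a.s., and the same chaining over unit intervals applies to $\tilde z$; since $h$ and $\tilde h$ are bounded, $|X_t|+|\tilde X_t|=O(t)$ a.s., which is far slower than $e^{a t}$ and also makes $\frac{1}{T}\int_0^T C_0e^{-a t}(|X_t|+|\tilde X_t|)\,dt=O(1/T)$ a.s. in (iv). You should spell this out (in particular the choice of threshold making the probabilities summable), and you should note explicitly that $\varphi_k$ on $(-\infty,0]$ means the periodic extension, which is what makes the $K_{\bvarphi}$ bound legitimate there. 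With those two points made precise, the proof is complete.
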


\begin{proposition}\label{prop:inverse}
If $\tau>0, A_n-A = O_P(n^\tau)$ and $A$ is invertible matrix, then $A_n^{-1}-A^{-1} = O_P(n^\tau)$.
\end{proposition}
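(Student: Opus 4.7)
The plan is to reduce everything to the standard perturbation identity for matrix inverses,
\[
A_n^{-1} - A^{-1} = -A^{-1}(A_n - A)\, A_n^{-1},
\]
which is valid whenever both $A_n$ and $A$ are invertible. Once this identity is in place, the conclusion is obtained by taking norms and multiplying the three factors on the right.

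First I would establish that, with probability tending to one, $A_n$ is invertible with $\|A_n^{-1}\| = O_{\rm{P}}(1)$. Factoring
\[
A_n = A\bigl(I + A^{-1}(A_n - A)\bigr),
\]
the matrix $I + A^{-1}(A_n - A)$ is invertible whenever $\|A^{-1}(A_n - A)\| < 1$, in which case the Neumann series gives
\[
A_n^{-1} = \bigl(I + A^{-1}(A_n-A)\bigr)^{-1} A^{-1}, \qquad \|A_n^{-1}\| \le \frac{\|A^{-1}\|}{1 - \|A^{-1}\|\,\|A_n - A\|}.
\]
Reading the hypothesis $A_n - A = O_{\rm{P}}(n^\tau)$ in the way it is actually applied in the paper (namely where the rate makes $A_n - A = o_{\rm{P}}(1)$, as in \Cref{Cbound_0114_2024}), the event $\mathcal{E}_n = \{\|A^{-1}\|\,\|A_n - A\| \le 1/2\}$ has probability tending to one, and on $\mathcal{E}_n$ the bound $\|A_n^{-1}\| \le 2\|A^{-1}\|$ holds. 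Hence $A_n^{-1} = O_{\rm{P}}(1)$.

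Second, inserting this into the perturbation identity and using submultiplicativity of the operator norm yields
\[
\|A_n^{-1} - A^{-1}\| \le \|A^{-1}\|\cdot\|A_n - A\|\cdot\|A_n^{-1}\| = O(1) \cdot O_{\rm{P}}(n^\tau) \cdot O_{\rm{P}}(1) = O_{\rm{P}}(n^\tau),
\]
which is the desired conclusion.

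The main obstacle is the first step, namely uniformly controlling $\|A_n^{-1}\|$. The Neumann-series argument is the natural tool, but one must be careful to formulate it so that the probabilistic bound is preserved: on the complement of $\mathcal{E}_n$ the inverse $A_n^{-1}$ need not even exist, so the whole statement is really an assertion on $\mathcal{E}_n$, which suffices because $\dP(\mathcal{E}_n^c) \to 0$ and the definition of $O_{\rm{P}}$ only requires control up to events of arbitrarily small probability. The rest of the argument is routine multiplication of norm bounds.
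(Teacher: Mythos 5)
Your proof is correct, and it takes a mildly but genuinely different route than the paper's. You use the exact first-order identity $A_n^{-1}-A^{-1} = -A^{-1}(A_n-A)A_n^{-1}$, so the real work lies in showing $\|A_n^{-1}\|=O_{\rm{P}}(1)$, which you do via the factorization $A_n = A\bigl(I+A^{-1}(A_n-A)\bigr)$ and a Neumann-series bound on the high-probability event $\mathcal{E}_n$. The paper instead expands to second order around the \emph{fixed} inverse: it shows $\bigl\|(A+x)^{-1}-A^{-1}+A^{-1}xA^{-1}\bigr\|\le \|I\|\,\|A^{-1}\|^3\|x\|^2/\bigl(1-\|A^{-1}\|\,\|x\|\bigr)=O\left(\|x\|^2\right)$, so that $A_n^{-1}-A^{-1}$ is the explicit leading term $-A^{-1}(A_n-A)A^{-1}$ plus a quadratic remainder; since both factors flanking the perturbation are the deterministic $A^{-1}$, no separate stochastic bound on $\|A_n^{-1}\|$ is needed. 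Both arguments ultimately rest on the same inequality $\|(I-B)^{-1}\|\le \|I\|/(1-\|B\|)$ for $\|B\|<1$, which the paper proves from scratch exactly as you invoke it. What each buys: your version is more elementary (no second-order expansion) and is more scrupulous about the probabilistic bookkeeping, making explicit that $A_n$ need only be invertible on $\mathcal{E}_n$ with $\dP(\mathcal{E}_n^c)\to 0$ — a point the paper's proof leaves implicit; the paper's version delivers a small refinement you do not, namely the identification of the leading term with a quantified $O\left(\|x\|^2\right)$ error, though this extra precision is not exploited elsewhere. You were also right to flag that the hypothesis must be read with a rate making $A_n-A=o_{\rm{P}}(1)$ (as in \Cref{Cbound_0114_2024}, where the rate is $O_{\rm{P}}(T^{-1})$): taken literally, $\tau>0$ with $A_n-A=O_{\rm{P}}(n^{\tau})$ allows non-vanishing perturbations, in which case $A_n$ need not be invertible at all, and indeed the paper's own bound tacitly requires $\|A^{-1}\|\,\|x\|<1$.
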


\begin{proof}
This follows from $$\|(A+x)^{-1}-A^{-1}+A^{-1}xA^{-1}\|\le \frac{\|I\|\|A^{-1}\|^3 \|x\|^2}{1-\|A^{-1}\|\|x\|}=O\left(\|x\|^2\right),$$
since $\|(I-B)^{-1}\|\le \frac{\|I\|}{1-\|B\|}$, if $\|B\|<1$. In fact, for any $y\neq 0$, $\|(I-B)y\|\ge \|y\|-\|By\|\ge \|y\|(1-\|B\|)$. As a result,
$$
\|I\|=\|(I-B)^{-1}(I-B)\|\ge \|(I-B)^{-1}(I-B)\|-\|(I-B)^{-1}B\| \ge \|(I-B)^{-1}\|(1-\|B\|).
$$
\end{proof}

\section{Proofs for the continuous process}

\subsection{Proof of \Cref{ConvQSigma}}\label{app:pf1}
First, by \Cref{hyp:1}, we have
  \begin{equation*}
     \frac{1}{T}\int_{0}^{T}\bvarphi(t)\bvarphi^\top(t)dt=\frac{1}{T}\int_{0}^{\lfloor T\rfloor}\bvarphi(t)\bvarphi^\top(t)dt+\frac{1}{T}\int_{\lfloor T\rfloor}^{T}\bvarphi(t)\bvarphi^\top(t)dt
  \end{equation*}
  \begin{equation*}
     =\frac{1}{T}\sum_{j=1}^{\lfloor T\rfloor}\int_{j-1}^{j}\bvarphi(t)\bvarphi^\top(t)dt+\frac{1}{T}\int_{\lfloor T\rfloor}^{T}\bvarphi(t)\bvarphi^\top(t)dt
  \end{equation*}
By \Cref{hyp:2},
  \begin{equation*}
     \frac{1}{T}\sum_{j=1}^{\lfloor T\rfloor}\int_{j-1}^{j}\bvarphi(t)\bvarphi^\top(t)dt
     =\frac{1}{T}\sum_{j=1}^{\lfloor T\rfloor}\int_{0}^{1}\bvarphi(u)\bvarphi^{\top}(u)du=\frac{\lfloor T\rfloor}{T}I_{p}\xrightarrow[T\to\infty]{}I_{p}.
  \end{equation*}
Furthermore,
  \begin{equation*}
  \left\|\frac{1}{T}\int_{\lfloor T\rfloor}^{T}\bvarphi(t)\bvarphi^\top(t)dt\right\| \le \frac{K_{\bvarphi}^2}{T},
  \end{equation*}
so
  \begin{equation*}
\left\| \frac{1}{T}\int_{0}^{T}\bvarphi(t)\bvarphi^\top(t)dt -I_p\right\| = O(1/T).
  \end{equation*}
Next, by  \Cref{CovSolu_Auxi}, $\left|\tilde{X}_{t}- X_{t}\right|\xrightarrow[t\to\infty]{a.s.~{\rm and}~L^{2}}0$, so  by using inequality \eqref{kphi}, we have
\begin{equation*}
\bvarphi(t)X_{t}-\bvarphi(t) \tilde{X}_{t}\xrightarrow[t\to\infty]{a.s \text{ \rm and } L^{2}}0.
\end{equation*}
By the continuous version of Ces\`aro's mean theorem, we have
\begin{equation*}
\frac{1}{T}\int_{0}^{T}\bvarphi(t) X_{t}dt-\frac{1}{T}\int_{0}^{T}\bvarphi(t) \tilde{X}_{t}dt\xrightarrow[T\to\infty]{a.s.~{\rm and}~L^{2}}0.
\end{equation*}
Then, we just need to prove
  \begin{equation*}
     \frac{1}{T}\int_{0}^{T}\bvarphi(t) \tilde{X}_{t}dt\xrightarrow[T\to\infty]{a.s.{\rm~and~}L^{2}}\int_{0}^{1}\bvarphi(t)\tilde{h}(t)dt.
  \end{equation*}
 In fact,
  \begin{equation*}
  \begin{aligned}
  \frac{1}{T}\int_{0}^{T}\bvarphi(t) \tilde{X}_{t}dt&=\frac{1}{T}\int_{0}^{\lfloor T\rfloor}\bvarphi(t) \tilde{X}_{t}dt+\frac{1}{T}\int_{\lfloor T\rfloor}^{T}\bvarphi(t) \tilde{X}_{t}dt\\ &=\frac{1}{T}\sum_{j=1}^{\lfloor T\rfloor}\int_{j-1}^{j}\bvarphi(t)\tilde{X}_{t}dt+\frac{1}{T}\int_{\lfloor T\rfloor}^{T}\bvarphi(t) \tilde{X}_{t}dt.
  \end{aligned}
  \end{equation*}
By construction of the auxiliary process given in \eqref{ergodic}, $\{\tilde{X}_{u+k-1},0\le u\le1\}_{k\in\mathbb{N}}$ is stationary and ergodic. Then, $\left\{\int_{j-1}^{j}\bvarphi(t) \tilde{X}_{t}dt\right\}_{j\in\mathbb{N}}$ is stationary and ergodic. This leads to
\begin{equation}\label{G0917_1}
  \begin{array}{ll}
  \ds\frac{1}{T}\sum_{j=1}^{\lfloor T\rfloor}\int_{j-1}^{j}\bvarphi (t)\tilde{X}_{t}dt \xrightarrow[T\to\infty]{a.s.{\rm~and~}L^{1}}\int_{0}^{1}\bvarphi (u)E\left[ \tilde{X}_{u}\right]du=\int_{0}^{1}\bvarphi(u)\tilde{h}(u)dt. \\
  \end{array}
  \end{equation}
 In addition, for $i=1,2,\cdots,p$, let $S_{\lfloor T\rfloor i}=\ds\int_{\lfloor T\rfloor}^{T}\varphi_{i}(t)\tilde{X}_{t}dt$. Then, we have
\begin{equation*}
E[S_{\lfloor T\rfloor i}^{2}]=    E\left[\left(\int_{\lfloor T\rfloor}^{T}\varphi_{i}(t)n \tilde{X}_{t}dt\right)^{2}\right]\le K_{\bvarphi}^{2}E\left[\left(\int_{\lfloor T\rfloor}^{T}\tilde{X}_{t}dt\right)^{2}\right]
\end{equation*}
\begin{equation*}
= K_{\bvarphi}^{2}E\left[\left(\int_{\lfloor T\rfloor}^{T} \tilde{X}_{t}\int_{\lfloor T\rfloor}^{T}\tilde{X}_{u}dudt\right)\right]= K_{\bvarphi}^{2}E\left[\left(\int_{\lfloor T\rfloor}^{T}\int_{\lfloor T\rfloor}^{T} \tilde{X}_{t}\tilde{X}_{u}dudt\right)\right].
\end{equation*}
Hence,
\begin{equation*}
E[S_{\lfloor T\rfloor i}^{2}]= K_{\bvarphi}^{2}\left(\int_{\lfloor T\rfloor}^{T}\int_{\lfloor T\rfloor}^{T}E\left[\tilde{X}_{t} \tilde{X}_{u}\right]dudt\right).
\end{equation*}
By (\ref{ErgodicBound}), for $m=2$, $E\left[\tilde{X}_{t}\tilde{X}_{u}\right]\le \sup_{t\ge 0}E\left[\left|  \tilde{X}_{t}\right|^{2}\right] \le \tilde{m}<\infty $, so by \eqref{eq:XL2}
\begin{equation*}
E[S_{\lfloor T\rfloor i}^{2}]\le K_{\bvarphi}^{2}\sup_{t\ge 0}E\left[\left| X_{t}\right|^{2}\right]\left(\int_{\lfloor T\rfloor}^{T}\int_{\lfloor T\rfloor}^{T}dudt\right)=K_{\bvarphi}^{2}K_1(T-\lfloor T\rfloor)^{2}.
\end{equation*}
Since $T-\lfloor T\rfloor\le 1$, it follows that
\begin{equation*}
\sum_{\lfloor T\rfloor=1}^{\infty}{\rm P}\left(\frac{|S_{\lfloor T\rfloor i}|}{\lfloor T\rfloor}>\delta\right) \le \sum_{\lfloor T\rfloor=1}^{\infty}\frac{E[|S_{\lfloor T\rfloor i}|^{2}]}{\lfloor T\rfloor^{2}\delta^{2}}\le \sum_{\lfloor T\rfloor=1}^{\infty}\frac{K_{\bvarphi}^{2}K_1}{\lfloor T\rfloor^{2}\delta^{2}}
\end{equation*}
\begin{equation*}
=K_{\bvarphi}^{2}K_1\sum_{\lfloor T\rfloor=1}^{\infty}\frac{1}{\lfloor T\rfloor^{2}\delta^{2}}<\infty.
\end{equation*}
By Borel-Cantelli's Lemma, we have
$\ds\frac{|S_{\lfloor T\rfloor i}|}{\lfloor T\rfloor}\xrightarrow[T\to\infty]{a.s.}0$, for $i=1,2,\cdots,p$,
which implies that
\begin{equation}\label{G0917_2}
\frac{1}{T}\int_{\lfloor T\rfloor}^{T}\bvarphi(t) \tilde{X}_{t}dt\xrightarrow[T\to\infty]{a.s.}0.
  \end{equation}
Furthermore,
\begin{equation*}
\begin{aligned}
&E\left[\left|\left|\ds\frac{1}{T}\int_{\lfloor T\rfloor}^{T}\bvarphi(t) \tilde{X}_{t}dt\right|\right|^{m}\right]=E\left[\left|\left|\ds\frac{1}{T}\int_{\lfloor T\rfloor}^{T}\bvarphi(t)\tilde{X}_{t}dt\right|\right|^{m}\right]\\
&\le E\left[\left(\ds\frac{1}{T}\int_{\lfloor T\rfloor}^{T}\left|\left|\bvarphi(t)\right|\right|| \tilde{X}_{t}|dt\right)^{m}\right]
\le \left(\ds\frac{1}{T}\right)^{m}\left(pK_{\bvarphi}\right)^{m/2}E\left[\left(\ds\int_{\lfloor T\rfloor}^{T}| \tilde{X}_{t}|dt\right)^{m}\right]\\
&\le \left(\ds\frac{1}{T}\right)^{m}\left(pK_{\bvarphi}\right)^{m/2}(T-\lfloor T\rfloor)^{m-1}\left(\ds\int_{\lfloor T\rfloor}^{T}E\left[| \tilde{X}_{t}|^{m}\right]dt\right)\\
&\le \left(\ds\frac{1}{T}\right)^{m}\left(pK_{\bvarphi}\right)^{m/2}\left(\ds\int_{\lfloor T\rfloor}^{T}E\left[| \tilde{X}_{t}|^{m}\right]dt\right).
\end{aligned}
\end{equation*}
By (\ref{ErgodicBound}), $\ds\sup_{t\ge 0}E\left[\left| \tilde{X}_{t}\right|^{m}\right]\le\tilde{m} <\infty $, so
\begin{equation}\label{G0917_3}
E\left[\left|\left|\ds\frac{1}{T}\int_{\lfloor T\rfloor}^{T}\bvarphi(t) \tilde{X}_{t}dt\right|\right|^{m}\right]\le\left(\ds\frac{1}{T}\right)^{m}\left(pK_{\bvarphi}\right)^{m/2}\tilde{m}\xrightarrow[T\to\infty]{}0.
\end{equation}
\eqref{G0917_1}, \eqref{G0917_2} and \eqref{G0917_3} imply that
  \begin{equation*}
     \frac{1}{T}\int_{0}^{T}\bvarphi(t) X_{t}dt\xrightarrow[T\to\infty]{a.s.~{\rm and}~L^{m}}\int_{0}^{1}\bvarphi(t)\tilde{h}(t)dt.
  \end{equation*}
Since in \Cref{CovSolu_Auxi}, we already have $ X_{t}^{2}-\tilde{X}_{t}^{2}\xrightarrow[T\to\infty]{a.s.~{\rm and}~L^{1}}0$, by the continuous version of Ces\`aro's mean theorem, we have
 \begin{equation*}
\frac{1}{T}\int_{0}^{T}(X_{t})^{2}dt-\frac{1}{T}\int_{0}^{T}(\tilde{X}_{t})^{2}dt\xrightarrow[T\to\infty]{a.s.~{\rm and}~L^{1}}0.
\end{equation*}
In addition, we just need to prove that
\begin{equation*}
  \frac{1}{T}\int_{0}^{T}( \tilde{X}_{t})^{2}dt\xrightarrow[T\to\infty]{a.s.~{\rm and}~L^{1}}\int_{0}^{1}(\tilde{h}(t))^{2}dt+\frac{\sigma^{2}}{2a}.
\end{equation*}
In fact,
\begin{equation}\label{tilXt2inequality}
 \frac{1}{T}\int_{0}^{\lfloor T\rfloor}(\tilde{X}_{t})^{2}dt \le \frac{1}{T}\int_{0}^{T}(\tilde{X}_{t})^{2}dt\le \frac{1}{T}\int_{0}^{\lfloor T\rfloor+1}( \tilde{X}_{t})^{2}dt.
\end{equation}
Then,
\begin{equation*}
 {\rm LHS~of~}(\ref{tilXt2inequality})=\frac{1}{T}\int_{0}^{\lfloor T\rfloor}( \tilde{X}_{t})^{2}dt= \frac{1}{T}\sum_{j=1}^{\lfloor T\rfloor}\int_{j-1}^{j}(\tilde{X}_{t})^{2}dt.
 \end{equation*}
Since $\left\{\tilde{X}_{u+(k-1)}\right\}_{k\in\mathbb{N}}$ is stationary and ergodic, and $\left(\tilde{X}_{u+(k-1)}\right)^{2}$ is a measurable function of the stationary and ergodic process $\left\{{\tilde{X}_{u+(k-1)}}\right\}_{i\in\mathbb{N}}$, by Theorem 3.5.8 in \cite{Stout1974}, $\left\{( \tilde{X}_{u+(k-1)})^{2}\right\}_{k\in\mathbb{N}}$ is stationary and ergodic. So \emph{the  pointwise ergodic theorem for stationary sequences} (Theorem 3.5.7 in \cite{Stout1974} can be applied to the sequence $\left\{\ds\int_{j-1}^{j}( \tilde{X}_{t})^{2}dt\right\}_{j\in\mathbb{N}}$. Thus,
\begin{equation*}
 {\rm LHS~of~}(\ref{tilXt2inequality})\xrightarrow[n\to\infty]{a.s.~{\rm and}~L^{1}}\int_{0}^{1}\left(\tilde{h}^{2}(u)+\frac{\sigma^{2}}{2a}\right)du=\int_{0}^{1}(\tilde{h}(t))^{2}dt+\frac{\sigma^{2}}{2a}.
 \end{equation*}
 Similarly,
 \begin{align*}
 {\rm RHS~of~}(\ref{tilXt2inequality})&=\frac{1}{T}\int_{0}^{\lfloor T\rfloor+1}(\tilde{X}_{t})^{2}dt\xrightarrow[n\to\infty]{a.s.~{\rm and}~L^{1}}\int_{0}^{1}(\tilde{h}^{2}(u)+\frac{\sigma^{2}}{2a})du=\int_{0}^{1}(\tilde{h}(t))^{2}dt+\frac{\sigma^{2}}{2a}. \end{align*}
 This implies that
\begin{equation*}
  \frac{1}{T}\int_{0}^{T}( \tilde{X}_{t})^{2}dt\xrightarrow[T\to\infty]{a.s.~{\rm and}~L^{1}}\int_{0}^{1}(\tilde{h}(t))^{2}dt+\frac{\sigma^{2}}{2a} .
\end{equation*}
In addition, since the matrix $\Sigma$ is invertible, we can apply the continuous mapping theorem to get that $T\bQ_{[0,T]}^{-1}\to \bSigma^{-1}$ a.s.
This completes the proof. \qed

\subsection{Proof of \Cref{lem:mle}}\label{app:pf2}
\Cref{MLE_0206} indicates that $\check\btheta_{T}-\btheta=\sigma \bQ_{[0,T]}^{-1}\bR_{[0,T]}$, which implies that $T^{1/2} \left(\check\btheta_{T}-\btheta\right)=\sigma T\bQ_{[0,T]}^{-1}\frac{1}{T^{1/2} }\bR_{[0,T]}$. From \Cref{ConvQSigma}, $T\bQ_{[0,T]}^{-1}\to \Sigma^{-1}$ almost surely, as $T\to\infty$. Furthermore, we prove that $\ds\frac{1}{T}\bR_{[0,T]}$ is $L^{2}$ bounded.
For $i=1,\cdots,p$, $\ds\int_{0}^{T}\varphi_{i}(t)dB_{t}$ is a martingale, and
\begin{equation*}
\sup_{T\geq 0}E\left[\left|\frac{1}{T^{1/2} }\int_{0}^{T}\varphi_{i}(t)dB_{t}\right|^{2}\right]=\sup_{T\geq 0}E\left[\frac{1}{T}\int_{0}^{T}\varphi_{i}^{2}(t)dt\right]\leq K_{\bvarphi}^{2},
\end{equation*}
and from (\ref{ErgodicBound}), $\sup_{t\geq 0}E\left[(X_{t})^{2}\right]<\infty$,
which completes the $L^{2}$ boundedness.
Furthermore, $\bR_{[0,T]}$ is a martingale, and by using Doob’s maximal inequality for
submartingales, for any $\delta>0$, we have
\begin{equation*}
{\rm P}\left(\sup_{2^{k}\leq T\leq 2^{k+1}}\frac{1}{T}|\bR_{[0,T]}|>\delta\right)\leq {\rm P}\left(\sup_{2^{k}\leq T\leq 2^{k+1}}|\bR_{[0,T]}|>\delta 2^{k}\right)
\end{equation*}
\begin{equation*}
\leq \frac{E\left[\sup_{2^{k}\leq T\leq 2^{k+1}}|\bR_{[0,T]}|^{2}\right]}{\delta^{2} 2^{2k}}\leq \frac{E\left[|\bR_{[0,2^{k+1}]}|^{2}\right]}{\delta^{2}2^{2k}}
\end{equation*}
\begin{equation*}
\leq \frac{\max\left\{\sup_{t\geq 0}E\left[(X_{t})^{2}\right],K_{\bvarphi}^{2}\right\}2^{k+1}}{\delta^{2}2^{2k}}=O\left(\frac{1}{2^{k}}\right).
\end{equation*}
Applying the Borel–Cantelli Lemma, we obtain $\ds\frac{1}{T}|\bR_{[0,T]}|\xrightarrow[T\to\infty]{a.s.}0$. Then, we can conclude that  $\check\btheta_{T}\xrightarrow[T\to\infty]{a.s.}\btheta$.
The asymptotic normality is a special case of Proposition 1.21 in \cite{Kutoyants2004}, for which $d_{1}=p+1$ and $d_{2}=1$.
As defined in \eqref{Rmatrix},
\begin{equation*}
\bR_{[0,T]}=\begin{bmatrix} \ds\int_{0}^{T}\varphi_{1}(t)dB_{t},~\int_{0}^{T}\varphi_{2}(t)dB_{t},\cdots,\int_{0}^{T}\varphi_{p}(t)dB_{t},-\int_{0}^{T} X_{t}dB_{t},
 \end{bmatrix}^{\top}
\end{equation*}
which is a $p+1$ column vector. We denote $r_{T}^{(i)}(t,\omega)=\ds\frac{1}{T^{1/2} }\varphi_{i}(t)$ for $i=1,2,\cdots,p$ and $r_{T}^{(p+1)}(t,\omega)=\ds\frac{1}{T^{1/2} }{X_{t}}(\omega)$.
For all $i=1,2,\cdots,p$, we have $\left(r_{T}^{(i)}(t,\omega)\right)^{2}=\ds\frac{1}{T}\varphi_{i}^{2}(t)$ and $\left(r_{T}^{(p+1)}(t,\omega)\right)^{2}=\ds\frac{1}{T}(X_{t}(\omega))^{2}$.
By \eqref{ErgodicBound}, $\sup_{t\geq 0} E\left[| X_{t}|^{m}\right]<\infty,$
and $|\varphi_{i}(t)|\leq K_{\bvarphi}<\infty$, which implies that
${\rm P}\left(\ds\int_{0}^{T}\left(r_{T}^{(i)}(t,\omega)\right)^{2}dt<\infty\right)=1$ for all $i=1,2,\cdots,p+1$.
Since
\begin{equation*}
\frac{1}{T}\bQ_{[0,T]}=\begin{bmatrix}
  \ds\frac{1}{{T}}\int_{0}^{T}\bvarphi(t)\bvarphi^\top(t)dt & -\ds\frac{1}{{T}}\int_{0}^{T}\bvarphi(t){X_{t}}dt\\
  -\ds\frac{1}{{T}}\int_{0}^{T}\bvarphi^{\top}(t){X_{t}}dt ~~~& \ds\frac{1}{{T}}\int_{0}^{T}\left({X_{t}}\right)^{2}dt
\end{bmatrix}
\end{equation*}
and $\displaystyle\frac{1}{T}\bQ_{[0,T]}\xrightarrow[T\to\infty]{a.s.}\bSigma$.
Finally, we apply Proposition 1.21 in \cite{Kutoyants2004} to  get
\begin{equation*}
\frac{1}{T^{1/2} } \bR_{[0,T]}\xrightarrow[T\to\infty]{D}\bR^{*}\sim\mathcal{N}_{p+1}\left(0,\bSigma\right).
\end{equation*}
This result also follows from the central limit theorem for martingales in \cite{Remillard/Vaillancourt:2024a}, since the quadratic variation of the martingale $\frac{1}{t^{1/2} }\bR_{[0,t]}$, $t\in [0,T]$, converges to $\Sigma$ by Proposition \ref{ConvQSigma}.
Then, by Slutsky's Theorem,
\begin{equation}
T^{1/2} \left(\check\btheta_{T}-\btheta\right)=\sigma T \bQ_{[0,T]}^{-1}\displaystyle\frac{1}{T^{1/2} }\bR_{[0,T]}\xrightarrow[T\to\infty]{D}\sigma\bSigma^{-1}\bR^{*}.
\nonumber
\end{equation}
Note that $\Sigma^{-1}$ is non-random and symmetric, by the properties of multivariate normal distributions, we have $\rho\sim\mathcal{N}_{p+1}\left(0,\sigma^{2}\Sigma^{-1}\right)$. Then,
\begin{equation*}
T^{1/2} \left(\check\btheta_{T}-\btheta\right)\xrightarrow[T\to\infty]{D}\rho\sim\mathcal{N}_{p+1}\left(0,\sigma^{2}\bSigma^{-1}\right).
\end{equation*}
Finally, let $U^*$ be some positive constant. By Markov's inequality, we have
\begin{equation*}
\begin{aligned}
\dP\left(||\bR_{[0,T]}||\ge T^{1/2} U^*\right)\le &\frac{E\left[||\bR_{[0,T]}||^2\right]}{T(K_*)^2}\le \frac{\int_0^T(||\bvarphi(t)||^2+E\left[X_t^2\right])dt}{T(U^*)^2}\\
\le &\frac{(K_{\bvarphi}^2+\sup_{t\geq 0}E\left[(X_{t})^{2}\right])T}{T(U^*)^2}=\frac{K_{\bvarphi}^2+\sup_{t\geq 0}E\left[(X_{t})^{2}\right]}{(U^*)^2}.
\end{aligned}
\end{equation*}
This completes the proof.
\qed

\subsection{Proof of \Cref{prop:CN}}\label{app:pf0}
Suppose that $C(\Delta_N)>\epsilon_0>0$. The, because $\bvarphi$ is periodic, one can find a  sequence $k_N$ so that $t_{k_N} \to t$ such that
$$
 \frac{1}{\Delta_N} \int_{t_{k_N}}^{t_{k_N+1}}\|\bvarphi(s)-\bvarphi(t_{k_N})\|ds >\epsilon_0.
$$
Take a continuity point $t'$ of $\bvarphi$ so that $|t-t'|<\delta/2$ and $\|\bvarphi(s)-\bvarphi(t')\|< \epsilon_0/4$ whenever $|s-t'|<\delta$. Then, for any $s\in [t_{k_N},t_{k_N+1}]$, $|s-t'|\le \Delta_N +|t_{k_N}-t|+|t-t'|<\delta$
if $N$ is large enough, so
\begin{multline*}
 \epsilon_0 < \frac{1}{\Delta_N}\int_{t_{k_N}}^{t_{k_N+1}}\|\bvarphi(s)-\bvarphi(t_{k_N})\|ds
 \le
  2\sup_{s\in [t_{k_N},t_{k_N+1}]} \|\bvarphi(s)-\bvarphi(t')\|
    \le
 2\frac{\epsilon_0}{4}=\frac{\epsilon_0}{2},
\end{multline*}
which is impossible. Hence, $C(\Delta_N)\to 0$.
\qed

\section{Proofs for the observed discretized process}
\subsection{Proof of \Cref{prop:convdiscont}}\label{app:pf3}

\begin{equation*}
\frac{1}{T}\bZ_N^{\top}\bZ_N= \frac{\Delta_N}{T}
\begin{bmatrix}
\sum_{i=1}^N \bvarphi(t_{i-1}) \bvarphi^\top(t_{i-1})  & -\sum_{i=1}^N \bvarphi(t_{i-1}) X_{t_{i-1}}\\
 -\sum_{i=1}^N \bvarphi^{\top}(t_{i-1}) X_{t_{i-1}} & \sum_{i=1}^N  X_{t_{i-1}}^2\\
\end{bmatrix},
\end{equation*}
where $\Delta_N$ is defined in \Cref{hyp:N}.
We have
\begin{multline*}
\left\|\frac{\Delta_N}{T}\sum_{i=1}^N \bvarphi(t_{i-1}) \bvarphi^\top(t_{i-1})-\frac{1}{T}\int_0^T\bvarphi(t) \bvarphi^\top(t)dt\right\|\\
\leq\frac{1}{T}\sum_{i=1}^N \left\|\int_{t_{i-1}}^{t_{i}}\bvarphi(t_{i-1}) \bvarphi^\top(t_{i-1})dt-\int_{t_{i-1}}^{t_{i}}\bvarphi(t) \bvarphi^\top(t)dt\right\|\\
\leq\frac{1}{T}\sum_{i=1}^N \left\|\int_{t_{i-1}}^{t_{i}}(\bvarphi(t_{i-1}) -\bvarphi(t))\bvarphi^\top (t_{i-1})dt\right\|+\frac{1}{T}\sum_{i=1}^N \left\|\int_{t_{i-1}}^{t_{i}}\bvarphi(t) (\bvarphi(t_{i-1})-\bvarphi(t))^{\top}dt\right\|\\
\leq\frac{1}{T}\sum_{i=1}^N \int_{t_{i-1}}^{t_{i}}\left\|\bvarphi(t_{i-1}) -\bvarphi(t)\right\|\left\|\bvarphi(t_{i-1})\right\|dt+\frac{1}{T}\sum_{i=1}^N \int_{t_{i-1}}^{t_{i}}\left\|\bvarphi(t)\right\| \left\|\bvarphi(t_{i-1})-\bvarphi(t)\right\|dt\\
=\frac{1}{N}\sum_{i=1}^N \frac{1}{\Delta_N}\int_{t_{i-1}}^{t_{i}}\left\|\bvarphi(t_{i-1}) -\bvarphi(t)\right\|\left\|\bvarphi(t_{i-1})\right\|dt+\frac{1}{N}\sum_{i=1}^N \frac{1}{\Delta_N}\int_{t_{i-1}}^{t_{i}}\left\|\bvarphi(t)\right\| \left\|\bvarphi(t_{i-1})-\bvarphi(t)\right\|dt\\
\leq \frac{K_{\bvarphi}}{N}\sum_{i=1}^N \frac{1}{\Delta_N}\int_{t_{i-1}}^{t_{i}}\left\|\bvarphi(t_{i-1}) -\bvarphi(t)\right\|dt+\frac{K_{\bvarphi}}{N}\sum_{i=1}^N \frac{1}{\Delta_N}\int_{t_{i-1}}^{t_{i}}\left\|\bvarphi(t_{i-1})-\bvarphi(t)\right\|dt,
\end{multline*}
where from \eqref{kphi}, $\|\bvarphi(t)\|\le K_{\bvarphi}$ and $\|\bvarphi(t_{i-1})\|\le K_{\bvarphi}$. Furthermore, from \Cref{prop:CN}, we have
$\frac{1}{\Delta_N}\int_{t_{i-1}}^{t_{i}}\left\|\bvarphi(t_{i-1})-\bvarphi(t)\right\|dt\le  C(\Delta_N)$. Then,
\begin{equation}
\label{ineq1}
\left\|\frac{\Delta_N}{T}\sum_{i=1}^N \bvarphi(t_{i-1}) \bvarphi^{\top}(t_{i-1})-\frac{1}{T}\int_0^T\bvarphi(t) \bvarphi^{\top}(t)dt\right\|\leq 2K_{\bvarphi} C(\Delta_N).
\end{equation}
Next,
\begin{multline}
\label{ZQ_2}
\left\|\frac{\Delta_N}{T}\sum_{i=1}^N \bvarphi(t_{i-1}) X_{t_{i-1}}-\frac{1}{T}\int_{0}^{T}\bvarphi(t) X_{t}dt \right\|\\
\leq\frac{1}{T}\sum_{i=1}^N \left\|\int_{t_{i-1}}^{t_{i}}\bvarphi(t_{i-1}) X_{t_{i-1}}dt-\int_{t_{i-1}}^{t_{i}}\bvarphi(t) X_{t}dt \right\|\\
\leq\frac{1}{T}\sum_{i=1}^N \left\|\int_{t_{i-1}}^{t_{i}}(\bvarphi(t_{i-1}) -\bvarphi(t))X_{t_{i-1}}dt\right\|+\frac{1}{T}\sum_{i=1}^N \left\|\int_{t_{i-1}}^{t_{i}}\bvarphi(t) (X_{t_{i-1}}-X_{t})dt\right\|\\
\leq\frac{1}{T}\sum_{i=1}^N \int_{t_{i-1}}^{t_{i}}\left\|(\bvarphi(t_{i-1}) -\bvarphi(t))X_{t_{i-1}}\right\|dt+\frac{1}{T}\sum_{i=1}^N \int_{t_{i-1}}^{t_{i}}\left\|\bvarphi(t) (X_{t_{i-1}}-X_{t})\right\|dt\\
\leq\frac{1}{T}\sum_{i=1}^N \int_{t_{i-1}}^{t_{i}}\left\|\bvarphi(t_{i-1}) -\bvarphi(t)\right\|\left|X_{t_{i-1}}\right|dt+\frac{1}{T}\sum_{i=1}^N \int_{t_{i-1}}^{t_{i}}\left\|\bvarphi(t)\right\| \left|X_{t_{i-1}}-X_{t}\right|dt.
\end{multline}

From \eqref{ZQ_2}, \Cref{prop:CN}, and \eqref{ineqMay10_1} in \Cref{prop:XT}, we have
\begin{multline*}
E\left[\frac{1}{T}\sum_{i=1}^N \int_{t_{i-1}}^{t_{i}}\left\|\bvarphi(t_{i-1}) -\bvarphi(t)\right\|\left|X_{t_{i-1}}\right|dt+\frac{1}{T}\sum_{i=1}^N \int_{t_{i-1}}^{t_{i}}\left\|\bvarphi(t)\right\| \left|X_{t_{i-1}}-X_{t}\right|dt\right]\\
\leq \sup_{0\leq t\leq T}E[|X_t|]\frac{1}{N}\sum_{i=1}^N \frac{1}{\Delta_N}\int_{t_{i-1}}^{t_{i}}\left\|\bvarphi(t_{i-1}) -\bvarphi(t)\right\|dt+K_{\bvarphi}\frac{1}{T}\sum_{i=1}^N \int_{t_{i-1}}^{t_{i}}E\left[\left|X_{t_{i-1}}-X_{t}\right|\right]dt\\
\leq C(\Delta_N)\sup_{0\leq t\leq T}E[|X_t|]+K_{\bvarphi}\frac{1}{T}\sum_{i=1}^N \int_{t_{i-1}}^{t_{i}}\left\{ \sigma\sqrt{(t-t_{i-1})}+O((t-t_{i-1})\right\}dt\\
=C(\Delta_N)\sup_{0\leq t\leq T}E[|X_t|]+K_{\bvarphi}\sigma \frac{1}{T}\sum_{i=1}^N \frac{2}{3}\Delta_N^{3/2}+O(\Delta_N)\\
=C(\Delta_N)\sup_{0\leq t\leq T}E[|X_t|]+\frac{2}{3}K_{\bvarphi}\sigma\Delta_N^{1/2} +O(\Delta_N).
\end{multline*}
This implies that
\begin{multline}
\label{ineq2}
E\left[\left\|\frac{\Delta_N}{T}\sum_{i=1}^N \bvarphi(t_{i-1}) X_{t_{i-1}}-\frac{1}{T}\int_{0}^{T}\bvarphi(t) X_{t}dt \right\|\right]\\
\leq C(\Delta_N)\sup_{0\leq t\leq T}E[|X_t|]+\frac{2}{3}K_{\bvarphi}\sigma\Delta_N^{1/2} +O(\Delta_N).
\end{multline}
In addition, we have
\begin{multline*}
\left|\frac{\Delta_N}{T}\sum_{i=1}^N  X_{t_{i-1}}^2-\frac{1}{T}\int_{0}^{T} X_{t}^2dt\right|=\left|\frac{1}{T}\sum_{i=1}^N  \int_{t_{i-1}}^{t_{i}}X_{t_{i-1}}^2dt-\frac{1}{T}\sum_{i=1}^N \int_{t_{i-1}}^{t_{i}} X_{t}^2dt\right|
\\
=\left|\frac{1}{T}\sum_{i=1}^N  \int_{t_{i-1}}^{t_{i}}(X_{t_{i-1}}^2-X_{t}^2)dt\right|=\left|\frac{1}{T}\sum_{i=1}^N  \int_{t_{i-1}}^{t_{i}}(X_{t_{i-1}}-X_{t})(X_{t_{i-1}}+X_{t})dt\right|\\
\leq\frac{1}{T}\sum_{i=1}^N  \int_{t_{i-1}}^{t_{i}} |X_t - X_{t_{i-1}}|(|X_{t_{i-1}}|+|X_{t}|)dt\\
=\frac{1}{T}\sum_{i=1}^N  \int_{t_{i-1}}^{t_{i}} |X_t - X_{t_{i-1}}||X_{t_{i-1}}|dt+\frac{1}{T}\sum_{i=1}^N  \int_{t_{i-1}}^{t_{i}} |X_t - X_{t_{i-1}}|(|X_{t}|)dt.
\end{multline*}
Then, by Cauchy–Schwartz inequality,
\begin{multline*}
E\left[\frac{1}{T}\sum_{i=1}^N  \int_{t_{i-1}}^{t_{i}} |X_t - X_{t_{i-1}}||X_{t_{i-1}}|dt\right]\leq \frac{1}{T}\sum_{i=1}^N  \int_{t_{i-1}}^{t_{i}} \left\{E\left[|X_t - X_{t_{i-1}}|^2\right]E\left[|X_{t_{i-1}}|^2\right]\right\}^{1/2}dt\\
\leq \sqrt{C_2} \frac{1}{T}\sum_{i=1}^N  \int_{t_{i-1}}^{t_{i}} \left\{E\left[|X_t - X_{t_{i-1}}|^2\right]\right\}^{1/2}dt\leq \sqrt{C_2}\sigma\Delta_N^{1/2} +O(\Delta_N).
\end{multline*}
Similarly,
\begin{multline*}
E\left[\frac{1}{T}\sum_{i=1}^N  \int_{t_{i-1}}^{t_{i}} |X_t - X_{t_{i-1}}|(|X_{t}|)dt\right]\leq \frac{1}{T}\sum_{i=1}^N  \int_{t_{i-1}}^{t_{i}} \left\{E\left[|X_t - X_{t_{i-1}}|^2\right]E\left[|X_{t}|^2\right]\right\}^{1/2}dt\\
\leq \sqrt{C_2} \frac{1}{T}\sum_{i=1}^N  \int_{t_{i-1}}^{t_{i}} \left\{E\left[|X_t - X_{t_{i-1}}|^2\right]\right\}^{1/2}dt\leq \sqrt{C_2}\sigma\Delta_N^{1/2} +O(\Delta_N).
\end{multline*}
Finally, we get
\begin{equation}
\label{ineq3}
E\left[\left|\frac{\Delta_N}{T}\sum_{i=1}^N  X_{t_{i-1}}^2-\frac{1}{T}\int_{0}^{T} X_{t}^2dt\right|\right]\leq 2\sqrt{C_2}\sigma\Delta_N^{1/2} +O(\Delta_N).
\end{equation}
\eqref{ineq1}, \eqref{ineq2}, and \eqref{ineq3} imply that
\begin{multline*}
E\left[\left\|\bSigma_N-\frac{1}{T}\bQ_{[0,T]}\right\|\right]\leq 2K_{\bvarphi} C(\Delta_N)+C(\Delta_N)\sup_{0\leq t\leq T}E[|X_t|]+\frac{2}{3}K_{\bvarphi}\sigma\Delta_N^{1/2} \\
+2\sqrt{C_2}\sigma\Delta_N^{1/2} +O(\Delta_N).
\end{multline*}
Since $\ds C_3=2K_{\bvarphi}+C_1, C
_4=\frac{2}{3}K_{\bvarphi}\sigma+2\sqrt{C_2}\sigma$, we have
\begin{equation*}
E\left[\left\|\bSigma_N-\frac{1}{T}\bQ_{[0,T]}\right\|\right]\leq C_3C(\Delta_N)+C_4\Delta_N^{1/2} +O(\Delta_N).
\end{equation*}
This completes the proof of the first statement.

Furthermore,
\begin{multline*}
\left\|\frac{1}{T^{1/2}}\sum_{i=1}^{N}\bZ_{N,i}\epsilon_{N,i} -\frac{\sigma}{T^{1/2}}\bR_{[0,T]}\right\|\\
=\left\|\frac{1}{T^{1/2}}\sum_{i=1}^N \sigma\int_{t_{i-1}}^{t_{i}}(\bvarphi^\top (t_{i-1}),-X_{t_{i-1}})^\top dB_t
-\frac{1}{T^{1/2}}\sum_{i=1}^N \sigma\int_{t_{i-1}}^{t_{i}}(\bvarphi^\top (t),-X_{t})^\top dB_t\right\|\\
\leq\frac{\sigma}{T^{1/2}}\sum_{i=1}^N \left\|\int_{t_{i-1}}^{t_{i}}\left((\bvarphi^\top (t_{i-1}),-X_{t_{i-1}})^\top-(\bvarphi^\top (t),-X_{t})^\top)\right)dB_t\right\|\\
=\frac{1}{T^{1/2}}\sum_{i=1}^N \sigma\left\|\int_{t_{i-1}}^{t_{i}}\left((\bvarphi(t_{i-1})-\bvarphi(t))^\top,-(X_{t_{i-1}}-X_t)\right)^\top dB_t\right\|.
\end{multline*}
From It\^o's isometry, we have
\begin{multline*}
\left(E\left[\left\|\int_{t_{i-1}}^{t_{i}}\left((\bvarphi(t_{i-1})-\bvarphi(t)),-(X_{t_{i-1}}-X_t)\right)dB_t\right\|\right]\right)^2
\\
\leq E\left[\left\|\int_{t_{i-1}}^{t_{i}}\left((\bvarphi(t_{i-1})-\bvarphi(t)),-(X_{t_{i-1}}-X_t)\right)dB_t\right\|^2\right]\\
=E\left[\int_{t_{i-1}}^{t_{i}}\left\|(\bvarphi(t_{i-1})-\bvarphi(t),-(X_{t_{i-1}}-X_t))^{\top}(\bvarphi(t_{i-1})-\bvarphi(t),-(X_{t_{i-1}}-X_t))\right\|dt\right]\\
\leq \int_{t_{i-1}}^{t_{i}}\left\|(\bvarphi(t_{i-1})-\bvarphi(t))^{\top}(\bvarphi(t_{i-1})-\bvarphi(t))\right\|dt
+E\left[\int_{t_{i-1}}^{t_{i}}\left\|X_{t_{i-1}}-X_t\right\|dt\right]\\
\leq \left(C^2(\Delta_N)+2C(\Delta_N)C^*\sqrt{\Delta_N+C(\Delta_N)}+(C^*\sqrt{\Delta_N+C(\Delta_N)})^2\right)\Delta_N.
\end{multline*}
Then,
\begin{equation}
\label{ineq4}
\begin{aligned}
 &E\left[\left\|\frac{1}{T^{1/2}}\sum_{i=1}^{N}\bZ_{N,i}\epsilon_{N,i} -\frac{\sigma}{T^{1/2}}\bR_{[0,T]}\right\|\right]\\
 &\leq \sigma\sqrt{C^2(\Delta_N)+2C(\Delta_N)C^*\sqrt{\Delta_N+C(\Delta_N)}+(C^*\sqrt{\Delta_N+C(\Delta_N)})^2}.
\end{aligned}
\end{equation}
Finally, using \eqref{eq:rn},
\begin{eqnarray*}
E\left(\left\| \sum_{i=1}^N \bZ_{N,i} r_{N,i}\right\| \right)
&\le &
 \|\bmu\| \sum_{i=1}^N
\int_{t_{i-1}}^{t_{i}} \left\{ \|\bvarphi(t_{i-1})\|+E(|X_{t_{i-1}}|)\right\} \|\bvarphi(s)-\bvarphi(t_{i-1})\| ds\\
&& \qquad +a \sum_{i=1}^N \|\bvarphi(t_{i-1})\|  \int_{t_{i-1}}^{t_{i}}E\left\{|X_s-X_{t_{i-1}}|\right\} ds\\
&& \qquad \qquad +a \sum_{i=1}^N  \int_{t_{i-1}}^{t_{i}}E \left\{|X_{t_{i-1}}||X_s-X_{t_{i-1}}|\right\} ds\\
&\le & T \|\bmu\|( K_\bvarphi+C_1)  C(\Delta_N) + T a K_\bvarphi \left\{O\left(\Delta_N^{1/2} \right)+O\left(\Delta_N \right)\right\} \\
&& \qquad \qquad \quad +a C_2^{1/2} \sum_{i=1}^N \int_{t_{i-1}}^{t_{i}} \left[ E\left\{ \left(X_s-X_{t_{i-1}}\right)^2\right\} \right]^{1/2}ds\\
&\le & T \|\bmu\|( K_\bvarphi+C_1) \left\{C(\Delta_N) + O\left(\Delta_N^{1/2} \right) + O\left(\Delta_N^{3/4} \right)
+ O\left(\Delta_N \right) \right\}.
\end{eqnarray*}
\qed

\subsection{Proof of \Cref{thm:main1}}\label{app:pf-thm1}
Recall that $T=t_N=N\Delta_N$.
First, we prove that if
$$ W(N,K)=\sum_{N<i\le N+K}\hat{\epsilon}_{N,i}-\left(\sum_{N<i\le N+K}\epsilon_{N,i}-\frac{K}{N}\sum_{0<i\le N}\epsilon_{N,i}\right),
$$
then
\begin{equation}
    \label{stat_0}
      \sup_{1\le K<\infty}  \frac{\ds\left|W(N,K) \right|}{g_1(N,K)}=o_P(1).
    \end{equation}
Note that using \eqref{eq:thetaN}, one gets
\begin{multline*}
    \sum_{N<i\le N+K}\hat{\epsilon}_{N,i}=\sum_{N<i\le N+K}(Y_{N,i}-\bZ_{N,i}^\top \hat\btheta_N)
    =\sum_{N<i\le N+K}\left\{Y_{N,i}-\bZ_{N,i}^\top \btheta_0-\bZ_{N,i}^\top \left(\hat\btheta_N-\btheta_0\right)\right\}\\
=\sum_{N<i\le N+K}{\epsilon}_{N,i} + \sum_{N<i\le N+K} r_{N,i} -  \frac{1}{T}\sum_{N<i\le N+K}\bZ_{N,i}^\top \bSigma_N^{-1} \bZ_N^{\top}(\bepsilon_N +\br_N),
\end{multline*}
so
\begin{multline*}
W(N,K)
=
\frac{K}{N}\sum_{0<i\le N}\epsilon_{N,i} + \sum_{N<i\le N+K} r_{N,i}-\frac{1}{T}\sum_{N<i\le N+K}\bZ_{N,i}^\top \bSigma_N^{-1}\bZ_N^{\top}( \bepsilon_N +\br_N ).
\end{multline*}
Let $\mathcal{C}_1$ be the first column of the matrix $\bSigma$. Then, $\mathcal{C}_1^\top \bSigma^{-1}=(1,0,\cdots,0)$.
From \Cref{hyp:2}, $\varphi_1(t)=1$, we  have
$\ds \sum_{0<i\le N}\epsilon_{N,i}= \mathcal{C}_1^\top \bSigma^{-1} \frac{\bZ_N^{\top}\bepsilon_N}{{\Delta_N^{1/2}}}$.
This leads to
$
|W(N,K)| \le |W(N,K,1)| +|W(N,K,2)| $,
where
$$
W(N,K,1) = \left( \frac{\Delta_N^{1/2}}{T} \sum_{N<i\le N+K}\bZ_{N,i}^\top \bSigma_N^{-1}-\frac{K}{N}\mathcal{C}_1^\top \bSigma^{-1} \right) \frac{\bZ_N^{\top}\bepsilon_N}{{\Delta_N^{1/2}}}
$$
and
$$
W(N,K,2) = \frac{1}{T}\sum_{N<i\le N+K}\bZ_{N,i}^\top \bSigma_N^{-1} \bZ_N^{\top} \br_N -\sum_{N<i\le N+K} r_{N,i}.
$$
Recall from \eqref{eq:Z} that
$\bZ_{N,i}= \Delta_N^{1/2}(1,\varphi_{2}(t_{i-1}),\dots,\varphi_{p}(t_{i-1}),-X_{t_{i-1}})^\top $.
Hence, the first column of the matrix
$\ds \bSigma_N = \frac{1}{T}\sum_{i=1}^{N}\bZ_{N,i}\bZ_{N,i}^\top $ is  $\ds \frac{\Delta_N^{1/2}}{T}\sum_{i=1}^{N}\bZ_{N,i} $. From \Cref{ConvQSigma} and \Cref{prop:convdiscont}, we have
$\ds \bSigma_N =\bSigma+o_P(1)$, as $T\to\infty$,
which implies that
$\ds \frac{\Delta_N^{1/2}}{T}\sum_{i=1}^{N}\bZ_{N,i} = \mathcal{C}_1 +o_P(1)$.
In fact, setting $\bZ_{N,i,1}=\Delta_N^{1/2} \bvarphi(t_{i-1})$ and $\ds \bar \bvarphi=\int_0^1 \bvarphi(s)ds=\mathbf{e}_1$ by Remark \ref{rem:a}, we have
\begin{multline*}
    \frac{\Delta_N^{1/2}}{T}\sum_{i=N+1}^{N+K}\bZ_{N,i,1}-\frac{K}{N}\bar \bvarphi\\
    =
    \frac{1}{T}\sum_{i=N+1}^{N+K}\int_{t_{i-1}}^{t_{i-1}} (\bvarphi(t_{i-1})-\bvarphi(s))ds +\frac{1}{T}\int_T^{T(1+K/N)}\bvarphi(s)ds-\frac{K}{N}\bar \bvarphi\\
   =
    \frac{1}{T}\sum_{i=N+1}^{N+K}\int_{t_{i-1}}^{t_{i-1}} (\bvarphi(t_{i-1})-\bvarphi(s))ds +
    \left(\frac{{\lfloor K\Delta_N)\rfloor}}{T} -\frac{K\Delta_N}{T}\right)\bar \bvarphi\\
    +\frac{1}{T}\int_{T+ \lfloor K\Delta_N\rfloor}^{T+K\Delta_N}\bvarphi(s)ds,
  \end{multline*}
  so
  \begin{equation}\label{eq:sumZ1}
  \left\| \frac{\Delta_N^{1/2}}{T}\sum_{i=N+1}^{N+K}\bZ_{N,i,1}-\frac{K}{N}\bar \bvarphi\right\|
  \le
 \frac{K}{N} C(\Delta_N)+ \frac{(\|\bar \bvarphi\|+K_\varphi)}{T}.
  \end{equation}
  Next, if $\bZ_{N,i,2} = - \Delta_N^{1/2}X_{t_{i-1}}$ and $b=-\bmu^\top \bar \bvarphi/a$, we have
  \begin{multline*}
    \frac{\Delta_N^{1/2}}{T}\sum_{i=N+1}^{N+K}\bZ_{N,i,2}-\frac{K}{N} b \\
    =
    - \frac{1}{T}\sum_{i=N+1}^{N+K}\int_{t_{i-1}}^{t_{i-1}} (X_{t_{i-1}}-X_s)ds -\frac{1}{T}\int_T^{T(1+K/N)}X_s ds - \frac{K}{N}b\\
   =  - \frac{1}{ T}\sum_{i=N+1}^{N+K}\int_{t_{i-1}}^{t_{i-1}} (X_{t_{i-1}}-X_s)ds +\frac{1}{a T} \left(X_{T(1+K/N)}-X_T\right)\\
   -\frac{1}{T}\int_T^{T(1+K/N)}\frac{\bmu^\top \bvarphi(s)}{a} ds -\frac{\sigma}{a T}(B_{T(1+K/N)}-B_T)- \frac{K}{N} b\\
   =  - \frac{1}{T}\sum_{i=1}^{N+K}\int_{t_{i-1}}^{t_{i-1}} (X_{t_{i-1}}-X_s)ds +\frac{1}{a T} \left(X_{T(1+K/N)}-X_T\right) -\frac{\sigma}{a T}(B_{T(1+K/N)}-B_T)\\
  +\left(\frac{{\lfloor K\Delta_N)\rfloor}}{T} -\frac{K\Delta_N}{T}\right)b
    +\frac{1}{a T}\int_{T+\lfloor K\Delta_N\rfloor}^{T(1+K/N)}\bmu^\top \bvarphi(s)ds,
  \end{multline*}
   so, using \eqref{ineqMay10_1}, we get
  \begin{multline}\label{eq:sumZ2}
 \mathbf{E}\left[ \left| \frac{\Delta_N^{1/2}}{T}\sum_{i=N+1}^{N+K}\bZ_{N,i,2}-\frac{K}{N}b\right|\right]
  \le \frac{K}{N}C_3 \Delta_N^{1/2}+ \frac{C_3}{a} \left(\frac{K/N}{T}\right)^{1/2}\\
  +   \frac{(|b|+2C_1+\|\bmu\|K_\varphi)/a}{T}.
  \end{multline}
Then,
\eqref{eq:sumZ1} and \eqref{eq:sumZ2} together imply that
 \begin{multline}\label{eq:sumTotal}
 \mathbf{E}\left[ \left\| \frac{\Delta_N^{1/2}}{T}\sum_{i=N+1}^{N+K}\bZ_{N,i}-\frac{K}{N}\mathcal{C}_1\right\|\right]
  \le \frac{K}{N}C_3 \Delta_N^{1/2}+ \frac{C_3}{a} \left(\frac{K/N}{T}\right)^{1/2}\\
  +   \frac{(|b|+2C_1+\|\bmu\|K_\varphi)/a}{T}+\frac{K}{N} C(\Delta_N)+ \frac{(\|\bar \bvarphi\|+K_\varphi)}{T}.
  \end{multline}
Next, we have $W(N,K,1) =W(N,K,1,1) +W(N,K,1,2) $, where
$$
W(N,K,1,1) = \left(\frac{\Delta_N^{1/2}}{T}\sum_{N<i\le N+K}\bZ_{N,i} - \frac{K}{N}\mathcal{C}_1\right)^\top \bSigma_N^{-1}\frac{\bZ_N^{\top}\bepsilon_N}{\Delta_N^{1/2}}
$$
and
$$
W(N,K,1,2) =  \frac{K}{N}\mathcal{C}_1^\top  \left(\bSigma_N^{-1}-\bSigma^{-1}\right)\frac{\bZ_N^{\top}\bepsilon_N}{{\Delta_N^{1/2}}}.
$$
\eqref{ineq4} implies that $N^{-1/2}\left\|\frac{\bZ_N^\top \bepsilon_N}{\Delta_N^{1/2}}\right\| = O_P(1)$. \Cref{ConvQSigma}, \Cref{prop:convdiscont}, and \Cref{prop:inverse} together imply that $\left\|\bSigma_N^{-1}-\bSigma^{-1}\right\|=o_P(1)$. It then follows that
\begin{multline*}
\sup_{1\le K<\infty}\left\|\frac{W(N,K,1,2)}{g_1(N,K)}\right\|=\sup_{K\ge 1} \frac{\left\| \frac{K}{N}\mathcal{C}_1^\top  \left(\bSigma_N^{-1}-\bSigma^{-1}\right)\frac{\bZ_N^{\top}\bepsilon_N}{\Delta_N^{1/2}} \right\| }{g_1(N,K)} \\
\le \sup_{1\le K<\infty}\frac{K/N}{(1+K/N)(\frac{K}{N+K})^{\gamma}}\left\|\bSigma_N^{-1}-\bSigma^{-1}\right\| N^{-1/2}\left\|\frac{\bZ_N^\top \bepsilon_N}{\Delta_N^{1/2}}\right\|\\
= \left\|\bSigma_N^{-1}-\bSigma^{-1}\right\|O_P(1)=o_P(1).\\
\end{multline*}
From  \Cref{eq:sumTotal}, since $0\le \gamma <1/2$,
we have
\begin{multline*}
\sup_{1\le K<\infty}
\left\|\frac{W(N,K,1,1)}{g_1(N,K)}\right\| = \sup_{1\le K<\infty}\frac{\left\|\left(\frac{\Delta_N^{1/2}}{T}\sum_{N<i\le N+K}\bZ_{N,i} - \frac{K}{N}\mathcal{C}_1\right)^\top \bSigma_N^{-1}\frac{\bZ_N^{\top}\bepsilon_N}{\Delta_N^{1/2}}\right\|}{g_1(N,K)}\\
=O_P(1) \sup_{1\le K<\infty}\frac{\left\|\left(\frac{\Delta_N^{1/2}}{T}\sum_{N<i\le N+K}\bZ_{N,i} - \frac{K}{N}\mathcal{C}_1\right)^\top\right\|}{(1+K/N)(\frac{K}{N+K})^{\gamma}}\\
\le O_P(1)\left\{\Delta_N^{1/2} + C(\Delta_N) + T^{-1/2} \right\} + \frac{1}{T} O_P(1) \sup_{k\ge 1} (k/N)^{-\gamma}\\
=
O_P(1)\left\{\Delta_N^{1/2} + C(\Delta_N) + T^{-1/2}  + \frac{N^\gamma}{T}\right\}.
\end{multline*}
\Cref{hyp:N} implies that $2 < \delta <\frac{1}{\gamma}$, and $ \frac{N^\gamma}{T} = \frac{{\lfloor T^{\delta}\rfloor}^\gamma}{T} \le T^{\delta\gamma-1} \to 0$ as $T\to\infty$, since $\delta\gamma <1$.
Hence, one may conclude that
\begin{equation*}
\sup_{1\le K<\infty}\frac{|W(N,K,1)|}{g_1(N,K)}=o_P(1),
\end{equation*}
as $T\to\infty$.
Next, $W(N,K,2) = W(N,K,2,1)+W(N,K,2,2)+W(N,K,2,3)-W(N,K,2,4)$, where
$$
W(N,K,2,1) =
 \left(\frac{\Delta_N^{1/2}}{T}\sum_{N<i\le N+K}\bZ_{N,i} - \frac{K}{N}\mathcal{C}_1\right)^\top \bSigma_N^{-1}\frac{\bZ_N^{\top}\br_N}{\Delta_N^{1/2}},
$$
$$
W(N,K,2,2) =  \frac{K}{N}\mathcal{C}_1^\top  \left(\bSigma_N^{-1}-\bSigma^{-1}\right)\frac{\bZ_N^{\top}\br_N}{{\Delta_N^{1/2}}},
$$
$\ds W(N,K,2,3) = \frac{K}{N}\sum_{i=1}^N r_{N,i}$,
and $\ds W(N,K,2,4) = \sum_{N<i\le N+K} r_{N,i}$. It follows from Proposition \ref{prop:convdiscont} that $\bZ_N^\top r_N = T \left\{C(\Delta_N)+\Delta_N^{1/2}\right\} O_P(1)$. Using \eqref{eq:sumTotal} and the previous calculations, one gets that
$$
\sup_{K\ge 1}\frac{\left|W(N,K,2,1)\right|}{g_1(N,K)} = \frac{T}{N^{1/2}}\left\{C(\Delta_N)+\Delta_N^{1/2}\right\}  \left\{\Delta_N^{1/2}+ T^{-1/2} +N^\gamma T^{-1} +C(\Delta_N) \right\}
O_P(1)=o_P(1),
$$
since $\frac{1}{\gamma}>\delta>2$. Also,
$$
\sup_{K\ge 1}\frac{\left|W(N,K,2,2)\right|}{g_1(N,K)} = T^{1/2}\left\{C(\Delta_N)+\Delta_N^{1/2}\right\}  o_P(1) = o_P(1),
$$
by Assumption \ref{hyp:phiextra}.
Next, from Proposition \ref{prop:convdiscont} and \eqref{eq:rN20},
$$
\sup_{K\ge 1}\frac{\left|W(N,K,2,3)\right|}{g_1(N,K)} \le \left|\frac{1}{N^{1/2}}\sum_{i=1}^N r_{N,i}\right|  =T^{1/2} \left\{C(\Delta_N)+\Delta_N^{1/2} \right\}O(1)= o_P(1),
$$
if $\delta >  2$. Furthermore,
$$
\sup_{K\ge 1}\frac{\left|W(N,K,2,4)\right|}{g_1(N,K)} \le \left|\frac{1}{N^{1/2}}\sum_{i=N+1}^{N+K} r_{N,i}\right| \le
N^{1/2} \Delta_N^{1/2}  \left\{C(\Delta_N)+\Delta_N^{1/2} +\Delta_N \right\}O(1)= o_P(1),
$$
if $\delta > 1+\frac{1}{2\ell_*}\ge 2$.
This completes the proof of \eqref{stat_0}. Finally, for $t\ge 0$,
set
\begin{eqnarray*}
W_N(t) &=& \frac{1}{\sigma N^{1/2}(1+\N{t})} \left\{ \sum_{i=N+1}^{N+\N{t}} \epsilon_{N,i} - \frac{\N{t}}{N}\sum_{i=1}^{N} \epsilon_{N,i}\right\} \\
&= & \frac{  B_N\left(1+ \N{t}/N\right)-B_N(1)-\frac{\N{t}}{N}B_N(1)}{1+\N{t}},
\end{eqnarray*}
where $\ds B_N(t) = \frac{1}{\sigma N^{1/2} }\sum_{i=1}^{\N{t}} \epsilon_{N,i}$. Then, $B_N$ converges in $C[0,\infty)$ to a Brownian motion $B$, so by the continuous mapping theorem, $W_N$ converges in $C[0,\infty)$ to $W$, where $W(t) = \frac{B(1+t)-(1+t)B(1)}{1+t} = \dB_1 \left(\frac{t}{1+t}\right)$, where $\dB_1$ is a Brownian motion. As a result,
$\ds
\sup_{K\ge 1} \frac{\left| \ds \sum_{i=N+1}^{N+\N{t}} \epsilon_{N,i} - \frac{\N{t}}{N}\sum_{i=1}^{N} \epsilon_{N,i}\right|}{\sigma g_1(N,K) }
$ converges in law to $\ds \sup_{t\in (0,1]}\frac{|\dB_1(t)|}{t^\gamma}$. This completes the proof.
\qed

\subsection{Proof of \Cref{ThmAlt}}\label{app:pf-thm1-alt}

For $K >K_*$, and $\bZ_{N,i}^{(2)}$ defined in \eqref{eq:Z_1},
we have
\begin{eqnarray*}
\sum_{i=N+1}^{N+K }\hat{\epsilon}_{N,i}&=& \sum_{i=N+1}^{N+K }\left(\epsilon_{N,i} +r_{N,i}^{(2)} \right) -\left(\sum_{i=N+1}^{N+K_*}\bZ_{N,i}\right)^\top(\hat\btheta_N-\btheta_0)\\
&& \qquad -\left(\sum_{i=N+K_*+1}^{N+K }\bZ_{N,i}^{(2)}\right)^\top(\hat\btheta_N-\btheta_*)\\
  &=& \sum_{i=N+1}^{N+K }\epsilon_{N,i} +   \sum_{i=N+1}^{N+K}r_{N,i}^{(2)}
   -\left(\sum_{i=N+1}^{N+K_*}\bZ_{N,i}\right)^\top(\hat\btheta_N-\btheta_0)\\
   && \qquad -\left(\sum_{i=N+K_*+1}^{N+K }\bZ_{N,i}^{(2)}\right)^\top(\hat\btheta_N-\btheta_0) +\left(\sum_{i=N+K_*+1}^{N+K }\bZ_{N,i}^{(2)}\right)^\top(\btheta_*-{\btheta}_0).
\end{eqnarray*}
From the proof of
\Cref{thm:main1}, we have
$$
\lim_{N\to\infty} \sup_{K> K_*}\frac{\ds \left|\sum_{i=N+1}^{N+K}\epsilon_{N,i}\right|}{g_1(N,K)}
\le \sigma \sup_{t\ge  t_*} \frac{|\dB_1(t)|}{t^\gamma}={\rm O}_P(1),
$$
where $\dB_1$ is a Brownian motion,
and
$$
\lim_{N\to\infty} \sup_{K> K_*}\frac{\ds \left|\sum_{i=N+1}^{N+K}r_{N,i}^{(2)}\right|}{g_1(N,K)}
={\rm o}_P(1).
$$
Next, because of \eqref{eq:sumTotal} and $T^{1/2}\left(\hat \btheta_N-\btheta_0\right) = {\rm O}_P(1)$, we have
\begin{multline*}
\lim_{N\to \infty} \sup_{K>K_*} \frac{\ds T^{-1/2} \left|\sum_{i=N+1}^{N+K_*}\bZ_{N,i}\right|} {g_1(N,K)}
\le \frac{t_* \|\mathcal{C}_1\|}{(1+t_*)^{1-\gamma} t_*^\gamma}+{\rm o}_P(1) ={\rm O}_P(1),
\end{multline*}
proving that
\begin{equation*}
\lim_{N\to\infty} \sup_{K>K_*}\frac{\ds \left| \left( \sum_{i=N+1}^{N+K_*}\bZ_{N,i}\right)^\top(\hat\btheta_N-\btheta_0)\right|}{g_1(N,K)}={\rm O}_P(1).
\end{equation*}
Similarly, using again  \eqref{eq:sumTotal} and $T^{1/2}\left(\hat \btheta_N-\btheta_0\right) = {\rm O}_P(1)$, we have
\begin{multline*}
\lim_{N\to\infty}\sup_{K>K_*}\frac{T^{-1/2} \ds \left| \sum_{i=N+K_*+1}^{N+K}\bZ_{N,i}^{(2)} \right|}{g_1(N,K)}\le  {\rm O}_P(1)  +{\rm o}_P(1)={\rm O}_P(1),
\end{multline*}
proving that
\begin{equation*}
\lim_{N\to\infty}\sup_{K>K_*}\frac{\left|\left( \sum_{i=N+1}^{N+K_*}\bZ_{N,i}^{(2)} \right)^\top(\hat\btheta_N-\btheta_0)\right|}{g_1(N,K)}={\rm O}_P(1).
\end{equation*}
Let $\mathcal{C}_{1*}$ be the first column of the matrix $\bSigma_*$ defined by
\begin{equation}
\label{sigma*}
\bSigma_*=\begin{bmatrix}
    I_p & \bLambda_*\\
    \bLambda^{\top}_* & \omega_*
\end{bmatrix},\quad \bLambda_* = -\int_{0}^{1} \widetilde{h}_*(t)\bvarphi(t)dt, \quad \omega_* = \int_{0}^{1} \widetilde{h}^{2}_*(t)dt + \frac{\sigma^2}{2a_*},
\end{equation}
where
$\displaystyle
\tilde{h}_*(t)=e^{-a_* t}\sum_{k=1}^{p}\mu_{k*}\int_{-\infty}^{t}e^{a_* s}\varphi_{k}(s)ds$, and $\bmu_{*},a_*$ are the parameters after the change-point $K_*$.
Furthermore, \Cref{eq:sumTotal} indicates that for $K/N = t >t_*$,
$$
\sum_{i=N+K_*+1}^{N+K }\bZ_{N,i}^{(2)}- T \Delta_N^{-1/2} (t -t_*)\mathcal{C}_{1*}
= \left( T t + N^{1/2} t^{1/2} + \Delta_N^{-1/2} + C(\Delta_N) t T \Delta_N^{-1/2}\right) {\rm O}_P(1)  ,
$$
so
\begin{equation*}
\lim_{N\to\infty} T^{-1/2} \frac{\ds \sum_{i=N+K_*+1}^{N+\lfloor Nt\rfloor }\bZ_{N,i}^{(2)} }{g_1(N,\lfloor Nt\rfloor)} = \frac{(t-t_*)}{(1+t)\ell_\gamma(t)}\mathbf{\mathcal{C}}_{1*} .
\end{equation*}
Finally,
setting
\begin{equation}\label{eq:kappastar0}
\kappa_{\hat Q,t_*,t} = \frac{(t-t_*)}{(1+t)\ell_\gamma(t)}  (\btheta_*-\btheta_0)^\top   \mathbf{\mathcal{C}}_{1*} ,
\end{equation}
one gets that by hypothesis, $\ds \lim_{t\to\infty} \kappa_{\hat Q,t_*,t}  = \kappa_{\hat Q,\btheta_0,\btheta_*}  \neq 0$.
This completes the proof.
\qed

\subsection{Proof of \Cref{thm:main12}}\label{app:pf-thm2}
By \eqref{eq:thetaN}, we have
$$
T^{1/2}\left(\hat \btheta_{N+\N{t}}-\hat \btheta_N\right) = T^{1/2}\left(\hat \btheta_{N+\N{t}}-\btheta_0\right)-T^{1/2}\left(\hat \btheta_N-\btheta_0\right).
$$
Setting $A_{N+\N{t}} = \frac{1}{t_{N+\N{t}}} \bZ_{N+\N{t}}^\top \bZ_{N+\N{t}}$, it follows from \Cref{prop:convdiscont} that
$ \ds A_{N+\N{t}} = \frac{ \bQ_{\left[0,t_{N+\N{t}}\right]}}{t_{N+\N{t}}}+o_P(1)$
and
$$
   T^{1/2}\left(\hat \btheta_{N+\N{t}}-\btheta_0\right)
= \frac{T}{t_{N+\N{t}}} A_{N+\N{t}}^{-1}T^{-1/2} \bR_{[0,t_{N+\N{t}}]}+o_P(1).
$$
By using the Central Limit Theorem for continuous martingales in \cite{Remillard/Vaillancourt:2024a}, we have
\begin{equation*}
T^{1/2}\left(\hat \btheta_{N+\N{t}}-\btheta_0\right) \xrightarrow[N\to\infty]{D}\bSigma^{-1/2} \frac{\dB_{p+1}(1+t)}{1+t},
\end{equation*}
where $\dB_{p+1}$  is a standard $(p+1)$-dimensional Brownian motion, and $D$ denotes convergence in the space of continuous functions on $[0,\infty)$. As a result,
$$
\hat \bGamma (N,\N{t}) \xrightarrow[N\to\infty]{D} \frac{\dB_{p+1}(1+t) - (1+t)\dB_{p+1}(1)}{1+t} = \dB_{p+1}\left(\frac{t}{1+t}\right),
$$
where $ \dB_{p+1}$ is a standard $(p+1)$-dimensional Brownian motion on $[0,1]$.
Finally, from the properties of $D$-convergence to Brownian motion and the continuous mapping theorem, it follows that
$\ds \sup_{K\ge 1} \frac{\left\|\hat \bGamma(N,K)\right\|}{\ell_\gamma\left(K/N\right)}$ converges in law to $\ds \sup_{t\in (0,1]}\frac{\left\|\dB_{p+1}(t)\right\|}{t^\gamma}$, completing the proof.
\qed

\subsection{Proof of \Cref{ThmAlt2}}\label{app:pf-thm2-alt}
Under the alternative hypothesis \eqref{eq:H1}, there exists $K_*\ge 1$, such that $K_*/N\to t_*$, and
for $K_*<K = \N{t}$,
\begin{multline*}
 \hat\bGamma(N,\N{t}) = A_N^{1/2}T^{1/2}\left(\hat\btheta_{N+\N{t}}-\hat\btheta_N\right)    =
 A_N^{1/2}T^{1/2}\left(\hat\btheta_{N+\N{t}}-\hat\btheta_{N+\N{t_*} } \right) \\
 + A_N^{1/2}T^{1/2}\left(\hat\btheta_{N+\N{t_*} } -\hat\btheta_{N}\right)\\
 =  A_N^{1/2}T^{1/2}\left(\hat\btheta_{N+\N{t}}-\hat\btheta_{N+\N{t_*} } \right)  +O_P(1).
\end{multline*}
Furthermore, for $t>t_*$,
\begin{multline*}
\frac{1}{T }A_{N+\N{t}} = \frac{1}{T } \sum_{i=1}^{N+K_*}\bZ_{N,i}\bZ_{N,i}^\top +\frac{1}{T }\sum_{i=N+K_*+1}^{N+\N{t}}\bZ_{N,i}^{(2)}{\bZ_{N,i}^{(2)}}^\top\\
=(1+t_*)\bSigma + (t-t_*) \bSigma_*+o_P(1) = (t-t_*)\left(\frac{ (1+t_*)}{(t-t_*)}\bSigma+\bSigma_*\right) +o_P(1),
\end{multline*}
where $\bSigma_*$ is defined in \eqref{sigma*}.
As a result,
\begin{multline*}
\left(\hat\btheta_{N+\N{t}}-\hat\btheta_{N+\N{t_*} } \right)
=   \left(\frac{1}{T }A_{N+\N{t}}\right)^{-1} T^{-1}\left(  \sum_{i=1}^{N+K_*}\bZ_{N,i}Y_{N,i}+  \sum_{i=N+K_*+1}^{N+\N{t}}\bZ_{N,i}^{(2)}Y_{N,i}^{(2)}\right)\\
-  T^{-1}\left(\frac{1}{T }A_{N+\N{t_*}}\right)^{-1}\left(  \sum_{i=1}^{N+K_*}\bZ_{N,i}Y_{N,i}\right)\\
=
 \left(\frac{1}{T }A_{N+\N{t}}\right)^{-1}T^{-1} \left(  \sum_{i=1}^{N+K_*}\bZ_{N,i} \bZ_{N,i}^\top \btheta_0 +  \sum_{i=1}^{N+K_*}\bZ_{N,i} \epsilon_{N,i}+ \sum_{i=1}^{N+K_*}\bZ_{N,i} r_{N,i}\right)\\
 +  \left(\frac{1}{T }A_{N+\N{t}}\right)^{-1}  T^{-1} \left(  \sum_{i=N+K_*+1}^{N+\N{t}}\bZ_{N,i}^{(2)}{\bZ_{N,i}^{(2)}}^\top \btheta_* + \sum_{i=N+K_*+1}^{N+\N{t}}\bZ_{N,i}^{(2)}\epsilon_{N,i}^{(2)}
 +\sum_{i=N+K_*+1}^{N+\N{t}}\bZ_{N,i}^{(2)}r_{N,i}^{(2)}\right)\\
- T^{-1} \left(\frac{1}{T }A_{N+\N{t_*}}\right)^{-1}\left( \sum_{i=1}^{N+K_*}\bZ_{N,i}\bZ_{N,i}^\top \btheta_0 + \sum_{i=1}^{N+K_*}\bZ_{N,i}\epsilon_{N,i} +\sum_{i=1}^{N+K_*}\bZ_{N,i}r_{N,i}\right)\\
\stackrel{Pr}{\longrightarrow} \left(\frac{ (1+t_*)}{(t-t_*)}\bSigma+\bSigma_*\right)^{-1} \bSigma_* (\btheta_*-\btheta_0).
\end{multline*}
Finally,  setting $\bSigma_{t_*,t} = \bSigma\left(\frac{ (1+t_*)}{(t-t_*)}\bSigma+\bSigma_*\right)^{-1} \bSigma_*$, one
may conclude that
$$
T^{-1/2} \left\|\hat{\bGamma}(N,\N{t})\right\|/\ell_\gamma\left(\frac{\N{t}}{N}\right) \stackrel{{\rm Pr}}{\longrightarrow}
\frac{1}{\ell_\gamma(t)}\left\|\bSigma_{t_*,t} (\btheta_*-\btheta_0)\right\| =\frac{\kappa_{t_*,t}}{\ell_\gamma(t)} > 0,
$$
and
\begin{equation}\label{eq:newkappa}
\lim_{t\to\infty} \kappa_{t_*,t}  =\kappa_{\hat \bGamma,\btheta_0,\btheta_*} = \left\| \bSigma (\btheta_*-\btheta_0)\right\|>0.
\end{equation}
As a result,
$$
 \sup_{0 \le t < \infty}\left\{\left\|\hat{\bGamma}(N,\N{t})\right\|/h\left(\frac{\N{t}}{N}\right)\right\}\stackrel{\rm{Pr}}{\longrightarrow}\infty,
$$
as $N\to\infty$.
This completes the proof.\qed

\section{Computation of $\tilde h$}\label{app:tildeh}

Note that for for any $k\ge 1$ and any $t\in[0,1]$,
$$
\int_{-\infty}^t e^{a s}\varphi_k(s)ds  = \frac{e^{-a}}{1-e^{-a}}\int_0^1 e^{a s}\varphi_k(s)ds +
\int_0^t e^{a s}\varphi_k(s)ds.
$$
In particular, if $k=1$, then
\begin{equation}\label{eq:htilde1}
e^{-a t}  \int_{-\infty}^t e^{a s}\varphi_1(s)ds =   \frac{1}{a}.
\end{equation}
Next, if $\bvarphi_2(s) = 2^{1/2} \cos{2\pi s}$, then, for any $t\in[0,1]$,
\begin{equation}\label{eq:htilde2}
 e^{-a t}  \int_{-\infty}^t e^{a s}\varphi_2(s)ds =   \frac{2^{1/2}}{a^2+4\pi^2}\left\{
a  \cos{2\pi t} + 2\pi \sin{2\pi t}\right\} = a\varphi_2(t)+b\psi_2(t),
\end{equation}
where $\psi_2(t) = 2^{1/2} \sin{2\pi t}$. Recall that $\left\{1,2^{1/2}\cos{2k\pi t},2^{1/2}\sin{2k \pi t}:\; k\ge 1\right\}$ is a complete orthonormal system on $L^2([0,1])$, so setting $\varphi_k(t) = 2^{1/2}\cos{2(k-1)\pi t}$, $k\ge 1$,  and $\psi_k(t) = 2^{1/2}\sin{2(k-1)\pi t}$, $k\ge 1$, one gets
that  for $k\ge 1$,
$$
e^{-a t}  \int_{-\infty}^t e^{a s}\varphi_k(s)ds =   a_k \varphi_k(t) + b_k \psi_k(t),
$$
with $a_k = \frac{a}{a^2+4\pi^2 (k-1)^2}$ and $b_k = \frac{2\pi(k-1)}{a^2+4\pi^2 (k-1)^2}$. Note that $a_1=\frac{1}{a}$, $b_1=0=\psi_1(t)$, and $a_k^2+b_k^2  = \frac{1}{a^2+4\pi^2 (k-1)^2}$.
As a result, for $\btheta=(\mu_1,\ldots,\mu_p,a)$, one gets
\begin{equation}\label{eq:htilde}
 \tilde h(t) =   \sum_{k=1}^p \mu_k \{a_k \varphi_k(t) +b_k \psi_k(t)\}.
\end{equation}
It then follows that $\ds \int_0^1 \tilde h(t)\varphi_k(t)dt = \mu_k a_k$, $k\ge 1$, and
$\ds \int_0^1 \left\{\tilde h(t)\right\}^2dt=  \sum_{k=1}^p \frac{\mu_k^2}{a^2+4(k-1)^2 \pi^2} $. Hence,
\begin{equation}\label{eq:Lambda}
 \Lambda^\top = -\left[ \mu_1 a_1, \ldots, \mu_p a_p \right].
\end{equation}
Finally, $\ds \omega =  \sum_{k=1}^p \frac{\mu_k^2}{a^2+4(k-1)^2 \pi^2} +\frac{\sigma^2}{2a}$. Taking for example $p=2$, so $\btheta=(\mu_1,\mu_2,a)$, it follows from \eqref{sigma} that
\begin{equation}\label{eq:Sigmaex}
\bSigma = \left( \begin{array}{ccc}
1 & 0 & -\frac{\mu_1}{a} \\
0 & 1 &  -\frac{\mu_2a}{a^2+4\pi^2}\\
-\frac{\mu_1}{a} & -\frac{\mu_2a }{a^2+4\pi^2} & \frac{\mu_1^2}{a^2}+\frac{\mu_2^2}{a^2+4\pi^2}+\frac{\sigma^2}{2a}
\end{array}\right).
\end{equation}
In particular, if $\btheta=(1,2,1)$, then
\begin{equation}\label{eq:Sigmaex1}
\bSigma = \left( \begin{array}{ccc}
1 & 0 & -1 \\
0 & 1 &  -\frac{2}{1+4\pi^2}\\
-1 & -\frac{2}{1+4\pi^2} & \frac{5+4\pi^2}{1+4\pi^2}+\frac{\sigma^2}{2}
\end{array}\right).
\end{equation}

Now, using $\btheta_*$ instead of $\btheta_0$, one gets

\begin{equation}\label{eq:Sigmastar}
\bSigma_* = \left( \begin{array}{ccc}
1 & 0 & -\frac{\mu_{1*}}{a_*} \\
0 & 1 &  -\frac{\mu_{2*}a_*}{a_*^2+4\pi^2}\\
-\frac{\mu_{1*}}{a_*} & -\frac{\mu_{2*}a_* }{a_*^2+4\pi^2} & \frac{\mu_{1*}^2}{a_*^2}+\frac{\mu_{2*}^2}{a_*^2+4\pi^2}+\frac{\sigma^2}{2a_*}
\end{array}\right).
\end{equation}

\section{Exact simulation of the generalized OU process}\label{app:sim}

First, suppose that
$\bvarphi(t) = \left(\phi_1(t),\ldots, \phi_{p_1}(t), \psi_2(t),\ldots, \psi_{p+1-p_1}(t)\right)^\top$, with $\phi_1(t)\equiv 1$, and for $k\ge 2$,
$\phi_k(t) = 2^{1/2} \cos\{2\pi(k-1)t\}$, and $\psi_k(t) = 2^{1/2} \sin\{2\pi(k-1)t\}$.
Then, for $\Delta>0$ and for $k\ge 1$,
\begin{eqnarray}\label{eq:expphi2}
A_k &=& e^{-a \Delta }\int_{0}^{\Delta} e^{a s}\cos\{2\pi(k-1)s\}ds \\
&= & a_k \cos\{2\pi(k-1)\Delta\}-a_k e^{-a \Delta }  +b_k\sin\{2\pi(k-1)\Delta\},\nonumber
\end{eqnarray}
and
\begin{eqnarray}\label{eq:expphi1}
B_k &=& e^{-a \Delta }\int_{0}^{\Delta} e^{a s}\sin\{2\pi(k-1)s\}ds\\
& =&   b_k e^{-a \Delta } -b_k \cos\{2\pi(k-1)\Delta\} +a_k \sin\{2\pi(k-1)\Delta\},\nonumber
\end{eqnarray}
where  $a_k = \frac{a}{a^2+4\pi^2 (k-1)^2}$ and $b_k = \frac{2\pi(k-1)}{a^2+4\pi^2 (k-1)^2}$.
As a result, for any $k\ge 1$,
\begin{equation}\label{eq:expphi}
 e^{-a t_i}\int_{t_{i-1}}^{t_i} e^{a s}\phi_k(s)ds = A_k \phi_k(t_{i-1}) - B_k\psi_k(t_{i-1}),
\end{equation}
and
\begin{equation}\label{eq:exppsi}
 e^{-a t_i}\int_{t_{i-1}}^{t_i} e^{a s}\psi_k(s)ds = B_k \phi_k(t_{i-1}) +A_k\psi_k(t_{i-1}).
\end{equation}
Next, suppose that for $t\in [0,T]$,
$dX_t = (\bmu^\top \bvarphi(t) - a X_t)dt+\sigma dB_t$. Then, for any $0\le i \le N+K_*$ and $t_i=i\Delta N=i\frac{T}{N}$, one gets
\begin{equation}\label{eq:OUsol}
X_{t_{i}} = e^{-a \Delta_N} X_{t_{i-1}}+ e^{-a t_i}\int_{t_{i-1}}^{t_i} e^{a s} \bmu^\top \bvarphi(s)ds + \sigma
\varepsilon_{N,i} ,
\end{equation}
where $\ds \varepsilon_{N,i} = e^{-a t_i}\int_{t_{i-1}}^{t_i} e^{a s} dB_s \sim N\left(0, \frac{1-e^{-2a \Delta_N}}{2a}\right)$.
If the parameters change after time $T(1+t_*) = N+K_*$, it follows that for
$i> N+K_*$,
\begin{equation}\label{eq:OUsol*}
X_{t_{i}} = e^{-a_* \Delta_N} X_{t_{i-1}}+ e^{-a_* t_i}\int_{t_{i-1}}^{t_i} e^{a_* s} \bmu_*^\top \bvarphi(s)ds + \sigma
\varepsilon_{N,i,*} ,
\end{equation}
where $\ds \varepsilon_{N,i,*} = e^{-a_* t_i}\int_{t_{i-1}}^{t_i} e^{a_* s} dB_s \sim N\left(0, \frac{1-e^{-2a_* \Delta_N}}{2a_*}\right)$.

\subsection{OU process}
Here, we take $p=1$ and $q=0$, so $\btheta^\top = (\mu_1,a)$. Setting $b=\mu_1/a$ and $\phi_N=e^{-a \Delta_N}$, one gets that
$$
X_{N,t_i} = b (1-\phi_N) + \phi_N X_{N,t_{i-1}}+ \sqrt{\frac{1-\phi_N^2}{2a}}\varepsilon_i,\qquad i\ge 1,
$$
$\varepsilon_i$ iid  $N(0,1)$, and $X_{N,0}=0$. Note that,  as $N\to\infty$,  $(1-\phi_N)/\Delta_N \to a$ and  $\frac{1-\phi_N^2}{2a\Delta_N} \to 1 $.

\addcontentsline{toc}{section}{References}
\bibliographystyle{apalike}

\section{Supplementary material}

\setcounter{figure}{0} 

\subsection{Other boxplots}

\begin{figure}[ht!]
    \centering
             \includegraphics[width=6cm,height=3cm]{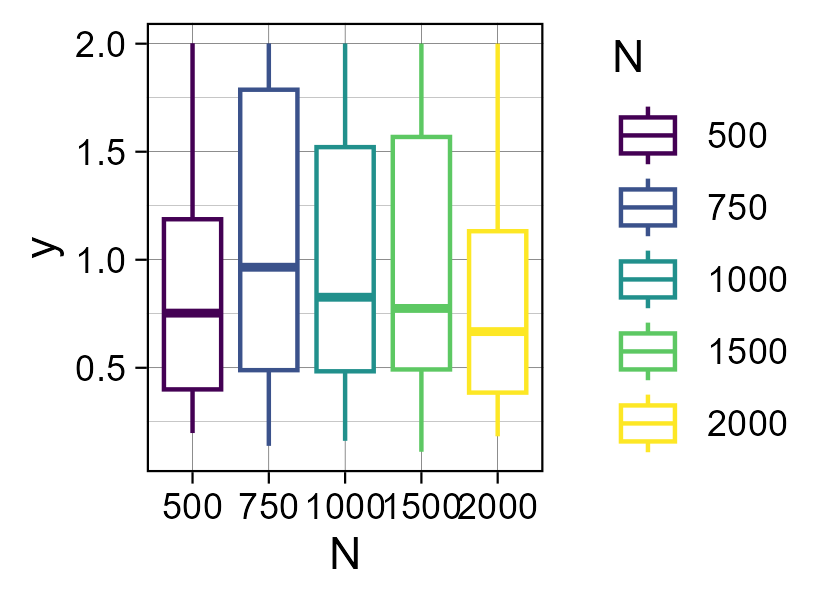}
    \includegraphics[width=6cm,height=3cm]{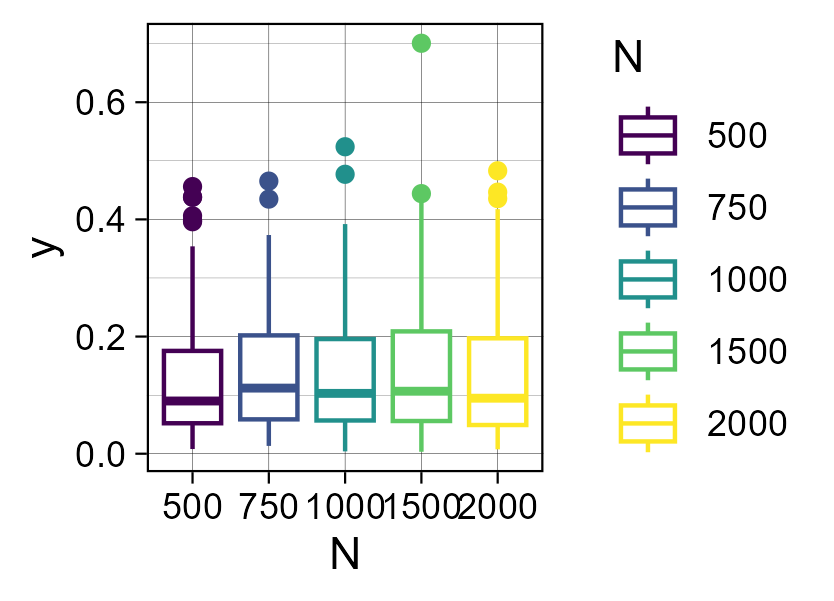}

            \includegraphics[width=6cm,height=3cm]{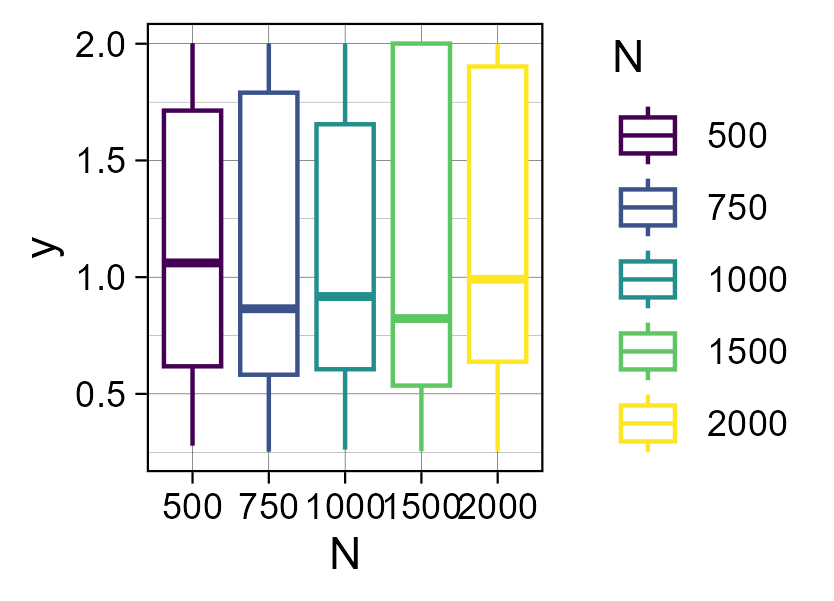}
    \includegraphics[width=6cm,height=3cm]{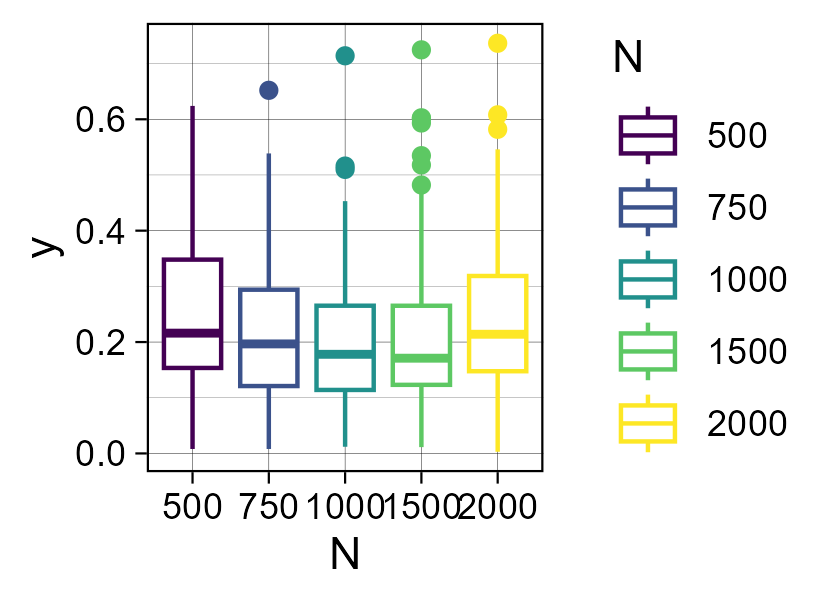}

            \includegraphics[width=6cm,height=3cm]{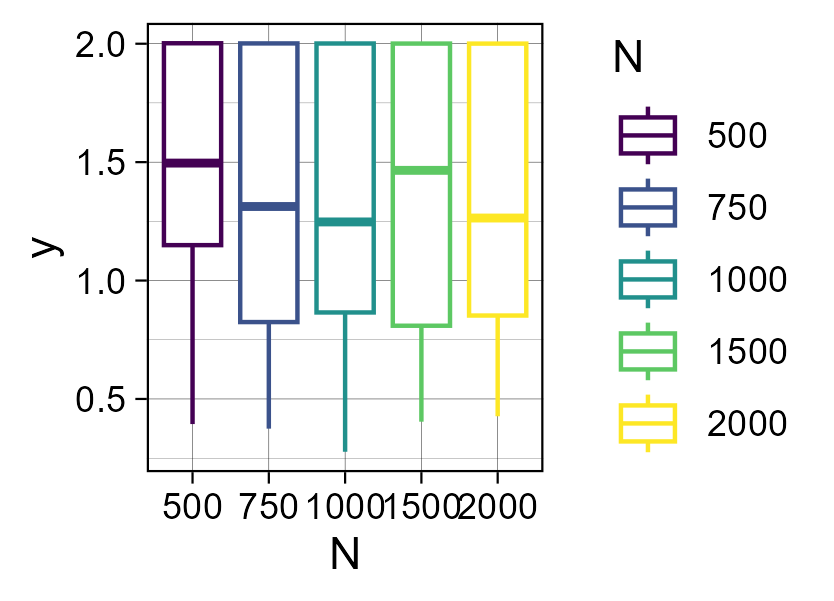}
    \includegraphics[width=6cm,height=3cm]{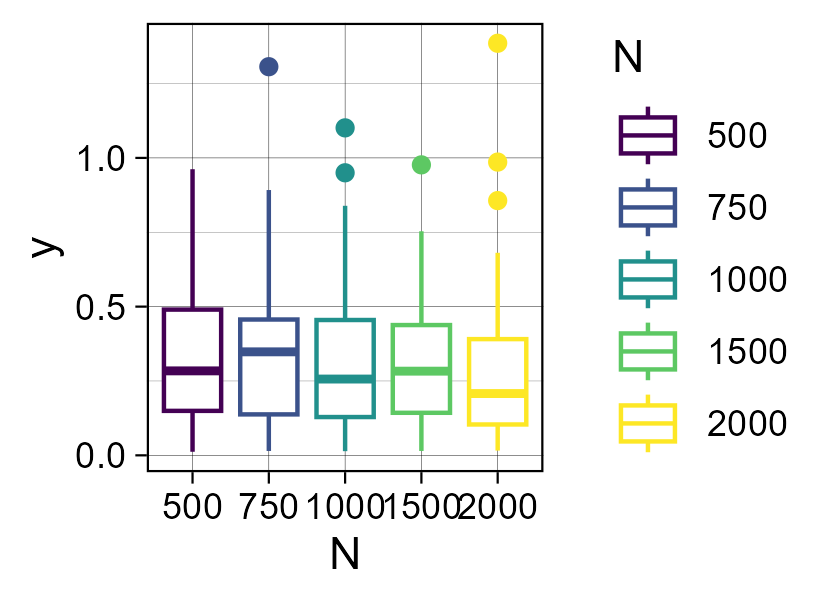}

            \includegraphics[width=6cm,height=3cm]{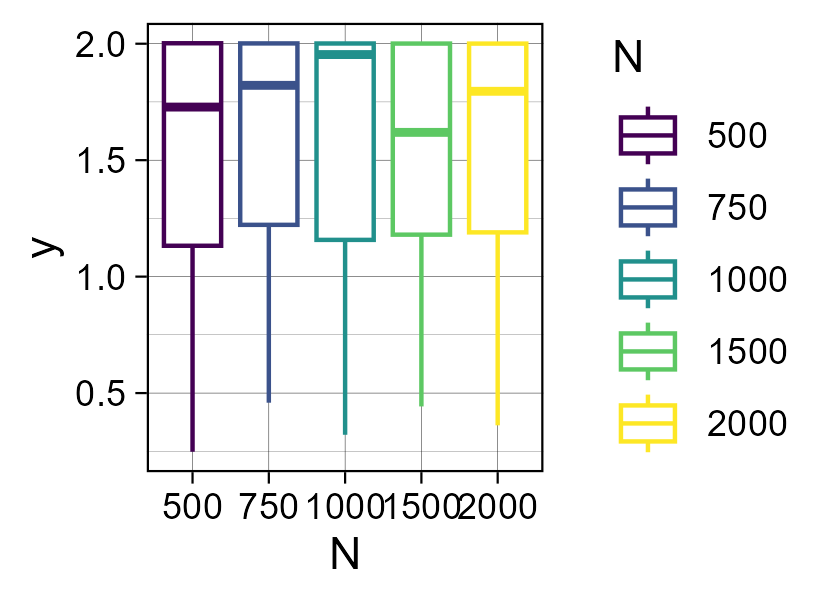}
    \includegraphics[width=6cm,height=3cm]{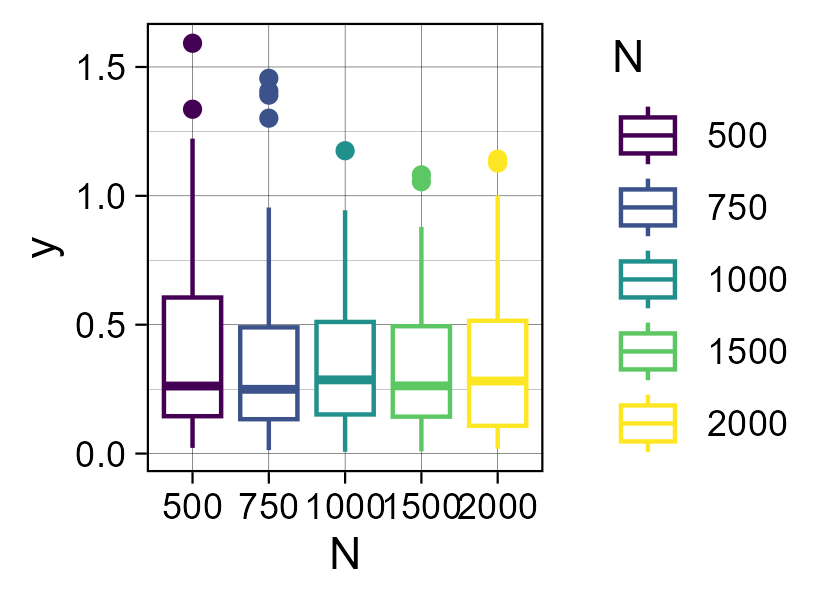}

            \includegraphics[width=6cm,height=3cm]{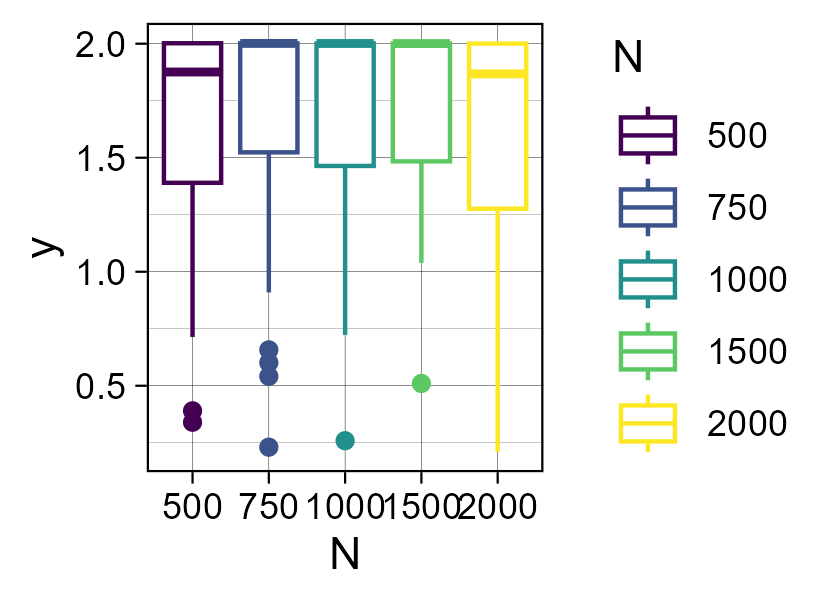}
    \includegraphics[width=6cm,height=3cm]{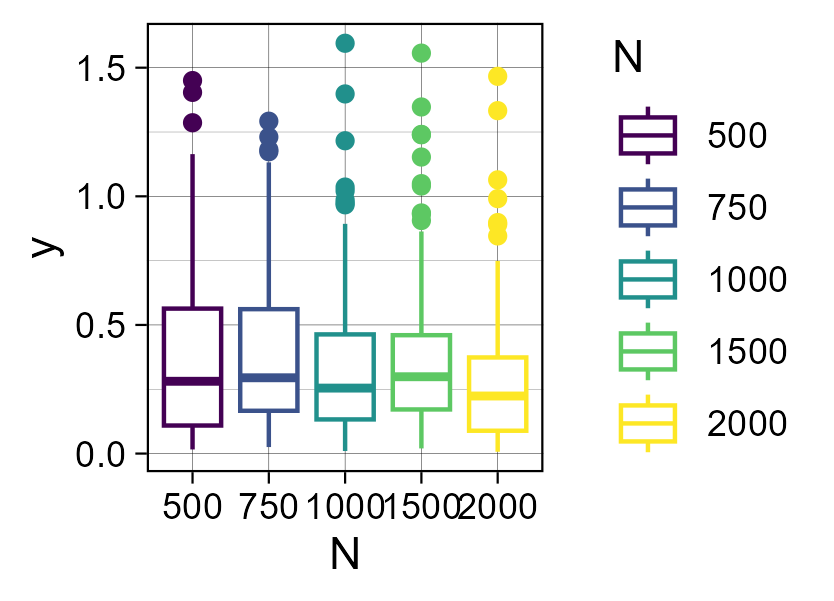}
        \caption{Estimation of $t_*$ when $t_*\in\{0,0.1,0.3,0.5,0.7\}$ (top to bottom), $\btheta_*=(15,3,4)$, $\kappa_{\hat Q,\btheta_0,\btheta_*}=2.75$ and $\kappa_{\hat \bGamma,\btheta_0,\btheta_*}=11.37$, based on $\widehat Q$ (left panel) and
     $\widehat \bGamma$ (right panel).}\label{fig:tauStar_t7_1534}
\end{figure}

\begin{figure}[ht!]
    \centering
                  \includegraphics[width=6cm,height=3cm]{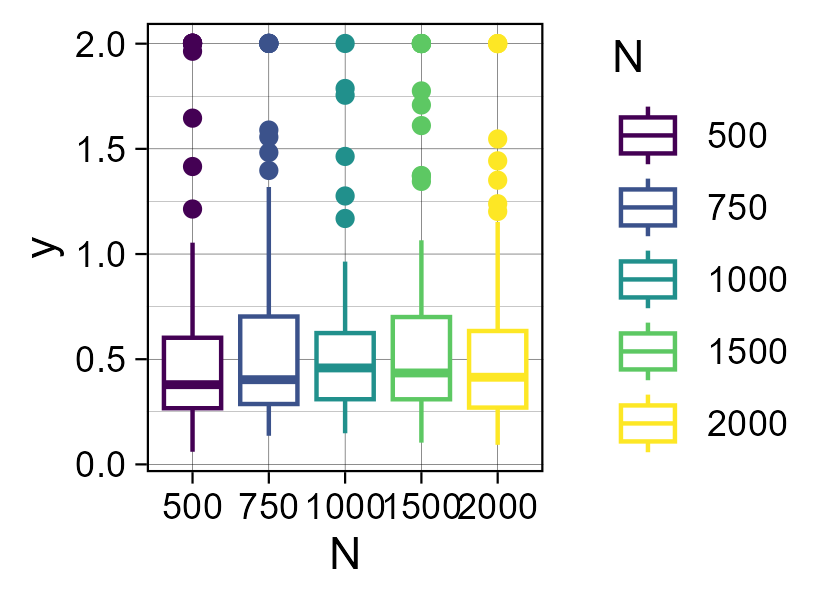}
    \includegraphics[width=6cm,height=3cm]{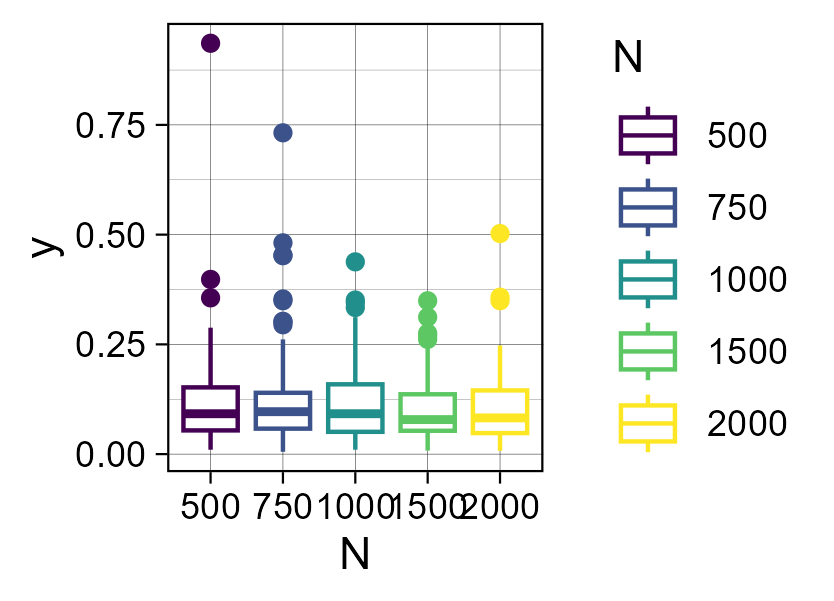}

      \includegraphics[width=6cm,height=3cm]{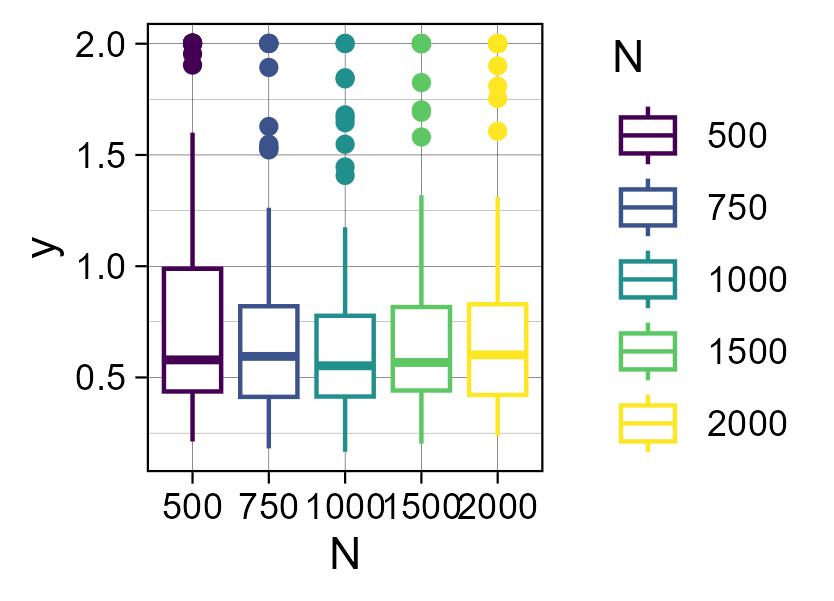}
    \includegraphics[width=6cm,height=3cm]{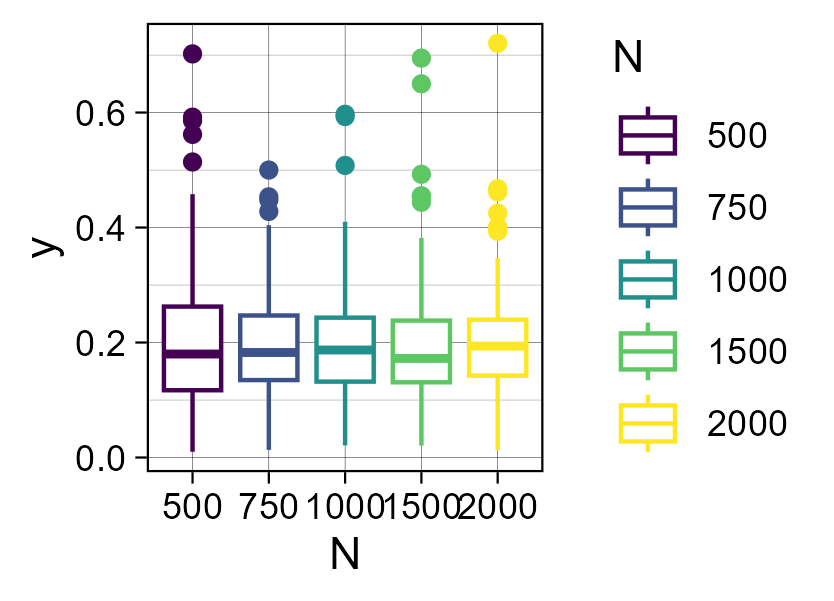}

                  \includegraphics[width=6cm,height=3cm]{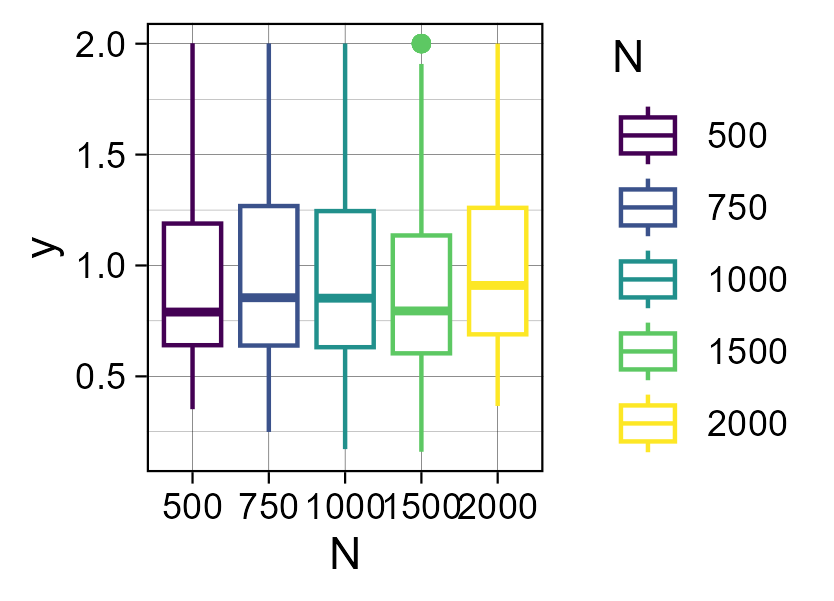}
    \includegraphics[width=6cm,height=3cm]{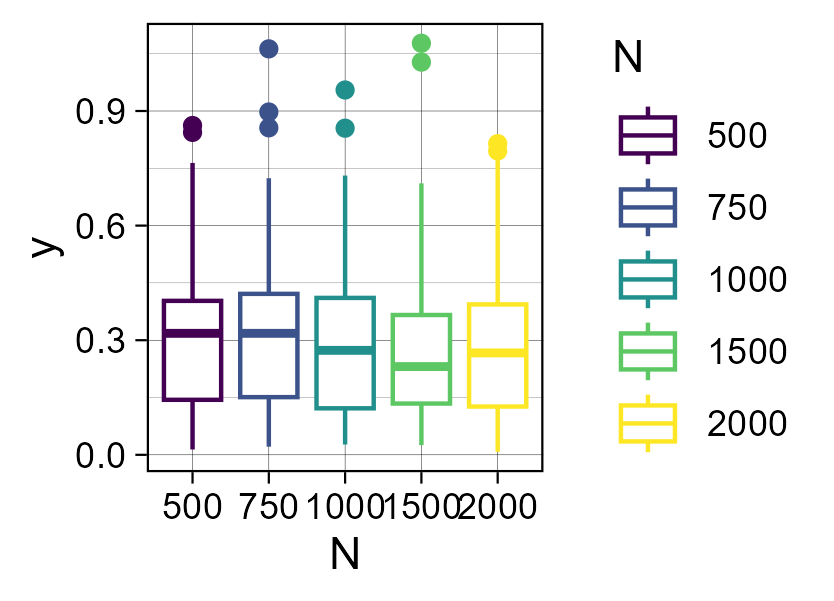}

                  \includegraphics[width=6cm,height=3cm]{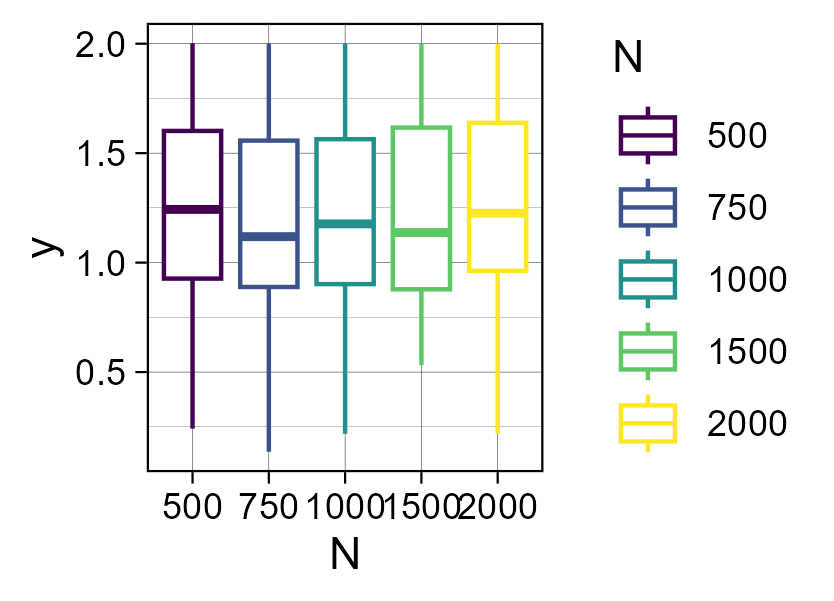}
    \includegraphics[width=6cm,height=3cm]{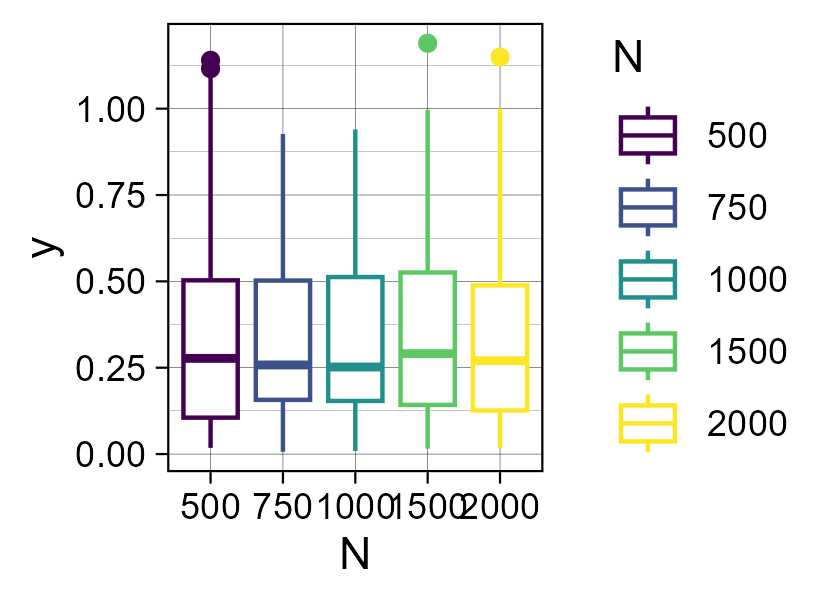}

                  \includegraphics[width=6cm,height=3cm]{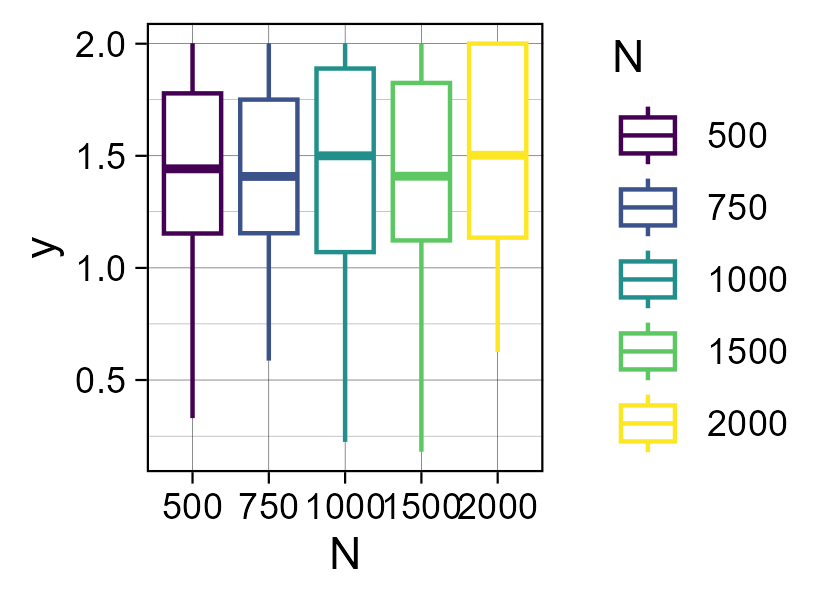}
    \includegraphics[width=6cm,height=3cm]{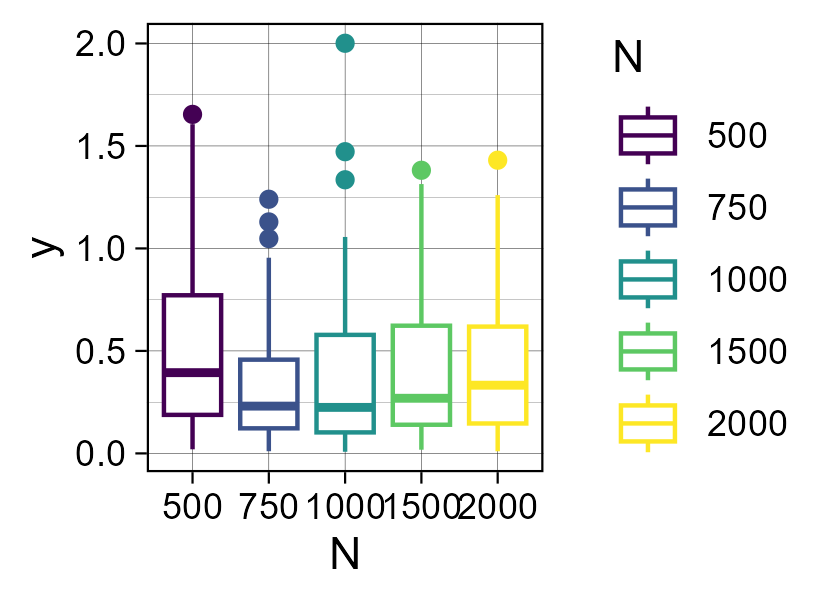}
        \caption{Estimation of $t_*$ when $t_*\in\{0,0.1,0.3,0.5,0.7\}$ (top to bottom), $\btheta_*=(5,3,1)$, $\kappa_{\hat Q,\btheta_0,\btheta_*}=4$ and $\kappa_{\hat \bGamma,\btheta_0,\btheta_*}=5.78$, based on $\widehat Q$ (left panel) and
     $\widehat \bGamma$ (right panel).}\label{fig:tauStar_t7_531}
\end{figure}

\subsection{Power curves}

\begin{figure*}[ht!]
    \centering
     \includegraphics[width=6cm,height=3cm]{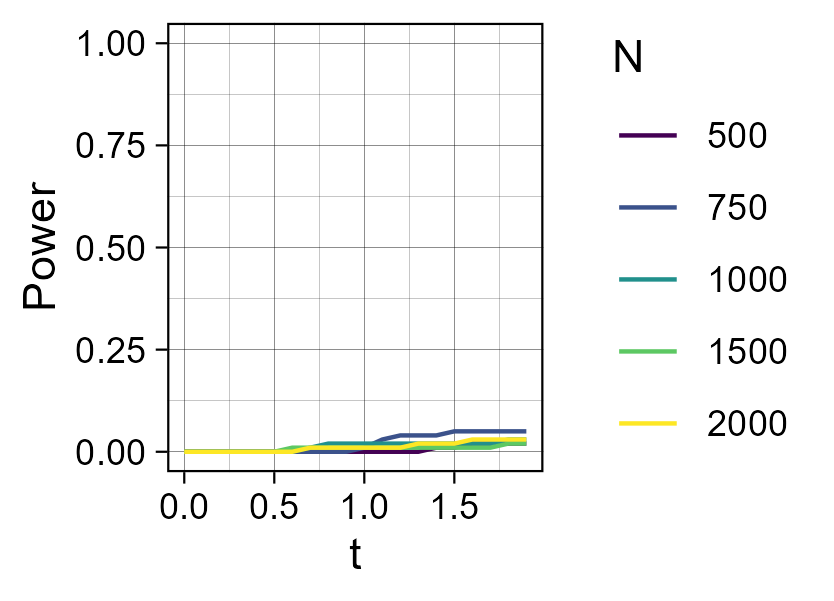}
    \includegraphics[width=6cm,height=3cm]{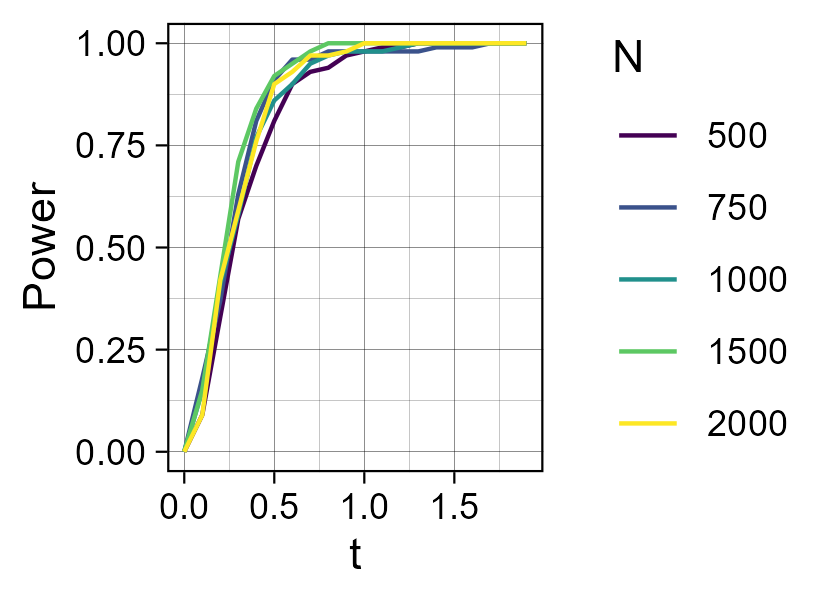}

   \includegraphics[width=6cm,height=3cm]{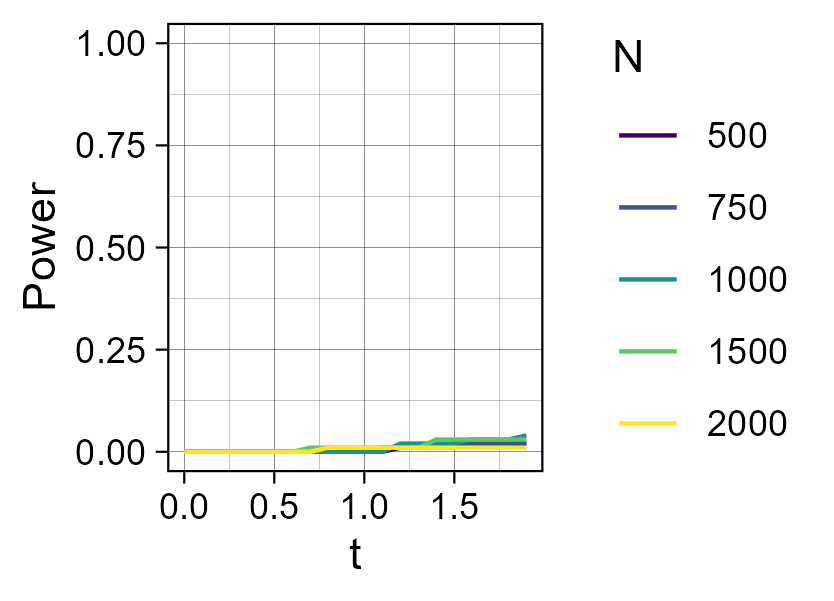}
    \includegraphics[width=6cm,height=3cm]{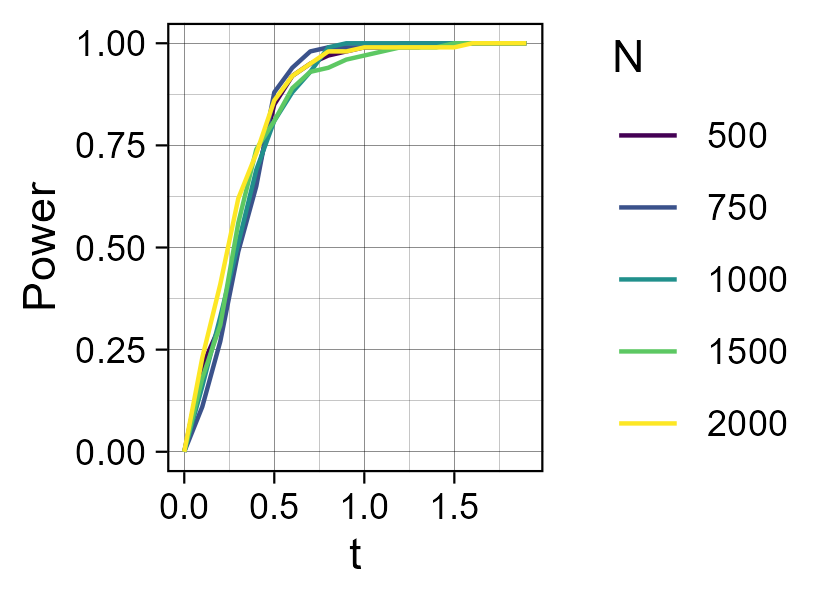}

      \includegraphics[width=6cm,height=3cm]{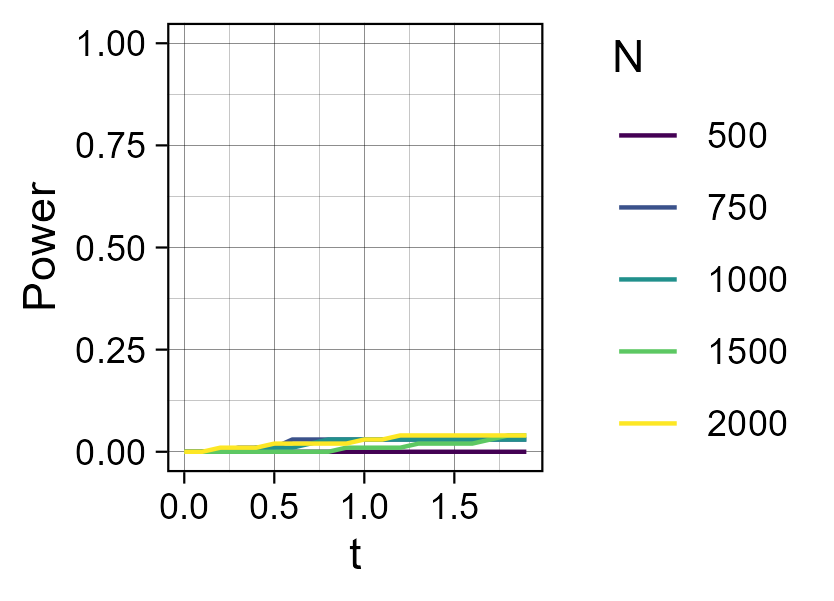}
    \includegraphics[width=6cm,height=3cm]{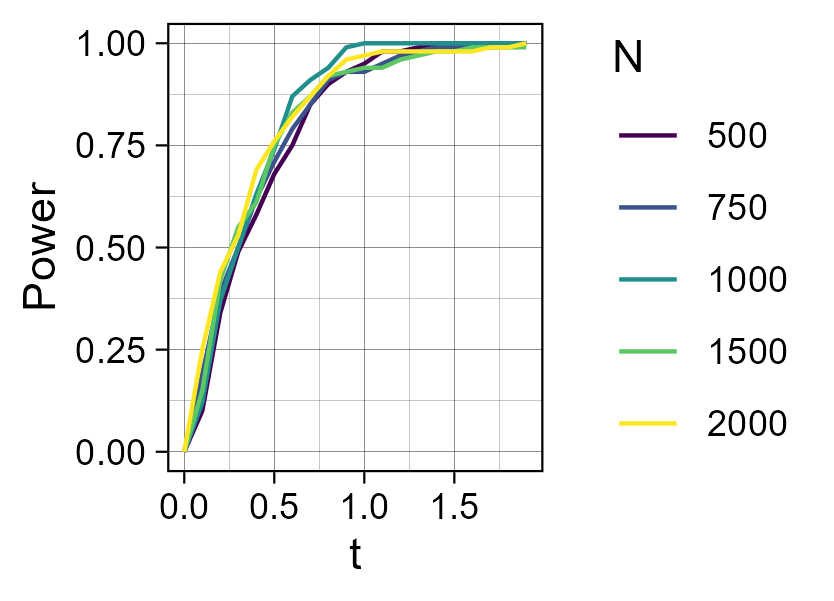}

     \includegraphics[width=6cm,height=3cm]{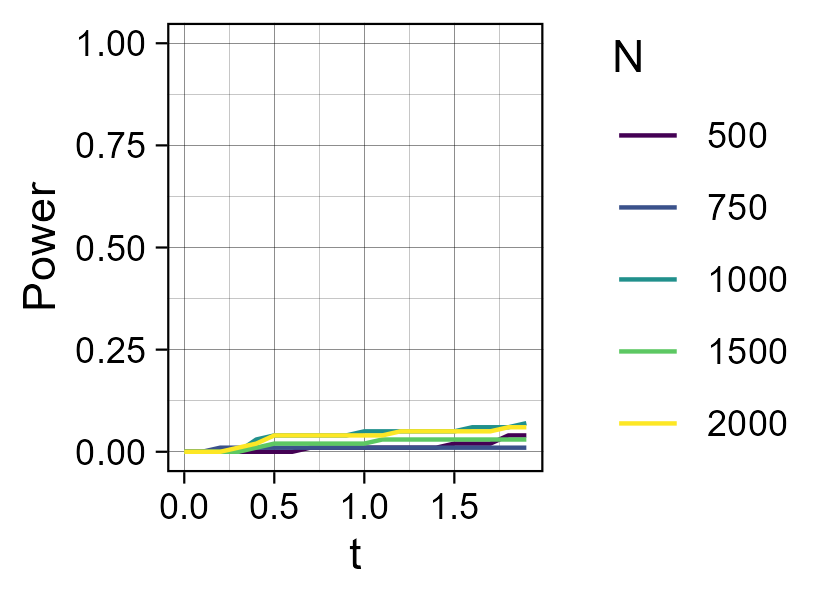}
    \includegraphics[width=6cm,height=3cm]{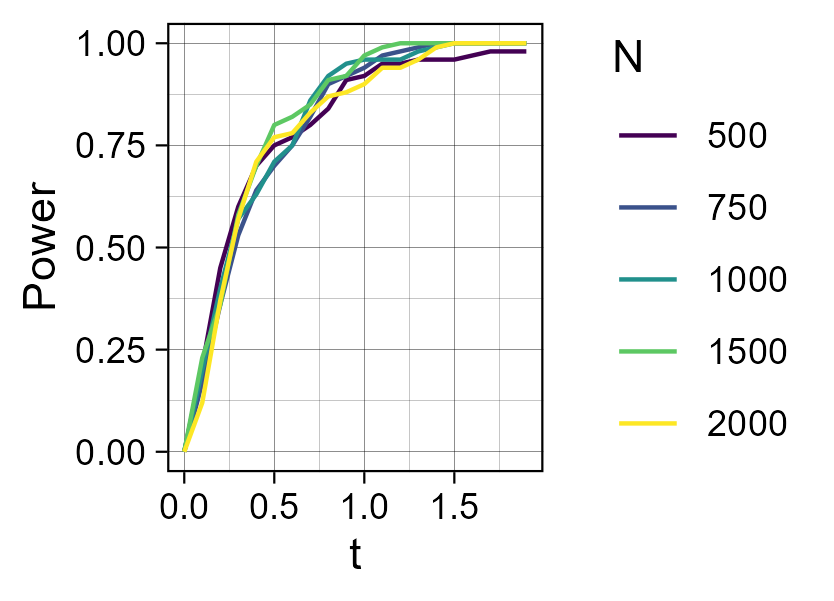}

     \includegraphics[width=6cm,height=3cm]{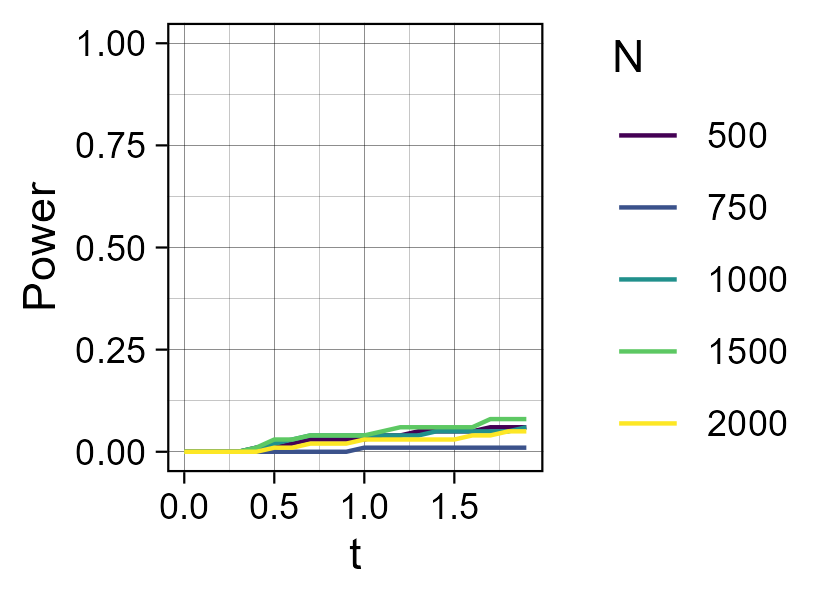}
    \includegraphics[width=6cm,height=3cm]{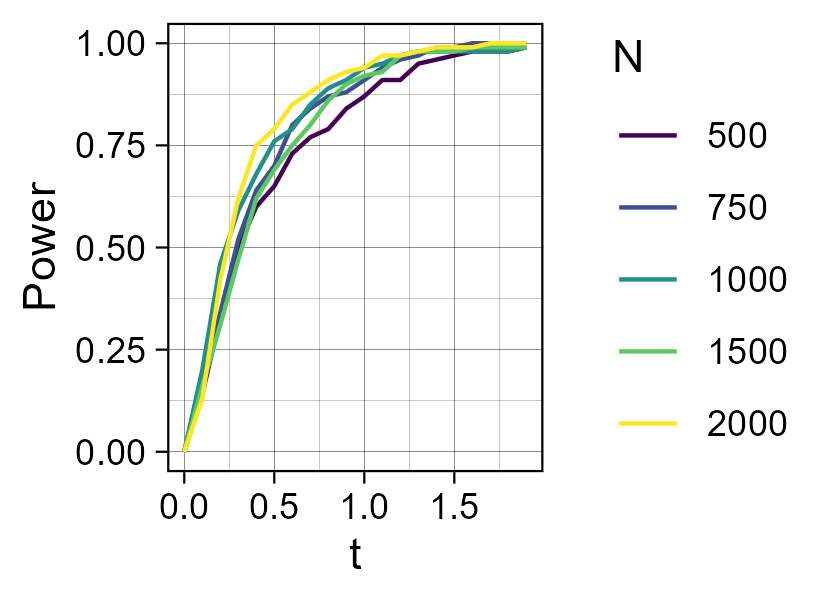}
    \caption{Empirical power of the estimation of  $t_*$ when $t_*\in\{0,0.1,0.3,0.5,0.7\}$ (top to bottom),  $\btheta_*=2\btheta_0=(2,4,2)$,  $\kappa_{\hat Q,\btheta_0,\btheta_*}=0$, based on $\widehat Q$ (left panel) and
     $\widehat \bGamma$ (right panel).}\label{fig:pow_tauStar_t7_242}
\end{figure*}

\begin{figure}[ht!]
    \centering

   \includegraphics[width=6cm,height=3cm]{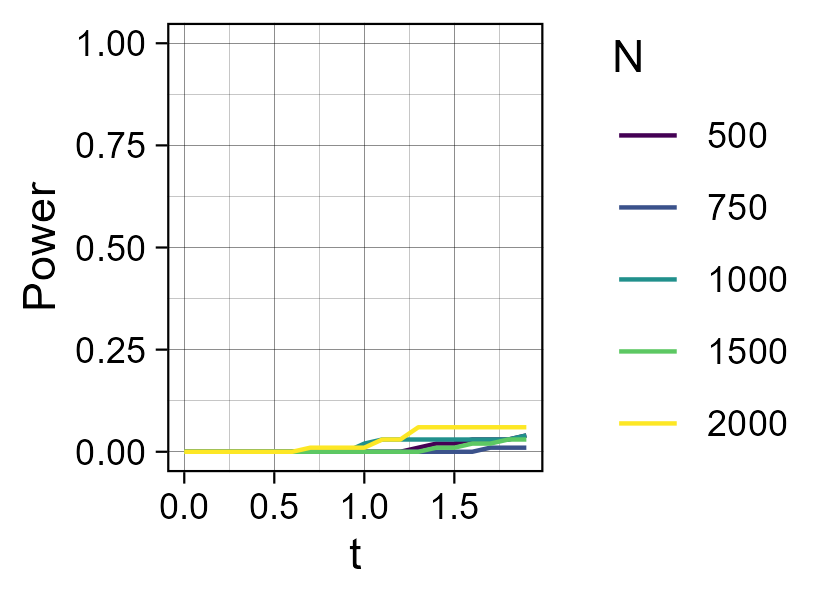}
    \includegraphics[width=6cm,height=3cm]{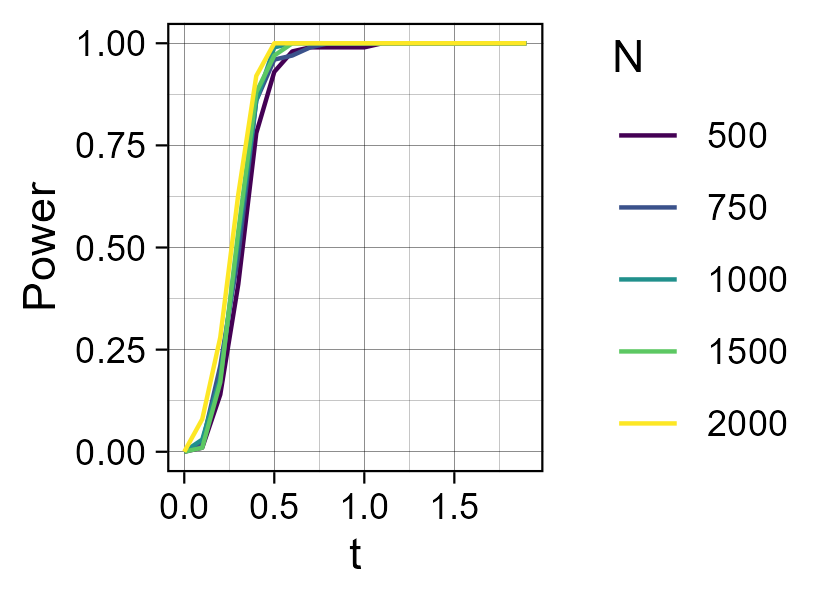}

 \includegraphics[width=6cm,height=3cm]{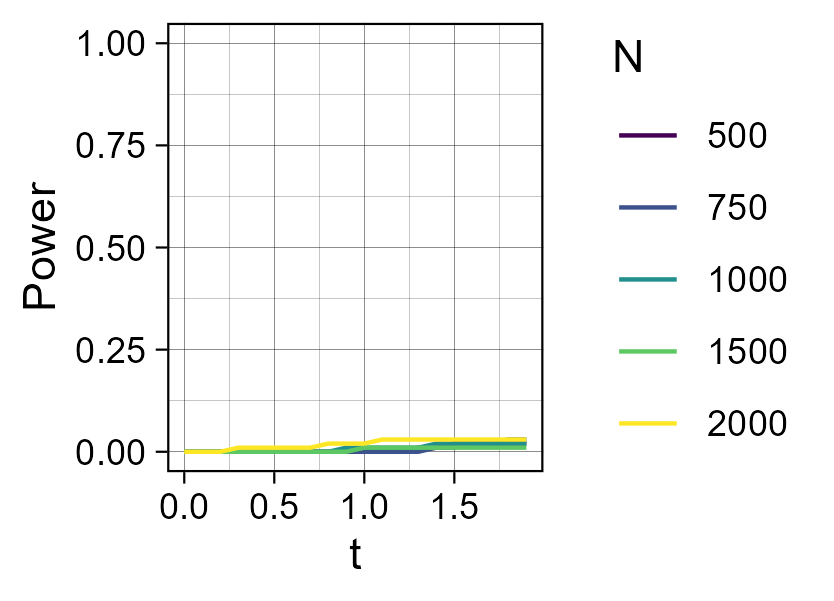}
    \includegraphics[width=6cm,height=3cm]{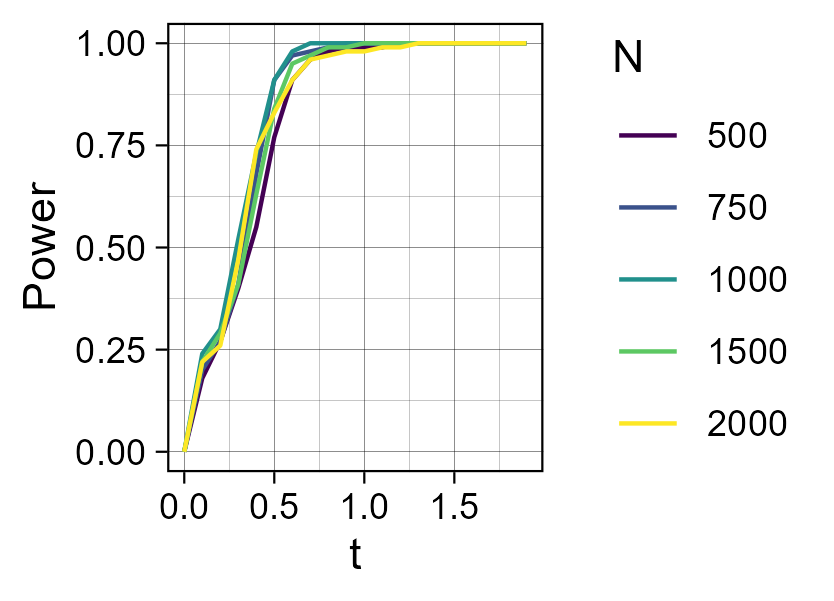}

\includegraphics[width=6cm,height=3cm]{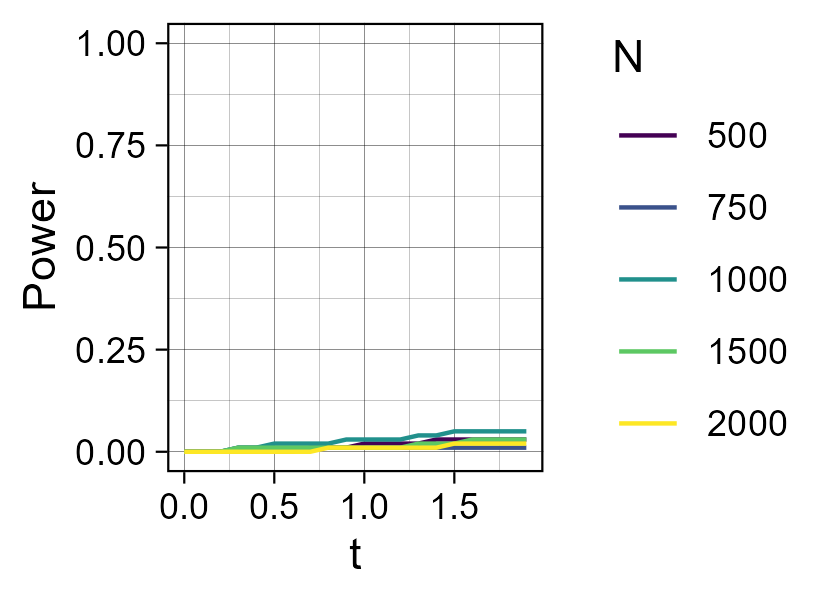}
    \includegraphics[width=6cm,height=3cm]{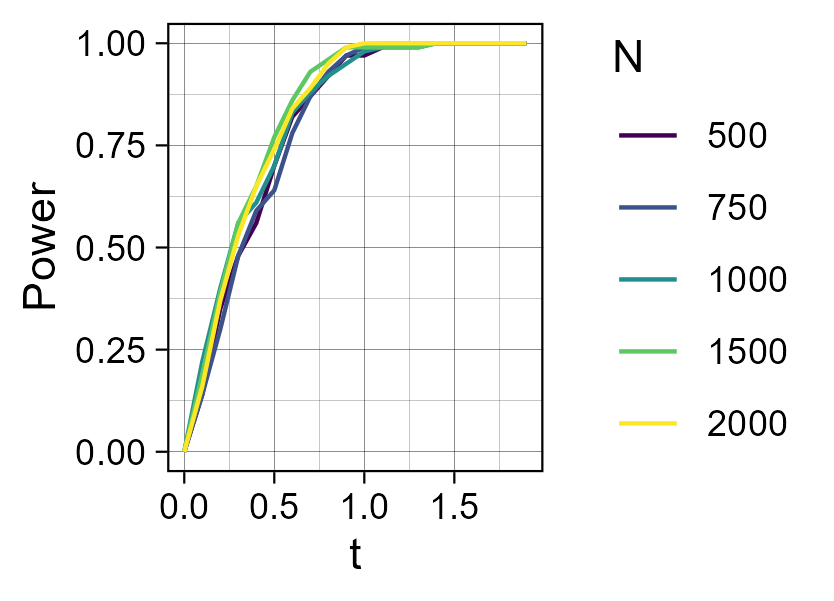}

\includegraphics[width=6cm,height=3cm]{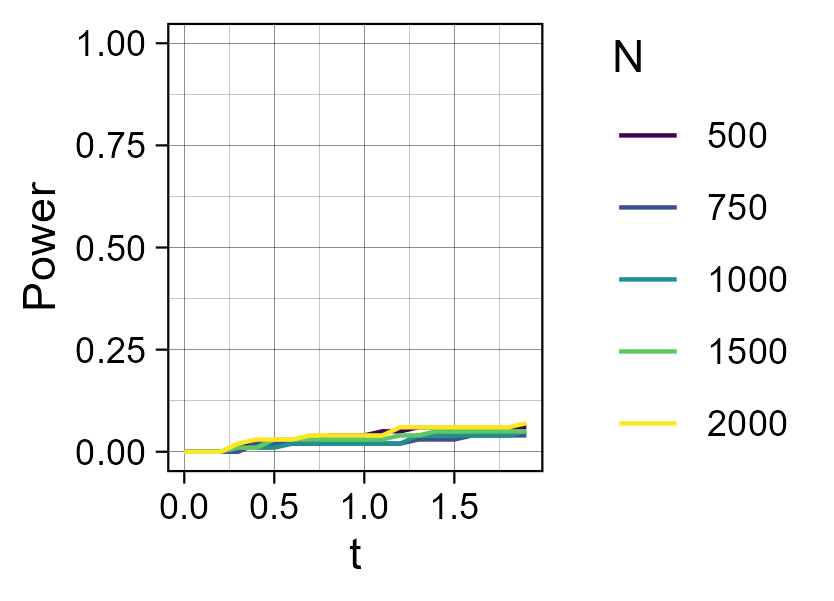}
    \includegraphics[width=6cm,height=3cm]{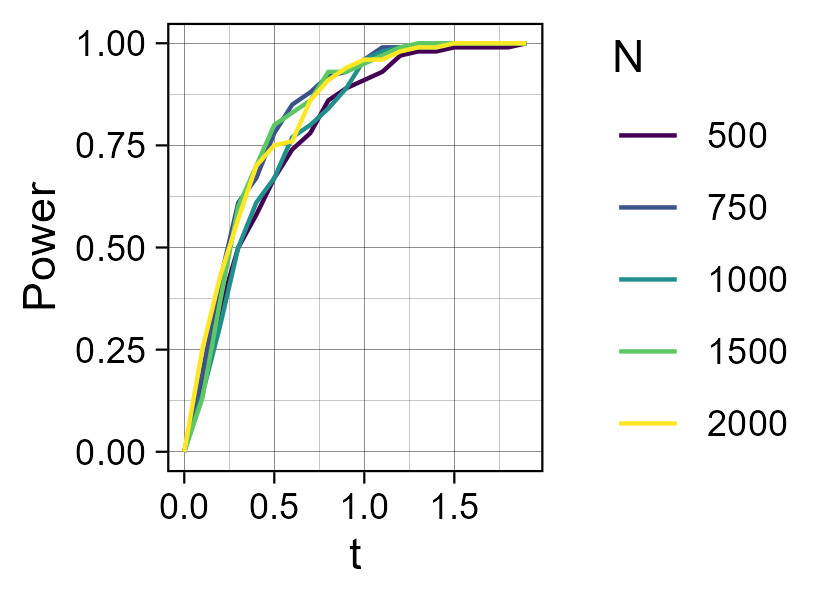}

 \includegraphics[width=6cm,height=3cm]{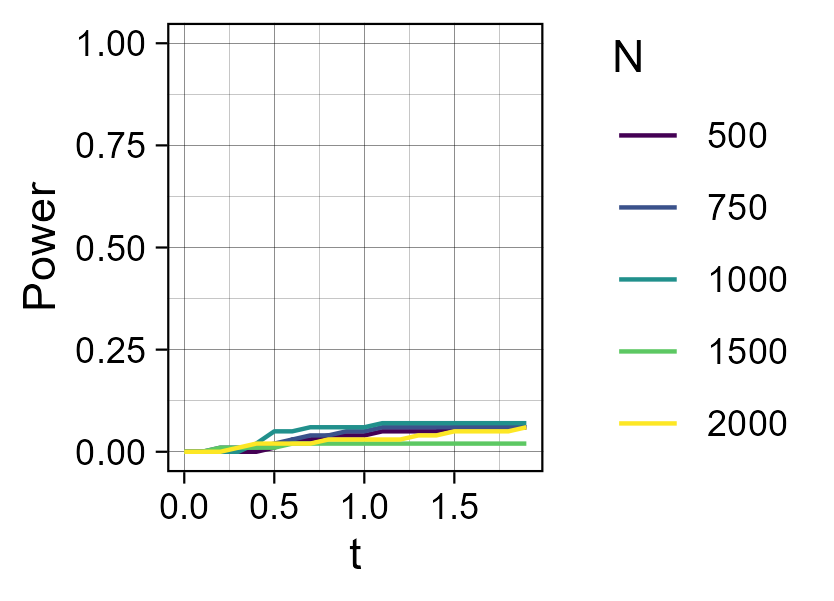}
    \includegraphics[width=6cm,height=3cm]{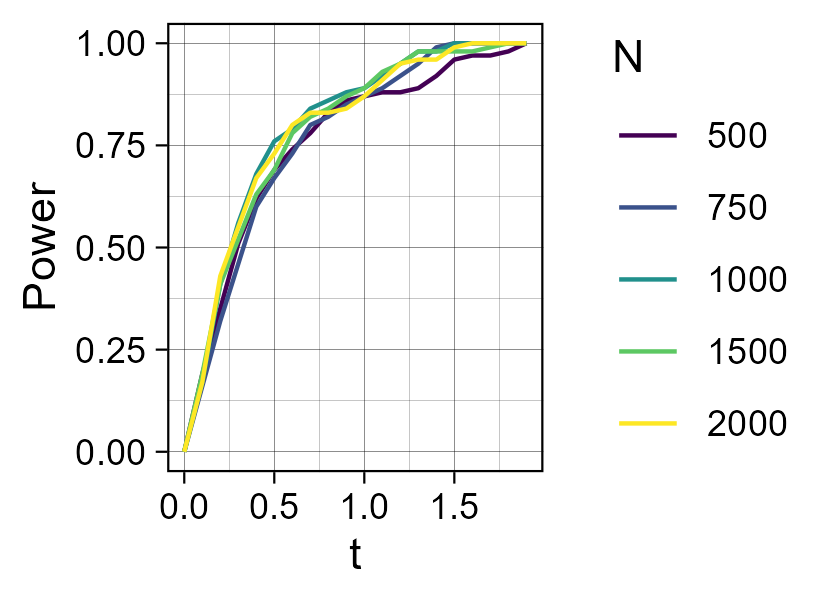}
        \caption{Empirical power of the estimation of  $t_*$ when $t_*\in\{0,0.1,0.3,0.5,0.7\}$ (top to bottom), $\btheta_*=(5,3,4)$,  $\kappa_{\hat Q,\btheta_0,\btheta_*}=0.25$ and $\kappa_{\hat \bGamma,\btheta_0,\btheta_*}=10.31$, based on $\widehat Q$ (left panel) and
     $\widehat \bGamma$ (right panel).}\label{fig:pow_tauStar_t0_534}
\end{figure}

\begin{figure}[ht!]
    \centering

      \includegraphics[width=6cm,height=3cm]{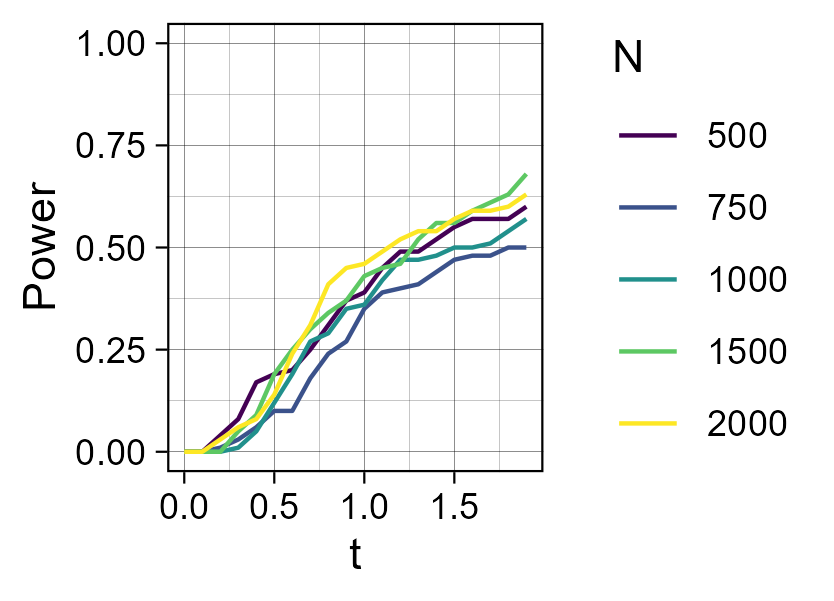}
    \includegraphics[width=6cm,height=3cm]{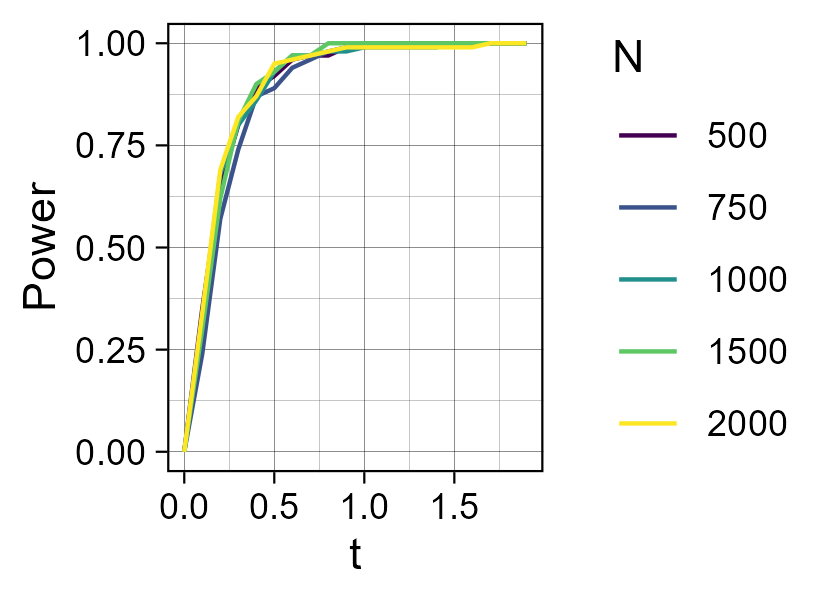}

       \includegraphics[width=6cm,height=3cm]{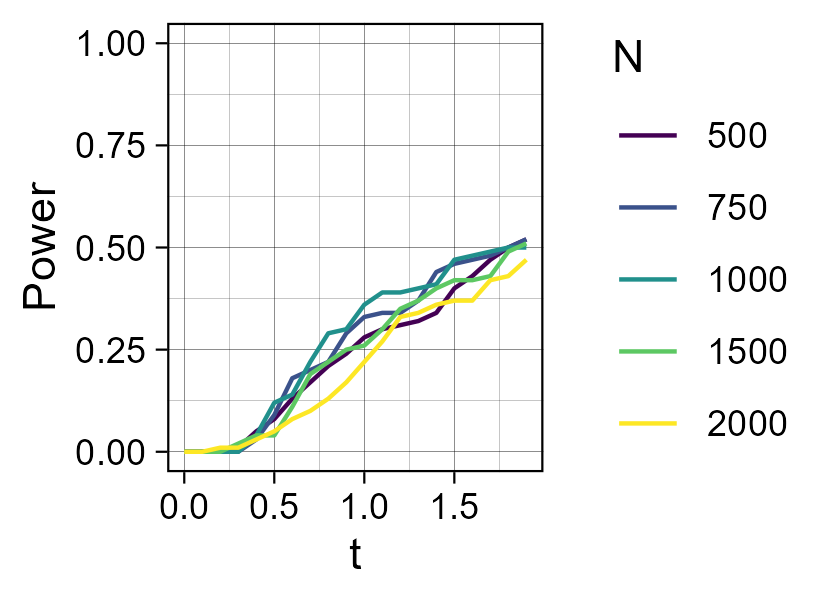}
    \includegraphics[width=6cm,height=3cm]{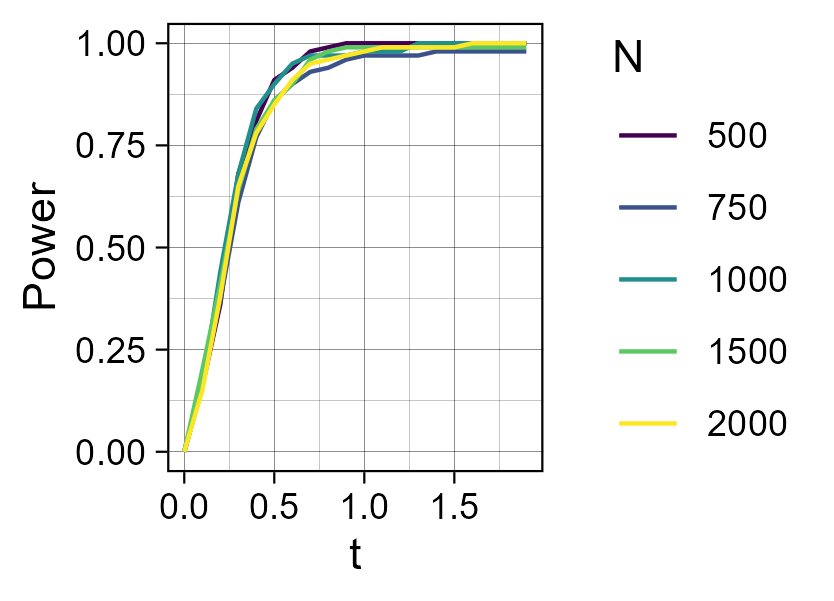}

       \includegraphics[width=6cm,height=3cm]{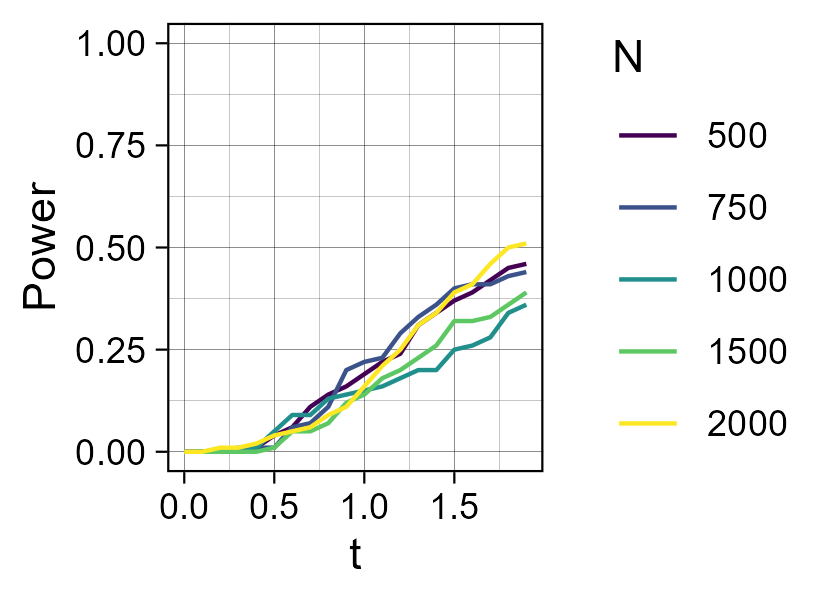}
    \includegraphics[width=6cm,height=3cm]{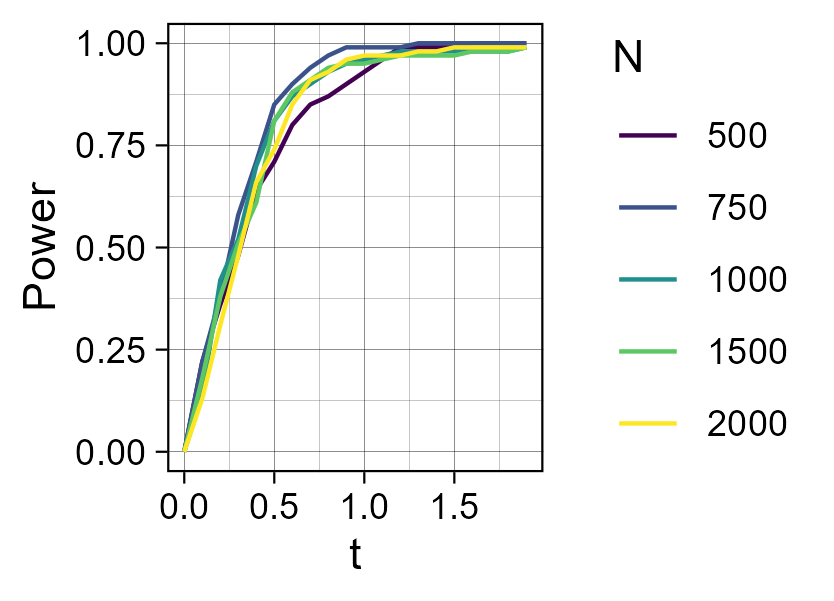}

       \includegraphics[width=6cm,height=3cm]{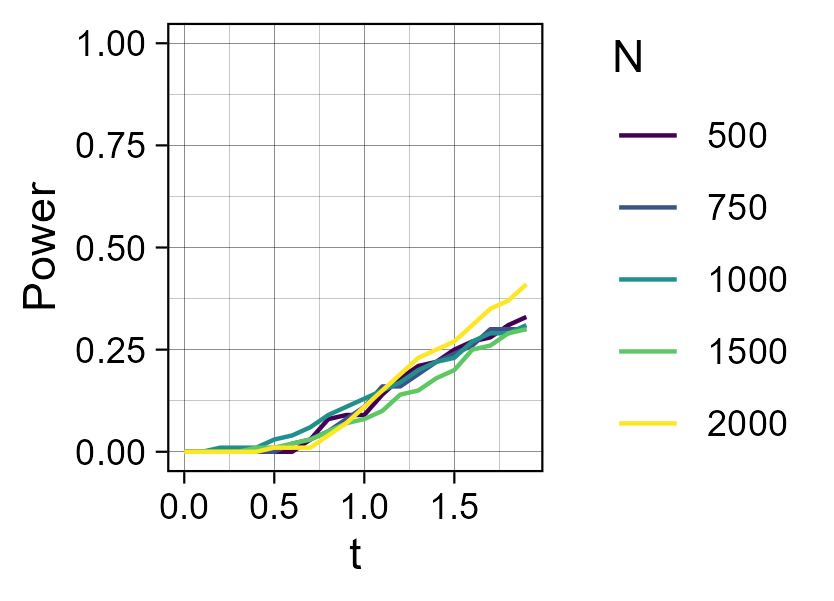}
    \includegraphics[width=6cm,height=3cm]{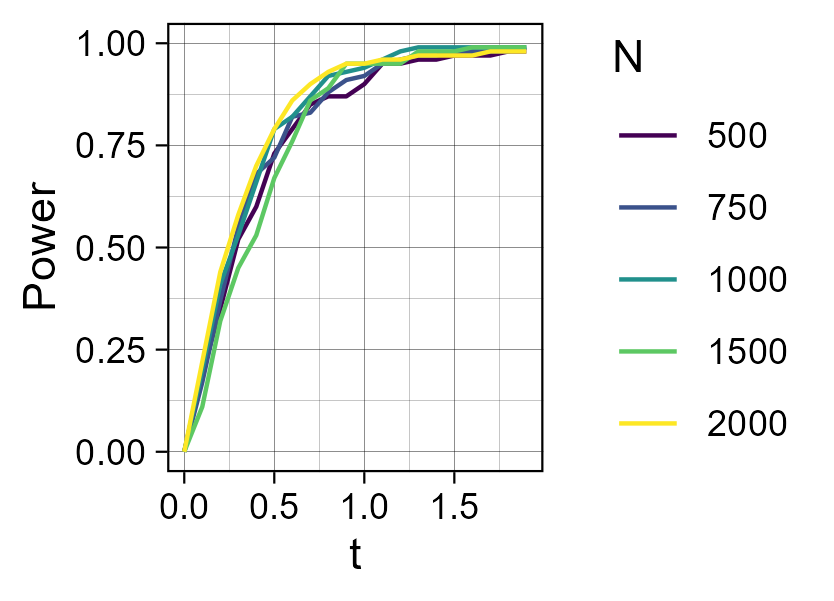}

       \includegraphics[width=6cm,height=3cm]{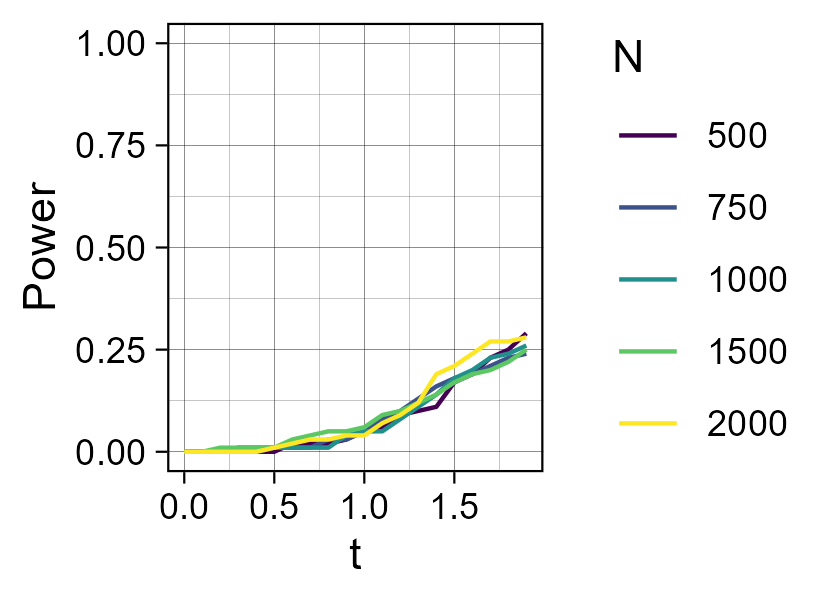}
    \includegraphics[width=6cm,height=3cm]{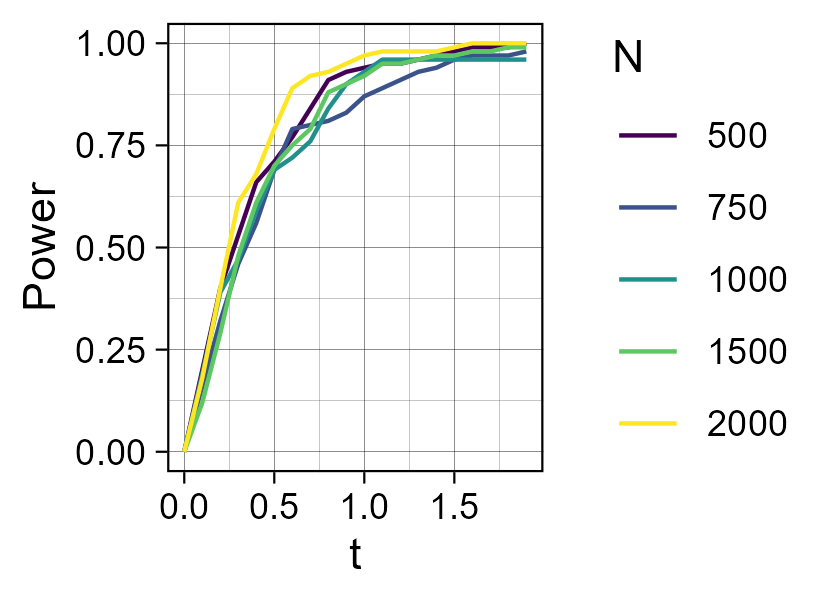}
        \caption{Empirical power of the estimation of  $t_*$ when $t_*\in\{0,0.1,0.3,0.5,0.7\}$ (top to bottom), $\btheta_*=(3,3,1)$,  $\kappa_{\hat Q,\btheta_0,\btheta_*}=2$ and $\kappa_{\hat \bGamma,\btheta_0,\btheta_*}=3.03$, based on $\widehat Q$ (left panel) and
     $\widehat \bGamma$ (right panel).}\label{fig:pow_tauStar_t7_331}
\end{figure}

\begin{figure}[ht!]
    \centering

                  \includegraphics[width=6cm,height=3cm]{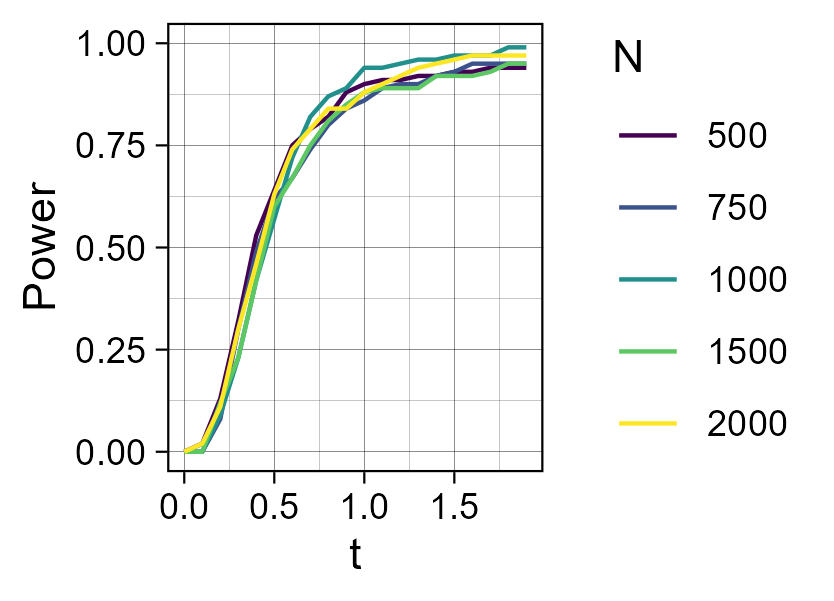}
    \includegraphics[width=6cm,height=3cm]{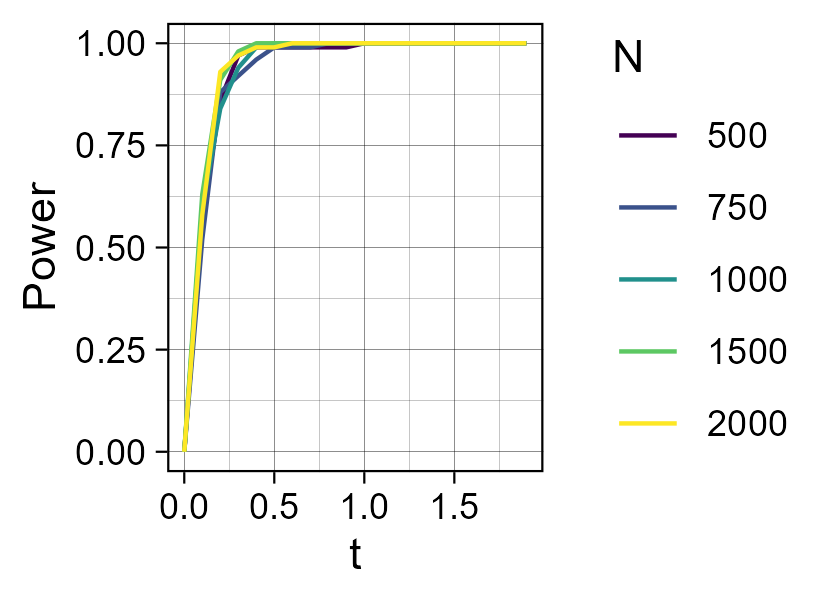}

      \includegraphics[width=6cm,height=3cm]{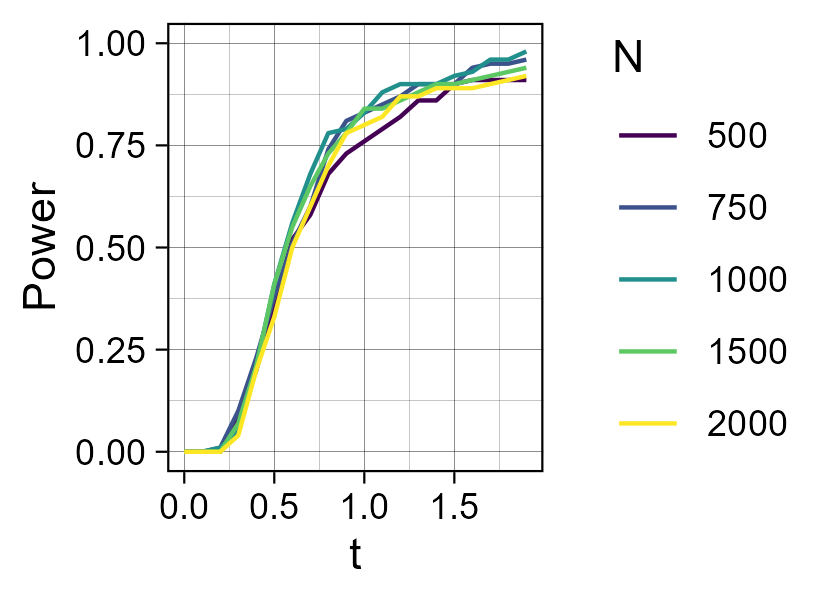}
    \includegraphics[width=6cm,height=3cm]{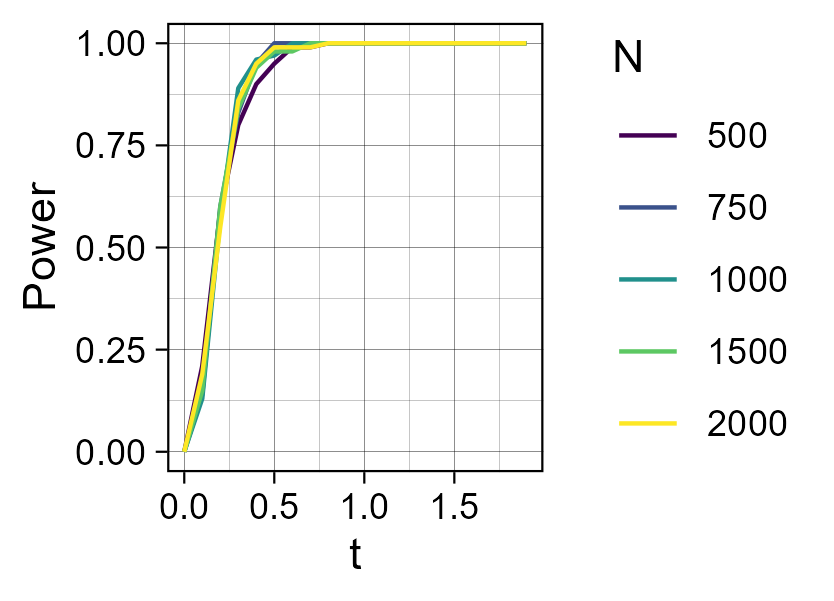}

                  \includegraphics[width=6cm,height=3cm]{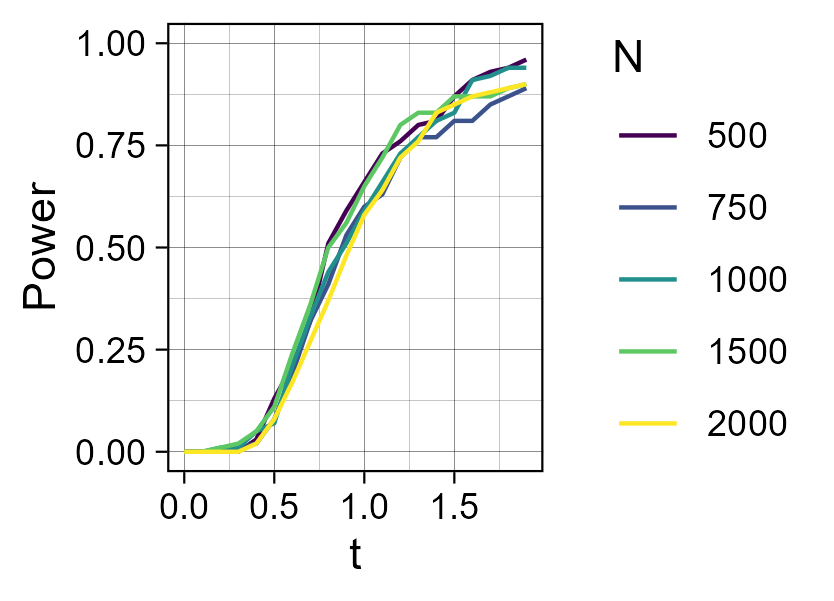}
    \includegraphics[width=6cm,height=3cm]{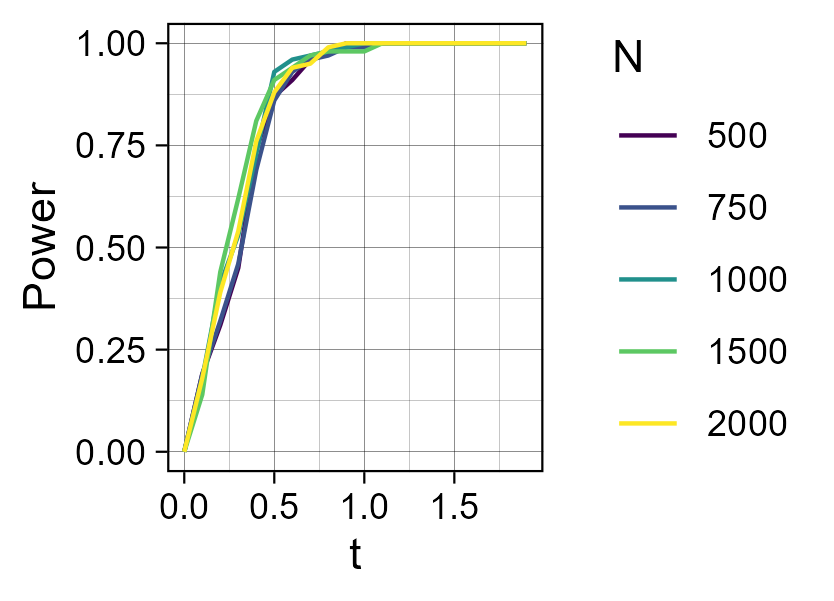}

                  \includegraphics[width=6cm,height=3cm]{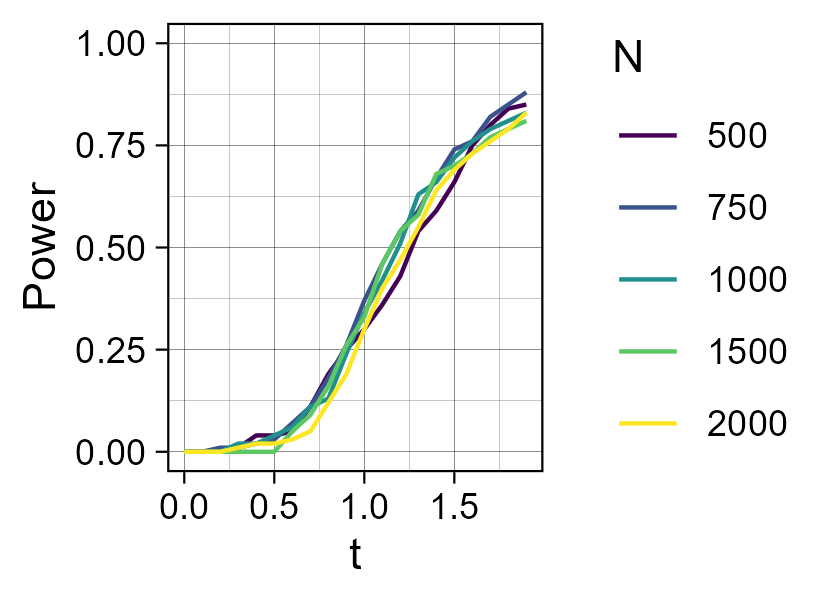}
    \includegraphics[width=6cm,height=3cm]{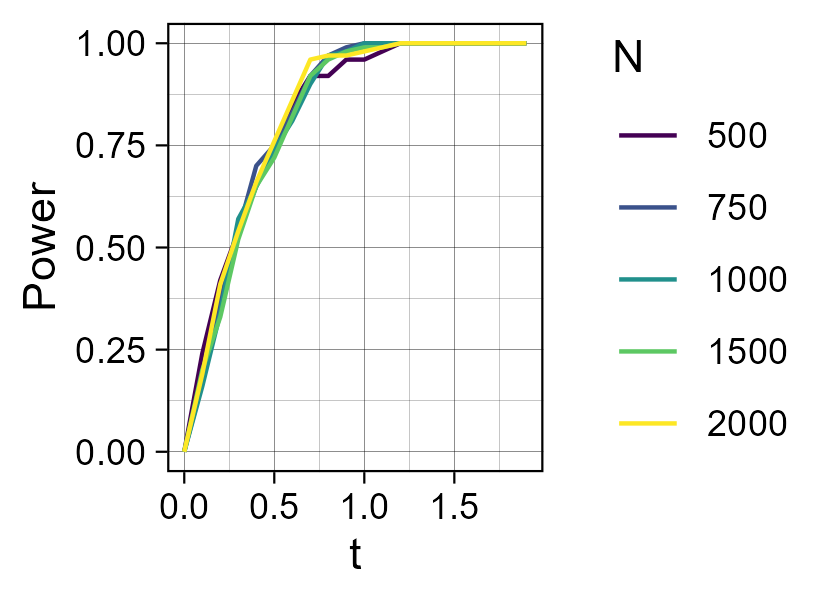}

                  \includegraphics[width=6cm,height=3cm]{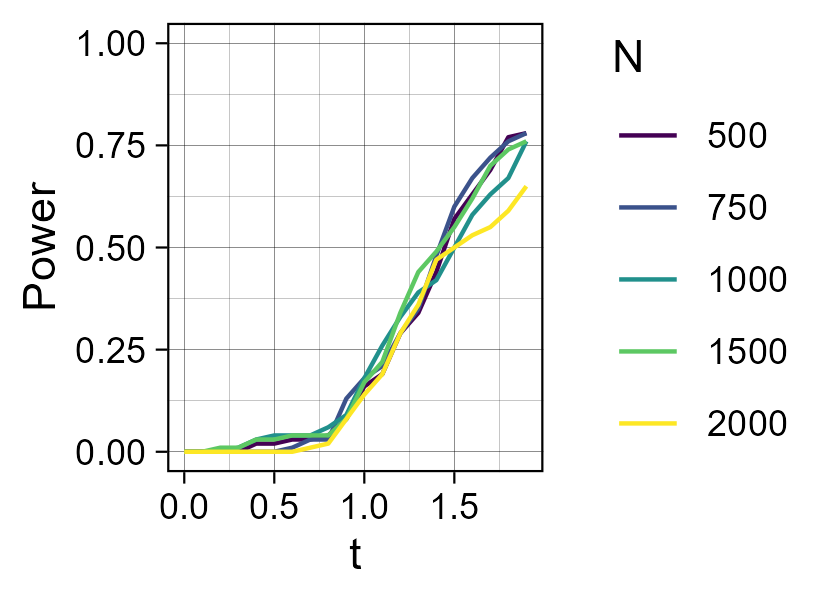}
    \includegraphics[width=6cm,height=3cm]{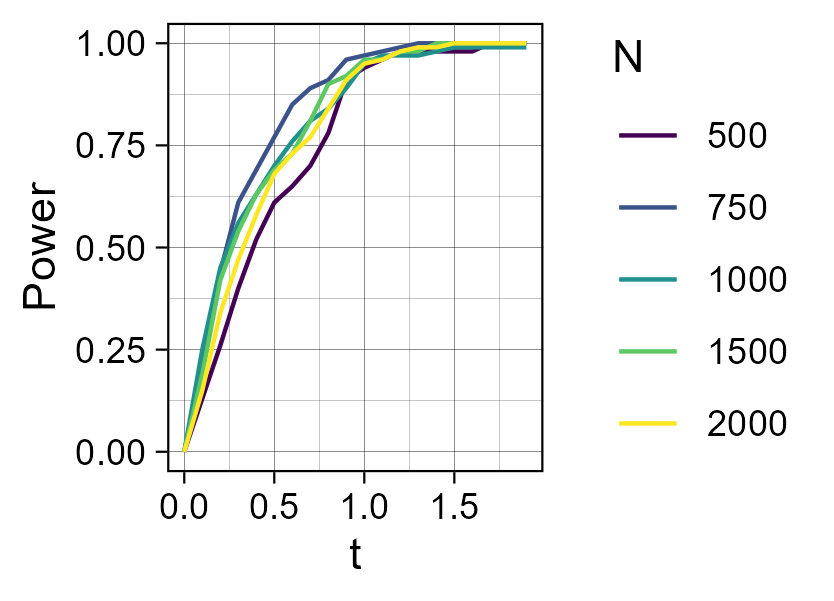}
        \caption{Empirical power of the estimation of  $t_*$ when $t_*\in\{0,0.1,0.3,0.5,0.7\}$ (top to bottom), $\btheta_*=(5,3,1)$, $\kappa_{\hat Q,\btheta_0,\btheta_*}=4$ and $\kappa_{\hat \bGamma,\btheta_0,\btheta_*}=5.78$, based on $\widehat Q$ (left panel) and
     $\widehat \bGamma$ (right panel).}\label{fig:pow_tauStar_t7_531}
\end{figure}


\end{document}